\newtheorem{theorem}{Theorem}
\theoremstyle{plain}
\newtheorem{corollary}[theorem]{Corollary}
\newtheorem{lemma}[theorem]{Lemma}
\newtheorem{problem}{Problem}
\newtheorem{proposition}[theorem]{Proposition}
\newtheorem{remark}{Remark}
\numberwithin{equation}{section}
\numberwithin{theorem}{section}  
\begin{document}
\title[The Klainerman-Machedon Conjecture for $\beta \in \left( 0,1\right) $]%
{Correlation structures, Many-body Scattering Processes and the Derivation
of the Gross-Pitaevskii Hierarchy}
\author{Xuwen Chen}
\address{Department of Mathematics, Brown University, 151 Thayer Street,
Providence, RI 02912}
\email{chenxuwen@math.brown.edu}
\urladdr{http://www.math.brown.edu/\symbol{126}chenxuwen/}
\author{Justin Holmer}
\address{Department of Mathematics, Brown University, 151 Thayer Street,
Providence, RI 02912}
\email{holmer@math.brown.edu}
\urladdr{http://www.math.brown.edu/\symbol{126}holmer/}
\subjclass[2010]{Primary 35Q55, 35A02, 81V70; Secondary 35A23, 35B45.}
\keywords{BBGKY Hierarchy, $N$-particle Schr\"{o}dinger Equation,
Klainerman-Machedon Space-time Bound, Quantum Kac's Program}

\begin{abstract}
We consider the dynamics of $N$ bosons in three dimensions. We assume the
pair interaction is given by $N^{3\beta -1}V(N^{\beta }\cdot )$ . By
studying an associated many-body wave operator, we introduce a BBGKY
hierarchy which takes into account all of the interparticle singular
correlation structures developed by the many-body evolution from the
beginning. Assuming energy conditions on the $N$-body wave function, for $%
\beta \in \left( 0,1\right] $, we derive the Gross-Pitaevskii hierarchy with 
$2$-body interaction. In particular, we establish that, in the $N\rightarrow
\infty $ limit, all $k$-body scattering processes vanishes if $k\geqslant 3$
and thus provide a direct answer to a question raised by Erd\"{o}s, Schlein,
and Yau in \cite{E-S-Y2}. Moreover, this new BBGKY hierarchy shares the
limit points with the ordinary BBGKY hierarchy strongly for $\beta \in
\left( 0,1\right) $ and weakly for $\beta =1$. Since this new BBGKY
hierarchy converts the problem from a two-body estimate to a weaker three-%
body estimate for which we have the estimates to achieve $\beta <1$, it then
allows us to prove that all limit points of the ordinary BBGKY hierarchy
satisfy the space-time bound conjectured by Klainerman and Machedon in \cite%
{KlainermanAndMachedon} for $\beta \in \left( 0,1\right) $.
\end{abstract}

\maketitle
\tableofcontents

\section{Introduction}

A Bose-Einstein condensate (BEC), is a peculiar gaseous state in which
particles of integer spin (bosons) occupy a macroscopic quantum state.
Though the existence of a BEC was first predicted theoretically by Einstein
for non-interacting particles in 1925, it was not verified experimentally
until the Nobel prize winning first observation of Bose-Einstein condensate
(BEC) for interacting atoms in low temperature in 1995 \cite{Anderson, Davis}
using laser cooling techniques. Since then, this new state of matter has
attracted a lot of attention in physics and mathematics as it can be used to
explore fundamental questions in quantum mechanics, such as the emergence of
interference, decoherence, superfluidity and quantized vortices.
Investigating various condensates has become one of the most active areas of
contemporary research.

As in the study of any time-dependent interacting $N$-body system, the main
difficulty in the theory of BEC is that the governing PDE is impossible to
solve or simulate when $N$ is large. For BEC, the time-evolution of a $N$
boson system without trapping in $\mathbb{R}^{3}$ is governed by the
many-body Schr\"{o}dinger equation 
\begin{equation}
i\partial _{t}\psi _{N}=H_{N}\psi _{N}  \label{eqn:N-body Schrodinger}
\end{equation}%
where the $N$-body Hamiltonian is given by 
\begin{equation}
H_{N}=-\sum_{j=1}^{N}\triangle _{x_{j}}+\sum_{1\leqslant i<j\leqslant
N}N^{3\beta -1}V(N^{\beta }(x_{i}-x_{j}))\text{ with }\beta \in \left( 0,1%
\right] \text{.}  \label{eqn:N-body Hamiltonian}
\end{equation}%
Here, $(x_{1},...,x_{N})\in \mathbb{R}^{3N}$ is the position vector of $N$
particles in $\mathbb{R}^{3}$, we choose $\left\Vert \psi _{N}(0)\right\Vert
_{L^{2}(\mathbb{R}^{3N})}=1$, and we assume the interparticle interaction is
given by $N^{3\beta -1}V(N^{\beta }\cdot )$. On the one hand, 
\begin{equation}
V_{N}(\cdot )=N^{3\beta }V(N^{\beta }\cdot )  \label{def:V_N}
\end{equation}%
is an approximation of the Dirac $\delta $-function as $N\rightarrow \infty $
and hence matches the Gross-Pitaevskii description that the many-body effect
should be modeled by an on-site strong self interaction.\footnote{%
From here on out, we consider the $\beta >0$ case solely. For $\beta =0$
(Hartree dynamics), see \cite%
{Frolich,E-Y1,KnowlesAndPickl,RodnianskiAndSchlein,MichelangeliSchlein,GMM1,GMM2,Chen2ndOrder,Ammari2,Ammari1,LChen}
.} On the other hand, if we denote by $\func{scat}(W)$ the 3D scattering
length of the potential $W$, then we have 
\begin{equation*}
N\func{scat}(N^{-1}V_{N}(\cdot ))\sim 1
\end{equation*}%
which is the Gross-Pitaevskii scaling condition introduced by Lieb,
Seiringer and Yngvason in \cite{Lieb2}. In the current experiments, we have $%
N\sim 10^{4}$ which already makes equation (\ref{eqn:N-body Schrodinger})
unrealistic to solve. In fact, according to the references in \cite{Lieb2},
the largest system one could simulate at the moment has $N\sim 10^{2}$.
Hence, it is necessary to find reductions or approximations.

It is widely believed that the mean-field approximation / limit of equation
( \ref{eqn:N-body Schrodinger}) is given by the cubic nonlinear Schr\"{o}%
dinger equation (NLS) 
\begin{equation}
i\partial _{t}\phi =-\triangle \phi +c\left\vert \phi \right\vert ^{2}\phi ,
\label{eqn:cubic NLS}
\end{equation}%
where the coupling constant $c$ is exactly given by $8\pi N\func{scat}%
(N^{-1}V_{N}(\cdot ))$. That is, if we define the $k$-particle marginal
densities associated with $\psi _{N}$ by 
\begin{equation}
\gamma _{N\,}^{(k)}(t,\mathbf{x}_{k};\mathbf{x}_{k}^{\prime })=\int \psi
_{N}(t,\mathbf{x}_{k},\mathbf{x}_{N-k})\overline{\psi _{N}}(t,\mathbf{x}%
_{k}^{\prime },\mathbf{x}_{N-k})d\mathbf{x}_{N-k},\quad \mathbf{x}_{k},%
\mathbf{x}_{k}^{\prime }\in \mathbb{R}^{3k},  \label{E:marginal}
\end{equation}%
and assume 
\begin{equation*}
\gamma _{N\,}^{(k)}(0,\mathbf{x}_{k},\mathbf{x}_{k}^{\prime })\sim
\dprod\limits_{j=1}^{k}\phi _{0}(x_{j})\bar{\phi}_{0}(x_{j}^{\prime })\text{
as }N\rightarrow \infty
\end{equation*}%
where $\mathbf{x}_{k}=(x_{1},...,x_{j})\in \mathbb{R}^{3k}$, then we have
the propagation of chaos, namely, 
\begin{equation}
\gamma _{N\,}^{(k)}(t,\mathbf{x}_{k},\mathbf{x}_{k}^{\prime })\sim
\dprod\limits_{j=1}^{k}\phi (t,x_{j})\bar{\phi}(t,x_{j}^{\prime })\text{ as }%
N\rightarrow \infty  \label{relation:propogation of chaos}
\end{equation}%
and $\phi (t,x_{j})$ is given by (\ref{eqn:cubic NLS}) subject to the
initial $\phi (0,x_{j})=\phi _{0}(x_{j})$. Naturally, to prove (\ref%
{relation:propogation of chaos}), one studies the $N\rightarrow \infty $
limit of the Bogoliubov--Born--Green--Kirkwood--Yvon (BBGKY) hierarchy of
the many-body system (\ref{eqn:N-body Schrodinger}) satisfied by $\left\{
\gamma _{N\,}^{(k)}\right\} $: 
\begin{equation}
i\partial _{t}\gamma _{N}^{(k)}+\left[ \triangle _{\mathbf{x}_{k}},\gamma
_{N}^{(k)}\right] =\begin{aligned}[t] &\frac{1}{N}\sum_{1\leqslant
i<j\leqslant k}\left[ V_{N}\left( x_{i}-x_{j}\right) ,\gamma
_{N}^{(k)}\right] \\ &+\frac{N-k}{N}
\sum_{j=1}^{k}\limfunc{Tr}\nolimits_{k+1}\left[ V_{N}\left(
x_{j}-x_{k+1}\right) ,\gamma _{N}^{(k+1)}\right] \end{aligned}
\label{hierarchy:fake BBGKY hierarchy in operator form}
\end{equation}%
if we do not distinguish $\gamma _{N}^{(k)}$ as a kernel and the operator it
defines. Here the operator $V_{N}\left( x\right) $ represents multiplication
by the function $V_{N}\left( x\right) $ and $\limfunc{Tr}\nolimits_{k+1}$
means taking the $k+1$ trace, for example, 
\begin{equation*}
\limfunc{Tr}\nolimits_{k+1}V_{N}\left( x_{j}-x_{k+1}\right) \gamma
_{N}^{(k+1)}=\int V_{N}\left( x_{j}-x_{k+1}\right) \gamma _{N}^{(k+1)}(t,%
\mathbf{x}_{k},x_{k+1};\mathbf{x}_{k}^{\prime },x_{k+1})dx_{k+1}\text{.}
\end{equation*}%
Such an approach for deriving mean-field type equations by studying the
limit of the BBGKY hierarchy was proposed by Kac in the classical setting
and demonstrated by Landford's work on the Boltzmann equation. In the
current quantum setting, it was suggested by Spohn \cite{Spohn} and has been
proven to be successful by Erd\"{o}s, Schlein, and Yau in their fundamental
papers \cite{E-S-Y1,E-S-Y2,E-S-Y5, E-S-Y3} which have inspired many works by
many authors \cite%
{KlainermanAndMachedon,Kirpatrick,TChenAndNP,ChenAnisotropic,TChenAndNPSpace-Time,Chen3DDerivation,SchleinNew,C-H3Dto2D, C-H2/3,GM1,C-HFocusing,Sohinger3,C-HFocusingII}
.

This paper, like the aforementioned work, is inspired by the work of Erd\"{o}%
s, Schlein, and Yau. The first main part of this paper deals with a problem
raised on \cite[p.516]{E-S-Y2}. To motivate and state the problem, we first
notice the formal limit of hierarchy (\ref{hierarchy:fake BBGKY hierarchy in
operator form}): 
\begin{equation}
i\partial _{t}\gamma ^{(k)}+\left[ \triangle _{\mathbf{x}_{k}},\gamma ^{(k)}%
\right] =b_{0}\sum_{j=1}^{k}\limfunc{Tr}\nolimits_{k+1}\left[ \delta
(x_{j}-x_{k+1}),\gamma ^{(k+1)}\right]  \label{hierarchy:GP with b_0}
\end{equation}%
where 
\begin{equation*}
b_{0}=\int_{\mathbb{R}^{3}}V(x)dx.
\end{equation*}%
We make such an observation because $V_{N}(\cdot )\rightarrow \left( \int_{%
\mathbb{R}^{3}}V(x)dx\right) \delta (\cdot )$. If we plug 
\begin{equation}
\gamma ^{(k)}(t,\mathbf{x}_{k},\mathbf{x}_{k}^{\prime
})=\dprod\limits_{j=1}^{k}\phi (t,x_{j})\bar{\phi}(t,x_{j}^{\prime })
\label{eqn:product form soln}
\end{equation}%
into (\ref{hierarchy:GP with b_0}) and assume $\phi $ solves (\ref{eqn:cubic
NLS}), then (\ref{eqn:product form soln}) is a solution to (\ref%
{hierarchy:GP with b_0}) if and only if the coupling constant $c$ in (\ref%
{eqn:cubic NLS}) equals to $b_{0}$. Since 
\begin{equation*}
8\pi \lim_{N\rightarrow \infty }N\func{scat}(N^{-1}V_{N}(\cdot ))=b_{0}\text{
for }\beta \in \left( 0,1\right) ,
\end{equation*}%
the formal limit (\ref{hierarchy:GP with b_0}) checks the prediction. It
also has been proven in \cite{E-S-Y2} for $\beta \in (0,1/2)$. However, this
formal limit does not meet the prediction when $\beta =1$ because 
\begin{equation*}
8\pi N\func{scat}(N^{-1}V_{N}(\cdot ))=8\pi \func{scat}(V)\equiv 8\pi a_{0}%
\text{ for }\beta =1
\end{equation*}%
which is usually a number smaller than $b_{0}$. In \cite{E-S-Y1,E-S-Y5,
E-S-Y3}, Erd\"{o}s, Schlein and Yau have established rigorously that the
real limit of the BBGKY hierarchy (\ref{hierarchy:fake BBGKY hierarchy in
operator form}) associated with (\ref{eqn:N-body Schrodinger}) matches the
prediction and is given by 
\begin{equation}
i\partial _{t}\gamma ^{(k)}+\left[ \triangle _{\mathbf{x}_{k}},\gamma ^{(k)}%
\right] =8\pi a_{0}\sum_{j=1}^{k}\limfunc{Tr}\nolimits_{k+1}\left[ \delta
(x_{j}-x_{k+1}),\gamma ^{(k+1)}\right] .  \label{hierarchy:GP with a_0}
\end{equation}%
The reasoning given is that one has to take into account the correlation
between the particles. To be specific, as in \cite%
{Lieb2,E-S-Y1,E-S-Y2,E-S-Y3}, let $w_{0}$ be the solution to 
\begin{eqnarray*}
(-\triangle +\frac{1}{2}N^{\beta -1}V)w_{0}\left( x\right) &=&\frac{1}{2}V,
\\
\lim_{\left\vert x\right\vert \rightarrow \infty }w_{0}(x) &=&0.
\end{eqnarray*}%
We scale $w_{0}$ by 
\begin{equation*}
w_{N}(x)=N^{\beta -1}w_{0}\left( N^{\beta }x\right)
\end{equation*}%
so that $w_{N}$ is the solution to 
\begin{eqnarray}
(-\triangle +\tfrac{1}{2N}V_{N})(1-w_{N}\left( x\right) ) &=&0,
\label{eqn:zero scattering} \\
\lim_{\left\vert x\right\vert \rightarrow \infty }w_{N}(x) &=&0.  \notag
\end{eqnarray}%
The papers \cite{E-S-Y1,E-S-Y5, E-S-Y3} then suggest that, instead of
considering the limit of hierarchy (\ref{hierarchy:fake BBGKY hierarchy in
operator form}) directly, one should investigate the limit of the following
hierarchy 
\begin{eqnarray}
&&i\partial _{t}\gamma _{N}^{(k)}+\triangle _{\mathbf{x}_{k}}\gamma
_{N}^{(k)}-\triangle _{\mathbf{x}_{k}^{\prime }}\gamma _{N}^{(k)}
\label{BBGKY:ESY REAL} \\
&=&\frac{1}{N}\sum_{1\leqslant i<j\leqslant k}\left( \tilde{V}%
_{N}(x_{i}-x_{j})\gamma _{N,i,j}^{(k)}-\tilde{V}_{N}(x_{i}^{\prime
}-x_{j}^{\prime })\gamma _{N,i^{\prime },j^{\prime }}^{(k)}\right)  \notag \\
&&+\frac{N-k}{N}\sum_{j=1}^{k}\limfunc{Tr}\nolimits_{k+1}\left( \tilde{V}%
_{N}(x_{j}-x_{k+1})\gamma _{N,j,k+1}^{(k+1)}-\tilde{V}_{N}(x_{j}^{\prime
}-x_{k+1}^{\prime })\gamma _{N,j^{\prime },\left( k+1\right) ^{\prime
}}^{(k+1)}\right) ,  \notag
\end{eqnarray}%
which has the singular correlations between particles built in. Here 
\begin{equation*}
\tilde{V}_{N}(\cdot )=V_{N}(\cdot )(1-w_{N}(\cdot )),
\end{equation*}%
and 
\begin{equation*}
\gamma _{N,i,j}^{(k)}=\frac{\gamma _{N}^{(k)}}{(1-w_{N}(x_{i}-x_{j}))}\text{%
. }
\end{equation*}%
As $N\rightarrow \infty $, one formally has 
\begin{equation*}
\gamma _{N}^{(k)}\sim \gamma _{N,i,j}^{(k)}\text{,}
\end{equation*}%
and 
\begin{equation*}
\tilde{V}_{N}(\cdot )\rightarrow 8\pi a_{0}\delta (\cdot )\text{,}
\end{equation*}%
hence one obtains (\ref{hierarchy:GP with a_0}) as the limit of the
many-body dynamic (\ref{eqn:N-body Schrodinger}).

One immediate question to this delicate limiting process is: aside from
physical motivation, is there a more mathematical explanation for why (\ref%
{hierarchy:GP with b_0}) is not the limit of (\ref{eqn:N-body Schrodinger})
when $\beta =1$? An answer is that the "usual" energy condition: 
\begin{equation}
\sup_{t}\left( \limfunc{Tr}S^{(k+1)}\gamma _{N}^{(k+1)}+\frac{1}{N}\limfunc{
Tr}S_{1}S_{1^{\prime }}S^{(k)}\gamma _{N}^{(k)}\right) \leqslant C^{k}\text{
for }k\geqslant 0,  \label{energy condition:"usual"}
\end{equation}
where 
\begin{equation*}
S_{j}=(1-\triangle _{x_{j}})^{\frac{1}{2}}\text{ and }S^{(k)}=\dprod
\limits_{j=1}^{k}S_{j}S_{j^{\prime }},
\end{equation*}
first proved in \cite{E-E-S-Y1,E-S-Y2} for $\beta \in \left( 0,\frac{3}{5}
\right) $ and later in \cite%
{Kirpatrick,TChenAndNP,ChenAnisotropic,Chen3DDerivation,C-HFocusing,C-HFocusingII}
, is not true when $\beta =1$. This can be proved by contradiction: assume
that (\ref{energy condition:"usual"}) does hold when $\beta =1$, then with a
simple argument in \cite{Kirpatrick} which is first hinted in \cite{E-S-Y2}
and used in \cite%
{Kirpatrick,TChenAndNP,ChenAnisotropic,Chen3DDerivation,C-HFocusing,C-HFocusingII}
, one easily proves that hierarchy (\ref{hierarchy:fake BBGKY hierarchy in
operator form}) converges to the wrong limit (\ref{hierarchy:GP with b_0})
and reaches a contradiction.

Another immediate but much deeper question is that, if the singular
correlation structure between particles is so crucial, then why would one
only take a pair into account at a time? For example, when considering the
term 
\begin{equation*}
V_{N}(x_{1}-x_{2})\gamma _{N}^{(k)}
\end{equation*}
why would one only put in the singular correlation structure between
particles $x_{1}$ and $x_{2}$ and why not put in the singular correlation
structure between particles $x_{1}$ and $x_{3}$ or $x_{2}$ and $x_{3}$? That
is, why not consider a term like 
\begin{equation*}
\left[ \tilde{V}_{N}(x_{1}-x_{2})(1-w_{N}(x_{1}-x_{3}))\right] \left[ \frac{
\gamma _{N,1,2}^{(k)}}{(1-w_{N}(x_{1}-x_{3}))}\right] ?
\end{equation*}
The above expression corresponds to a three-body interaction. Basically, the
question is: why can this case be dropped? This is actually a problem raised
on \cite[p.516]{E-S-Y2}.

\begin{problem}[{\protect\cite[p.516]{E-S-Y2}}]
\label{Problem:ESY}One should rigorously establish the fact that all
three-body scattering processes are negligible in the limit.
\end{problem}

In the first main part of this paper, we provide a direct answer to Problem %
\ref{Problem:ESY}. We take into account all of the interparticle singular
correlation structures developed by the many-body evolution from the
beginning.\footnote{%
In the Fock space version of the problem, there is another way to insert all
of the correlation structures using the metaplectic representation /
Bogoliubov transform. See \cite{SchleinNew}.} We rigorously establish that,
in the $N\rightarrow \infty $ limit, all $k$-body scattering processes
vanishes if $k\geqslant 3$. To be specific, we have the following theorem.

\begin{theorem}[Main Theorem I]
\label{THM:Convergence}Define 
\begin{equation}
\alpha _{N}^{(k)}(t,\mathbf{x}_{k},\mathbf{x}_{k}^{\prime })\overset{\mathrm{%
\ def}}{=}\left( G_{N}^{(k)}\left( \mathbf{x}_{k}\right) \right) ^{-1}\left(
G_{N}^{(k)}\left( \mathbf{x}_{k}^{\prime }\right) \right) ^{-1}\gamma
_{N}^{(k)}(t,\mathbf{x}_{k},\mathbf{x}_{k}^{\prime }),  \label{def:alpha(k)}
\end{equation}
where 
\begin{equation}
G_{N}^{(k)}\left( \mathbf{x}_{k}\right) \overset{\mathrm{def}}{=}
\prod_{1\leq i<j\leq k}(1-w_{N}(x_{i}-x_{j})).  \label{def:G_N(k)}
\end{equation}
Suppose $\beta \in \left( 0,1\right] $. Assume the energy bound\footnote{%
We remind the readers that the "usual" energy condition (\ref{energy
condition:"usual"}) is not true when $\beta =1$. The energy conditions (\ref%
{bound:energy estimate}) and (\ref{bound:k-energy estimate}) we impose on
Theorems \ref{THM:Convergence} and \ref{THM:KM Bound} have been proven for $%
k=0,1$ or with spatial cut-offs for general $k$ in \cite{E-S-Y3,E-S-Y5}.}: 
\begin{equation}
\sup_{t}\left( \limfunc{Tr}S^{(3)}\alpha _{N}^{(3)}+\frac{1}{N}\limfunc{Tr}
S_{1}S_{1^{\prime }}S^{(3)}\alpha _{N}^{(3)}\right) \leqslant C\text{.}
\label{bound:energy estimate}
\end{equation}
Moreover, denote $\mathcal{L}_{k}^{2}$ the space of Hilbert-Schmidt
operators on $L^{2}(\mathbb{R}^{3k})$. Then every limit point $\Gamma
=\left\{ \gamma ^{(k)}\right\} _{k=1}^{\infty }$ of $\left\{ \Gamma
_{N}(t)=\left\{ \alpha _{N}^{(k)}\right\} _{k=1}^{N}\right\} $ in $%
\bigoplus_{k\geqslant 1}C\left( \left[ 0,T\right] ,\mathcal{L}%
_{k}^{2}\right) $ with respect to the product topology $\tau _{prod}$
(defined in Appendix \ref{appendix:ESYTopology}), if there is any, satisfies
the cubic Gross-Pitaevskii hierarchy: 
\begin{equation}
i\partial _{t}\gamma ^{(k)}=\sum_{j=1}^{k}\left[ -\triangle _{x_{j}},\gamma
^{(k)}\right] +c_{0}\sum_{j=1}^{k}\limfunc{Tr}\nolimits_{k+1}\left[ \delta
(x_{j}-x_{k+1}),\gamma ^{(k+1)}\right] ,  \label{hierarchy:GP}
\end{equation}
where the coupling constant $c_{0}$ is given by 
\begin{equation}
c_{0}= 
\begin{cases}
\int_{\mathbb{R}^{3}}V(x)dx & \text{if }\beta \in \left( 0,1\right) , \\ 
8\pi a_{0} & \text{if }\beta =1.%
\end{cases}
\label{coupling constant:c_0}
\end{equation}
\end{theorem}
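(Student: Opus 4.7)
The plan is to derive a new BBGKY-type hierarchy satisfied directly by $\alpha_N^{(k)}$, and then pass to the $N\to\infty$ limit in this hierarchy. The construction (\ref{def:alpha(k)})--(\ref{def:G_N(k)}) combined with the scattering equation (\ref{eqn:zero scattering}) is designed precisely so that the most singular part of the pair potential $V_N$ cancels exactly against the action of the Laplacian on the correlation factor $G_N^{(k)}$.

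First, I would compute $i\partial_t \alpha_N^{(k)}$ by substituting $\gamma_N^{(k)} = G_N^{(k)}(\mathbf{x}_k)\, G_N^{(k)}(\mathbf{x}_k')\, \alpha_N^{(k)}$ into (\ref{hierarchy:fake BBGKY hierarchy in operator form}) and dividing both sides by $G_N^{(k)}(\mathbf{x}_k)\, G_N^{(k)}(\mathbf{x}_k')$. Applying $-\Delta_{x_j}$ to $G_N^{(k)}(\mathbf{x}_k)$ produces, after this division, three categories of terms: (i) a \emph{diagonal} potential $\sum_{i\neq j,\, i\leq k}\frac{-\Delta w_N(x_i-x_j)}{1-w_N(x_i-x_j)}$, which by (\ref{eqn:zero scattering}) equals $-\frac{1}{2N}\sum_{i\neq j,\, i\leq k} V_N(x_i-x_j)$ and, upon summing over $j$, \emph{exactly cancels} the internal interaction $\frac{1}{N}\sum_{i<j\leq k}[V_N(x_i-x_j), \gamma_N^{(k)}]$ coming from the original BBGKY; (ii) \emph{off-diagonal} quadratic-gradient terms $\sum_{l\neq i,\, l,i\neq j}\nabla w_N(x_i-x_j)\cdot\nabla w_N(x_l-x_j)$ times residual correlation factors, which are genuine three-body contributions; and (iii) a first-order drift $-2\sum_{i\neq j}\nabla w_N(x_i-x_j)\cdot\nabla_{x_j}\alpha_N^{(k)}$. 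Simultaneously, from the trace term $\frac{N-k}{N}\sum_j \operatorname{Tr}_{k+1}[V_N(x_j-x_{k+1}), \gamma_N^{(k+1)}]$ one extracts the factors $(1-w_N(x_i-x_{k+1}))$ for $i\leq k$ that belong to $G_N^{(k+1)}(\mathbf{x}_{k+1})$ but not to $G_N^{(k)}(\mathbf{x}_k)$: the factor with $i=j$ combines with $V_N$ to produce the renormalized kernel $\tilde V_N(x_j-x_{k+1}) = V_N(1-w_N)$, while the factors with $i\neq j$ produce further three-body remainders of schematic form $\tilde V_N(x_j-x_{k+1})\, w_N(x_i-x_{k+1})\, \alpha_N^{(k+1)}$.

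Next, I would pass to the limit of the resulting hierarchy term-by-term, using the given limit point $\Gamma=\{\gamma^{(k)}\}$ in $\tau_{prod}$. The kinetic commutator converges directly in the distributional sense since convergence occurs in $C([0,T],\mathcal{L}_k^2)$. For the surviving two-body term, a change of variables gives
\begin{equation*}
\int \tilde V_N(x)\, dx \;=\; \int V(z)\bigl(1 - N^{\beta-1} w_0(z)\bigr)\, dz,
\end{equation*}
which tends to $b_0=\int V$ when $\beta\in(0,1)$ and equals $\int V(1-w_0)\, dz = 8\pi a_0$ when $\beta=1$ by the very definition of the three-dimensional scattering length. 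Combined with $\mathcal{L}_{k+1}^2$-convergence of $\alpha_N^{(k+1)}$, this yields $\operatorname{Tr}_{k+1}[\tilde V_N(x_j-x_{k+1}), \alpha_N^{(k+1)}] \to c_0 \operatorname{Tr}_{k+1}[\delta(x_j-x_{k+1}),\gamma^{(k+1)}]$ distributionally, with $c_0$ exactly as in (\ref{coupling constant:c_0}).

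The crux of the proof, and the main obstacle, is to show that all three-body remainder terms --- the off-diagonal $\nabla w_N \cdot \nabla w_N$ contributions, the drift $\nabla w_N \cdot \nabla \alpha_N^{(k)}$, and the extra $w_N(x_i - x_{k+1})$ dressings of $\tilde V_N$ --- tend to zero in $\tau_{prod}$. Each is smaller than the leading two-body contribution by one of three mechanisms: an explicit $\frac{1}{N}$ from BBGKY counting (the inner pair sum divides by $N$); a factor $w_N$ which is either pointwise $O(N^{\beta-1})$ (for $\beta<1$) or else localizes the integration to a $\sim N^{-1}$-measure region around a triple collision (for $\beta=1$); or an unmatched gradient that can be exchanged against the $\frac{1}{N}\operatorname{Tr} S_1 S_{1'} S^{(3)}\alpha_N^{(3)}$ part of the energy bound (\ref{bound:energy estimate}). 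This is precisely why the new hierarchy demands only a \emph{three-body} estimate rather than the stronger two-body estimate required to close the argument for the original hierarchy. Proving that these remainders vanish is the rigorous resolution of Problem \ref{Problem:ESY}: only two-body scattering survives in the Gross--Pitaevskii limit, and the combined convergence produces (\ref{hierarchy:GP}) with coupling constant $c_0$.
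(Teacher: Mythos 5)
Your architecture is the paper's: conjugating the BBGKY hierarchy by the full correlation factor $G_N^{(k)}$, using the scattering equation (\ref{eqn:zero scattering}) to cancel the internal pair interaction exactly, isolating the quadratic-gradient three-body terms ($A_N^{(k)}$), the first-order drift ($E_N^{(k)}$), the renormalized collision kernel $\tilde V_N=V_N(1-w_N)$, and the extra $w_N(x_i-x_{k+1})$ dressings (the $k$-body part $\tilde B_{N,\mathrm{many}}$); your identification of $c_0$ via $\int\tilde V_N\to b_0$ for $\beta<1$ and $\int V(1-w_0)=8\pi a_0$ for $\beta=1$ is also exactly what is used. This part of the proposal is correct.

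The gap is in the quantitative estimates showing the remainders vanish, and it is not cosmetic: the theorem is claimed for $\beta\in(0,1]$, and your three mechanisms only close for $\beta<1$. The worst term is $A_{N,i,j,\ell}\sim N^{4\beta-2}\langle N^\beta(x_i-x_\ell)\rangle^{-2}\langle N^\beta(x_j-x_\ell)\rangle^{-2}$. With only the energy bound (\ref{bound:energy estimate}) (one derivative per variable), a H\"older--Sobolev argument forces each factor into $L^3$, giving $\bigl(N^{2\beta-1}\cdot N^{-\beta}\bigr)^2=N^{2\beta-2}$, which is $O(1)$ at $\beta=1$; similarly the drift term gives only $N^{\beta-1}$. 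Your appeal to ``localization near a triple collision'' at $\beta=1$ does not by itself convert into a bound compatible with $H^1$-per-variable regularity. The paper closes this by working in $X_{-\frac12+}$ and invoking the retarded \emph{endpoint} Strichartz estimates in translated coordinates --- the 6D version (Corollary \ref{corollary:basic corollary in three body}) for $A_{N,i,j,\ell}$ and the 3D version (Corollary \ref{corollary:basic corollary for 3D endpoint}) for $E_{N,j,\ell}$ --- which allow both potential factors to be placed in $L^{\frac32+}$ at the cost of only one derivative per variable, yielding $\|\tilde v_{2,N}\|_{L^{3/2+}}^2=N^{-2+}$ and $N^{-1+}$ uniformly in $\beta\leqslant1$. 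Likewise, for the $k$-body dressing terms one needs the collapsing estimate of Lemma \ref{Lem:basic lem in KIP}, which places two derivatives on the collapsed variable $x_{k+1}$ and is paid for by the $\frac1N\limfunc{Tr}S_1S_{1'}S^{(3)}\alpha_N^{(3)}$ piece of (\ref{bound:energy estimate}), producing the $N^{-\frac12+}$ decay. Without supplying these space-time estimates (or an equivalent substitute), the argument does not reach $\beta=1$, and even for $\beta<1$ you have not specified the norm in which the remainders vanish (the paper proves $L_T^\infty\mathcal{L}^2\to0$, which then implies convergence in $\tau_{prod}$).
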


An important feature of $\alpha _{N}^{(k)}$ is that, considered as bounded
operators, $\alpha _{N}^{(k)}\ $and $\gamma _{N}^{(k)}$\ share the same $%
N\rightarrow \infty $ limit for $\beta \in \left( 0,1\right) $, if there is
any.\footnote{%
The same thing is weakly true for $\beta =1$ but we omit the proof at the
moment since Theorem \ref{THM:KM Bound} applies only to $\beta <1$.} We will
prove this simple fact in Lemma \ref{Lemma:SameLimit}, \S \ref{Sec:New BBGKY}%
. Hence, Theorem \ref{THM:Convergence} and its proof give us a better
understanding of the limiting process and allow us to solve an open problem,
raised by Klainerman and Machedon in 2008, for $\beta \in \left( 0,1\right) $
in the second main part of this paper. After reading Theorem \ref%
{THM:Convergence}, an alert reader can easily tell that one needs to prove a
uniqueness theorem of solutions to hierarchy (\ref{hierarchy:GP}) before
concluding that equation (\ref{eqn:cubic NLS}) is the mean-field limit to
the $N$-body dynamic (\ref{eqn:N-body Schrodinger}). In the second main part
of this paper, we solve an open problem about an a-priori bound on the limit
points which leads to uniqueness of (\ref{hierarchy:GP}), conjectured by
Klainerman and Machedon \cite{KlainermanAndMachedon} in 2008 for $\beta \in
\left( 0,1\right) $. Though this conjecture was not stated explicitly in 
\cite{KlainermanAndMachedon}, as we will explain after stating Theorem \ref%
{THM:KM Bound}, this Klainerman-Machedon a-priori bound is necessary to
implement Klainerman-Machedon's powerful and flexible approach in the most
involved part of proving the cubic nonlinear Schr\"{o}dinger equation (NLS)
as the $N\rightarrow \infty $ limit of quantum $N$-body dynamics.
Kirkpatrick-Schlein-Staffilani \cite{Kirpatrick} completely solved the $%
\mathbb{T}^{2}$ version of the conjecture with a trace theorem and were the
first to successfully implement such an approach. However, the $\mathbb{R}%
^{3}$ version of the conjecture as stated inside Theorem \ref{THM:KM Bound},
was fully open until recently. T. Chen and Pavlovi\'{c} \cite%
{TChenAndNPSpace-Time} have been able to prove the conjecture for $\beta \in
\left( 0,1/4\right) $. In \cite{Chen3DDerivation}, X.C simplified and
extended the result to the range of $\beta \in $ $\left( 0,2/7\right] .$
X.C. and J.H. \cite{C-H2/3}\ then extended the $\beta \in (0,2/7]$ result by
X.C. to $\beta \in (0,2/3)$. In the second main part of this paper, we prove
it for $\beta \in \left( 0,1\right) $. In particular, away from the $\beta
=1 $ case, the conjecture is now resolved. To be specific, we prove the
following theorem.

\begin{theorem}[Main Theorem II]
\label{THM:KM Bound}Define 
\begin{equation*}
R^{(k)}=\dprod\limits_{j=1}^{k}\left\vert \nabla _{x_{j}}\right\vert
\left\vert \nabla _{x_{j}^{\prime }}\right\vert ,
\end{equation*}
and 
\begin{equation*}
B_{j,k+1}\gamma ^{(k+1)}=\limfunc{Tr}\nolimits_{k+1}\left[ \delta
(x_{j}-x_{k+1}),\gamma ^{(k+1)}\right] ,
\end{equation*}
Suppose $\beta \in \left( 0,1\right) $. Assume the energy bound: 
\begin{equation}
\sup_{t}\left( \limfunc{Tr}S^{(k+1)}\alpha _{N}^{(k+1)}+\frac{1}{N}\limfunc{
Tr}S_{1}S_{1^{\prime }}S^{(k)}\alpha _{N}^{(k)}\right) \leqslant C_{0}^{k+1} 
\text{ for }k\geqslant 0\text{,}  \label{bound:k-energy estimate}
\end{equation}
then every limit point $\Gamma =\left\{ \gamma ^{(k)}\right\} _{k=1}^{\infty
}$ of $\left\{ \Gamma _{N}(t)=\left\{ \alpha _{N}^{(k)}\right\}
_{k=1}^{N}\right\} $ obtained in Theorem \ref{THM:Convergence} (and hence of 
$\left\{ \left\{ \gamma _{N}^{(k)}\right\} _{k=1}^{N}\right\} _{N=1}^{\infty
}$ because they have the same limit), satisfies the space-time bound
conjectured by Klainerman-Machedon \cite{KlainermanAndMachedon} in 2008: 
\begin{equation}
\int_{0}^{T}\left\Vert R^{(k)}B_{j,k+1}\gamma ^{(k+1)}(t,\cdot ,\cdot
)\right\Vert _{L_{\mathbf{x,x}^{\prime }}^{2}}dt\leqslant C^{k}\text{.}
\label{bound:target KM bound}
\end{equation}
In particular, there is only one limit point due to the Klainerman-Machedon
uniqueness theorem \cite[Theorem 1.1]{KlainermanAndMachedon}.
\end{theorem}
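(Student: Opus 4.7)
The plan is to work by duality: it suffices to prove that for every bounded compact test kernel $J^{(k)}$ on $L^{2}(\mathbb{R}^{3k})$,
\[
\left|\int_{0}^{T}\tr\bigl(J^{(k)}R^{(k)}B_{j,k+1}\gamma^{(k+1)}(t)\bigr)\,dt\right|\leqslant C^{k}\|J^{(k)}\|_{\op}.
\]
Since $\delta(x_{j}-x_{k+1})$ is too singular to pair with the weakly convergent sequence $\alpha_{N}^{(k+1)}$ directly, the first step is to approximate the contact interaction by $\tilde V_{M}(\cdot)=V_{M}(\cdot)(1-w_{M}(\cdot))$ at an intermediate scale $1\ll M\ll N$, using the fact that $\tilde V_{M}\to c_{0}\delta$ when tested against quantities controlled by the energy norm in (\ref{bound:k-energy estimate}). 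An integration by parts that moves $R^{(k)}$ off the test kernel then recovers the expression above as an iterated limit $\lim_{M}\lim_{N}$ of an integral involving $\alpha_{N}^{(k+1)}$ and the smooth potential $\tilde V_{M}$.

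Next, I would substitute the Duhamel expansion of $\alpha_{N}^{(k+1)}$ coming from the \emph{new} BBGKY hierarchy for $\{\alpha_{N}^{(k)}\}$ that is the central construction of this paper. Because the singular two-body correlations have been absorbed into the conjugating factor $G_{N}^{(k)}$, the interaction on the right-hand side splits into an approximate-$\delta$ term plus genuine three-particle pieces of the schematic form $V_{N}(x_{i}-x_{j})(1-w_{N}(x_{i}-x_{\ell}))^{-1}$, modulo lower-order remainders. Iterating the Duhamel formula to depth $k_{c}$ produces the usual factorial combinatorial explosion, which I would reduce to at most $4^{k_{c}}$ equivalence classes via the Klainerman-Machedon board-game argument, then let $k_{c}\to\infty$ so that the residual tail vanishes by the a-priori control built into the iteration.

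Each surviving term is then estimated by a space-time collapsing inequality. The key innovation, and the mechanism that unlocks the range $\beta<1$, is to replace the two-body Klainerman-Machedon collapsing estimate (which is known to break near $\beta=1$) by a three-body variant: one bounds
\[
\|R^{(k)}V_{N}(x_{j}-x_{k+1})(1-w_{N}(x_{i}-x_{k+1}))^{-1}e^{it\Delta^{(k+1)}}f\|_{L_{t,x,x'}^{2}}
\]
in terms of $\|S_{1}S_{1'}S^{(k+1)}f\|_{L^{2}}$, uniformly in $N$. The $N^{-1}$ prefactor in the second summand of (\ref{bound:k-energy estimate}) is precisely what absorbs the worst $N^{\beta}$ growth inside such a three-body estimate, yielding the geometric $C^{k}$ after summing over board-game classes. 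The main obstacle is exactly this three-body Strichartz/trace estimate: pushing it through the full range $\beta<1$ requires a delicate exploitation of the $|N^{\beta}x|^{-1}$ decay of $w_{N}$ in combination with an endpoint Strichartz-type inequality, and it is this estimate, rather than any step of the abstract scheme above, that represents the hard analytic work behind the theorem.
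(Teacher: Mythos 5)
Your high-level architecture (Duhamel expansion of the new hierarchy for $\alpha_N^{(k)}$, Klainerman--Machedon board game, three-body collapsing estimates fed by the energy bound) matches the paper's, but there are two genuine gaps. First, the reduction is off: (\ref{bound:target KM bound}) is an $L_T^1L^2_{\mathbf{x},\mathbf{x}'}$ (Hilbert--Schmidt) bound, and pairing against compact test kernels controlled only in operator norm is the duality appropriate to the weak-* trace-class convergence of Theorem \ref{THM:Convergence}, not to a Hilbert--Schmidt norm. The paper instead proves the uniform frequency-localized bound $\Vert P_{\leq M}^{(k)}R^{(k)}\tilde{B}_{N,j,k+1}\gamma_N^{(k+1)}\Vert_{L_T^1L^2}\leq C^k$ (Theorem \ref{THM:KMwithL-P}), passes to the weak limit in $N$ at fixed $M$ (the cutoff $P_{\leq M}$ is what makes the $\delta$ collapse well-defined on the limit object), and then sends $M\to\infty$ by monotone convergence; your regularization $\tilde{V}_M\to c_0\delta$ at an intermediate physical scale plays no role and would not by itself produce the norm bound.

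Second, and more seriously, the estimate you single out as the engine of the proof would not reach $\beta<1$. The kernel $V_N(x_j-x_{k+1})(1-w_N(x_i-x_{k+1}))^{-1}$ is a two-body potential times a factor bounded above and below, so the collapsing estimate you propose is the ordinary two-body one, which is exactly what fails as $\beta\to 1$. The actual gain in the paper comes from the cancellation $-\Delta G_N=-\tfrac{1}{2N}V_NG_N$, which removes the diagonal $\tfrac1N V_N$ terms entirely and leaves the genuinely three-body potential $A_{N,i,j,\ell}\sim N^{4\beta-2}\langle N^\beta(x_i-x_\ell)\rangle^{-2}\langle N^\beta(x_j-x_\ell)\rangle^{-2}$ with $i\neq j$, estimated via 6D endpoint Strichartz in transformed coordinates (Corollary \ref{corollary:basic corollary in three body}). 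Even then, the raw bound is $N^{3\beta-2}$, which only covers $\beta<2/3$; the extra ingredient that closes the gap to $\beta<1$ is the Littlewood--Paley splitting of the dyadic sum at $M\sim N^\beta$, i.e.\ the bound $\sum_{M_{k+q}}\min(M_{k+q}^{-1+2\varepsilon}N^{3\beta},M_{k+q}^{2+2\varepsilon})\lesssim N^{2\beta+}$ as in (\ref{sum:key in L-P}), which effectively trades one power of $N^\beta$ for a derivative. This frequency-sharing mechanism is absent from your outline, so as written your scheme tops out at the previously known range rather than proving the theorem.
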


In 2007, Erd\"{o}s, Schlein, and Yau obtained the first uniqueness theorem
of solutions \cite[Theorem 9.1]{E-S-Y2} to hierarchy (\ref{hierarchy:GP}).
The proof is surprisingly delicate -- it spans 63 pages and uses complicated
Feynman diagram techniques. The main difficulty is that hierarchy (\ref%
{hierarchy:GP}) is a system of infinitely coupled equations. Briefly, \cite[
Theorem 9.1]{E-S-Y2} is the following:

\begin{theorem}[{Erd\"{o}s-Schlein-Yau uniqueness {\protect\cite[Theorem 9.1]%
{E-S-Y2}}}]
There is at most one nonnegative symmetric operator sequence $\left\{ \gamma
^{(k)}\right\} _{k=1}^{\infty }$ that solves hierarchy (\ref{hierarchy:GP})
subject to the energy condition 
\begin{equation}
\sup_{t\in \lbrack 0,T]}\limfunc{Tr}S^{(k)}\gamma ^{(k)}\leqslant C^{k}.
\label{bound:ESYCondition}
\end{equation}
\end{theorem}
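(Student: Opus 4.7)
My plan is to prove uniqueness by linearization, an iterated Duhamel expansion, and a combinatorial reorganization (the Erd\"{o}s--Schlein--Yau ``board game'' argument) that tames the factorial number of terms generated by the iteration.

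\textbf{Linearization and iterated Duhamel.} Given two candidate sequences $\{\gamma_1^{(k)}\}$ and $\{\gamma_2^{(k)}\}$ with identical initial data, linearity of (\ref{hierarchy:GP}) reduces the problem to showing that the difference $\eta^{(k)}=\gamma_1^{(k)}-\gamma_2^{(k)}$ vanishes, where $\eta^{(k)}(0)=0$ and $\sup_t \tr S^{(k)}\eta^{(k)}\le (2C)^{k}$. Let $U^{(k)}(s)$ denote conjugation by $e^{is\sum_{j=1}^{k}\triangle_{x_{j}}}$ and write $B_{j,k+1}\eta^{(k+1)}=\limfunc{Tr}_{k+1}[\delta(x_{j}-x_{k+1}),\eta^{(k+1)}]$. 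Iterating Duhamel's formula $n$ times expresses $\eta^{(k)}(t)$ as a sum, indexed by sequences $(j_{1},\dots,j_{n})$ with $j_{r}\in\{1,\dots,k+r-1\}$, of multi-linear integrals of the form $U^{(k)}(t-t_{1})B_{j_{1},k+1}U^{(k+1)}(t_{1}-t_{2})\cdots B_{j_{n},k+n}\eta^{(k+n)}(t_{n})$ over the simplex $0\le t_{n}\le\cdots\le t_{1}\le t$.

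\textbf{Board game and per-term estimate.} Naively the number of summands is $k(k+1)\cdots(k+n-1)$, which is factorial in $n$ and cannot be absorbed by the $t^{n}/n!$ gained from the simplex alone. I would implement the ESY combinatorial reduction: define an equivalence relation on the $j$-sequences whereby two sequences are equivalent if their induced contraction (Feynman) graphs coincide after a permutation of the particles in the symmetric kernel $\eta^{(k+n)}$, pay the cost of an enlarged time domain for each equivalence class, and verify that the number of classes is bounded by $C_{0}^{n}$ uniformly in $k$. For a class representative I would then establish a trace-norm bound of the schematic form $\|U^{(k)}(t-t_{1})B_{j_{1},k+1}\cdots B_{j_{n},k+n}\eta^{(k+n)}(t_{n})\|_{\mathcal{L}^{1}}\le C^{k+n}$, exploiting the boundedness of $S_{j}^{-1}S_{k+1}^{-1}\delta(x_{j}-x_{k+1})S_{j}^{-1}S_{k+1}^{-1}$ between appropriate Sobolev-type trace ideals together with the energy bound (\ref{bound:ESYCondition}); the $n$-fold time integration then supplies the factor $t^{n}/n!$.

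\textbf{Convergence and main obstacle.} Combining the two bounds yields $\|\eta^{(k)}(t)\|_{\mathcal{L}^{1}}\le C^{k}(C_{1}t)^{n}/n!$ for every $n$; sending $n\to\infty$ forces $\eta^{(k)}(t)=0$ on $[0,T_{0}]$ where $T_{0}$ depends only on $c_{0}$ and the energy constant, and a standard iteration on subintervals extends this to $[0,T]$ because the energy bound is uniform in time. The hardest step is the board-game reduction: one must match summands across equivalence classes while carefully tracking the time variables as the simplex is enlarged, ensuring that the combinatorial bound $C_{0}^{n}$ and the per-term trace estimate coexist without losing any smallness. This delicate bookkeeping, rather than any single analytic inequality, is what forces the 63-page length of the original argument in \cite{E-S-Y2}.
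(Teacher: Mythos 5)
The paper does not prove this theorem; it quotes it from \cite{E-S-Y2}, so your sketch must be measured against the original Erd\"{o}s--Schlein--Yau argument. Your overall architecture (linearize, iterate Duhamel, reduce the factorially many summands to $C_{0}^{n}$ equivalence classes, estimate a class representative, conclude via $t^{n}/n!$) is the correct skeleton shared by both the ESY proof and the later Klainerman--Machedon approach. However, the step you treat as routine --- the per-term bound $\Vert U^{(k)}(t-t_{1})B_{j_{1},k+1}\cdots B_{j_{n},k+n}\eta^{(k+n)}(t_{n})\Vert _{\mathcal{L}^{1}}\leqslant C^{k+n}$, obtained by iterating the boundedness of $S_{j}^{-1}S_{k+1}^{-1}\delta (x_{j}-x_{k+1})S_{j}^{-1}S_{k+1}^{-1}$ against the energy bound, with the time integrals contributing only the simplex volume --- contains the genuine gap. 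The smoothed-delta bound controls one application of $B_{j,k+1}$ by $\limfunc{Tr}S_{j}S_{k+1}|\,\cdot \,|S_{j}S_{k+1}$ of its argument, but the output of a collision operator does not again lie in the weighted trace class needed for the next application: restricting an $H^{1}\otimes H^{1}$ kernel to the diagonal $x_{j_{2}}=x_{k+2}$ leaves only about half a derivative in the contracted variable, and after two or three iterations the regularity supplied once by (\ref{bound:ESYCondition}) is exhausted. A fixed-time, term-by-term operator estimate therefore cannot close; one must use the free propagators $U^{(k+r)}(t_{k+r}-t_{k+r+1})$ sandwiched between the $B$'s. This is exactly why ESY carry out all time and momentum integrations jointly in Fourier space through their Feynman-graph/resolvent analysis (the source of the 63 pages), and why Klainerman--Machedon replace the fixed-time estimate by the space-time collapsing bound (\ref{bound:target KM bound}) --- the very bound whose verification is the subject of the present paper. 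Identifying the combinatorial board game as ``the hardest step'' therefore misplaces the difficulty: the combinatorics is delicate but standard, while the analytic per-term estimate is where your proposal, as written, would fail.

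A secondary, fixable point: the hypothesis requires nonnegativity, which the difference $\eta ^{(k)}$ does not inherit, so the quantity $\sup_{t}\limfunc{Tr}S^{(k)}\eta ^{(k)}$ is not a norm; one should instead control $\limfunc{Tr}S^{(k)}|\eta ^{(k)}|S^{(k)}$ via $|\eta ^{(k)}|\leqslant \gamma _{1}^{(k)}+\gamma _{2}^{(k)}$ in the appropriate quadratic-form sense before launching the iteration.
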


In \cite{KlainermanAndMachedon}, based on their null form paper \cite%
{KlainermanMachedonNullForm}, Klainerman and Machedon gave a different
uniqueness theorem of hierarchy (\ref{hierarchy:GP}) in a space different
from that used in \cite[Theorem 9.1]{E-S-Y2}. The proof is shorter (13
pages) than the proof of \cite[Theorem 9.1]{E-S-Y2}. Briefly, \cite[Theorem
1.1]{KlainermanAndMachedon} is the following:

\begin{theorem}[{Klainerman-Machedon uniqueness {\protect\cite[Theorem 1.1]%
{KlainermanAndMachedon}}}]
There is at most one symmetric operator sequence $\left\{ \gamma
^{(k)}\right\} _{k=1}^{\infty }$ that solves hierarchy (\ref{hierarchy:GP})
subject to the space-time bound (\ref{bound:target KM bound}).
\end{theorem}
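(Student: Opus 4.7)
By linearity, the difference of two symmetric solutions sharing the same initial data is again a symmetric solution of (\ref{hierarchy:GP}), has zero initial data, and inherits the bound (\ref{bound:target KM bound}). It therefore suffices to show that any symmetric sequence $\left\{ \gamma ^{(k)}\right\} _{k\geqslant 1}$ with $\gamma ^{(k)}(0)=0$ for every $k$ and satisfying (\ref{bound:target KM bound}) must vanish identically. Write $U^{(k)}(s)$ for conjugation by the free propagator $e^{is\triangle _{\mathbf{x}_{k}}}$ and $B^{(k+1)}=\sum_{j=1}^{k}B_{j,k+1}$. Since the data vanish, iterating Duhamel's formula $n$ times yields
\begin{equation*}
\gamma ^{(k)}(t)=(-ic_{0})^{n}\int_{0<t_{n}<\cdots <t_{1}<t}U^{(k)}(t-t_{1})\,B^{(k+1)}\,U^{(k+1)}(t_{1}-t_{2})\,B^{(k+2)}\cdots B^{(k+n)}\gamma ^{(k+n)}(t_{n})\,dt_{1}\cdots dt_{n},
\end{equation*}
and expanding each $B^{(k+j)}$ into its $k+j-1$ summands $B_{\mu _{j},k+j}$ produces a sum of $k(k+1)\cdots (k+n-1)$ labeled terms.

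The crux is the Klainerman--Machedon combinatorial ``board game.'' Using only the symmetry of $\gamma ^{(k+n)}$ in its variables together with legal interchanges of the order of the nested time integrations, one reshuffles and consolidates the $k(k+1)\cdots (k+n-1)$ labeled terms so that each becomes equal to one of at most $C_{0}^{n}$ canonical ``upper-echelon'' representatives, at the cost of enlarging the time-ordered simplex in each representative to a union of such simplices still contained in $[0,t]^{n}$. This factorial-to-exponential collapse is precisely what will permit the Duhamel series to converge.

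For each surviving representative I would pass $R^{(k)}$ through the outermost propagator $U^{(k)}(t-t_{1})$ (which commutes with $R^{(k)}$ and is an $L^{2}_{\mathbf{x},\mathbf{x}^{\prime }}$ isometry), and then peel off the chain of $B_{\mu _{j},k+j}$ operators one rung at a time using Minkowski's inequality combined with the a priori bound (\ref{bound:target KM bound}) applied at each intermediate marginal; the commutation $[R^{(k+j)},U^{(k+j)}]=0$ allows the $R$-factors required by (\ref{bound:target KM bound}) to be reconstituted at every step. The upshot is a bound of the form $C^{k+n}\,t^{n}/n!$ per representative, where the $1/n!$ comes from the residual simplex structure in the time variables. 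Summing over the $C_{0}^{n}$ representative classes then gives
\begin{equation*}
\left\Vert \gamma ^{(k)}(t)\right\Vert _{\mathcal{L}_{k}^{2}}\leqslant C^{k}\,\frac{(C_{1}t)^{n}}{n!},
\end{equation*}
which tends to zero as $n\rightarrow \infty $ for every $t\in \lbrack 0,T]$; hence $\gamma ^{(k)}\equiv 0$, as required.

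The hardest step is the board game itself. No matter how sharp the per-term estimate is, the naive growth $k(k+1)\cdots (k+n-1)\sim n!$ will obliterate the factor $t^{n}/n!$ and make the Duhamel series diverge unless one genuinely reduces the sum to $C_{0}^{n}$ canonical terms. All other ingredients -- the reduction to zero data, the commutation of $R^{(k)}$ with $U^{(k)}$, Minkowski's inequality, and the simplex volume $t^{n}/n!$ -- are essentially routine once the combinatorial skeleton is in place.
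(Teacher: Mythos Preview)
Your overall architecture is right---reduce to zero data, iterate Duhamel, invoke the board game to collapse the $k(k+1)\cdots(k+n-1)$ terms to $C_{0}^{n}$ upper-echelon representatives, then estimate each representative iteratively. But two points in the estimation step are off, and one of them is a genuine gap.

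First, the claimed $1/n!$ factor is not there. You correctly note that the board game enlarges the time-simplex to a domain $D\subset[0,t]^{n}$; but then you cannot recover $t^{n}/n!$ from ``residual simplex structure''---$D$ can be essentially the full cube. The decay in $n$ comes from a different mechanism: at each of the outer $n-1$ rungs one applies Cauchy--Schwarz in the time variable to convert an $L_{T}^{1}$ integral to $T^{1/2}$ times an $L_{T}^{2}$ norm, and then invokes the Klainerman--Machedon \emph{collapsing estimate}
\[
\bigl\Vert R^{(k)}B_{j,k+1}U^{(k+1)}(\cdot)f^{(k+1)}\bigr\Vert_{L_{t}^{2}L_{\mathbf{x},\mathbf{x}'}^{2}}\lesssim \bigl\Vert R^{(k+1)}f^{(k+1)}\bigr\Vert_{L_{\mathbf{x},\mathbf{x}'}^{2}},
\]
which is a Strichartz/null-form type bound for the free propagator and is the analytic heart of \cite{KlainermanAndMachedon}. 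This produces a factor $(CT^{1/2})^{n-1}$, not $t^{n}/n!$. At the innermost rung---and only there---one closes with the a priori bound (\ref{bound:target KM bound}). The upshot is $\Vert\gamma^{(k)}(t)\Vert\lesssim C_{0}^{n}(CT^{1/2})^{n-1}C^{k+n}$, which tends to zero as $n\to\infty$ only for $T$ sufficiently small; uniqueness on $[0,T]$ for arbitrary $T$ then follows by subdividing the interval. Your version, with $(C_{1}t)^{n}/n!$, would give uniqueness for all $t$ in one shot, which is too good to be true given the available estimates.

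Second, and relatedly, the a priori bound (\ref{bound:target KM bound}) is a statement about the particular solution $\gamma^{(k+1)}$, not about $B_{j,k+1}$ acting on a free evolution. So you cannot ``apply (\ref{bound:target KM bound}) at each intermediate marginal''; the intermediate steps require the collapsing estimate above, which is a separate analytic input you have not mentioned. Without it the iteration does not close.
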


When propagation of chaos (\ref{relation:propogation of chaos}) happens,
condition (\ref{bound:ESYCondition}) is actually 
\begin{equation}
\sup_{t\in \lbrack 0,T]}\left\Vert \left\langle \nabla _{x}\right\rangle
\phi \right\Vert _{L^{2}}\leqslant C,  \label{estimate:ESYforNLS}
\end{equation}%
while condition (\ref{bound:target KM bound}) means 
\begin{equation}
\int_{0}^{T}\left\Vert \left\vert \nabla _{x}\right\vert \left( \left\vert
\phi \right\vert ^{2}\phi \right) \right\Vert _{L^{2}}dt\leqslant C.
\label{estimate:KMforNLS}
\end{equation}%
When $\phi $ satisfies NLS (\ref{eqn:cubic NLS}), both are known. Due to the
Strichartz estimate \cite{Keel-Tao}, (\ref{estimate:ESYforNLS}) implies (\ref%
{estimate:KMforNLS}), that is, condition (\ref{bound:target KM bound}) seems
to be a bit weaker than condition (\ref{bound:ESYCondition}). The proof of 
\cite[Theorem 1.1]{KlainermanAndMachedon} (13 pages) is also considerably
shorter than the proof of \cite[Theorem 9.1]{E-S-Y2} (63 pages). It is then
natural to wonder whether \cite[Theorem 1.1]{KlainermanAndMachedon} provides
a simple proof of uniqueness. To answer such a question it is necessary to
know whether the limit points in Theorem \ref{THM:Convergence} satisfy
condition (\ref{bound:target KM bound}).

Away from curiosity, there are realistic reasons to study the
Klainerman-Machedon bound (\ref{bound:target KM bound}). In the NLS
literature, uniqueness subject to condition (\ref{estimate:ESYforNLS}) is
called unconditional uniqueness while uniqueness subject to condition (\ref%
{estimate:KMforNLS}) is called conditional uniqueness. While the conditional
uniqueness theorems usually come for free with the uniqueness conditions
verified naturally in NLS theory because they are parts of the existence
argument, the unconditional uniqueness theorems usually do not yield any
information of existence. Recently, using a version of the quantum de
Finetti theorem from \cite{Lewin}, T. Chen, Hainzl, Pavlovi\'{c}, and
Seiringer \cite{ChHaPaSe13}\ provided an alternative 33 pages proof to {\cite%
[Theorem 9.1]{E-S-Y2}} and confirmed that it is an unconditional uniqueness
result in the sense of NLS theory.\footnote{%
See also \cite{Sohinger3,C-PUniqueness,HoTaXi14}.} Therefore, the general
existence theory of the Gross-Pitaevskii hierarchy (\ref{hierarchy:GP})
subject to general initial datum has to require that the limits of the BBGKY
hierarchy (\ref{hierarchy:fake BBGKY hierarchy in operator form}) lie in the
space in which the space-time bound (\ref{bound:target KM bound}) holds. See 
\cite%
{TChenAndNPGPWellPosedness1,TChenAndNPGPWellPosedness2,TChenAndNPSpace-Time,TCKT}
.

Moreover, while \cite[Theorem 9.1]{E-S-Y2} is a powerful theorem, it is very
difficult to adapt such an argument to various other interesting and
colorful settings: a different spatial dimension, a three-body interaction
instead of a pair interaction, or the Hermite operator instead of the
Laplacian. The last situation mentioned is physically important. On the one
hand, all the known experiments of BEC use harmonic trapping to stabilize
the condensate \cite{Anderson, Davis,Cornish, Ketterle, Stamper}. On the
other hand, different trapping strength produces quantum behaviors which do
not exist in the Boltzmann limit of classical particles nor in the quantum
case when the trapping is missing and have been experimentally observed \cite%
{Kettle3Dto2DExperiment, FrenchExperiment, Philips, NatureExperiment,
Another2DExperiment}. The Klainerman-Machedon approach applies easily in
these meaningful situations (\cite%
{Kirpatrick,TChenAndNP,ChenAnisotropic,Chen3DDerivation,C-H3Dto2D,Sohinger,C-HFocusing,C-HFocusingII}
). Thus proving the Klainerman-Machedon bound (\ref{bound:target KM bound})
actually helps to advance the study of quantum many-body dynamic and the
mean-field approximation in the sense that it provides a flexible and
powerful tool in 3D.

\subsection{Organization of the Paper\label{Sec:org of paper}}

We will first compute the BBGKY hierarchy satisfied by $\left\{ \alpha
_{N}^{(k)}\right\} $, defined in (\ref{def:alpha(k)}), in \S \ref{Sec:New
BBGKY}. Due to the definition of $\left\{ \alpha _{N}^{(k)}\right\} $, the
BBGKY hierarchy of $\left\{ \alpha _{N}^{(k)}\right\} $, written as (\ref%
{Hierarchy: Real BBGKY at k level}), takes into account all of the singular
correlation structures developed by the many-body evolution from the
beginning. The differences between hierarchy (\ref{Hierarchy: Real BBGKY at
k level}) and hierarchy (\ref{hierarchy:fake BBGKY hierarchy in operator
form}) are obvious: hierarchy (\ref{Hierarchy: Real BBGKY at k level}) for $%
\alpha _{N}^{(k)}$ has $k-$body interactions where $k=2,...,k$, but most
importantly, for the purpose of Theorems \ref{THM:Convergence} and \ref%
{THM:KM Bound}, hierarchy (\ref{Hierarchy: Real BBGKY at k level}) does not
have $2$-body interactions not under an integral sign. We will call the key
new terms the \emph{potential terms}, which consist of three-body
interactions, and the \emph{$k$-body interaction terms}, which consist of $k$%
-body interaction for all $k\geqslant 3$.

With the BBGKY hierarchy satisfied by $\left\{ \alpha _{N}^{(k)}\right\} $
computed in \S \ref{Sec:New BBGKY}, we prove Theorem \ref{THM:Convergence}
in \S \ref{sec:pf of thm1} as a "warm up" first and then establish Theorem %
\ref{THM:KM Bound} in \S \ref{sec:pf of thm2}. The gut of the proof of
Theorems \ref{THM:Convergence} and \ref{THM:KM Bound} is the careful
application of the 3D and 6D retarded endpoint Strichartz estimates \cite%
{Keel-Tao} and the Littlewood-Paley theory.

One of the effects of considering the singular interparticle correlation
structures developed by the many-body evolution is to replace the potential 
\begin{equation}
N^{-1}V_{N}(x_{i}-x_{j})=N^{3\beta -1}V(N^{\beta }(x_{i}-x_{j}))
\label{E:pot1}
\end{equation}%
with the new potential 
\begin{equation}
(\nabla _{x_{\ell }}G_{N,i,\ell })(\nabla _{x_{\ell }}G_{N,j,\ell })\,,\quad
i\neq j,i\neq \ell ,j\neq \ell  \label{E:pot2}
\end{equation}%
(among other terms). (\ref{E:pot2}) could be considered as a three-body
interaction, since it is only nontrivial if all three $x_{i}$, $x_{j}$, and $%
x_{\ell }$ are within $\sim N^{-\beta }$. One might wonder why a three-body
interaction is better then a two-body interaction because a three-body
interaction is more complicated. For the purposes of estimates, the original
potential \eqref{E:pot1} has the behavior 
\begin{equation}
N^{-1}V_{N}(x_{i}-x_{j})\sim N^{3\beta -1}\langle N^{\beta
}(x_{i}-x_{j})\rangle ^{-100}  \label{E:pot1b}
\end{equation}%
For the new potential, we have effectively 
\begin{equation}
(\nabla _{x_{\ell }}G_{N,i,\ell })(\nabla _{x_{\ell }}G_{N,j,\ell })\sim
N^{4\beta -2}\langle N^{\beta }(x_{i}-x_{\ell })\rangle ^{-2}\langle
N^{\beta }(x_{j}-x_{\ell })\rangle ^{-2}  \label{E:pot2b}
\end{equation}%
Note that if $\beta =1$ and $i=j$, then \eqref{E:pot2b} and \eqref{E:pot1b}
are effectively the same, and there is no gain in going from \eqref{E:pot1}
to \eqref{E:pot2}. However, $i\neq j$ in \eqref{E:pot2} and hence %
\eqref{E:pot2}, a three-body interaction, actually offers more localization
than \eqref{E:pot1}, a two-body interaction. It is then natural to use the
6D endpoint Strichartz estimate when one wants to estimate a term like%
\begin{equation*}
\left\Vert \int_{0}^{t_{k}}U^{(k)}(t_{k}-t_{k+1})\left[ (\nabla _{x_{\ell
}}G_{N,i,\ell })(\nabla _{x_{\ell }}G_{N,j,\ell })\alpha _{N}^{(k)}\left(
t_{k+1}\right) \right] dt_{k+1}\right\Vert _{L_{T}^{\infty }L_{\mathbf{x,x}%
^{\prime }}^{2}}.
\end{equation*}%
Here $U^{(k)}(t_{k})=e^{it_{k}\triangle _{\mathbf{x}_{j}}}e^{-it_{k}%
\triangle _{\mathbf{x}_{j}^{\prime }}}$.

Using the Littlewood-Paley theory or frequency localization effectively
gains one derivative in the analysis. That is, we avoid a $N^{\beta }$ in
the estimates. Heuristically speaking, it sort of averages the best and the
worst estimates. Here, the "best" means no derivatives hits $V_{N}$ and the
"worst" means that two derivatives hit $V_{N}$. For example, say one would
like to look at%
\begin{equation}
\left\Vert P_{M}^{1}P_{M}^{2}\left\vert \nabla _{x_{1}}\right\vert
\left\vert \nabla _{x_{2}}\right\vert V_{N}(x_{1}-x_{2})\right\Vert _{L^{2}}.
\label{E:example for LP}
\end{equation}%
Here, $P_{M}^{i}$ is the Littlewood-Paley projection onto frequencies $\sim
M $, acting on functions of $x_{i}\in \mathbb{R}^{3}$. There are two ways to
look at (\ref{E:example for LP}), namely%
\begin{equation*}
M^{2}\left\Vert V_{N}(x_{1}-x_{2})\right\Vert _{L^{2}}
\end{equation*}%
and%
\begin{equation*}
N^{2\beta }\left\Vert P_{M}^{1}P_{M}^{2}\left( V^{\prime \prime }\right)
_{N}(x_{1}-x_{2})\right\Vert _{L^{2}}\text{.}
\end{equation*}%
Then depending on the sizes of $N^{\beta }$ and $M$, one is better than the
other. As we will see in the proof of Theorem \ref{THM:KM Bound} in \S \ref%
{sec:pf of thm2}, such a consideration will effectively avoid a $N^{\beta }$
in the estimates.

\subsection{Acknowledgements}

J.H. was supported in part by NSF grant DMS-1200455.

\section{The BBGKY Hierarchy with Singular Correlation Structure\label%
{Sec:New BBGKY}}

Recall (\ref{def:G_N(k)})%
\begin{equation*}
G_{N}^{(k)}(x_{1},\cdots ,x_{N})=\prod_{1\leq i<j\leq
k}(1-w_{N}(x_{i}-x_{j})),
\end{equation*}%
where $w_{N}$ is defined via (\ref{eqn:zero scattering}). We decompose $%
G_{N}^{(k)}$ as follows: 
\begin{equation*}
G_{N}^{(k)}=\prod_{1\leq i<j\leq k}G_{N,i,j}\,,\qquad
G_{N,i,j}=1-w_{N}(x_{i}-x_{j})
\end{equation*}%
and define the multiplication operator $Y_{N}^{(k)}$ by 
\begin{equation}
((Y_{N}^{(k)})^{-1}\psi _{N})(x_{1},\cdots ,x_{N})\overset{\mathrm{def}}{=}%
G_{N}^{(k)}(x_{1},\cdots ,x_{k})\psi _{N}(x_{1},\cdots ,x_{N}).
\label{E:tilde-Y}
\end{equation}

An immediate property of $Y_{N}^{(k)}$ is the following.

\begin{lemma}
\label{Lemma:SameLimit}Let $\alpha _{N}^{(k)}$ be defined as in (\ref%
{def:alpha(k)}). For $\beta \in \left( 0,1\right) $, $\forall f\in
L^{2}\left( \mathbb{R}^{3k}\right) $,%
\begin{equation*}
\lim_{N\rightarrow \infty }\left\Vert \alpha _{N}^{(k)}\left( f\right)
-\gamma _{N}^{(k)}\left( f\right) \right\Vert _{L^{2}}\leqslant C\left\Vert
f\right\Vert \lim_{N\rightarrow \infty }\left\Vert Y_{N}^{(k)}-1\right\Vert
_{op}=0.
\end{equation*}%
Here $\alpha _{N}^{(k)}\left( f\right) $ and $\gamma _{N}^{(k)}\left(
f\right) $ means the operators $\alpha _{N}^{(k)}$ and $\gamma _{N}^{(k)}$
act on $f$, and $\left\Vert \cdot \right\Vert _{op}$ means the operator norm.
\end{lemma}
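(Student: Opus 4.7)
The plan is to reduce the entire statement to the single fact that $\|Y_N^{(k)}-1\|_{op}\to 0$. First I would observe, directly from the definitions (\ref{def:alpha(k)}) and (\ref{E:tilde-Y}), that $Y_N^{(k)}$ is simply multiplication by $1/G_N^{(k)}(\mathbf{x}_k)$, so that as bounded operators on $L^2(\mathbb{R}^{3k})$
\begin{equation*}
\alpha_N^{(k)} = Y_N^{(k)}\,\gamma_N^{(k)}\,Y_N^{(k)},
\end{equation*}
since the integral kernel of $M_f\,\gamma\,M_g$ is $f(\mathbf{x}_k)g(\mathbf{x}_k')\gamma(\mathbf{x}_k,\mathbf{x}_k')$, which matches the expression in (\ref{def:alpha(k)}).

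Given that identity, I would apply the telescoping
\begin{equation*}
\alpha_N^{(k)} - \gamma_N^{(k)} = Y_N^{(k)}\gamma_N^{(k)}(Y_N^{(k)}-1) + (Y_N^{(k)}-1)\gamma_N^{(k)},
\end{equation*}
evaluate on $f$, and take $L^2$ norms. Since $\gamma_N^{(k)}$ is a nonnegative trace-class operator of trace one, $\|\gamma_N^{(k)}\|_{op}\leq 1$, and therefore
\begin{equation*}
\|(\alpha_N^{(k)}-\gamma_N^{(k)})f\|_{L^2}\leq \bigl(\|Y_N^{(k)}\|_{op}+1\bigr)\,\|Y_N^{(k)}-1\|_{op}\,\|f\|_{L^2}.
\end{equation*}
This immediately yields the claimed inequality, provided I can show both that $\|Y_N^{(k)}\|_{op}$ stays uniformly bounded in $N$ and that $\|Y_N^{(k)}-1\|_{op}\to 0$.

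Both statements follow from the scaling $w_N(x)=N^{\beta-1}w_0(N^\beta x)$ together with the standard fact that $w_0$, the solution of the zero-scattering problem vanishing at infinity, is bounded in $L^\infty$ (indeed $0\leq w_0\leq 1$ by the maximum principle when $V\geq 0$, as recorded in \cite{E-S-Y2,Lieb2}). Because $\beta<1$, this gives $\|w_N\|_{L^\infty}=N^{\beta-1}\|w_0\|_{L^\infty}\to 0$. Consequently each of the finitely many factors $1-w_N(x_i-x_j)$ defining $G_N^{(k)}$ is uniformly close to $1$ and uniformly bounded away from $0$ for $N$ large, so $\|1/G_N^{(k)}-1\|_{L^\infty}\to 0$ and $\|1/G_N^{(k)}\|_{L^\infty}$ is uniformly bounded. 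Since $Y_N^{(k)}-1$ and $Y_N^{(k)}$ are multiplication operators, their operator norms coincide with these $L^\infty$ norms, closing the argument.

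I do not expect any serious obstacle here; the only delicate point is that the proof genuinely requires $\beta<1$ in the step $N^{\beta-1}\to 0$, which is precisely why the footnote following the lemma notes that at $\beta=1$ one can only hope for a weaker (weak-operator) convergence statement.
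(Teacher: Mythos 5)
Your proposal is correct and takes essentially the same route as the paper's proof: the same telescoping of $Y_N^{(k)}\gamma_N^{(k)}Y_N^{(k)}-\gamma_N^{(k)}$ into two terms each controlled by $\Vert Y_N^{(k)}-1\Vert_{op}$, the same uniform bound $\Vert\gamma_N^{(k)}\Vert_{op}\leqslant 1$, and the same use of $\Vert w_N\Vert_{L^{\infty}}\leqslant CN^{\beta-1}\rightarrow 0$ for $\beta<1$ to conclude that the multiplication operator $Y_N^{(k)}-1$ tends to zero in operator norm. The only differences are cosmetic (the order of the telescoping and your slightly more explicit treatment of the product over all pairs defining $G_N^{(k)}$).
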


\begin{proof}
We have%
\begin{eqnarray*}
\left\Vert \alpha _{N}^{(k)}f-\gamma _{N}^{(k)}f\right\Vert _{L^{2}}
&=&\left\Vert Y_{N}^{(k)}\gamma _{N}^{(k)}Y_{N}^{(k)}f-\gamma
_{N}^{(k)}f\right\Vert _{L^{2}} \\
&\leqslant &\left\Vert Y_{N}^{(k)}\gamma _{N}^{(k)}Y_{N}^{(k)}f-\gamma
_{N}^{(k)}Y_{N}^{(k)}f\right\Vert _{L^{2}}+\left\Vert \gamma
_{N}^{(k)}Y_{N}^{(k)}f-\gamma _{N}^{(k)}f\right\Vert _{L^{2}} \\
&\leqslant &\left\Vert Y_{N}^{(k)}-1\right\Vert _{op}\left\Vert \gamma
_{N}^{(k)}\right\Vert _{op}\left\Vert Y_{N}^{(k)}\right\Vert _{op}\left\Vert
f\right\Vert _{L^{2}}+\left\Vert \gamma _{N}^{(k)}\right\Vert
_{op}\left\Vert Y_{N}^{(k)}-1\right\Vert _{op}\left\Vert f\right\Vert
_{L^{2}}\text{.}
\end{eqnarray*}%
Notice that the Hilbert-Schmidt norm of $\gamma _{N}^{(k)}$ is uniformly
bounded by $1$ because we assume $\left\Vert \psi _{N}(0)\right\Vert _{L^{2}(%
\mathbb{R}^{3N})}=1$. Moreover, since $\beta <1$, we have%
\begin{equation*}
\lim_{N\rightarrow \infty }\left\Vert Y_{N}^{(k)}-1\right\Vert _{op}=0\text{.%
}
\end{equation*}%
In fact, consider%
\begin{equation*}
\int \left\vert \frac{f(x_{1},x_{2})}{1-\omega _{N}(x_{1}-x_{2})}%
-f(x_{1},x_{2})\right\vert ^{2}d\mathbf{x}_{2}=\int \left\vert \frac{\omega
_{N}(x_{1}-x_{2})}{1-\omega _{N}(x_{1}-x_{2})}\right\vert ^{2}\left\vert
f(x_{1},x_{2})\right\vert ^{2}d\mathbf{x}_{2}
\end{equation*}%
where%
\begin{eqnarray*}
\frac{\omega _{N}(x_{1}-x_{2})}{1-\omega _{N}(x_{1}-x_{2})} &=&\frac{%
N^{\beta -1}\omega _{0}(N^{\beta }\left( x_{1}-x_{2}\right) )}{1-N^{\beta
-1}\omega _{0}(N^{\beta }\left( x_{1}-x_{2}\right) )} \\
&\leqslant &CN^{\beta -1}\rightarrow 0\text{ as }N\rightarrow \infty
\end{eqnarray*}%
because $\omega _{0}\in L^{\infty }\left( \mathbb{R}^{3}\right) $ and $\beta
<1$. So we conclude that%
\begin{equation*}
\lim_{N\rightarrow \infty }\left\Vert \alpha _{N}^{(k)}\left( f\right)
-\gamma _{N}^{(k)}\left( f\right) \right\Vert _{L^{2}}\leqslant C\left\Vert
f\right\Vert \lim_{N\rightarrow \infty }\left\Vert Y_{N}^{(k)}-1\right\Vert
_{op}=0.
\end{equation*}
\end{proof}

To compute the BBGKY hierarchy of $\left\{ \alpha _{N}^{(k)}\right\} $, we
first give the following lemma.

\begin{lemma}
We have 
\begin{equation}
H_{N}^{(k)}(Y_{N}^{(k)})^{-1}=(Y_{N}^{(k)})^{-1}(H_{N,0}^{(k)}+A_{N}^{(k)}+E_{N}^{(k)}),
\label{E:approx-wo}
\end{equation}%
where $H_{N,0}^{(k)}$ is the ordinary Laplacian%
\begin{equation*}
H_{N,0}^{(k)}=-\sum_{j=1}^{k}\Delta _{x_{j}},
\end{equation*}%
$A_{N}^{(k)}$ is the zeroth order operator of multiplication by 
\begin{equation*}
-\sum_{\substack{ 1\leq i,j,\ell \leq k  \\ i,j,\ell \text{ distinct}}}\frac{%
\nabla _{x_{\ell }}G_{N,\ell ,i}\cdot \nabla _{x_{\ell }}G_{N,\ell ,j}}{%
G_{N,\ell ,i}\;G_{N,\ell ,j}},
\end{equation*}%
and $E_{N}^{(k)}$ is the first order operator 
\begin{equation*}
2\sum_{\substack{ 1\leq j,\ell \leq k  \\ j\neq \ell }}\frac{\nabla
_{x_{\ell }}G_{N,j,\ell }}{G_{N,j,\ell }}\cdot \nabla _{x_{\ell }}.
\end{equation*}
\end{lemma}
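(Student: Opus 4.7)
The plan is to prove (\ref{E:approx-wo}) by a direct conjugation calculation: apply $H_N^{(k)}$ to $(Y_N^{(k)})^{-1}\psi = G_N^{(k)}\psi$ using the product rule, factor $G_N^{(k)}$ back out on the left, and show that the internal pair potentials in $H_N^{(k)}$ are exactly cancelled by the second-derivative terms coming from $G_N^{(k)}$, by virtue of the zero-scattering equation (\ref{eqn:zero scattering}) satisfied by $w_N$. What is left over is precisely a Laplacian plus the multiplication operator $A_N^{(k)}$ and the first-order operator $E_N^{(k)}$.

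Concretely, I would first compute
\[
-\Delta_{x_\ell}(G_N^{(k)}\psi) = G_N^{(k)}\bigl(-\Delta_{x_\ell}\psi\bigr) - 2\nabla_{x_\ell}G_N^{(k)}\cdot\nabla_{x_\ell}\psi - (\Delta_{x_\ell}G_N^{(k)})\psi,
\]
sum over $\ell = 1,\dots,k$, and multiply on the left by $G_N^{(k)}$ to get that $(Y_N^{(k)})H_N^{(k)}(Y_N^{(k)})^{-1}\psi$ equals
\[
H_{N,0}^{(k)}\psi \;-\; 2\sum_{\ell=1}^k \frac{\nabla_{x_\ell}G_N^{(k)}}{G_N^{(k)}}\cdot\nabla_{x_\ell}\psi \;-\; \Bigl(\sum_{\ell=1}^k \frac{\Delta_{x_\ell}G_N^{(k)}}{G_N^{(k)}}\Bigr)\psi \;+\; \frac{1}{N}\sum_{1\le i<j\le k} V_N(x_i-x_j)\psi.
\]
Next I would pass to logarithmic derivatives. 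Setting $L := \log G_N^{(k)} = \sum_{i<j}\log G_{N,i,j}$, the fact that $G_{N,i,j}$ depends on $x_\ell$ only when $\ell\in\{i,j\}$ gives
\[
\frac{\nabla_{x_\ell}G_N^{(k)}}{G_N^{(k)}} = \sum_{j\ne\ell}\frac{\nabla_{x_\ell}G_{N,j,\ell}}{G_{N,j,\ell}},
\]
which yields (up to sign convention) the first-order operator $E_N^{(k)}$. For the second-derivative sum, I would use the algebraic identity $\Delta G/G = \Delta L + |\nabla L|^2$ and expand. The cross terms in $|\nabla_{x_\ell}L|^2$ with $i = j$ cancel exactly against the $-(\nabla G_{N,j,\ell}/G_{N,j,\ell})^2$ contributions from $\Delta_{x_\ell}\log G_{N,j,\ell}$, leaving
\[
\sum_{\ell}\frac{\Delta_{x_\ell}G_N^{(k)}}{G_N^{(k)}} = \sum_{\ell}\sum_{j\ne\ell}\frac{\Delta_{x_\ell}G_{N,j,\ell}}{G_{N,j,\ell}} + \sum_{\substack{i,j,\ell\\ \text{distinct}}}\frac{\nabla_{x_\ell}G_{N,i,\ell}\cdot\nabla_{x_\ell}G_{N,j,\ell}}{G_{N,i,\ell}\,G_{N,j,\ell}}.
\]
The second sum is exactly (minus) the multiplication operator $A_N^{(k)}$.

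The crucial step is then the use of (\ref{eqn:zero scattering}): since $(-\Delta + \tfrac{1}{2N}V_N)(1-w_N) = 0$, one has $\Delta_{x_\ell}G_{N,j,\ell} = \tfrac{1}{2N}V_N(x_\ell-x_j)G_{N,j,\ell}$, so that
\[
\sum_{\ell}\sum_{j\ne\ell}\frac{\Delta_{x_\ell}G_{N,j,\ell}}{G_{N,j,\ell}} = \frac{1}{2N}\sum_{\ell\ne j}V_N(x_\ell-x_j) = \frac{1}{N}\sum_{1\le i<j\le k}V_N(x_i-x_j),
\]
which exactly cancels the internal pair-potential term coming from $H_N^{(k)}$. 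Collecting what remains proves (\ref{E:approx-wo}). The main obstacle is nothing conceptual but careful bookkeeping: keeping the two distinct kinds of terms straight in $\Delta L + |\nabla L|^2$, correctly identifying which index collisions lead to cancellation ($i=j$) versus which contribute to $A_N^{(k)}$ ($i,j,\ell$ all distinct), and verifying the sign conventions so that $A_N^{(k)}$ and $E_N^{(k)}$ appear with the signs stated.
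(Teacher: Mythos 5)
Your proposal is correct and takes essentially the same route as the paper's proof: both conjugate by the multiplication operator $G_{N}^{(k)}$, use the logarithmic-derivative identity $\Delta G/G=\Delta \log G+|\nabla \log G|^{2}$ to split $\sum_{\ell}\Delta_{x_{\ell}}G_{N}^{(k)}/G_{N}^{(k)}$ into single-pair Laplacians plus the distinct-triple cross terms that constitute $A_{N}^{(k)}$, and invoke the zero-scattering equation (\ref{eqn:zero scattering}) so that the single-pair Laplacians produce $\frac{1}{N}\sum_{i<j}V_{N}(x_{i}-x_{j})$ and cancel the internal potential of $H_{N}^{(k)}$. Your hedge ``up to sign convention'' on the first-order term is in fact warranted: the product rule yields $-2\sum_{j\neq \ell}\frac{\nabla_{x_{\ell}}G_{N,j,\ell}}{G_{N,j,\ell}}\cdot \nabla_{x_{\ell}}$, the opposite sign to the stated $E_{N}^{(k)}$ (an inconsequential discrepancy, since $E_{N}^{(k)}$ is only ever estimated in norm), and the phrase ``multiply on the left by $G_{N}^{(k)}$'' should read ``divide by $G_{N}^{(k)}$'' to obtain your second display.
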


Before proceeding to the proof, let us note that the terms $A_N^{(k)}$ and $%
E_N^{(k)}$ should be thought of as ``error terms''. Indeed, $A_N^{(k)}$
involves only three-body interaction -- it is only nontrivial if $x_i$, $x_j$%
, and $x_\ell$ are within $\sim N^{-\beta}$ of each other.

\begin{proof}
We start with%
\begin{equation}
(-\Delta _{x_{\ell }})\log G_{N}^{(k)}=-\frac{\Delta _{x_{\ell }}G_{N}^{(k)}%
}{G_{N}^{(k)}}+\frac{|\nabla _{x_{\ell }}G_{N}^{(k)}|^{2}}{(G_{N}^{(k)})^{2}}
\label{E:GNderiv1a}
\end{equation}%
Using that 
\begin{equation*}
\frac{\nabla _{x_{\ell }}G_{N}^{(k)}}{G_{N}^{(k)}}=\sum_{1\leq i<j\leq k}%
\frac{\nabla _{x_{\ell }}G_{N,i,j}}{G_{N,i,j}}=\sum_{\substack{ 1\leq j\leq
k  \\ j\neq \ell }}\frac{\nabla _{x_{\ell }}G_{N,\ell ,j}}{G_{N,\ell ,j}},
\end{equation*}%
we can rewrite \eqref{E:GNderiv1a} as 
\begin{equation}
-\frac{\Delta _{x_{\ell }}G_{N}^{(k)}}{G_{N}^{(k)}}=(-\Delta _{x_{\ell
}})\log G_{N}^{(k)}-\left\vert \sum_{\substack{ 1\leq j\leq k  \\ j\neq \ell 
}}\frac{\nabla _{x_{\ell }}G_{N,\ell ,j}}{G_{N,\ell ,j}}\right\vert ^{2}.
\label{E:GNderiv1}
\end{equation}%
On the other hand, we have 
\begin{equation*}
\log G_{N}^{(k)}=\sum_{1\leq i<j\leq k}\log G_{N,i,j},
\end{equation*}%
and hence \eqref{E:GNderiv1a} also reads%
\begin{align*}
(-\Delta _{x_{\ell }})\log G_{N}^{(k)}& =\sum_{1\leq i<j\leq k}\left( -\frac{%
\Delta _{x_{\ell }}G_{N,i,j}}{G_{N,i,j}}+\left\vert \frac{\nabla _{x_{\ell
}}G_{N,i,j}}{G_{N,i,j}}\right\vert ^{2}\right) \\
& =\sum_{\substack{ 1\leq j\leq k  \\ j\neq \ell }}\frac{-\Delta _{x_{\ell
}}G_{N,\ell ,j}}{G_{N,\ell ,j}}+\sum_{\substack{ 1\leq j\leq k  \\ j\neq
\ell }}\left\vert \frac{\nabla _{x_{\ell }}G_{N,\ell ,j}}{G_{N,\ell ,j}}%
\right\vert ^{2}.
\end{align*}%
Plugging this into \eqref{E:GNderiv1} and expanding the square in %
\eqref{E:GNderiv1}, 
\begin{equation*}
-\frac{\Delta _{x_{\ell }}G_{N}^{(k)}}{G_{N}^{(k)}}=\sum_{\substack{ 1\leq
j\leq k  \\ j\neq \ell }}\frac{-\Delta _{x_{\ell }}G_{N,\ell ,j}}{G_{N,\ell
,j}}-2\sum_{\substack{ 1\leq i<j\leq k  \\ i\neq \ell ,\;j\neq \ell }}\frac{%
\nabla _{x_{\ell }}G_{N,\ell ,i}\cdot \nabla _{x_{\ell }}G_{N,\ell ,j}}{%
G_{N,\ell ,i}\;G_{N,\ell ,j}}
\end{equation*}%
We infer from (\ref{eqn:zero scattering}) that $-\Delta G_{N}=-\frac{1}{2}%
N^{-1}V_{N}G_{N}$, so 
\begin{equation*}
-\frac{\Delta _{x_{\ell }}G_{N}^{(k)}}{G_{N}^{(k)}}=-\frac{1}{2N}\sum 
_{\substack{ 1\leq j\leq k  \\ j\neq \ell }}V_{N,\ell ,j}-2\sum_{\substack{ %
1\leq i<j\leq k  \\ i\neq \ell ,\;j\neq \ell }}\frac{\nabla _{x_{\ell
}}G_{N,\ell ,i}\cdot \nabla _{x_{\ell }}G_{N,\ell ,j}}{G_{N,\ell
,i}\;G_{N,\ell ,j}}
\end{equation*}%
Now summing in $\ell $, $1\leq \ell \leq k$, we obtain 
\begin{equation*}
H_{N}^{(k)}G_{N}^{(k)}=-2G_{N}^{(k)}\sum_{\substack{ 1\leq i<j\leq k  \\ %
1\leq \ell \leq k  \\ i\neq \ell ,\;j\neq \ell }}\frac{\nabla _{x_{\ell
}}G_{N,\ell ,i}\cdot \nabla _{x_{\ell }}G_{N,\ell ,j}}{G_{N,\ell
,i}\;G_{N,\ell ,j}},
\end{equation*}%
Here $H_{N}^{(k)}G_{N}^{(k)}$ is considered as $H_{N}^{(k)}$ applied to the
function $G_{N}^{(k)}$. Note that the sum on the right side is perhaps more
intuitively written as 
\begin{equation*}
H_{N}^{(k)}G_{N}^{(k)}=-G_{N}^{(k)}\sum_{\substack{ 1\leq i,j,\ell \leq k 
\\ i,j,\ell \text{ distinct}}}\frac{\nabla _{x_{\ell }}G_{N,\ell ,i}\cdot
\nabla _{x_{\ell }}G_{N,\ell ,j}}{G_{N,\ell ,i}\;G_{N,\ell ,j}}
\end{equation*}%
which implies \eqref{E:approx-wo}.
\end{proof}

With the above Lemma, we compute the BBGKY hierarchy of $\left\{ \alpha
_{N}^{(k)}\right\} $. Applying $Y_{N}^{(k)}$ to the left of the operator
equation \eqref{E:approx-wo}, we obtain 
\begin{equation}
Y_{N}^{(k)}H_{N}^{(k)}(Y_{N}^{(k)})^{-1}=(H_{N,0}^{(k)}+A_{N}^{(k)}+E_{N}^{(k)})
\label{E:approx-wo-2}
\end{equation}%
Thus $Y_{N}^{(k)}$ could be regarded as an approximation to the wave
operator relating $H_{N}^{(k)}$ to $H_{N,0}^{(k)}$, although a more precise
statement is that $Y_{N}^{(k)}$ is an exact wave operator relating $%
H_{N}^{(k)}$ to an approximation of $H_{N,0}^{(k)}$, namely the operator $%
H_{N,0}^{(k)}+A_{N}^{(k)}+E_{N}^{(k)}$. Since $%
H_{N,0}^{(k)}+A_{N}^{(k)}+E_{N}^{(k)}$ is not self-adjoint, the wave
operator $Y_{N}^{(k)}$ is not unitary.

We now work out the BBGKY hierarchy of $\left\{ \alpha _{N}^{(k)}\right\} $.
We will need to compute $Y_{N}^{(k)}[H_{N}^{(k)},\gamma
_{N}^{(k)}]Y_{N}^{(k)}$. To this end, we use the operator property: given
two operators $Y_{1}$, $Y_{2}$, let $\alpha =Y_{1}\gamma Y_{2}^{-1}$, then 
\begin{equation*}
Y_{1}[H,\gamma ]Y_{2}^{-1}=(Y_{1}HY_{1}^{-1})\alpha -\alpha
(Y_{2}HY_{2}^{-1}).
\end{equation*}%
In the above, taking $Y_{1}=Y_{N}^{(k)}$ and $Y_{2}=(Y_{N}^{(k)})^{-1}$, and
applying \eqref{E:approx-wo-2} give 
\begin{equation}
Y_{N}^{(k)}[H_{N}^{(k)},\gamma
_{N}^{(k)}]Y_{N}^{(k)}=(H_{N,0}^{(k)}+A_{N}^{(k)}+E_{N}^{(k)})\alpha -\alpha
(H_{N,0}^{(k)}+A_{N}^{(k)}+(E_{N}^{(k)})^{\ast })  \label{E:conjugate}
\end{equation}%
Moreover, let us introduce the operator $W_{N}^{(k)}$ which acts on any
kernel $K(\mathbf{x}_{k},\mathbf{x}_{k}^{\prime })$ by 
\begin{eqnarray*}
W_{N}^{(k)}K(\mathbf{x}_{k},\mathbf{x}_{k}^{\prime })
&=&[Y_{N}^{(k)}Y_{N}^{(k^{\prime })}K](\mathbf{x}_{k},\mathbf{x}_{k}^{\prime
}) \\
&=&\frac{1}{G_{N}(\mathbf{x}_{k})G_{N}(\mathbf{x}_{k}^{\prime })}K(\mathbf{x}%
_{k},\mathbf{x}_{k}^{\prime }).
\end{eqnarray*}%
With the above notation, the BBGKY hierarchy of equations for the operators $%
\left\{ \alpha _{N}^{(k)}=Y_{N}^{(k)}\gamma _{N}^{(k)}Y_{N}^{(k)}\right\} $
or the corresponding kernels $\left\{ \alpha _{N}^{(k)}(\mathbf{x}_{k},%
\mathbf{x}_{k}^{\prime })=\frac{\gamma _{N}^{(k)}(\mathbf{x}_{k},\mathbf{x}%
_{k}^{\prime })}{G_{N}(\mathbf{x}_{k})G_{N}(\mathbf{x}_{k}^{\prime })}%
\right\} $ (using that $Y_{N}^{(k)}$ is equal to its transpose) is given by 
\begin{eqnarray}
i\partial _{t}\alpha _{N}^{(k)} &=&\left( H_{N,0}^{(k)}-H_{N,0}^{(k^{\prime
})}\right) \alpha _{N}^{(k)}+\left( A_{N}^{(k)}-A_{N}^{(k^{\prime })}\right)
\alpha _{N}^{(k)}+\left( E_{N}^{(k)}-E_{N}^{(k^{\prime })}\right) \alpha
_{N}^{(k)}  \label{E:BBGKY-with-approx-wo} \\
&&+\frac{N-k}{N}\sum_{l=1}^{k}W_{N}^{(k)}B_{N,l,k+1}(W_{N}^{\left(
k+1\right) })^{-1}\alpha _{N}^{(k+1)}  \notag
\end{eqnarray}%
where%
\begin{eqnarray}
&&(W_{N}^{(k)}B_{N,l,k+1}(W_{N}^{\left( k+1\right) })^{-1}\alpha
_{N}^{(k+1)})(\mathbf{x}_{k};\mathbf{x}_{k}^{\prime })  \label{E:mod-int-1}
\\
&=&\int_{\mathbb{R}^{3}}\left( V_{N}(x_{l}-x_{k+1})-V_{N}(x_{l}^{\prime
}-x_{k+1})\right) \dprod\limits_{j=1}^{k}G_{N,j,k+1}G_{N,j^{\prime
},k+1}\alpha _{N}^{(k+1)}(...)dx_{k+1}  \notag
\end{eqnarray}%
where $(\cdots )$ is $(x_{1},\ldots ,x_{k},x_{k+1};x_{1}^{\prime },\cdots
,x_{k}^{\prime },x_{k+1})$.

We will decompose the terms in \eqref{E:mod-int-1} to properly set up the
Duhamel-Born series. Let 
\begin{equation*}
L_{N,\ell ,k+1}+1\overset{\mathrm{def}}{=}G_{N,\ell
,k+1}^{-1}\prod_{j=1}^{k}G_{N,j,k+1}G_{N,j^{\prime },k+1}=G_{N,\ell ^{\prime
},k+1}\prod_{\substack{ 1\leq j\leq k  \\ j\neq \ell }}G_{N,j,k+1}G_{N,j^{%
\prime },k+1},
\end{equation*}%
\begin{equation*}
L_{N,\ell ^{\prime },k+1}+1\overset{\mathrm{def}}{=}G_{N,\ell ^{\prime
},k+1}^{-1}\prod_{j=1}^{k}G_{N,j,k+1}G_{N,j^{\prime },k+1}=G_{N,\ell
,k+1}\prod_{\substack{ 1\leq j\leq k  \\ j\neq \ell }}G_{N,j,k+1}G_{N,j^{%
\prime },k+1}.
\end{equation*}%
Here $L$ stands for localization. Also let 
\begin{equation*}
\tilde{V}_{N}(x)=V_{N}\left( x\right) \left( 1-w_{N}\left( x\right) \right)
\end{equation*}%
so that 
\begin{equation*}
\tilde{V}_{N}(x_{l}-x_{k+1})=V_{N}(x_{l}-x_{k+1})G_{N,\ell ,k+1}
\end{equation*}%
\begin{equation*}
\tilde{V}_{N}(x_{l}^{\prime }-x_{k+1})=V_{N}(x_{l}^{\prime
}-x_{k+1})G_{N,\ell ^{\prime },k+1}
\end{equation*}%
Then%
\begin{eqnarray}
&&(W_{N}^{(k)}B_{N,l,k+1}(W_{N}^{\left( k+1\right) })^{-1}\alpha
_{N}^{(k+1)})(\mathbf{x}_{k};\mathbf{x}_{k}^{\prime })  \label{E:mod-int-2}
\\
&=&\int_{\mathbb{R}^{3}}\tilde{V}_{N}(x_{l}-x_{k+1})\left(
L_{N,l,k+1}+1\right) \alpha _{N}^{(k+1)}(...)dx_{k+1}  \notag \\
&&-\int_{\mathbb{R}^{3}}\tilde{V}_{N}(x_{l}^{\prime }-x_{k+1})\left(
L_{N,l^{\prime },k+1}+1\right) \alpha _{N}^{(k+1)}(...)dx_{k+1}  \notag
\end{eqnarray}%
Separate "the $k$-body part" and "the 2-body part": 
\begin{equation}  \label{E:B-many-decomp}
\tilde{B}_{N,\mathrm{many}}^{(k+1)}\alpha _{N}^{(k+1)} =\sum_{l=1}^{k}\tilde{%
B}_{N,\mathrm{many},l,k+1}\alpha _{N}^{(k+1)},
\end{equation}
where 
\begin{equation}  \label{def:B,loc}
B_{N,\mathrm{many},l,k+1}\alpha_N^{(k+1)} \overset{\mathrm{def}}{=} %
\begin{aligned}[t] &\frac{N-k}{N}
\int_{\mathbb{R}^{3}}(\tilde{V}_{N}(x_{l}-x_{k+1})L_{N,l,k+1} \\ & \qquad
\qquad -\tilde{V}_{N}(x_{l}^{\prime}-x_{k+1})L_{N,l^{\prime },k+1})\alpha
_{N}^{(k+1)}(\cdots )\,dx_{k+1} \end{aligned}
\end{equation}
and 
\begin{eqnarray}
&&\tilde{B}_{N}^{(k+1)}\alpha _{N}^{(k+1)}  \label{def:real B} \\
&\equiv &\frac{N-k}{N}\sum_{l=1}^{k}\int_{\mathbb{R}^{3}}(\tilde{V}%
_{N}(x_{l}-x_{k+1})-\tilde{V}_{N}(x_{l}^{\prime }-x_{k+1}))\alpha
_{N}^{(k+1)}(\cdots )\,dx_{k+1}  \notag \\
&\equiv &\sum_{l=1,}^{k}\tilde{B}_{N,l,k+1}\alpha _{N}^{(k+1)},  \notag
\end{eqnarray}%
so that 
\begin{equation*}
\frac{N-k}{N}\sum_{\ell =1}^{k}W_{N}^{(k)}B_{N,\ell
,k+1}(W_{N}^{(k+1)})^{-1}=\tilde{B}_{N,many}^{(k+1)}+\tilde{B}_{N}^{(k+1)}
\end{equation*}%
The operator $\tilde{B}_{N,many}$ will give rise to the $k$\emph{-body
interaction part} and $\tilde{B}_{N}^{(k)}$ will give rise to the \emph{%
interaction part} in the Duhamel-Born series below.

Finally, introduce the operator 
\begin{equation*}
\tilde{V}_{N}^{(k)}\alpha _{N}^{(k)}=(A_{N}^{(k)}-A_{N}^{(k)^{\prime
}})\alpha _{N}^{(k)}+(E_{N}^{(k)}-E_{N}^{(k)^{\prime }})\alpha _{N}^{(k)}
\end{equation*}%
which will give rise to the \emph{potential part} in the Duhamel-Born series
below.

From \eqref{E:BBGKY-with-approx-wo}, 
\begin{eqnarray}
\alpha _{N}^{(k)}(t_{k}) &=&U^{(k)}(t_{k})\alpha
_{N,0}^{(k)}-i\int_{0}^{t_{k}}U^{(k)}(t_{k}-t_{k+1})\tilde{V}%
_{N}^{(k)}\alpha _{N}^{(k)}\left( t_{k+1}\right) dt_{k+1}
\label{Hierarchy: Real BBGKY at k level} \\
&&-i\int_{0}^{t_{k}}U^{(k)}(t_{k}-t_{k+1})\tilde{B}_{N,many}^{(k+1)}\alpha
_{N}^{(k+1)}\left( t_{k+1}\right) dt_{k+1}  \notag \\
&&-i\int_{0}^{t_{k}}U^{(k)}(t_{k}-t_{k+1})\tilde{B}_{N}^{(k+1)}\alpha
_{N}^{(k+1)}\left( t_{k+1}\right) dt_{k+1}  \notag \\
&\equiv &FP^{k,0}+PP^{k,0}+KIP^{k,0}+IP^{k,0},  \notag
\end{eqnarray}%
Here, $U^{(k)}(t_{k})=e^{it_{k}\triangle _{\mathbf{x}_{j}}}e^{-it_{k}%
\triangle _{\mathbf{x}_{j}^{\prime }}}$, $FP^{k,0}$ stands for the free part
of $\alpha _{N}^{(k)}$ with coupling level $0$, $PP^{k,0}$ stands for the
potential part of $\alpha _{N}^{(k)}$ with coupling level $0$, $KIP^{k,0}$
stands for the $k$-body interaction part of $\alpha _{N}^{(k)}$ with
coupling level $0$, and $IP^{k,0}$ stands for the $2$-body interaction part
of $\alpha _{N}^{(k)}$ with coupling level $0$. We will use this notation
for the rest of the paper.

\begin{remark}
In the case $\beta =1$, $\tilde{B}_{N,l,k+1}$ is where $8\pi a_{0}$ shows
up. In fact 
\begin{equation*}
G_{N,\ell ,k+1}V_{N,\ell ,k+1}\rightarrow 8\pi a_{0}\delta (x_{\ell
}-x_{k+1})
\end{equation*}%
as shown in \cite{E-S-Y3}.
\end{remark}

\section{Proof of Theorem \protect\ref{THM:Convergence}\label{sec:pf of thm1}%
}

We prove Theorem \ref{THM:Convergence} as a warm up to the proof of Theorem %
\ref{THM:KM Bound}. Here "warm up" means that we do not need to iterate (\ref%
{Hierarchy: Real BBGKY at k level}) many times to get a good enough decay in
time for the interaction part and do not need to use the Littlewood-Paley
theory or the $X_{0,b}$ spaces.

To prove Theorem \ref{THM:Convergence}, we prove that hierarchy (\ref%
{Hierarchy: Real BBGKY at k level}) converges to hierarchy (\ref%
{hierarchy:GP}) which written in the integral form is 
\begin{equation}
\gamma ^{(k)}(t_{k})=U^{(k)}(t_{k})\gamma
_{0}^{(k)}-ic_{0}\int_{0}^{t_{k}}U^{(k)}(t_{k}-t_{k+1})\limfunc{Tr}%
\nolimits_{k+1}\left[ \delta (x_{j}-x_{k+1}),\gamma ^{(k+1)}\left(
t_{k+1}\right) \right] dt_{k+1}.  \label{hierarchy:GP integral}
\end{equation}%
It has been proven in \cite%
{AGT,E-E-S-Y1,E-S-Y1,E-S-Y3,E-S-Y2,E-S-Y5,Kirpatrick,TChenAndNP,C-H3Dto2D}
that, provided that the energy bound (\ref{bound:energy estimate}) holds,
the 1st term and the last term on the right handside of (\ref{Hierarchy:
Real BBGKY at k level}) do converge to the right hand side of (\ref%
{hierarchy:GP integral}) weak*-ly in $L_{T}^{\infty }\mathcal{L}^{1}$. In
particular, it is proved that, as trace class operators%
\begin{eqnarray*}
&&\int_{0}^{t_{k}}U^{(k)}(t_{k}-t_{k+1})\tilde{B}_{N,j,k+1}\alpha
_{N}^{(k+1)}\left( t_{k+1}\right) dt_{k+1} \\
&\rightharpoonup &\left( \lim_{N\rightarrow \infty }\int \tilde{V}%
_{N}(x)dx\right) \int_{0}^{t_{k}}U^{(k)}(t_{k}-t_{k+1})B_{j,k+1}\gamma
^{(k+1)}\left( t_{k+1}\right) dt_{k+1}\text{ weak*}
\end{eqnarray*}%
where $\lim_{N\rightarrow \infty }\int \tilde{V}_{N}(x)dx$ is exactly the $%
c_{0}$ defined in (\ref{coupling constant:c_0}). So we only need to prove
the following two estimates:%
\begin{eqnarray}
\left\Vert PP^{k,0}\right\Vert _{L_{T}^{\infty }\mathcal{L}^{2}}
&\rightarrow &0\text{ as }N\rightarrow \infty
\label{estimate:PP_0 goes to zero} \\
\left\Vert \mathnormal{KIP}^{k,0}\right\Vert _{L_{T}^{\infty }\mathcal{L}%
^{2}} &\rightarrow &0\text{ as }N\rightarrow \infty .
\label{estimate:KIP_0 goes to zero}
\end{eqnarray}%
where 
\begin{eqnarray*}
PP^{k,0}(t_{k}) &=&-i\int_{0}^{t_{k}}U^{(k)}(t_{k}-t_{k+1})\tilde{V}%
_{N}^{(k)}\alpha _{N}^{(k)}\left( t_{k+1}\right) dt_{k+1}, \\
\mathnormal{KIP}^{k,0}(t_{k}) &=&-i\int_{0}^{t_{k}}U^{(k)}(t_{k}-t_{k+1})%
\tilde{B}_{N,many}^{(k+1)}\alpha _{N}^{(k+1)}\left( t_{k+1}\right) dt_{k+1}.
\end{eqnarray*}%
Before delving into the proof, we remark that condition (\ref{bound:k-energy
estimate}) implies that%
\begin{equation*}
\sup_{t}\left( \left\Vert S^{(k+1)}\alpha _{N}^{(k+1)}\right\Vert _{L_{%
\mathbf{x}_{k},\mathbf{x}_{k}^{\prime }}^{2}}+\frac{1}{\sqrt{N}}\left\Vert
S_{1}S^{(k)}\alpha _{N}^{(k)}\right\Vert _{L_{\mathbf{x}_{k},\mathbf{x}%
_{k}^{\prime }}^{2}}\right) \leqslant C_{0}^{k+1}
\end{equation*}%
In fact, consider the second term for $k=2$:%
\begin{eqnarray*}
&&\frac{1}{\sqrt{N}}\left\Vert S_{1}^{2}S_{2}S_{1^{\prime }}S_{2^{\prime
}}\alpha _{N}^{(2)}\right\Vert _{L_{\mathbf{x}_{1},\mathbf{x}_{1}^{\prime
}}^{2}} \\
&=&\frac{1}{\sqrt{N}}\left( \int \left\vert \int S_{1}^{2}S_{2}\left( \frac{%
\psi _{N}(\mathbf{x}_{2},\mathbf{x}_{N-1})}{G_{N}^{(2)}(\mathbf{x}_{2})}%
\right) S_{1^{\prime }}S_{2^{\prime }}\frac{\psi _{N}(\mathbf{x}_{2}^{\prime
},\mathbf{x}_{N-2})}{G_{N}^{(k)}(\mathbf{x}_{2}^{\prime })}d\mathbf{x}%
_{N-2}\right\vert ^{2}d\mathbf{x}_{2}\right) ^{\frac{1}{2}}
\end{eqnarray*}%
Cauchy-Schwarz in $d\mathbf{x}_{N-2}$, 
\begin{eqnarray*}
&\leqslant &\frac{1}{\sqrt{N}}\left( \int \left\vert S_{1}^{2}S_{2}\left( 
\frac{\psi _{N}(\mathbf{x}_{2},\mathbf{x}_{N-1})}{G_{N}^{(2)}(\mathbf{x}_{2})%
}\right) \right\vert ^{2}d\mathbf{x}_{N}\right) ^{\frac{1}{2}}\left( \int
\left\vert S_{1}S_{2}\frac{\psi _{N}(\mathbf{x}_{2},\mathbf{x}_{N-2})}{%
G_{N}^{(k)}(\mathbf{x}_{2})}\right\vert ^{2}d\mathbf{x}_{N}\right) ^{\frac{1%
}{2}} \\
&=&\frac{1}{\sqrt{N}}\left( \limfunc{Tr}S_{1}S_{1^{\prime }}S^{(2)}\alpha
_{N}^{(2)}\right) ^{\frac{1}{2}}\left( \limfunc{Tr}S^{(2)}\alpha
_{N}^{(2)}\right) ^{\frac{1}{2}} \\
&\leqslant &C_{0}^{3}
\end{eqnarray*}%
by condition (\ref{bound:k-energy estimate}) with $k=2$.

\subsection{Estimate for the Potential Term}

Recall%
\begin{equation*}
\tilde{V}_{N}^{(k)}\alpha _{N}^{(k)}=(A_{N}^{(k)}-A_{N}^{(k)^{\prime
}})\alpha _{N}^{(k)}+(E_{N}^{(k)}-E_{N}^{(k)^{\prime }})\alpha _{N}^{(k)},
\end{equation*}%
where 
\begin{equation*}
A_{N}^{(k)}\alpha _{N}^{(k)}=-\sum_{\substack{ 1\leq i,j,\ell \leq k  \\ %
i,j,\ell \text{ distinct}}}\frac{\nabla _{x_{\ell }}G_{N,\ell ,i}\cdot
\nabla _{x_{\ell }}G_{N,\ell ,j}}{G_{N,\ell ,i}\;G_{N,\ell ,j}}\alpha
_{N}^{(k)},
\end{equation*}%
and%
\begin{equation*}
E_{N}^{(k)}\alpha _{N}^{(k)}=2\sum_{\substack{ 1\leq j,\ell \leq k  \\ j\neq
\ell }}\frac{\nabla _{x_{\ell }}G_{N,j,\ell }}{G_{N,j,\ell }}\cdot \nabla
_{x_{\ell }}\alpha _{N}^{(k)}.
\end{equation*}%
Let us define 
\begin{eqnarray}
A_{N,i,j,l}^{(k)}\alpha _{N}^{(k)} &=&-\frac{\nabla _{x_{\ell }}G_{N,\ell
,i}\cdot \nabla _{x_{\ell }}G_{N,\ell ,j}}{G_{N,\ell ,i}\;G_{N,\ell ,j}}%
\alpha _{N}^{(k)},  \label{formula: a piece of three body} \\
E_{N,j,l}^{(k)}\alpha _{N}^{(k)} &=&2\frac{\nabla _{x_{\ell }}G_{N,j,\ell }}{%
G_{N,j,\ell }}\cdot \nabla _{x_{\ell }}\alpha _{N}^{(k)},
\label{formula: a piece of 1st order}
\end{eqnarray}%
then to prove estimate (\ref{estimate:PP_0 goes to zero}), it suffices to
prove the following estimates%
\begin{eqnarray*}
\left\Vert \int_{0}^{t_{k}}U^{(k)}(t_{k}-t_{k+1})A_{N,i,j,l}^{(k)}\alpha
_{N}^{(k)}\left( t_{k+1}\right) dt_{k+1}\right\Vert _{L_{T}^{\infty }L_{%
\mathbf{x,x}^{\prime }}^{2}} &\leqslant &C_{T}N^{-2+}\text{,} \\
\left\Vert \int_{0}^{t_{k}}U^{(k)}(t_{k}-t_{k+1})E_{N,j,l}^{(k)}\alpha
_{N}^{(k)}\left( t_{k+1}\right) dt_{k+1}\right\Vert _{L_{T}^{\infty }L_{%
\mathbf{x,x}^{\prime }}^{2}} &\leqslant &C_{T}N^{-1+}.
\end{eqnarray*}

In fact, assume the above estimates for the moment, we have%
\begin{eqnarray*}
\left\Vert PP^{k,0}\right\Vert _{L_{T}^{\infty }\mathcal{L}^{2}}
&=&\left\Vert PP^{k,0}\right\Vert _{L_{T}^{\infty }L_{\mathbf{x,x}^{\prime
}}^{2}}\leqslant C_{T}k^{3}N^{-2+}+C_{T}k^{2}N^{-1+} \\
&\rightarrow &0\text{ as }N\rightarrow \infty \text{,}
\end{eqnarray*}%
for $\beta \in \left( 0,1\right] $, where we used the facts that $\left\Vert
\cdot \right\Vert _{L_{T}^{\infty }\mathcal{L}^{2}}=\left\Vert \cdot
\right\Vert _{L_{T}^{\infty }L_{\mathbf{x,x}^{\prime }}^{2}}$ and there are $%
k^{3}$ summands in $A_{N}^{(k)}$ while there are $k^{2}$ summands in $%
E_{N}^{(k)}$. So we finish the estimate for the potential part in the proof
of Theorem \ref{THM:Convergence} with the following two lemmas.

\begin{lemma}
\label{Lem:key three-body in convergence}We have the estimate:%
\begin{eqnarray*}
&&\left\Vert \int_{0}^{t_{k}}U^{(k)}(t_{k}-t_{k+1})A_{N,i,j,l}^{(k)}\alpha
^{(k)}\left( t_{k+1}\right) dt_{k+1}\right\Vert _{L_{T}^{\infty }L_{\mathbf{%
x,x}^{\prime }}^{2}} \\
&\leqslant &C_{T}N^{-2+}\left\Vert \left\langle \nabla _{x_{i}}\right\rangle
\left\langle \nabla _{x_{j}}\right\rangle \left\langle \nabla
_{x_{l}}\right\rangle \alpha ^{(k)}\left( t_{k+1}\right) \right\Vert
_{L_{t}^{\infty }L_{\mathbf{x,x}^{\prime }}^{2}}\text{.}
\end{eqnarray*}%
In particular, if one assumes the energy bound (\ref{bound:energy estimate}%
), it reads 
\begin{equation*}
\left\Vert \int_{0}^{t_{k}}U^{(k)}(t_{k}-t_{k+1})A_{N,i,j,l}^{(k)}\alpha
_{N}^{(k)}\left( t_{k+1}\right) dt_{k+1}\right\Vert _{L_{T}^{\infty }L_{%
\mathbf{x,x}^{\prime }}^{2}}\leqslant C_{T}N^{-2+}.
\end{equation*}
\end{lemma}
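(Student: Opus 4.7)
The plan is to apply the 6D endpoint retarded Strichartz estimate in the pair $(x_i,x_j)\in\mathbb{R}^6$ and combine it with Hölder's inequality and iterated Sobolev embedding, exploiting the scaling of $w_N$. First, since $\|w_N\|_{L^\infty}=O(N^{\beta-1})$, both $G_{N,\ell,i}$ and $G_{N,\ell,j}$ are bounded below by $1/2$ for large $N$, so
$$|A_{N,i,j,\ell}^{(k)}|\le C\,|\nabla w_N(x_\ell-x_i)|\,|\nabla w_N(x_\ell-x_j)|=:u(x_i,x_j,x_\ell).$$
The rescaling $y=N^\beta x$ yields the key identity $\|\nabla w_N\|_{L^p(\mathbb{R}^3)}=N^{2\beta-1-3\beta/p}\|\nabla w_0\|_{L^p}$, which is finite for $p>3/2$ because $\nabla w_0(y)=O(|y|^{-2})$ at infinity. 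In particular, choosing $p=3/2+$ gives $\|\nabla w_N\|_{L^p}^2=O(N^{-2+})$, supplying the desired prefactor.

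I would then view $e^{it(\Delta_{x_i}+\Delta_{x_j})}$ as a 6D Schrödinger propagator and apply the 6D endpoint retarded Strichartz estimate of Keel--Tao. Using Minkowski's inequality together with the $L^2$-unitarity of the remaining free propagators (over all other $x$'s and all $x'$'s), the $L^\infty_T L^2_{x,x'}$-norm of the Duhamel integral is controlled by
$$C\|u\,\alpha^{(k)}\|_{L^2_t L^{3/2}_{x_i,x_j}L^2_{\mathrm{rest}}}.$$
Next, Hölder in $(x_i,x_j)$ with conjugate exponents $1/p+1/q=2/3$ splits this as $\|u\|_{L^p_{x_i,x_j}}\|\alpha^{(k)}\|_{L^q_{x_i,x_j}}$. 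Direct factorization gives $\|u\|_{L^p_{x_i,x_j}}(x_\ell)=\|\nabla w_N\|_{L^p}^2$, independent of $x_\ell$, so taking $p=3/2+$ consumes the $N^{-2+}$ scaling. The Hölder partner $\|\alpha^{(k)}\|_{L^q_{x_i,x_j}}$ then has $q$ large, which I would bound by iterated 3D Sobolev embedding $H^1(\mathbb{R}^3)\hookrightarrow L^6(\mathbb{R}^3)$ applied in each of $x_i$, $x_j$, and $x_\ell$, translating $\langle\nabla_{x_i}\rangle\langle\nabla_{x_j}\rangle\langle\nabla_{x_\ell}\rangle\alpha^{(k)}\in L^2$ into the mixed Lebesgue norms that match the Hölder exponents. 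The third derivative $\langle\nabla_{x_\ell}\rangle$ supplies the integrability margin in the $x_\ell$-direction that absorbs the $\epsilon$-loss from $p>3/2$. The outer $L^2_t$ integration then contributes a benign $T^{1/2}$.

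The main obstacle I anticipate is the borderline nature of $\nabla w_0$ with respect to $L^{3/2}(\mathbb{R}^3)$: the exponent $p$ must be taken strictly above $3/2$, so the Hölder partner $q$ is close to $\infty$ and the three derivatives on $\alpha^{(k)}$—distributed as $\langle\nabla_{x_i}\rangle\langle\nabla_{x_j}\rangle\langle\nabla_{x_\ell}\rangle$ rather than a single $\langle\nabla\rangle^3$ on $\mathbb{R}^9$—must be shown to suffice through iterated (variable-by-variable) Sobolev rather than a global one, without spending any compensating $N^\beta$ factor. Once this bookkeeping is in place the stated bound follows, and the second claim of the lemma is then immediate from the energy condition \eqref{bound:energy estimate}.
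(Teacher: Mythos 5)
Your overall strategy (6D endpoint Strichartz in the pair $(x_i,x_j)$, H\"older against $\|\nabla w_N\|_{L^{3/2+}}^2=O(N^{-2+})$, Sobolev on $\alpha^{(k)}$) is the right family of ideas and is what the paper does via Corollary \ref{corollary:basic corollary in three body}, but the argument as written has a genuine gap at the H\"older step, caused by the ordering of norms. After your (Hilbert-space-valued) retarded Strichartz step you hold $\|u\,\alpha^{(k)}\|_{L^2_tL^{3/2}_{x_ix_j}L^2_{\mathrm{rest}}}$ with the $L^2$ over the remaining coordinates --- in particular over $x_\ell$ --- sitting \emph{inside} the $L^{3/2}_{x_ix_j}$ norm. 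Since $u=|\nabla w_N(x_\ell-x_i)|\,|\nabla w_N(x_\ell-x_j)|$ depends on $x_\ell$, you cannot perform H\"older in $(x_i,x_j)$ and extract the constant $\|\nabla w_N\|_{L^p}^2$: that factorization requires the $x_\ell$-integration to come \emph{after} the $(x_i,x_j)$-norms, and Minkowski exchanges $L^{3/2}_{x_ix_j}L^2_{x_\ell}$ with $L^2_{x_\ell}L^{3/2}_{x_ix_j}$ in the wrong direction (while taking $\sup_{x_\ell}u$ first leaves a function of $x_i-x_j$ alone, which is not in $L^{3/2}(\mathbb{R}^6)$). This is precisely why the paper proves the Strichartz estimate in \emph{translated} coordinates, \eqref{E:Str5}: after the change of variables the potentials no longer depend on the third coordinate, one Fourier transforms in that coordinate, and Plancherel restores its $L^2$ norm only after H\"older and Sobolev have been applied in the first two (Lemma \ref{lem:three-body Holder soblev}).

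There is a second, quantitative gap in your derivative bookkeeping. Pairing $\|\nabla w_N\|_{L^{3/2+}}^2$ with the $L^{3/2+}_{x_ix_j}$ Strichartz exponent forces the H\"older partner $\alpha^{(k)}\in L^{\infty-}_{x_ix_j}L^2_c$, which by Sobolev on $\mathbb{R}^6$ costs $\langle\nabla_{x_i}\rangle^{3/2}\langle\nabla_{x_j}\rangle^{3/2}$, i.e.\ three derivatives concentrated on two variables. Your iterated $H^1(\mathbb{R}^3)\hookrightarrow L^6(\mathbb{R}^3)$ gives only $L^6_{x_i}L^6_{x_j}$, whose H\"older partner is $\|\nabla w_N\|_{L^{3}}^2=O(N^{2\beta-2})$ --- a loss of $N^{2\beta}$ relative to the claimed bound --- and the derivative in $x_\ell$ cannot supply integrability in $(x_i,x_j)$ by Sobolev alone. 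The missing ingredient is the frequency decomposition in the proof of Corollary \ref{corollary:basic corollary in three body}: split $\alpha^{(k)}$ according to which of $|\xi_i|,|\xi_j|,|\xi_\ell|$ is largest, apply the translated Strichartz estimate adapted to each region, and on each piece move the excess half-derivatives onto the dominant frequency, converting $\langle\nabla_{x_i}\rangle^{3/2}\langle\nabla_{x_j}\rangle^{3/2}$ into $\langle\nabla_{x_i}\rangle\langle\nabla_{x_j}\rangle\langle\nabla_{x_\ell}\rangle$. With these two repairs your outline matches the paper's proof (which also routes the reduction through the $X_{\pm\frac12+}$ spaces and Lemma \ref{Lemma:b to b-1} rather than a direct retarded estimate, a difference that is essentially cosmetic).
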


\begin{lemma}
\label{Lem:key 1st order in convergence}%
\begin{eqnarray*}
&&\left\Vert \int_{0}^{t_{k}}U^{(k)}(t_{k}-t_{k+1})E_{N,j,l}^{(k)}\alpha
^{(k)}\left( t_{k+1}\right) dt_{k+1}\right\Vert _{L_{T}^{\infty }L_{\mathbf{%
x,x}^{\prime }}^{2}} \\
&\leqslant &C_{T}N^{-1+}\left\Vert \left\langle \nabla _{x_{j}}\right\rangle
\left\langle \nabla _{x_{\ell }}\right\rangle \alpha ^{(k)}\left(
t_{k+1}\right) \right\Vert _{L_{t}^{\infty }L_{\mathbf{x,x}^{\prime }}^{2}}
\end{eqnarray*}%
In particular, if one assumes the energy bound (\ref{bound:energy estimate}%
), it reads 
\begin{equation*}
\left\Vert \int_{0}^{t_{k}}U^{(k)}(t_{k}-t_{k+1})E_{N,j,l}^{(k)}\alpha
_{N}^{(k)}\left( t_{k+1}\right) dt_{k+1}\right\Vert _{L_{T}^{\infty }L_{%
\mathbf{x,x}^{\prime }}^{2}}\leqslant C_{T}N^{-1+}.
\end{equation*}
\end{lemma}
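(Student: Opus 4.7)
The plan is to reduce the estimate to a 3D endpoint retarded Strichartz estimate in the relative coordinate $u=x_j-x_\ell$, and then to distribute the two derivatives provided by the energy bound using H\"older and Sobolev embedding.

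First, I change variables from $(x_j,x_\ell)$ to $(u,v)$ with $u=x_j-x_\ell$ and $v=(x_j+x_\ell)/2$, under which $\Delta_{x_j}+\Delta_{x_\ell}=2\Delta_u+\tfrac{1}{2}\Delta_v$. Writing $U^{(k)}(s)=e^{2is\Delta_u}\mathcal{R}(s)$, where $\mathcal{R}(s)$ bundles together the $v$-evolution and the free evolutions in all remaining unprimed and in all primed coordinates, the operator $\mathcal{R}(s)$ is unitary on $L^2_{\mathbf{x},\mathbf{x}'}$, commutes with any multiplier depending only on $u$, and preserves the mixed norm $L^{6/5}_u L^2_{\mathrm{rest}}$ (where ``rest'' denotes all variables other than $u$). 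Conjugating $\mathcal{R}$ out of the Duhamel integral and applying the 3D endpoint retarded Strichartz estimate in $u$ with all other variables as parameters, followed by Minkowski on $[0,T]$, bounds the left-hand side by $C\,T^{1/2}\|E_{N,j,\ell}^{(k)}\alpha^{(k)}\|_{L^\infty_t L^{6/5}_u L^2_{\mathrm{rest}}}$.

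Next I estimate this mixed norm. Since $E_{N,j,\ell}^{(k)}\alpha^{(k)}=\tfrac{2(\nabla w_N)(u)}{1-w_N(u)}\cdot\nabla_{x_\ell}\alpha^{(k)}$ and $\|w_N\|_{L^\infty}\leq\|w_0\|_{L^\infty}<1$ under the standing assumption on $V$, the denominator contributes only a bounded multiplicative constant. H\"older's inequality in $u$ with conjugate exponents $p=3/2+\delta$ and $q$ satisfying $\tfrac1p+\tfrac1q=\tfrac56$ (so $q\nearrow 6$ as $\delta\to 0$) yields
\[
\|E_{N,j,\ell}^{(k)}\alpha^{(k)}\|_{L^{6/5}_u L^2_{\mathrm{rest}}}\lesssim \|\nabla w_N\|_{L^{3/2+\delta}(\mathbb{R}^3)}\,\|\nabla_{x_\ell}\alpha^{(k)}\|_{L^q_u L^2_{\mathrm{rest}}}.
\]
From the explicit scaling $\nabla w_N(x)=N^{2\beta-1}(\nabla w_0)(N^\beta x)$ one computes $\|\nabla w_N\|_{L^{3/2+\delta}}=N^{-1+4\beta\delta/(3+2\delta)}\|\nabla w_0\|_{L^{3/2+\delta}}$; the $N$-exponent is at most $-1+O(\delta)$ uniformly for $\beta\in(0,1]$, and $\nabla w_0\in L^{3/2+\delta}$ by virtue of its $|x|^{-2}$ decay at infinity, so $\|\nabla w_N\|_{L^{3/2+\delta}}\leq C_\delta N^{-1+O(\delta)}=N^{-1+}$.

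For the second factor, the Sobolev embedding in $u\in\mathbb{R}^3$, namely $L^q_u\hookrightarrow \dot H^{1-O(\delta)}_u$, together with the fact that $\nabla_u=\tfrac12(\nabla_{x_j}-\nabla_{x_\ell})$ is a linear combination of $\nabla_{x_j}$ and $\nabla_{x_\ell}$, yields
\[
\|\nabla_{x_\ell}\alpha^{(k)}\|_{L^q_u L^2_{\mathrm{rest}}}\lesssim \|\langle\nabla_u\rangle^{1-O(\delta)}\nabla_{x_\ell}\alpha^{(k)}\|_{L^2_{\mathbf{x},\mathbf{x}'}}\lesssim \|\langle\nabla_{x_j}\rangle\langle\nabla_{x_\ell}\rangle\alpha^{(k)}\|_{L^2}.
\]
Combining the three steps proves the first displayed inequality of the lemma; the second then follows from the energy bound \eqref{bound:energy estimate}, which controls $\|\langle\nabla_{x_j}\rangle\langle\nabla_{x_\ell}\rangle\alpha_N^{(k)}\|_{L^\infty_tL^2}$ by a constant. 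The main subtlety is the failure of $\nabla w_0$ to lie in $L^{3/2}(\mathbb{R}^3)$ due to its $|x|^{-2}$ tail, which forces the H\"older exponent slightly above $3/2$, matched by a correspondingly small Sobolev loss on the $\alpha^{(k)}$ side; this is exactly what produces the ``$-1+$'' rather than a sharp $N^{-1}$ decay.
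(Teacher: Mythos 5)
Your first two steps are sound and in the same spirit as the paper's argument (which runs the dual 3D endpoint Strichartz estimate through the $X_{-\frac12+}$ norm and a sheared change of variables rather than your direct retarded estimate in relative coordinates), but the final derivative-distribution step contains a genuine error. After Sobolev in the relative variable $u=x_j-x_\ell$ you claim
\begin{equation*}
\left\Vert \left\langle \nabla _{u}\right\rangle ^{1-O(\delta )}\nabla
_{x_{\ell }}\alpha ^{(k)}\right\Vert _{L^{2}}\lesssim \left\Vert
\left\langle \nabla _{x_{j}}\right\rangle \left\langle \nabla _{x_{\ell
}}\right\rangle \alpha ^{(k)}\right\Vert _{L^{2}},
\end{equation*}
but the corresponding Fourier multiplier $\langle (\eta _{j}-\eta _{\ell
})/2\rangle ^{1-\epsilon }\,|\eta _{\ell }|\,\langle \eta _{j}\rangle
^{-1}\langle \eta _{\ell }\rangle ^{-1}$ is unbounded: take $\eta _{j}=0$
and $|\eta _{\ell }|\rightarrow \infty $, where it grows like $|\eta _{\ell
}|^{1-\epsilon }$. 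The structural problem is that in the symmetric
relative/center-of-mass coordinates the Strichartz--Sobolev derivative $
\langle \nabla _{u}\rangle ^{1-}$ mixes $\nabla _{x_{j}}$ and $\nabla
_{x_{\ell }}$, so when the $x_{j}$-frequency is $O(1)$ and the $x_{\ell }$
-frequency is large you need nearly two derivatives on $x_{\ell }$, which
the right-hand side (and the energy bound) does not supply. No frequency
splitting rescues this within your framework: in the bad regime the
relative frequency is comparable to $\eta _{\ell }$, so the Sobolev cost
must be charged entirely to $x_{\ell }$, and replacing the H\"{o}lder
exponent by $L^{3+}$ to avoid Sobolev altogether only yields $\Vert \nabla
w_{N}\Vert _{L^{3+}}\sim N^{\beta -1+}$, which is not $N^{-1+}$ for $\beta $
near $1$.

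The paper avoids exactly this by using the triangular shear $
T_{2}f(x_{1},x_{2})=f(x_{1},x_{2}+x_{1})$ (or $T_{1}$) of Lemma \ref
{Lemma:TheStrichartzEstimate}: under a shear, the derivative appearing in
the Sobolev embedding corresponds to a derivative in a \emph{single}
original coordinate, so the second estimates of \eqref{E:Str3}--
\eqref{E:Str4} let one place the full Strichartz--Sobolev derivative on $
x_{j}$ while the explicit $\nabla _{x_{\ell }}$ from $E_{N,j,\ell }^{(k)}$
stays on $x_{\ell }$; this is precisely the second line of
\eqref{estimate:basic corollary for 3D endpoint} used in the paper's proof.
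The price is that the free evolution no longer diagonalizes in the sheared
coordinates, which is why the paper's Strichartz lemma needs the Galilean
conjugation argument. If you want to keep your cleaner factorization $
U^{(k)}=e^{2is\Delta _{u}}\mathcal{R}(s)$, you must replace the symmetric
change of variables by a shear (and then your remaining steps go through);
as written, the proof does not establish the lemma.
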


\begin{proof}[Proof of Lemma \protect\ref{Lem:key three-body in convergence}]

Define 
\begin{equation*}
v_{2,N}(x)=\frac{N^{2\beta -1}\left( \nabla \omega _{0}\right) \left(
N^{\beta }x\right) }{G_{N}(x)},
\end{equation*}%
then%
\begin{equation*}
v_{2,N}(x)=\frac{N^{2\beta -1}\left( \nabla \omega _{0}\right) \left(
N^{\beta }x\right) }{G_{N}(x)}\sim N^{2\beta -1}\left\langle N^{\beta
}x\right\rangle ^{-2}\equiv \tilde{v}_{2,N}(x).
\end{equation*}%
So%
\begin{eqnarray*}
&&\left\Vert \int_{0}^{t_{k}}U^{(k)}(t_{k}-t_{k+1})A_{N,i,j,l}^{(k)}\alpha
^{(k)}\left( t_{k+1}\right) dt_{k+1}\right\Vert _{L_{T}^{\infty }L_{\mathbf{%
x,x}^{\prime }}^{2}} \\
&\sim &\left\Vert \int_{0}^{t_{k}}U^{(k)}(t_{k}-t_{k+1})\left[ \tilde{v}%
_{2,N}(x_{l}-x_{i})\tilde{v}_{2,N}(x_{l}-x_{j})\alpha ^{(k)}\left(
t_{k+1}\right) \right] dt_{k+1}\right\Vert _{L_{T}^{\infty }L_{\mathbf{x,x}%
^{\prime }}^{2}}
\end{eqnarray*}%
Insert a smooth cut-off $\theta (t)$ with $\theta (t)=1$ for $t\in \left[
-T,T\right] $ and $\theta (t)=0$ for $t\in \left[ -2T,2T\right] ^{c}$ into
the above,%
\begin{equation*}
\leqslant \left\Vert \theta (t_{k})\int_{0}^{t_{k}}U^{(k)}(t_{k}-t_{k+1}) 
\left[ \tilde{v}_{2,N}(x_{l}-x_{i})\tilde{v}_{2,N}(x_{l}-x_{j})\theta
(t_{k+1})\alpha ^{(k)}\left( t_{k+1}\right) \right] dt_{k+1}\right\Vert
_{L_{t}^{\infty }L_{\mathbf{x,x}^{\prime }}^{2}}
\end{equation*}%
Since $\left\Vert \cdot \right\Vert _{L_{t}^{\infty }L_{\mathbf{x,x}^{\prime
}}^{2}}\leqslant C\left\Vert \cdot \right\Vert _{X_{\frac{1}{2}+}^{(k)}}$,
we have%
\begin{equation*}
\leqslant C\left\Vert \theta (t_{k})\int_{0}^{t_{k}}U^{(k)}(t_{k}-t_{k+1}) 
\left[ \tilde{v}_{2,N}(x_{l}-x_{i})\tilde{v}_{2,N}(x_{l}-x_{j})\theta
(t_{k+1})\alpha ^{(k)}\left( t_{k+1}\right) \right] dt_{k+1}\right\Vert _{X_{%
\frac{1}{2}+}^{(k)}}.
\end{equation*}%
By Lemma \ref{Lemma:b to b-1},

\begin{equation*}
\leqslant C\left\Vert \tilde{v}_{2,N}(x_{l}-x_{i})\tilde{v}%
_{2,N}(x_{l}-x_{j})\theta (t_{k+1})\alpha ^{(k)}\left( t_{k+1}\right)
\right\Vert _{X_{-\frac{1}{2}+}^{(k)}}.
\end{equation*}%
Use the first inequality of (\ref{estimate:basic corollary in three body})
in Corollary \ref{corollary:basic corollary in three body},%
\begin{eqnarray*}
&\leqslant &C\left\Vert \tilde{v}_{2,N}\right\Vert _{L^{\frac{3}{2}%
+}}^{2}\left\Vert \theta (t_{k+1})\left\langle \nabla _{x_{i}}\right\rangle
\left\langle \nabla _{x_{j}}\right\rangle \left\langle \nabla
_{x_{l}}\right\rangle \alpha ^{(k)}\left( t_{k+1}\right) \right\Vert
_{L_{t}^{2}L_{\mathbf{x,x}^{\prime }}^{2}} \\
&\leqslant &C_{T}\left\Vert \tilde{v}_{2,N}\right\Vert _{L^{\frac{3}{2}%
+}}^{2}\left\Vert \left\langle \nabla _{x_{i}}\right\rangle \left\langle
\nabla _{x_{j}}\right\rangle \left\langle \nabla _{x_{l}}\right\rangle
\alpha ^{(k)}\left( t_{k+1}\right) \right\Vert _{L_{t}^{\infty }L_{\mathbf{%
x,x}^{\prime }}^{2}}
\end{eqnarray*}%
where%
\begin{equation*}
\left\Vert \tilde{v}_{2,N}\right\Vert _{L^{\frac{3}{2}+}}=CN^{-1+}.
\end{equation*}%
That is%
\begin{eqnarray*}
&&\left\Vert \int_{0}^{t_{k}}U^{(k)}(t_{k}-t_{k+1})A_{N,i,j,l}^{(k)}\alpha
^{(k)}\left( t_{k+1}\right) dt_{k+1}\right\Vert _{L_{T}^{\infty }L_{\mathbf{%
x,x}^{\prime }}^{2}} \\
&\leqslant &C_{T}N^{-2+}\left\Vert \left\langle \nabla _{x_{i}}\right\rangle
\left\langle \nabla _{x_{j}}\right\rangle \left\langle \nabla
_{x_{l}}\right\rangle \alpha ^{(k)}\left( t_{k+1}\right) \right\Vert
_{L_{t}^{\infty }L_{\mathbf{x,x}^{\prime }}^{2}}
\end{eqnarray*}%
as claimed.
\end{proof}

\begin{proof}[Proof of Lemma \protect\ref{Lem:key 1st order in convergence}]

As in the proof of Lemma \ref{Lem:key three-body in convergence}, we replace 
\begin{equation*}
v_{2,N}(x)=\frac{N^{2\beta -1}\left( \nabla \omega _{0}\right) \left(
N^{\beta }x\right) }{G_{N}(x)}\sim N^{2\beta -1}\left\langle N^{\beta
}x\right\rangle ^{-2}=\tilde{v}_{2,N}(x)
\end{equation*}%
with $\tilde{v}_{2,N}(x)=N^{2\beta -1}\left\langle N^{\beta }x\right\rangle
^{-2}$. Then, we have 
\begin{eqnarray*}
&&\left\Vert \int_{0}^{t_{k}}U^{(k)}(t_{k}-t_{k+1})E_{N,j,l}^{(k)}\alpha
^{(k)}\left( t_{k+1}\right) dt_{k+1}\right\Vert _{L_{T}^{\infty }L_{\mathbf{%
x,x}^{\prime }}^{2}} \\
&\sim &\left\Vert \int_{0}^{t_{k}}U^{(k)}(t_{k}-t_{k+1})\left[ \tilde{v}%
_{2,N}(x)\left( \left\vert \nabla _{x_{\ell }}\right\vert \alpha
^{(k)}\left( t_{k+1}\right) \right) \right] dt_{k+1}\right\Vert
_{L_{T}^{\infty }L_{\mathbf{x,x}^{\prime }}^{2}} \\
&\leqslant &\left\Vert \tilde{v}_{2,N}(x)\left( \theta (t_{k+1})\left\vert
\nabla _{x_{\ell }}\right\vert \alpha ^{(k)}\left( t_{k+1}\right) \right)
\right\Vert _{X_{-\frac{1}{2}+}^{(k)}}
\end{eqnarray*}%
Use the second inequality of (\ref{estimate:basic corollary for 3D endpoint}%
) in Corollary \ref{corollary:basic corollary for 3D endpoint}, 
\begin{eqnarray*}
&\leqslant &C\left\Vert \tilde{v}_{2,N}\right\Vert _{L^{\frac{3}{2}%
+}}\left\Vert \theta (t_{k+1})\left\vert \nabla _{x_{j}}\right\vert
\left\vert \nabla _{x_{\ell }}\right\vert \alpha ^{(k)}\left( t_{k+1}\right)
\right\Vert _{L_{t}^{2}L_{\mathbf{x,x}^{\prime }}^{2}} \\
&\leqslant &C_{T}N^{-1+}\left\Vert \left\langle \nabla _{x_{j}}\right\rangle
\left\langle \nabla _{x_{\ell }}\right\rangle \alpha ^{(k)}\left(
t_{k+1}\right) \right\Vert _{L_{t}^{\infty }L_{\mathbf{x,x}^{\prime }}^{2}}.
\end{eqnarray*}%
So we have finished the proof of Lemma \ref{Lem:key 1st order in convergence}%
.
\end{proof}

\subsection{Estimate for the $k$-body Interaction Part}

Recall 
\begin{equation*}
\tilde{B}_{N,many}^{(k+1)}\alpha _{N}^{(k+1)}=\sum_{l=1}^{k}\tilde{B}%
_{N,many,l,k+1}\alpha _{N}^{(k+1)}.
\end{equation*}
To prove estimate (\ref{estimate:KIP_0 goes to zero}), we prove the estimate:%
\begin{equation}
\left\Vert \int_{0}^{t_{k}}U^{(k)}(t_{k}-t_{k+1})\tilde{B}%
_{N,many,l,k+1}^{+}\alpha _{N}^{(k+1)}\left( t_{k+1}\right)
dt_{k+1}\right\Vert _{L_{T}^{\infty }L_{\mathbf{x,x}^{\prime
}}^{2}}\leqslant C_{T}C^{k+1}N^{-\frac{1}{2}+}\text{,}
\label{estimate:key loc in convergence}
\end{equation}%
where $\tilde{B}_{N,many,l,k+1}^{+}$ is half of $\tilde{B}_{N,many,l,k+1}$.
Assume estimate (\ref{estimate:key loc in convergence}), then%
\begin{equation*}
\left\Vert \mathnormal{KIP}^{k,0}\right\Vert _{L_{T}^{\infty }\mathcal{L}%
^{2}}=\left\Vert \mathnormal{KIP}^{k,0}\right\Vert _{L_{T}^{\infty }L_{%
\mathbf{x,x}^{\prime }}^{2}}\leqslant 2kC_{T}C^{k+1}N^{-\frac{1}{2}%
+}\rightarrow 0\text{ as }N\rightarrow \infty \text{.}
\end{equation*}%
The rest of this section is the proof of estimate (\ref{estimate:key loc in
convergence}). We first give the following lemma.

\begin{lemma}
\label{lemma:term counting in L}One can decompose $\tilde{B}%
_{N,many,l,k+1}^{+}\alpha _{N}^{(k+1)}\left( t_{k+1}\right) $, defined in (%
\ref{def:B,loc}), as the sum of at most $8^{k}$ terms of the form 
\begin{eqnarray*}
&&\tilde{B}_{N,many,l,k+1,\sigma }^{+}\alpha _{N}^{(k+1)}\left(
t_{k+1}\right) \\
&=&\frac{N-k}{N}\int_{\mathbb{R}^{3}}\tilde{V}_{N}(x_{l}-x_{k+1})N^{\beta
-1}w_{0}(N^{\beta }(x_{\sigma }-x_{k+1}))A_{\sigma }\alpha _{N}^{(k+1)}(%
\mathbf{x}_{k},x_{k+1};\mathbf{x}_{k}^{\prime },x_{k+1})\,dx_{k+1}.
\end{eqnarray*}%
Here, $x_{\sigma }$ is some $x_{j}$ or $x_{j}^{\prime }$ but not $x_{l}$ and 
$A_{\sigma }$ is a product of $\left[ N^{\beta -1}w_{0}(N^{\beta
}(x_{j}-x_{k+1}))\right] $, $\left[ N^{\beta -1}w_{0}(N^{\beta
}(x_{j}^{\prime }-x_{k+1}))\right] $ or $1$ with $x_{j}$ not equal to $x_{l}$
or $x_{\sigma }$.
\end{lemma}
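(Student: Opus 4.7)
The plan is purely combinatorial: fully expand the factor $L_{N,l,k+1}$ appearing inside $\tilde{B}^{+}_{N,\mathrm{many},l,k+1}$ and regroup. First, I unfold the explicit formula derived just before \eqref{E:B-many-decomp}, namely
\[
L_{N,\ell,k+1}+1 \;=\; G_{N,\ell',k+1}\prod_{\substack{1\leq j\leq k \\ j\neq \ell}}G_{N,j,k+1}\,G_{N,j',k+1},
\]
which exhibits $L_{N,l,k+1}+1$ as a product of exactly $2k-1$ factors of the form $1-w_N(x_\mu - x_{k+1})$, with $\mu$ running through the index set $\mathcal{I} = \{\ell'\} \cup \{j,\, j' : 1 \leq j \leq k,\ j \neq \ell\}$. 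The key observation here is that $\ell \notin \mathcal{I}$, so every index $\mu$ arising in the expansion will automatically satisfy $x_\mu \neq x_\ell$.

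Second, I apply the elementary product-expansion identity
\[
\prod_{\mu \in \mathcal{I}}(1-w_\mu) - 1 \;=\; \sum_{\emptyset \neq S \subseteq \mathcal{I}}(-1)^{|S|}\prod_{\mu \in S} w_\mu,
\]
with $w_\mu = N^{\beta-1}w_0(N^\beta(x_\mu - x_{k+1}))$, which writes $L_{N,l,k+1}$ as a sum of $2^{2k-1}-1$ signed monomials in the $w_N$'s. For each nonempty $S$, I designate one distinguished element $\sigma = \sigma(S) \in S$ (for instance, the minimum under a fixed ordering of $\mathcal{I}$), factor out the corresponding $N^{\beta-1}w_0(N^\beta(x_\sigma - x_{k+1}))$, and absorb all remaining factors together with the sign $(-1)^{|S|}$ into
\[
A_\sigma \;=\; (-1)^{|S|}\prod_{\mu \in S \setminus \{\sigma\}} N^{\beta-1}w_0\bigl(N^\beta(x_\mu - x_{k+1})\bigr),
\]
with the empty product (the case $S=\{\sigma\}$) interpreted as $-1$. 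By construction $A_\sigma$ is exactly of the form asserted in the lemma---a product of factors $N^{\beta-1}w_0(N^\beta(x_j - x_{k+1}))$ or $N^{\beta-1}w_0(N^\beta(x_{j'} - x_{k+1}))$ (or a pure sign) with all involved indices distinct from both $\ell$ and $\sigma$.

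Finally, multiplying the expansion through by $\frac{N-k}{N}\tilde{V}_N(x_l-x_{k+1})$ and integrating against $\alpha_N^{(k+1)}$ realizes each monomial as a single summand of the form $\tilde{B}_{N,\mathrm{many},l,k+1,\sigma}^{+}\alpha_N^{(k+1)}$. The total number of summands is $2^{2k-1}-1 < 4^k \leq 8^k$, comfortably within the advertised bound. There is no genuine analytic obstacle in this lemma; the entire task is organizational, and the bound $8^k$ is deliberately generous to absorb the slack introduced by any particular convention for choosing $\sigma(S)$.
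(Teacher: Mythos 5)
Your proposal is correct and follows essentially the same route as the paper's own proof: expand the product formula for $L_{N,l,k+1}+1$ over the $2k-1$ factors $G_{N,\mu,k+1}$ (none of which carries the index $l$), distribute, factor out one $N^{\beta-1}w_{0}(N^{\beta}(x_{\sigma}-x_{k+1}))$ from each resulting monomial, and count the terms. Your count $2^{2k-1}-1$ is in fact slightly sharper than the paper's binomial bookkeeping, and your explicit handling of the signs $(-1)^{|S|}$ is a harmless refinement; both comfortably land under the stated bound $8^{k}$.
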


\begin{proof}
Recall, 
\begin{eqnarray*}
\tilde{B}_{N,many,l,k+1}^{+}\alpha _{N}^{(k+1)} &=&\frac{N-k}{N}\int_{%
\mathbb{R}^{3}}\tilde{V}_{N}(x_{l}-x_{k+1})L_{N,l,k+1} \\
&&\alpha _{N}^{(k+1)}(\mathbf{x}_{k},x_{k+1};\mathbf{x}_{k}^{\prime
},x_{k+1})\,dx_{k+1}
\end{eqnarray*}%
Notice that,%
\begin{eqnarray*}
L_{N,l,k+1}+1 &=&G_{N,l^{\prime },k+1}\prod_{_{\substack{ 1\leqslant
j\leqslant k  \\ j\neq l}}}G_{N,j,k+1}G_{N,j^{\prime },k+1} \\
G_{N,j,k+1} &=&1-N^{\beta -1}w_{0}(N^{\beta }(x_{j}-x_{k+1}))
\end{eqnarray*}%
Thus, taken as a binomial expansion, $L_{N,l,k+1}$ is a sum of $2k$ classes
where each class has $%
\begin{pmatrix}
2k \\ 
j%
\end{pmatrix}%
$\thinspace $,j=1,...,2k,$ terms inside, that is: 
\begin{eqnarray*}
&&L_{N,l,k+1} \\
&=&\left( \sum_{_{\substack{ 1\leqslant j\leqslant k  \\ j\neq l}}}N^{\beta
-1}w_{0}(N^{\beta }(x_{j}-x_{k+1}))+\sum_{_{1\leqslant j\leqslant
k}}N^{\beta -1}w_{0}(N^{\beta }(x_{j}^{\prime }-x_{k+1}))\right) \\
&&+... \\
&&+(\left[ N^{\beta -1}w_{0}(N^{\beta }(x_{l}^{\prime }-x_{k+1}))\right]
\prod_{_{\substack{ 1\leqslant j\leqslant k  \\ j\neq l}}}\left[ N^{\beta
-1}w_{0}(N^{\beta }(x_{j}-x_{k+1}))\right] \left[ N^{\beta -1}w_{0}(N^{\beta
}(x_{j}^{\prime }-x_{k+1}))\right] ).
\end{eqnarray*}%
Thus $L_{N,l,k+1}$ can be written as a sum of at most $8^{k}$ terms which
individually looks like%
\begin{equation*}
N^{\beta -1}w_{0}(N^{\beta }(x_{\sigma }-x_{k+1}))A_{\sigma }
\end{equation*}%
where $x_{\sigma }$ is some $x_{j}$ or $x_{j}^{\prime }$ but not $x_{l}$ and 
$A_{\sigma }$ is a product of $\left[ N^{\beta -1}w_{0}(N^{\beta
}(x_{j}-x_{k+1}))\right] $, $\left[ N^{\beta -1}w_{0}(N^{\beta
}(x_{j}^{\prime }-x_{k+1}))\right] $ or $1$ with $x_{j}$ not equal to $x_{l}$
or $x_{\sigma }$. Inserting this into (\ref{def:B,loc}), we have the claimed
decomposition.
\end{proof}

With Lemma \ref{lemma:term counting in L}, we have the following estimate.

\begin{lemma}
\label{Lem:KeyOfLocInConvergence}Let $\tilde{B}_{N,many,l,k+1,\sigma }^{+}$
be defined as in Lemma \ref{lemma:term counting in L}, we have%
\begin{eqnarray*}
&&\left\Vert \int_{0}^{t_{k}}U^{(k)}(t_{k}-t_{k+1})\tilde{B}%
_{N,many,l,k+1,\sigma }^{+}\alpha ^{(k+1)}\left( t_{k+1}\right)
dt_{k+1}\right\Vert _{L_{T}^{\infty }L_{\mathbf{x,x}^{\prime }}^{2}}\, \\
&\leqslant &C_{T}C^{k+1}N^{-\frac{1}{2}+}\frac{1}{\sqrt{N}}\left\Vert
\left\langle \nabla _{x_{k+1}}\right\rangle ^{2}\left\langle \nabla
_{x_{k+1}^{\prime }}\right\rangle \alpha ^{(k+1)}(t_{k+1})\right\Vert
_{L_{t}^{\infty }L_{\mathbf{x,x}^{\prime }}^{2}}.
\end{eqnarray*}%
Consequently,%
\begin{eqnarray*}
&&\left\Vert \int_{0}^{t_{k}}U^{(k)}(t_{k}-t_{k+1})\tilde{B}%
_{N,many,l,k+1}^{+}\alpha ^{(k+1)}\left( t_{k+1}\right) dt_{k+1}\right\Vert
_{L_{T}^{\infty }L_{\mathbf{x,x}^{\prime }}^{2}} \\
&\leqslant &C_{T}C^{k+1}N^{-\frac{1}{2}+}\frac{1}{\sqrt{N}}\left\Vert
\left\langle \nabla _{x_{k+1}}\right\rangle ^{2}\left\langle \nabla
_{x_{k+1}^{\prime }}\right\rangle \alpha ^{(k+1)}(t_{k+1})\right\Vert
_{L_{t}^{\infty }L_{\mathbf{x,x}^{\prime }}^{2}}
\end{eqnarray*}%
because, by Lemma \ref{lemma:term counting in L}, 
\begin{equation*}
\tilde{B}_{N,many,l,k+1}^{+}\alpha _{N}^{(k+1)}=\sum_{\sigma }\tilde{B}%
_{N,loc,l,k+1,\sigma }^{+},
\end{equation*}%
where the sum has at most $8^{k}$ terms inside. In particular, if one
assumes the energy bound (\ref{bound:energy estimate}), it reads%
\begin{eqnarray*}
&&\left\Vert \int_{0}^{t_{k}}U^{(k)}(t_{k}-t_{k+1})\tilde{B}%
_{N,many,l,k+1}^{+}\alpha ^{(k+1)}\left( t_{k+1}\right) dt_{k+1}\right\Vert
_{L_{T}^{\infty }L_{\mathbf{x,x}^{\prime }}^{2}} \\
&\leqslant &C_{T}C^{k+1}N^{-\frac{1}{2}+},
\end{eqnarray*}%
which is exactly estimate (\ref{estimate:key loc in convergence}).
\end{lemma}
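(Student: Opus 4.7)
The operator $\tilde B^+_{N,many,l,k+1,\sigma}$ is effectively a three-body interaction among the particles at $x_l$, $x_\sigma$, and the ``auxiliary'' trace variable $x_{k+1}$, carrying the two concentrated factors $\tilde V_N(x_l - x_{k+1})$ and $N^{\beta-1}w_0(N^\beta(x_\sigma - x_{k+1}))$, multiplied by the pointwise-bounded product $A_\sigma$. The plan is therefore to adapt the strategy of Lemma \ref{Lem:key three-body in convergence}: first insert a smooth time cutoff $\theta(t_{k+1})$ with $\theta = 1$ on $[-T,T]$, use the embedding $\|\cdot\|_{L_t^{\infty} L^2_{\mathbf{x},\mathbf{x}^{\prime}}} \lesssim \|\cdot\|_{X^{(k)}_{1/2+}}$, and apply Lemma \ref{Lemma:b to b-1} to reduce the claimed Duhamel bound to estimating
\begin{equation*}
\|\theta(t_{k+1})\,\tilde B^+_{N,many,l,k+1,\sigma}\alpha^{(k+1)}\|_{X^{(k)}_{-1/2+}}.
\end{equation*}

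Next, pull the product $A_\sigma$ outside pointwise. Each of its at most $2k-2$ factors $1 - N^{\beta-1}w_0(N^\beta(\cdot))$ has $L^\infty_x$-norm $O(1)$ since $w_0 \in L^\infty(\mathbb{R}^3)$ and $\beta \leq 1$, so $\|A_\sigma\|_{L^\infty_x} \leq C^{k+1}$, which is absorbed into the constant. To the remaining product $\tilde V_N(x_l - x_{k+1}) \cdot N^{\beta-1} w_0(N^\beta(x_\sigma - x_{k+1}))$, viewed as a three-body interaction, apply Corollary \ref{corollary:basic corollary in three body} (or the primed-variable variant, according to whether $x_\sigma$ is some $x_j$ or $x_j^\prime$) with the trace $x_{k+1} = x^\prime_{k+1}$ treated as an extra ``three-body'' variable. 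This yields a bound of the form $\|\tilde V_N\|_{L^{p_1}} \|N^{\beta-1}w_0(N^\beta\cdot)\|_{L^{p_2}}$ times a norm of $\alpha^{(k+1)}$ in $L^2_{t_{k+1}} L^2_{\mathbf{x},\mathbf{x}^\prime}$ carrying $\langle\nabla_{x_l}\rangle, \langle\nabla_{x_\sigma}\rangle, \langle\nabla_{x_{k+1}}\rangle, \langle\nabla_{x^\prime_{k+1}}\rangle$. Computing sizes, $\|\tilde V_N\|_{L^1_x} = O(1)$ and, using that $w_0 \in L^p$ for all $p > 3$ (because $w_0 \sim |x|^{-1}$ at infinity), $\|N^{\beta-1}w_0(N^\beta\cdot)\|_{L^{3+}} = O(N^{-1+})$. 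Using the bosonic symmetry of $\alpha^{(k+1)}$ to redistribute the derivatives on $x_l, x_\sigma$ onto $x_{k+1}$ and invoking the Cauchy-Schwarz splitting already displayed in the paper after \eqref{bound:k-energy estimate} to extract the factor $1/\sqrt{N}$, one arrives at the claimed bound.

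The main technical obstacle is the precise balancing of the Hölder exponents and the placement of the derivatives in Step 3 — in particular, the fact that $w_0 \notin L^{3/2}$ forces the use of $L^{3+}$ rather than the more natural $L^{3/2+}$, producing the ``$+$'' loss in $N^{-1/2+}$. Once the single-term estimate is established, the stated bound for the full $\tilde B^+_{N,many,l,k+1}$ follows immediately from Lemma \ref{lemma:term counting in L} by summing the at most $8^k$ terms (with the combinatorial factor absorbed into $C^{k+1}$), and the ``in particular'' clause follows by applying the energy bound \eqref{bound:energy estimate} to the weighted $\alpha^{(k+1)}$ norm.
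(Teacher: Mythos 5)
Your overall outline---insert the time cutoff, pass to $X^{(k)}_{\frac12+}$ and apply Lemma \ref{Lemma:b to b-1}, strip off $A_\sigma$ in $L^\infty$ at the cost of $C^{k+1}$, then run a H\"older--Sobolev--Strichartz estimate on the remaining three-body structure and close with the energy bound---is the paper's, and your final exponent bookkeeping ($\|\tilde V_N\|_{L^{1+}}=O(N^{0+})$, $\|N^{\beta-1}w_0(N^\beta\cdot)\|_{L^{3+}}=O(N^{-1+})$) agrees with the paper's. The gap is in the tool you invoke at the key step. Corollary \ref{corollary:basic corollary in three body} is a pure multiplication estimate: it bounds $\|V(x_1-x_2)W(x_1-x_3)\gamma^{(k)}\|_{X^{(k)}_{-1/2+}}$ with all three coordinates still active in the $X$-norm and none of them integrated out. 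Here $\tilde B^+_{N,many,l,k+1,\sigma}$ \emph{collapses} the $(k+1)$-st coordinate: it sets $x_{k+1}=x_{k+1}'$ and integrates $dx_{k+1}$, and the output is measured in $X^{(k)}_{-1/2+}$, a norm in which $x_{k+1}$ no longer appears. The corollary cannot be applied ``with the trace variable treated as an extra three-body variable''; the correct tool is the collapsing estimate, Lemma \ref{Lem:basic lem in KIP}, specifically the third line of \eqref{E:Str20} with $V=\tilde V_N$, $W=N^{\beta-1}w_0(N^\beta\cdot)$ and $f$ a factor of $A_\sigma$ placed in $L^\infty$.

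This is not merely a citation slip, because the H\"older exponents you quote are unavailable in Corollary \ref{corollary:basic corollary in three body}. Since $w_0(x)\sim |x|^{-1}$ at infinity, $w_0\in L^p$ only for $p>3$, so the only admissible placements of $W=N^{\beta-1}w_0(N^\beta\cdot)$ in \eqref{estimate:basic corollary in three body} are the $L^{6+}$ ones; pairing with $\|\tilde V_N\|_{L^{2+}}\sim N^{3\beta/2}$ or $\|\tilde V_N\|_{L^{6+}}\sim N^{5\beta/2}$ yields at best $N^{2\beta-1+}$, which does not vanish for $\beta\geqslant \frac12$, whereas the lemma is needed for all $\beta\in(0,1]$. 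The decisive $L^{1+}$ placement of $\tilde V_N$ is possible only because of the $dx_{k+1}$ integration, which Lemma \ref{Lem:basic lem in KIP} exploits via the change of variables and Minkowski's inequality that move $V$ outside all the $L^2$ norms. Relatedly, your mechanism for producing the derivative weights (distributing $\nabla_{x_l},\nabla_{x_\sigma}$ and transferring them to $x_{k+1}$ by symmetry) is not what occurs: there are no external derivatives in this convergence lemma, and the factor $\langle\nabla_{x_{k+1}}\rangle^2\langle\nabla_{x_{k+1}'}\rangle$ arises from the Sobolev embeddings and the trace at $x_{k+1}=x_{k+1}'$ inside the collapsing estimate. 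With Lemma \ref{Lem:basic lem in KIP} substituted for the corollary, the remainder of your argument (the $8^k$ summation and the energy bound) goes through as you describe.
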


\begin{proof}
Recall 
\begin{eqnarray*}
&&\tilde{B}_{N,many,l,k+1,\sigma }^{+}\alpha ^{(k+1)}\left( t_{k+1}\right) \\
&=&\frac{N-k}{N}\int_{\mathbb{R}^{3}}\tilde{V}_{N}(x_{l}-x_{k+1})N^{\beta
-1}w_{0}(N^{\beta }(x_{\sigma }-x_{k+1}))A_{\sigma }\alpha ^{(k+1)}(\mathbf{x%
}_{k},x_{k+1};\mathbf{x}_{k}^{\prime },x_{k+1})dx_{k+1}.
\end{eqnarray*}%
There is no need to write out the variables in $A_{\sigma }$. In fact, $%
A_{\sigma }$ is a harmless factor because $N^{\beta -1}w_{0}(N^{\beta
}(x_{j}-x_{k+1}))$ is in $L^{\infty }$ uniformly in $N$ if $\beta \leqslant
1 $.

As in the proof of Lemmas \ref{Lem:key three-body in convergence} and \ref%
{Lem:key 1st order in convergence}, we insert a smooth cut-off $\theta (t)$,%
\begin{eqnarray*}
&&\left\Vert \int_{0}^{t_{k}}U^{(k)}(t_{k}-t_{k+1})\tilde{B}%
_{N,many,l,k+1,\sigma }^{+}\alpha ^{(k+1)}\left( t_{k+1}\right)
dt_{k+1}\right\Vert _{L_{T}^{\infty }L_{\mathbf{x,x}^{\prime }}^{2}} \\
&\leqslant &\left\Vert \theta (t_{k})\int_{0}^{t_{k}}U^{(k)}(t_{k}-t_{k+1})%
\tilde{B}_{N,many,l,k+1,\sigma }^{+}\left( \theta (t_{k+1})\alpha
^{(k+1)}\left( t_{k+1}\right) \right) dt_{k+1}\right\Vert _{L_{t}^{\infty
}L_{\mathbf{x,x}^{\prime }}^{2}},
\end{eqnarray*}%
and proceed to%
\begin{eqnarray*}
&\leqslant &C\left\Vert \tilde{B}_{N,many,l,k+1,\sigma }^{+}\left( \theta
(t_{k+1})\alpha ^{(k+1)}\left( t_{k+1}\right) \right) \right\Vert _{X_{-%
\frac{1}{2}+}^{(k)}} \\
&=&C\Bigg\|\int_{\mathbb{R}^{3}}\tilde{V}_{N}(x_{l}-x_{k+1})N^{\beta
-1}w_{0}(N^{\beta }(x_{\sigma }-x_{k+1})) \\
&&A_{\sigma }\theta (t_{k+1})\alpha ^{(k+1)}(\mathbf{x}_{k},x_{k+1};\mathbf{x%
}_{k}^{\prime },x_{k+1})dx_{k+1}\Bigg\|_{X_{-\frac{1}{2}+}^{(k)}}.
\end{eqnarray*}%
The third inequality of (\ref{E:Str20}) of Lemma \ref{Lem:basic lem in KIP}
gives%
\begin{eqnarray*}
&\leqslant &C\left\Vert \tilde{V}_{N}\right\Vert _{L^{1+}}\left\Vert
N^{\beta -1}w_{0}(N^{\beta }(\cdot ))\right\Vert _{L^{3+}}\left\Vert
A_{\sigma }\right\Vert _{L^{\infty }} \\
&&\times \left\Vert \left\langle \nabla _{x_{k+1}}\right\rangle
^{2}\left\langle \nabla _{x_{k+1}^{\prime }}\right\rangle \theta
(t_{k+1})\alpha ^{(k+1)}(\mathbf{x}_{k},x_{k+1};\mathbf{x}_{k}^{\prime
},x_{k+1})\right\Vert _{L_{t}^{2}L_{\mathbf{x,x}^{\prime }}^{2}}
\end{eqnarray*}%
where%
\begin{equation*}
\left\Vert \tilde{V}_{N}\right\Vert _{L^{1+}}\left\Vert N^{\beta
-1}w_{0}(N^{\beta }(\cdot ))\right\Vert _{L^{3+}}\left\Vert A_{\sigma
}\right\Vert _{L^{\infty }}\leqslant C^{k+1}N^{-1+}.
\end{equation*}%
Thus%
\begin{eqnarray*}
&&\left\Vert \int_{0}^{t_{k}}U^{(k)}(t_{k}-t_{k+1})\tilde{B}%
_{N,many,l,k+1,\sigma }^{+}\alpha ^{(k+1)}\left( t_{k+1}\right)
dt_{k+1}\right\Vert _{L_{T}^{\infty }L_{\mathbf{x,x}^{\prime }}^{2}} \\
&\leqslant &C_{T}C^{k+1}N^{-\frac{1}{2}+}\frac{1}{\sqrt{N}}\left\Vert
\left\langle \nabla _{x_{k+1}}\right\rangle ^{2}\left\langle \nabla
_{x_{k+1}^{\prime }}\right\rangle \alpha ^{(k+1)}(t_{k+1})\right\Vert
_{L_{t}^{\infty }L_{\mathbf{x,x}^{\prime }}^{2}}.
\end{eqnarray*}%
which is good enough to conclude the proof of Lemma \ref%
{Lem:KeyOfLocInConvergence}.
\end{proof}

\section{Proof of Theorem \protect\ref{THM:KM Bound}\label{sec:pf of thm2}}

We will use Littlewood-Paley theory to prove Theorem \ref{THM:KM Bound}. Let 
$P_{\leq M}^{i}$ be the projection onto frequencies $\leq M$ and $P_{M}^{i}$
the analogous projections onto frequencies $\sim M$, acting on functions of $%
x_{i}\in \mathbb{R}^{3}$ (the $i$th coordinate). We take $M$ to be a dyadic
frequency range $2^{\ell }\geq 1$. Similarly, we define $P_{\leq
M}^{i^{\prime }}$ and $P_{M}^{i^{\prime }}$, which act on the variable $%
x_{i}^{\prime }$. Let 
\begin{equation}
P_{\leq M}^{(k)}=\prod_{i=1}^{k}P_{\leq M}^{i}P_{\leq M}^{i^{\prime }}.
\label{E:LP-def}
\end{equation}%
As observed in earlier work \cite%
{TChenAndNPSpace-Time,Chen3DDerivation,C-H2/3}, to establish Theorem \ref%
{THM:KM Bound}, it suffices to prove the following theorem.\footnote{%
To be precise, this formulation with frequency localization is from \cite%
{C-H2/3}. The formulations in \cite{TChenAndNPSpace-Time,Chen3DDerivation}
do not have the Littlewood-Paley projector inside.}

\begin{theorem}
\label{THM:KMwithL-P}Under the assumptions of Theorem \ref{THM:KM Bound},
there exists a $C$ (independent of $k,M,N$) such that for each $M\geqslant 1$
there exists $N_{0}$ (depending on $M$) such that for $N\geqslant N_{0}$,
there holds 
\begin{equation}
\Vert P_{\leq M}^{(k)}R^{(k)}\tilde{B}_{N,j,k+1}\gamma _{N}^{(k+1)}(t)\Vert
_{L_{T}^{1}L_{\mathbf{x},\mathbf{x}^{\prime }}^{2}}\leqslant C^{k}.
\label{estimate:main target}
\end{equation}
\end{theorem}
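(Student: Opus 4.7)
The plan is to adapt the standard Klainerman--Machedon iterated Duhamel expansion to the new hierarchy (\ref{Hierarchy: Real BBGKY at k level}) for $\alpha_N^{(k)}$ rather than to the ordinary BBGKY hierarchy for $\gamma_N^{(k)}$. Since $\gamma_N^{(k+1)}$ and $\alpha_N^{(k+1)}$ differ by multiplication by $G_N^{(k+1)}$ on each side, which converges to the identity in operator norm by Lemma \ref{Lemma:SameLimit}, and since $N_0$ is allowed to depend on $M$, it suffices to prove the target bound with $\alpha_N^{(k+1)}$ in place of $\gamma_N^{(k+1)}$, up to errors vanishing as $N \to \infty$ for fixed $M$. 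Moreover, expanding the $G$-factors inside $\tilde B_{N,j,k+1}\gamma_N^{(k+1)}$ produces precisely the decomposition $\tilde B_N^{(k+1)}+\tilde B_{N,\mathrm{many}}^{(k+1)}$ of Section~\ref{Sec:New BBGKY}, so no new operators arise.

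Next, I would iterate (\ref{Hierarchy: Real BBGKY at k level}) a controlled number $K_c \sim ck$ of times, expanding only the two-body interaction part $IP$ at each step. This represents $\alpha_N^{(k)}$ as a sum of four classes of terms at each coupling level $0\leq j\leq K_c$: iterated free parts $FP^{k,j}$, potential parts $PP^{k,j}$ coming from the three-body corrections $A_N^{(k)}$ and $E_N^{(k)}$, $k$-body interaction parts $KIP^{k,j}$ from $\tilde B_{N,\mathrm{many}}$, and purely iterated two-body chains $IP^{k,j}$. The Klainerman--Machedon board game then collapses the resulting $j!$ time orderings into at most $C^{k+j}$ ``upper echelon'' equivalence classes, so combinatorial complexity stays under control. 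The deepest $IP^{k,K_c}$ remainder term is handled by brute Hilbert--Schmidt/trace bounds using (\ref{bound:k-energy estimate}) and shown to vanish as $K_c\to\infty$.

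The third step is to estimate each surviving summand under $P_{\leq M}^{(k)}R^{(k)}\tilde B_{N,j,k+1}$ in $L^1_T L^2_{\mathbf{x},\mathbf{x}'}$. The contributions from $PP^{k,j}$ and $KIP^{k,j}$ already carry a negative power of $N$ by Lemmas \ref{Lem:key three-body in convergence}, \ref{Lem:key 1st order in convergence}, and \ref{Lem:KeyOfLocInConvergence}; this survives the extra derivatives in $R^{(k)}$ and the frequency projection $P_{\leq M}^{(k)}$ with only a polynomial loss in $M$, and is therefore absorbed by taking $N\geq N_0(M)$. The purely two-body chain $IP^{k,j}$ is the core of the argument: for each $\tilde B_N$ vertex one applies the $3$D endpoint Strichartz estimate of Keel--Tao together with the $X_{s,b}$ machinery of Lemma \ref{Lemma:b to b-1} and Corollary \ref{corollary:basic corollary for 3D endpoint}, and one invokes the Littlewood--Paley dichotomy from Section~\ref{Sec:org of paper}, namely the balancing of
\[
\bigl\|P_M^1 P_M^2 |\nabla_{x_1}||\nabla_{x_2}| V_N(x_1-x_2)\bigr\|_{L^2}\lesssim \min\bigl(M^2\|V_N\|_{L^2},\ N^{2\beta}\|P_M^1 P_M^2 V''_N(x_1-x_2)\|_{L^2}\bigr),
\]
which effectively gains one derivative and removes the factor $N^\beta$ that would otherwise be fatal near $\beta=1$. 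Summing geometrically over dyadic scales yields a bound by $C^k$, uniform in $M$ and $N$.

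The main obstacle I anticipate is exactly the $IP^{k,j}$ chain for $\beta$ close to $1$: a direct endpoint Strichartz bound produces an unaffordable $N^\beta$ at each two-body vertex, so the frequency-localization dichotomy above must be pushed through the whole iterated expansion \emph{uniformly} in the $j$ Duhamel variables, in a way compatible with the board-game reorganization, and the resulting double (spatial and temporal) summation over dyadic blocks must close at a rate independent of $M$. This is where the fact that the new hierarchy (\ref{Hierarchy: Real BBGKY at k level}) has no un-integrated two-body term becomes essential: every singular interaction is either concealed inside the three-body potential pieces $A_N^{(k)},E_N^{(k)},\tilde B_{N,\mathrm{many}}$ (where it is small by Section~\ref{sec:pf of thm1}) or protected by a Duhamel time-integral on which the frequency-localized Strichartz gain can be applied, and this structural separation is what extends the Klainerman--Machedon bound to the full range $\beta\in(0,1)$.
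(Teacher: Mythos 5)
Your overall architecture (Duhamel iteration of the new hierarchy, board game, separate treatment of $FP$, $PP$, $KIP$, $IP$) matches the paper, but there are two genuine gaps. First, the coupling depth: you iterate $K_c\sim ck$ times and propose to kill the remainder $IP^{k,K_c}$ by "brute Hilbert--Schmidt/trace bounds," claiming it "vanishes as $K_c\to\infty$." This does not work: for fixed $N$ the terminal term involves $\tilde B_N$ applied to $\alpha_N^{(k+K_c+1)}$ under $R^{(k-1)}$, and estimating it through the energy bound (\ref{bound:k-energy estimate}) costs positive powers of $N$ that a fixed (i.e.\ $N$-independent) number of couplings cannot absorb. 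The paper follows Chen--Pavlovi\'c and takes $p=\ln N$, so that the factor $(CT^{1/4})^{p}=N^{\ln(CT^{1/4})}$ from the time integrations beats any fixed polynomial loss in $N$; this is stated explicitly as the reason (\ref{estimate:ip}) is "difficult to show unless $p=\ln N$," and with $p=\ln N$ the $FP$ and $IP$ estimates are imported from \cite{Chen3DDerivation,C-H2/3} for \emph{all} $\beta\in(0,\infty)$.

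Second, you have mislocated both the role of the Littlewood--Paley dichotomy and the source of the restriction $\beta<1$. Each two-body vertex in the $IP$ chain is estimated by Lemma \ref{Lemma:LocalizedKM}/\ref{Lemma:LocalizedKMWithX_b}, which costs only $\|\tilde V_N\|_{L^1}\sim 1$ times the dyadic factor $(M_k/M_{k+1})^{1-\varepsilon}$; no $N^{\beta}$ appears in the chain, so there is nothing to "push through uniformly in the $j$ Duhamel variables." The frequency dichotomy $\sum_{M_{k+q}\geqslant M_{k-1}}\min(M_{k+q}^{-1+2\varepsilon}N^{a\beta},M_{k+q}^{b+2\varepsilon})$ is applied exactly once, to the \emph{terminal} $PP$ and $KIP$ terms, and the only place requiring $\beta<1$ is the three-body potential $A_N^{(k)}$ (Corollary \ref{Corollary:Key of 3-body}), where the sum yields $N^{-2+}N^{2\beta+2\varepsilon}$. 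Relatedly, your claim that the Section 3 convergence lemmas "survive the extra derivatives in $R^{(k)}$ and the projection $P_{\leq M}^{(k)}$ with only a polynomial loss in $M$" is precisely the nontrivial content you cannot wave away: once $R^{(k)}$ is present, derivatives can land on the singular factors $V_N$, $\nabla w_N$, and controlling this requires the 6D endpoint Strichartz estimates in transformed coordinates, the careful distribution of derivatives, and the $\min(M\cdots,N^{\beta}\cdots)$ bounds of Propositions \ref{P:Str29}, \ref{P:2body-potential-prep}, \ref{P:Str22} — which is the bulk of Section 5 and the actual novelty of the proof.
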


In fact, passing to the weak limit $\gamma _{N}^{(k)}\rightarrow \gamma
^{(k)}$ as $N\rightarrow \infty $, we obtain 
\begin{equation*}
\Vert P_{\leq M}^{(k)}R^{(k)}B_{j,k+1}\gamma ^{(k+1)}\Vert _{L_{T}^{1}L_{%
\mathbf{x},\mathbf{x}^{\prime }}^{2}}\leqslant C^{k}
\end{equation*}%
Since it holds uniformly in $M$, we can send $M\rightarrow \infty $ and, by
the monotone convergence theorem, we obtain 
\begin{equation*}
\Vert R^{(k)}B_{j,k+1}\gamma ^{(k+1)}\Vert _{L_{T}^{1}L_{\mathbf{x},\mathbf{x%
}^{\prime }}^{2}}\leqslant C^{k}
\end{equation*}%
which is exactly the Klainerman-Machedon space-time bound (\ref{bound:target
KM bound}). This completes the proof Theorem \ref{THM:KM Bound}, assuming
Theorem \ref{THM:KMwithL-P}.

The rest of this section is devoted to proving Theorem \ref{THM:KMwithL-P}.
We will first establish estimate (\ref{estimate:main target}) for a
sufficiently small $T$ which depends on the controlling constant in
condition (\ref{bound:k-energy estimate}) and is independent of $k$, $N$ and 
$M,$ then a bootstrap argument together with condition (\ref{bound:k-energy
estimate}) give estimate (\ref{estimate:main target}) for every finite time
at the price of a larger constant $C$. The first step of the proof of
Theorem \ref{THM:KMwithL-P} is to iterate (\ref{Hierarchy: Real BBGKY at k
level}) $p$ times and get to the formula%
\begin{equation*}
\alpha _{N}^{(k)}(t_{k})=\mathnormal{FP}^{k,p}(t_{k})+\mathnormal{PP}%
^{k,p}(t_{k})+\mathnormal{KIP}^{k,p}(t_{k})+\mathnormal{IP}^{k,p}(t_{k}),
\end{equation*}%
then estimate each term, that is, prove the following estimates:%
\begin{eqnarray}
\left\Vert P_{\leq M}^{(k-1)}R^{(k-1)}\tilde{B}_{N,1,k}\mathnormal{FP}%
^{k,p}\right\Vert _{L_{T}^{1}L_{\mathbf{x,x}^{\prime }}^{2}} &\leqslant
&C^{k-1},  \label{estimate:fp} \\
\left\Vert P_{\leq M}^{(k-1)}R^{(k-1)}\tilde{B}_{N,1,k}\mathnormal{PP}%
^{k,p}\right\Vert _{L_{T}^{1}L_{\mathbf{x,x}^{\prime }}^{2}} &\leqslant
&C^{k-1},  \label{estimate:pp} \\
\left\Vert P_{\leq M}^{(k-1)}R^{(k-1)}\tilde{B}_{N,1,k}K\mathnormal{IP}%
^{k,p}\right\Vert _{L_{T}^{1}L_{\mathbf{x,x}^{\prime }}^{2}} &\leqslant
&C^{k-1},  \label{estimate:kip} \\
\left\Vert P_{\leq M}^{(k-1)}R^{(k-1)}\tilde{B}_{N,1,k}\mathnormal{IP}%
^{k,p}\right\Vert _{L_{T}^{1}L_{\mathbf{x,x}^{\prime }}^{2}} &\leqslant
&C^{k-1}.  \label{estimate:ip}
\end{eqnarray}%
for all $k\geqslant 2$ and for some $C$ and a sufficiently small $T$
determined by the controlling constant in condition (\ref{bound:k-energy
estimate}) and independent of $k$, $N$ and $M$. Here, we iterate (\ref%
{Hierarchy: Real BBGKY at k level}) because it is difficult to show (\ref%
{estimate:ip}) unless $p=\ln N$, a fact first observed by Chen and Pavlovic 
\cite{TChenAndNPSpace-Time}, who proved (\ref{bound:target KM bound}) for $%
\beta \in \left( 0,1/4\right) $, and then used in the $\beta \in \left( 0,2/7%
\right] $ work \cite{Chen3DDerivation}\ by X.C and in the $\beta \in \left(
0,2/3\right) $ work \cite{C-H2/3}\ by X.C and J.H. As proven in \cite%
{Chen3DDerivation,C-H2/3}, once $p$ is set to be $\ln N$, one can prove
estimates (\ref{estimate:fp}) and (\ref{estimate:ip}) for all $\beta \in
\left( 0,\infty \right) $. The obstacle in achieving higher $\beta $ lies
solely in proving (\ref{estimate:pp}) and (\ref{estimate:kip}). Hence, in
the rest of this section, we prove estimates (\ref{estimate:pp}) and (\ref%
{estimate:kip}) only and refer the readers to \cite{Chen3DDerivation,C-H2/3}
for the proof of estimates (\ref{estimate:fp}) and (\ref{estimate:ip}).

To make formulas shorter, for $q\geqslant 1$, we introduce the following
notation:%
\begin{equation*}
J_{N}^{(k,q)}(\underline{t}_{k,q})(f^{(k+q)})=\left( U^{(k)}(t_{k}-t_{k+1})%
\tilde{B}_{N}^{(k+1)}\right) \cdots \left( U^{(k+q-1)}(t_{k+q-1}-t_{k+q})%
\tilde{B}_{N}^{(k+q)}\right) f^{(k+q)}\text{,}
\end{equation*}%
where $\underline{t}_{k,q}$ means $\left( t_{k+1},\ldots ,t_{k+q}\right) .$
When $q=0$, the above product is degenerate and we let 
\begin{equation*}
J_{N}^{(k,0)}(\underline{t}_{k,0})(f^{(k)})=f^{(k)}.
\end{equation*}

Now plug the $(k+1)$ version of (\ref{Hierarchy: Real BBGKY at k level})
into the \emph{last term only} of (\ref{Hierarchy: Real BBGKY at k level})
to obtain 
\begin{equation*}
\alpha _{N}^{(k)}(t_{k})=\mathnormal{FP}^{k,1}(t_{k})+\mathnormal{PP}%
^{k,1}(t_{k})+\mathnormal{KIP}^{k,1}(t_{k})+\mathnormal{IP}^{k,1}(t_{k})
\end{equation*}%
where the \emph{free part} is 
\begin{eqnarray*}
&&\mathnormal{FP}^{k,1}(t_{k}) \\
&=&U^{(k)}(t_{k})\alpha _{N,0}^{(k)}+\left( -i\right)
\int_{0}^{t_{k}}U^{(k)}(t_{k}-t_{k+1})\tilde{B}%
_{N}^{(k+1)}U^{(k+1)}(t_{k+1})\alpha _{N,0}^{(k+1)}\,dt_{k+1} \\
&=&\sum_{q=0}^{1}\left( -i\right) ^{q}\int_{0\leq t_{k+q-1}\leq \cdots \leq
t_{k}}J_{N}^{(k,q)}(\underline{t}_{k,q})( f_{FP}^{(k,q)})d\underline{t}_{k,q}
\end{eqnarray*}%
with%
\begin{equation}
f_{FP}^{(k,q)}=U^{(k+q)}(t_{k+q})\alpha _{N,0}^{(k+q)}  \label{def:f_FP}
\end{equation}%
the \emph{potential part} is%
\begin{eqnarray*}
&&\mathnormal{PP}^{k,1}(t_{k}) \\
&=&-i\int_{0}^{t_{k}}U^{(k)}(t_{k}-t_{k+1})\tilde{V}_{N}^{(k)}\alpha
_{N}^{(k)}\left( t_{k+1}\right) dt_{k+1} \\
&&+\left( -i\right)
^{2}\int_{0}^{t_{k}}\int_{0}^{t_{k+1}}U^{(k)}(t_{k}-t_{k+1})\tilde{B}%
_{N}^{(k+1)}U^{(k+1)}(t_{k+1}-t_{k+2})\tilde{V}_{N}^{(k+1)}\alpha
_{N}^{(k+1)}\left( t_{k+2}\right) d\underline{t}_{k,2} \\
&=&\sum_{q=0}^{1}\left( -i\right) ^{q+1}\int_{0\leq t_{k+q-1}\leq \cdots
\leq t_{k}}J_{N}^{(k,q)}(\underline{t}_{k,q})(f_{PP}^{(k,q)})d\underline{t}%
_{k,q}
\end{eqnarray*}%
with%
\begin{equation}
f_{PP}^{(k,q)}=\int_{0}^{t_{k+q}}U^{(k+q)}(t_{k+q}-t_{k+q+1})\tilde{V}%
_{N}^{(k+q)}\alpha _{N}^{(k+q)}\left( t_{k+q+1}\right) dt_{k+q+1}
\label{def:f_PP}
\end{equation}%
the \emph{k-body} \emph{interaction part} is%
\begin{eqnarray*}
&&\mathnormal{KIP}^{k,1}(t_{k}) \\
&=&-i\int_{0}^{t_{k}}U^{(k)}(t_{k}-t_{k+1})\tilde{B}_{N,many}^{(k+1)}\alpha
_{N}^{(k+1)}\left( t_{k+1}\right) dt_{k+1} \\
&&+(-i)^{2}\int_{0}^{t_{k}}\int_{0}^{t_{k+1}}U^{(k)}(t_{k}-t_{k+1})\tilde{B}%
_{N}^{(k+1)}U^{(k+1)}(t_{k+1}-t_{k+2})\tilde{B}_{N,many}^{(k+2)}\alpha
_{N}^{(k+2)}\left( t_{k+2}\right) d\underline{t}_{k,2} \\
&=&\sum_{q=0}^{1}\left( -i\right) ^{q+1}\int_{0\leq t_{k+q-1}\leq \cdots
\leq t_{k}}J_{N}^{(k,q)}(\underline{t}_{k,q})(f_{KIP}^{(k,q)})d\underline{t}%
_{k,q}
\end{eqnarray*}%
with%
\begin{equation}
f_{KIP}^{(k,q)}=\int_{0}^{t_{k+q}}U^{(k+q)}(t_{k+q}-t_{k+q+1})\tilde{B}%
_{N,many}^{(k+q+1)}\alpha _{N}^{(k+q+1)}\left( t_{k+q+1}\right) dt_{k+q+1}
\label{def:f_LIP}
\end{equation}%
and the \emph{interaction part} is 
\begin{eqnarray*}
&&\mathnormal{IP}^{k,1}(t_{k}) \\
&=&(-i)^{2}\int_{0}^{t_{k}}\int_{0}^{t_{k+1}}U^{(k)}(t_{k}-t_{k+1})\tilde{B}%
_{N}^{(k+1)}U^{(k+1)}(t_{k+1}-t_{k+2})\tilde{B}_{N}^{(k+2)}\alpha
_{N}^{(k+2)}\left( t_{k+2}\right) \,d\underline{t}_{k,2} \\
&=&(-i)^{1+1}\int_{0}^{t_{k}}\int_{0}^{t_{k+1}}J_{N}^{(k,2)}(\underline{t}%
_{k,2})(\alpha _{N}^{(k+2)}\left( t_{k+2}\right) )d\underline{t}_{k,2}
\end{eqnarray*}

Now we iterate this process $(p-1)$ more times to obtain 
\begin{equation*}
\alpha _{N}^{(k)}(t_{k})=\mathnormal{FP}^{k,p}(t_{k})+\mathnormal{PP}%
^{k,p}(t_{k})+\mathnormal{KIP}^{k,p}(t_{k})+\mathnormal{IP}^{k,p}(t_{k})
\end{equation*}%
where the \emph{free part} is%
\begin{equation}
\mathnormal{FP}^{k,p}(t_{k})=\sum_{q=0}^{p}\left( -i\right) ^{q}\int_{0\leq
t_{k+q-1}\leq \cdots \leq t_{k}}J_{N}^{(k,q)}(\underline{t}%
_{k,q})(f_{FP}^{(k,q)})d\underline{t}_{k,q}.  \label{def:fp}
\end{equation}%
The \emph{potential part} is%
\begin{equation}
\mathnormal{PP}^{k,p}(t_{k})=\sum_{q=0}^{p}\left( -i\right)
^{q+1}\int_{0\leq t_{k+q-1}\leq \cdots \leq t_{k}}J_{N}^{(k,q)}(\underline{t}%
_{k,q})(f_{PP}^{(k,q)})d\underline{t}_{k,q}.  \label{def:pp}
\end{equation}%
The \emph{k-body interaction part} is%
\begin{equation}
\mathnormal{KIP}^{k,p}(t_{k})=\sum_{q=0}^{p}\left( -i\right)
^{q+1}\int_{0\leq t_{k+q-1}\leq \cdots \leq t_{k}}J_{N}^{(k,q)}(\underline{t}%
_{k,q})(f_{KIP}^{(k,q)})d\underline{t}_{k,q}.  \label{def:lip}
\end{equation}%
The \emph{interaction part} is 
\begin{equation}
\mathnormal{IP}^{k,p}(t_{k})=(-i)^{p+1}\int_{0\leq t_{k+p}\leq \cdots \leq
t_{k}}J_{N}^{(k+p+1)}(\underline{t}_{k,p+1})(\alpha _{N}^{(k+p+1)}\left(
t_{k+p+1}\right) )d\underline{t}_{k,p+1}.  \label{def:ip}
\end{equation}%
We then apply the Klainerman-Machedon board game to the free part, potential
part, $k$-body interaction part, and interaction part.

\begin{lemma}[Klainerman-Machedon board game]
\label{lemma:Klainerman-MachedonBoardGameForBBGKY}\cite%
{KlainermanAndMachedon}One can express 
\begin{equation*}
\int_{0\leq t_{k+q-1}\leq \cdots \leq t_{k}}J_{N}^{(k,q)}(\underline{t}%
_{k,q})(f^{(k+q)})d\underline{t}_{k,q},
\end{equation*}%
as a sum of at most $4^{q}$ terms of the form 
\begin{equation*}
\int_{D}J_{N}^{(k,q)}(\underline{t}_{k,q},\mu _{m})(f^{(k+q)})d\underline{t}%
_{k,q},
\end{equation*}%
or in other words, 
\begin{equation*}
\int_{0\leq t_{k+q-1}\leq \cdots \leq t_{k}}J_{N}^{(k,q)}(\underline{t}%
_{k,q})(f^{(k+q)})d\underline{t}_{k,q}=\sum_{m}\int_{D}J_{N}^{(k,q)}(%
\underline{t}_{k,q},\mu _{m})(f^{(k+q)})d\underline{t}_{k,q}.
\end{equation*}%
Here $D\subset \lbrack 0,t_{k}]^{q}$, $\mu _{m}$ are a set of maps from $%
\{k+1,\ldots ,k+q\}$ to $\{k,\ldots ,k+q-1\}$ satisfying $\mu _{m}(k+1)=k$
and $\mu _{m}(l)<l$ for all $l,$ and 
\begin{eqnarray*}
J_{N}^{(k,q)}(\underline{t}_{k,q},\mu _{m})(f^{(k+q)})
&=&U^{(k)}(t_{k}-t_{k+1})\tilde{B}_{N,k,k+1}U^{(k+1)}(t_{k+1}-t_{k+2})\tilde{%
B}_{N,\mu _{m}(k+2),k+2}\cdots \\
&&\cdots U^{(k+q-1)}(t_{k+q-1}-t_{k+q})\tilde{B}_{N,\mu
_{m}(k+q),k+q}(f^{(k+q)}).
\end{eqnarray*}
\end{lemma}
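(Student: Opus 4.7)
The plan is to execute the classical Klainerman--Machedon ``board game'' argument, adapted from \cite{KlainermanAndMachedon}, exploiting the bosonic symmetry of $\alpha_N^{(k+q)}$ (inherited from the symmetry of $\psi_N$) and the disjoint-support commutation of certain $\tilde{B}_{N,l,j}$'s.

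First I would expand $\tilde{B}_N^{(j+1)} = \sum_{l=1}^{j} \tilde{B}_{N,l,j+1}$ at every one of the $q$ levels in $J_N^{(k,q)}$, which produces
\begin{equation*}
\int_{0 \leq t_{k+q} \leq \cdots \leq t_{k+1} \leq t_k} J_N^{(k,q)}(\underline{t}_{k,q})(f^{(k+q)})\,d\underline{t}_{k,q}
= \sum_{\sigma} \int_{T_q} J_N^{(k,q)}(\underline{t}_{k,q},\sigma)(f^{(k+q)})\,d\underline{t}_{k,q},
\end{equation*}
where $\sigma$ runs over all maps $\sigma:\{k+1,\ldots,k+q\} \to \{1,\ldots,k+q-1\}$ with $\sigma(l)<l$ and $T_q$ is the time simplex. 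A crude count gives $k(k+1)\cdots(k+q-1)$ such $\sigma$'s, which is far too many.

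The next step is the combinatorial reduction. Because $\alpha_N^{(k+q+1)}$ is fully symmetric in $(x_1,\ldots,x_{k+q+1})$ and separately in the primed variables, any relabeling of the integration variables via a permutation $\pi$ of $\{1,\ldots,k+q\}$ leaves the value of the integral invariant once we track how $\pi$ acts on $\sigma$. In particular, one can always force $\sigma(k+1)=k$. The heart of the ``board game'' is to organize the remaining reductions as moves on a grid whose rows are hierarchy levels and columns are time indices: when the operator $\tilde{B}_{N,\sigma(j),j}$ acts only on coordinates disjoint from $\tilde{B}_{N,\sigma(j+1),j+1}$, we can swap the order of the two $t$-integrations (changing the region $T_q$ accordingly) and use symmetry to further canonicalize $\sigma$. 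Grouping together all $\sigma$ in the same orbit under these moves and merging their time regions into a single $D \subset [0,t_k]^q$, every term reduces to an integral of the prescribed form $\int_D J_N^{(k,q)}(\underline{t}_{k,q},\mu_m)(f^{(k+q)})\,d\underline{t}_{k,q}$ for some ``acceptable'' map $\mu_m$ with $\mu_m(k+1)=k$ and $\mu_m(l)<l$.

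The main obstacle, and the one non-trivial counting step, is to show that the number of acceptable $\mu_m$'s is bounded by $4^q$. I would prove this by induction on $q$: letting $N_q$ be the number of acceptable maps on $\{k+1,\ldots,k+q\}$, one encodes each map by a sequence of $q$ binary ``left/right, add/carry'' choices recording, at each step $l$, the relationship between $\mu(l)$ and the previously chosen values; a direct check shows each choice has at most $4$ possibilities, yielding $N_q \leq 4^q$. (Equivalently, acceptable maps biject with a subfamily of labelled binary trees on $q$ nodes, whose count is majorized by the Catalan-type bound $4^q$.) Once this counting is in hand, combined with the symmetry and region-merging described above, the lemma follows exactly as in \cite{KlainermanAndMachedon}; the only care required is that all time-swap moves preserve the ordered integration structure so that the final regions $D$ are pairwise disjoint up to null sets, which is immediate from the monotone structure of the simplex.
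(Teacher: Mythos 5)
Your proposal is correct and is essentially the standard Klainerman--Machedon board game argument (expand the collision operators, use the bosonic symmetry of $f^{(k+q)}$ and the commutation of the free propagators with permutations to perform ``acceptable moves,'' reduce to upper-echelon canonical forms $\mu_m$, merge the permuted time simplices into $D$, and bound the number of canonical classes by $4^{q}$). The paper does not prove this lemma itself but imports it directly from \cite{KlainermanAndMachedon}, and your outline matches that cited proof; the only step you leave genuinely schematic is the combinatorial count of echelon classes, which is exactly the content of the counting lemma in that reference.
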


\subsection{Estimate for the k-body Interaction Part\label{sec:estimate for
LIP}}

To make formulas shorter, let us write 
\begin{equation*}
R_{\leqslant M_{k}}^{(k)}=P_{\leqslant M_{k}}^{(k)}R^{(k)},
\end{equation*}%
since $P_{\leqslant M_{k}}^{(k)}$ and $R^{(k)}$ are usually bundled together.

\subsubsection{Step I}

Applying Lemma \ref{lemma:Klainerman-MachedonBoardGameForBBGKY} to (\ref%
{def:lip}), we get%
\begin{eqnarray*}
&&\left\Vert R_{\leqslant M_{k-1}}^{(k-1)}B_{N,1,k}\mathnormal{KIP}%
^{k,p}\right\Vert _{L_{T}^{1}L_{\mathbf{x,x}^{\prime }}^{2}} \\
&\leqslant &\sum_{q=0}^{p}\sum_{m}\left\Vert R_{\leqslant
M_{k-1}}^{(k-1)}B_{N,1,k}\int_{D}J_{N}^{(k,q)}(\underline{t}_{k,q},\mu
_{m})(f_{KIP}^{(k,q)})d\underline{t}_{k,q}\right\Vert _{L_{T}^{1}L_{\mathbf{%
x,x}^{\prime }}^{2}}
\end{eqnarray*}%
where $f_{KIP}^{(k,q)}$ is given by (\ref{def:f_LIP}) and the sum $\sum_{m}$
has at most $4^{q}$ terms inside. By Minkowski's integral inequality,%
\begin{eqnarray*}
&&\left\Vert R_{\leqslant M_{k-1}}^{(k-1)}B_{N,1,k}\int_{D}J_{N}^{(k,q)}(%
\underline{t}_{k,q},\mu _{m})(f_{KIP}^{(k,q)})d\underline{t}%
_{k,q}\right\Vert _{L_{T}^{1}L_{\mathbf{x,x}^{\prime }}^{2}} \\
&=&\int_{0}^{T}dt_{k}\left\Vert \int_{D}R_{\leqslant
M_{k-1}}^{(k-1)}B_{N,1,k}J_{N}^{(k,q)}(\underline{t}_{k,q},\mu
_{m})(f_{KIP}^{(k,q)})d\underline{t}_{k,q}\right\Vert _{L_{\mathbf{x,x}%
^{\prime }}^{2}}dt_{k} \\
&\leqslant &\int_{[0,T]^{q+1}}\left\Vert R_{\leqslant
M_{k-1}}^{(k-1)}B_{N,1,k}U^{(k)}(t_{k}-t_{k+1})\tilde{B}_{N,k,k+1}...\right%
\Vert _{L_{\mathbf{x,x}^{\prime }}^{2}}dt_{k}d\underline{t}_{k,q}.
\end{eqnarray*}%
Cauchy-Schwarz in the $t_{k}$ integration, 
\begin{equation*}
\leqslant T^{\frac{1}{2}}\int_{\left[ 0,T\right] ^{q}}\left( \int \left\Vert
R_{\leqslant M_{k-1}}^{(k-1)}B_{N,1,k}U^{(k)}(t_{k}-t_{k+1})\tilde{B}%
_{N,k,k+1}...\right\Vert _{L_{\mathbf{x,x}^{\prime }}^{2}}^2 dt_{k}\right) ^{%
\frac{1}{2}}d\underline{t}_{k,q}
\end{equation*}%
By Lemma \ref{Lemma:LocalizedKM}, 
\begin{equation*}
\leqslant C_{\varepsilon }T^{\frac{1}{2}}\sum_{M_{k}\geqslant M_{k-1}}\left( 
\frac{M_{k-1}}{M_{k}}\right) ^{1-\varepsilon }\int_{\left[ 0,T\right]
^{q}}\left\Vert R_{\leqslant M_{k}}^{(k)}\tilde
B_{N,k,k+1}U^{(k+1)}(t_{k+1}-t_{k+2})\cdots \right\Vert _{L_{\mathbf{x},%
\mathbf{x}^{\prime }}^{2}}d\underline{t}_{k,q}
\end{equation*}%
Iterate the previous steps $(q-1)$ times,%
\begin{eqnarray*}
&\leqslant &(C_{\varepsilon }T^{\frac{1}{2}})^{q}\sum_{M_{k+q-1}\geqslant
\cdots \geqslant M_{k}\geqslant M_{k-1}}\Big[\left( \frac{M_{k-1}}{M_{k}}%
\frac{M_{k}}{M_{k+1}}\cdots \frac{M_{k+q-2}}{M_{k+q-1}}\right)
^{1-\varepsilon } \\
&&\times \left\Vert R_{\leqslant M_{k+q-1}}^{(k+q-1)}B_{N,\mu
_{m}(k+q),k+q}\left( f_{KIP}^{(k,q)}\right) \right\Vert _{L_{T}^{1}L_{%
\mathbf{x,x}^{\prime }}^{2}}\Big] \\
&=&(C_{\varepsilon }T^{\frac{1}{2}})^{q}\sum_{M_{k+q-1}\geqslant \cdots
\geqslant M_{k}\geqslant M_{k-1}}\Big[\left( \frac{M_{k-1}}{M_{k+q-1}}%
\right) ^{1-\varepsilon } \\
&&\times \left\Vert R_{\leqslant M_{k+q-1}}^{(k+q-1)}\tilde{B}_{N,\mu
_{m}(k+q),k+q}\left( f_{KIP}^{(k,q)}\right) \right\Vert _{L_{T}^{1}L_{%
\mathbf{x,x}^{\prime }}^{2}}\Big]
\end{eqnarray*}%
where the sum is over all $M_{k},\ldots ,M_{k+q-1}$ dyadic such that $%
M_{k+q-1}\geqslant \cdots \geqslant M_{k}\geqslant M_{k-1}$.

Hence%
\begin{eqnarray*}
&&\left\Vert R_{\leqslant M_{k-1}}^{(k-1)}B_{N,1,k}\mathnormal{KIP}%
^{k,p}\right\Vert _{L_{T}^{1}L_{\mathbf{x,x}^{\prime }}^{2}} \\
&\leqslant &\sum_{q=0}^{p}(C_{\varepsilon }T^{\frac{1}{2}})^{q}%
\sum_{M_{k+q-1}\geqslant \cdots \geqslant M_{k}\geqslant M_{k-1}}\Big[\left( 
\frac{M_{k-1}}{M_{k+q-1}}\right) ^{1-\varepsilon } \\
&&\times \left\Vert R_{\leqslant M_{k+q-1}}^{(k+q-1)}\tilde{B}_{N,\mu
_{m}(k+q),k+q}\left( f_{KIP}^{(k,q)}\right) \right\Vert _{L_{T}^{1}L_{%
\mathbf{x,x}^{\prime }}^{2}}\Big]
\end{eqnarray*}%
We then insert a smooth cut-off $\theta (t)$ with $\theta (t)=1$ for $t\in %
\left[ -T,T\right] $ and $\theta (t)=0$ for $t\in \left[ -2T,2T\right] ^{c}$
into the above estimate to get%
\begin{eqnarray*}
&&\left\Vert R_{\leqslant M_{k-1}}^{(k-1)}B_{N,1,k}\mathnormal{KIP}%
^{k,p}\right\Vert _{L_{T}^{1}L_{\mathbf{x,x}^{\prime }}^{2}} \\
&\leqslant &\sum_{q=0}^{p}(C_{\varepsilon }T^{\frac{1}{2}})^{q}%
\sum_{M_{k+q-1}\geqslant \cdots \geqslant M_{k}\geqslant M_{k-1}}\Big[\left( 
\frac{M_{k-1}}{M_{k+q-1}}\right) ^{1-\varepsilon } \\
&&\times \left\Vert R_{\leqslant M_{k+q-1}}^{(k+q-1)}\tilde{B}_{N,\mu
_{m}(k+q),k+q}\left( \theta (t_{k+q})\tilde{f}_{KIP}^{(k,q)}\right)
\right\Vert _{L_{T}^{1}L_{\mathbf{x,x}^{\prime }}^{2}}\Big]
\end{eqnarray*}%
with%
\begin{equation}
\tilde{f}_{KIP}^{(k,q)}=\int_{0}^{t_{k+q}}U^{(k+q)}(t_{k+q}-t_{k+q+1})\theta
(t_{k+q+1})\tilde{B}_{N,many}^{(k+q+1)}\alpha _{N}^{(k+q+1)}\left(
t_{k+q+1}\right) dt_{k+q+1},  \label{def:f tutle_lip}
\end{equation}%
where the sum is over all $M_{k},\ldots ,M_{k+q-1}$ dyadic such that $%
M_{k+q-1}\geqslant \cdots \geqslant M_{k}\geqslant M_{k-1}$.

\subsubsection{Step II}

With Lemma \ref{Lemma:LocalizedKMWithX_b}, the $X_{b}$ space version of
Lemma \ref{Lemma:LocalizedKM}, we turn Step I into%
\begin{eqnarray*}
&&\left\Vert R_{\leqslant M_{k-1}}^{(k-1)}B_{N,1,k}\mathnormal{KIP}%
^{k,p}\right\Vert _{L_{T}^{1}L_{\mathbf{x,x}^{\prime }}^{2}} \\
&\leqslant &\sum_{q=0}^{p}(C_{\varepsilon }T^{\frac{1}{2}})^{q+1}%
\sum_{M_{k+q}\geqslant M_{k+q-1}\geqslant \cdots \geqslant M_{k}\geqslant
M_{k-1}}\Big[\left( \frac{M_{k-1}}{M_{k+q}}\right) ^{1-\varepsilon } \\
&&\times \left\Vert R_{\leqslant M_{k+q}}^{(k+q)}\left( \theta (t_{k+q})%
\tilde{f}_{KIP}^{(k,q)}\right) \right\Vert _{X_{\frac{1}{2}+}^{(k+q)}}\Big].
\end{eqnarray*}%
Use Lemma \ref{Lemma:b to b-1} gives us 
\begin{eqnarray*}
&\leqslant &\sum_{q=0}^{p}(C_{\varepsilon }T^{\frac{1}{2}})^{q+1}%
\sum_{M_{k+q}\geqslant M_{k+q-1}\geqslant \cdots \geqslant M_{k}\geqslant
M_{k-1}}\Big[\left( \frac{M_{k-1}}{M_{k+q}}\right) ^{1-\varepsilon } \\
&&\times \left\Vert R_{\leqslant M_{k+q}}^{(k+q)}\left( \theta (t_{k+q+1})%
\tilde{B}_{N,many}^{(k+q+1)}\alpha _{N}^{(k+q+1)}\left( t_{k+q+1}\right)
\right) \right\Vert _{X_{-\frac{1}{2}+}^{(k+q)}}\Big].
\end{eqnarray*}%
Carry out the sum in $M_{k}\leqslant \cdots \leqslant M_{k+q-1}$ with the
help of Lemma \ref{L:iterates3}:%
\begin{eqnarray*}
&\leqslant &\sum_{q=0}^{p}(C_{\varepsilon }T^{\frac{1}{2}})^{q+1}%
\sum_{M_{k+q}\geqslant M_{k-1}}\Big[\left( \frac{M_{k-1}}{M_{k+q}}\right)
^{1-\varepsilon }\left( \frac{(\log _{2}\frac{M_{k+q}}{M_{k-1}}+q)^{q}}{q!}%
\right) \\
&&\times \left\Vert R_{\leqslant M_{k+q}}^{(k+q)}\left( \theta (t_{k+q+1})%
\tilde{B}_{N,many}^{(k+q+1)}\alpha _{N}^{(k+q+1)}\left( t_{k+q+1}\right)
\right) \right\Vert _{X_{-\frac{1}{2}+}^{(k+q)}}\Big].
\end{eqnarray*}%
Take a $T^{j/4}$ from the front to apply Lemma \ref{L:iterates4}:%
\begin{eqnarray*}
&\leqslant &(C_{\varepsilon }T^{\frac{1}{2}})\sum_{q=0}^{p}\Bigg\{%
(C_{\varepsilon }T^{\frac{1}{4}})^{q}\sum_{M_{k+q}\geqslant M_{k-1}}\Big[%
\left( \frac{M_{k-1}^{1-2\varepsilon }}{M_{k+q}^{1-2\varepsilon }}\right) \\
&&\times \left\Vert R_{\leqslant M_{k+q}}^{(k+q)}\left( \theta (t_{k+q+1})%
\tilde{B}_{N,many}^{(k+q+1)}\alpha _{N}^{(k+q+1)}\left( t_{k+q+1}\right)
\right) \right\Vert _{X_{-\frac{1}{2}+}^{(k+q)}}\Big]\Bigg\}.
\end{eqnarray*}%
where the sum is over dyadic $M_{k+q}$ such that $M_{k+q}\geqslant M_{k-1}$.

\begin{lemma}[{\protect\cite[Lemma 3.1]{C-H2/3}}]
\label{L:iterates3} 
\begin{equation*}
\left( \sum_{M_{k-1}\leq M_{k}\leq \cdots \leq M_{k+j-1}\leq
M_{k+j}}1\right) \leq \frac{(\log _{2}\frac{M_{k+j}}{M_{k-1}}+j)^{j}}{j!},
\end{equation*}%
where the sum is in $M_{k}\leq \cdots \leq M_{k+j-1}$ over dyads, such that $%
M_{k-1}\leq M_{k}\leq \cdots \leq M_{k+j-1}\leq M_{k+j}$.
\end{lemma}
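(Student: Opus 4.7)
The plan is to reduce the claim to an elementary multiset counting problem. Writing each dyadic frequency as $M_i = 2^{\ell_i}$ with $\ell_i \in \mathbb{Z}_{\geq 0}$, the condition $M_a \leq M_b$ translates to $\ell_a \leq \ell_b$, so with the endpoints $M_{k-1}$ and $M_{k+j}$ fixed the sum on the left-hand side counts the number of weakly increasing integer sequences
\begin{equation*}
\ell_{k-1} \leq \ell_k \leq \ell_{k+1} \leq \cdots \leq \ell_{k+j-1} \leq \ell_{k+j},
\end{equation*}
equivalently, the number of ways to choose $j$ integers (with repetition, weakly increasing) from the set $\{\ell_{k-1}, \ell_{k-1}+1, \ldots, \ell_{k+j}\}$, a set of cardinality $N+1$ with $N := \log_{2}(M_{k+j}/M_{k-1})$.

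By the standard stars-and-bars identity, the number of such multisets of size $j$ drawn from $N+1$ values is $\binom{N+j}{j}$. The proof then concludes with the elementary estimate
\begin{equation*}
\binom{N+j}{j} = \frac{(N+j)(N+j-1)\cdots(N+1)}{j!} \leq \frac{(N+j)^j}{j!},
\end{equation*}
which upon substituting $N = \log_{2}(M_{k+j}/M_{k-1})$ yields the bound stated in the lemma.

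There is essentially no obstacle here: the entire content of the lemma is that one can pass from a sum over ordered tuples of dyads to a binomial coefficient and then apply the crude bound $\binom{n}{j} \leq n^j/j!$. The only point worth double-checking is the bookkeeping of which variables are summed and which are free — namely $M_k,\ldots,M_{k+j-1}$ are summed while $M_{k-1}$ and $M_{k+j}$ are fixed — so that the length of the sequence being counted is exactly $j$, matching the factor $j!$ in the denominator. With this verified, the estimate follows in a single display.
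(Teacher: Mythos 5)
Your proof is correct and complete: the reduction to counting weakly increasing integer sequences of length $j$ with values in a set of $N+1$ dyadic exponents, the exact multiset count $\binom{N+j}{j}$, and the crude bound $\binom{N+j}{j}\leq (N+j)^j/j!$ together give precisely the stated estimate, and your bookkeeping of which variables are summed is right. Note that the paper does not prove this lemma itself but cites it from \cite[Lemma 3.1]{C-H2/3}; your argument is a clean, self-contained combinatorial proof of the same elementary fact and requires no changes.
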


\begin{lemma}[{\protect\cite[Lemma 3.2]{C-H2/3}}]
\label{L:iterates4} For each $\alpha >0$ (possibly large) and each $\epsilon
>0$ (arbitrarily small), there exists $t>0$ (independent of $M$)
sufficiently small such that 
\begin{equation*}
\forall \;j\geq 1,\;\forall \;M\,,\quad \text{we have}\quad \frac{%
t^{j}(\alpha \log M+j)^{j}}{j!}\leq M^{\epsilon }
\end{equation*}
\end{lemma}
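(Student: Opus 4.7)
The plan is to invoke Stirling's inequality $j!\geq (j/e)^{j}$ to recast the left side as
\[
\frac{t^{j}(\alpha \log M+j)^{j}}{j!}\leq \left(\frac{te\,(\alpha \log M+j)}{j}\right)^{j},
\]
and then analyze the right side by splitting the range of $j$ into two regimes relative to $\alpha \log M$ (noting that $M\geq 1$ in the context of the paper since $M$ is a dyadic frequency $2^{\ell}\geq 1$).

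In the regime $j\geq \alpha \log M$, we have $\alpha \log M+j\leq 2j$, so the bound collapses to $(2te)^{j}$, which is at most $1\leq M^{\epsilon}$ provided $t\leq 1/(2e)$. This regime is therefore handled instantly.

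In the regime $1\leq j<\alpha \log M$, we have $\alpha \log M+j\leq 2\alpha \log M$, so the bound becomes $(c/j)^{j}$ with $c=2te\alpha \log M$. Viewing $j$ as a positive real variable and taking the logarithm gives $g(x)=x\log(c/x)$, whose derivative vanishes at $x=c/e$ and whose maximum value is $c/e$. Exponentiating, the maximum of $(c/x)^{x}$ is $e^{c/e}=\exp(2t\alpha \log M)=M^{2t\alpha}$. Choosing $t\leq \epsilon/(2\alpha )$ thus yields the desired bound $M^{\epsilon}$.

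Setting $t=\min\bigl(1/(2e),\,\epsilon/(2\alpha)\bigr)$ produces a threshold independent of both $j$ and $M$ for which the inequality holds. The only step that is not purely mechanical is the continuous optimization in the small-$j$ regime: a naive attempt to bound $(c/j)^{j}$ uniformly in integer $j$ (for instance by peeling off the $j=1$ term and estimating the tail) loses a factor of $\log M$ that cannot be reabsorbed into $M^{\epsilon}$. Relaxing $j$ to a real variable and using that the optimizer $j=c/e$ lives precisely on the scale of $\log M$ is what gives exactly the gain $M^{2t\alpha}$ needed to convert smallness of $t$ into smallness of the exponent of $M$.
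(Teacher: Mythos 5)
Your proof is correct. Note that the paper does not actually prove this lemma; it imports it verbatim from \cite[Lemma 3.2]{C-H2/3}, so there is no in-paper argument to compare against, but your reasoning stands on its own: Stirling's bound $j!\geq (j/e)^{j}$ is valid, the regime $j\geq \alpha\log M$ is killed by $t\leq 1/(2e)$, and in the regime $j<\alpha\log M$ the concavity of $x\mapsto x\log(c/x)$ makes $x=c/e$ a global maximizer with value $e^{c/e}=M^{2t\alpha}$, so $t\leq \epsilon/(2\alpha)$ finishes it (using $M\geq 1$ throughout, which you correctly flag). A slightly more economical route that avoids the calculus step is to bound $(\alpha\log M+j)^{j}\leq (2\alpha\log M)^{j}+(2j)^{j}$ and then use $c^{j}/j!\leq e^{c}$ (a single term of the exponential series) on the first piece and Stirling on the second; this yields the same two constraints $t\leq \epsilon/(2\alpha)$ and $t\leq 1/(2e)$ and the same conclusion, so the two arguments are essentially equivalent in content. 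Your closing remark about the $j=1$ term is a slight red herring — $\log M\lesssim_{\epsilon} M^{\epsilon}$ uniformly in $M\geq 1$, so that term is harmless either way — but it does not affect the validity of the proof.
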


\subsubsection{Step III}

Recall the ending result of Step II,

\begin{eqnarray*}
&&\left\Vert R_{\leqslant M_{k-1}}^{(k-1)}B_{N,1,k}\mathnormal{KIP}%
^{k,p}\right\Vert _{L_{T}^{1}L_{\mathbf{x,x}^{\prime }}^{2}} \\
&\leqslant &(C_{\varepsilon }T^{\frac{1}{2}})\sum_{q=0}^{p}\Bigg\{%
(C_{\varepsilon }T^{\frac{1}{4}})^{q}\sum_{M_{k+q}\geqslant M_{k-1}}\Big[%
\left( \frac{M_{k-1}^{1-2\varepsilon }}{M_{k+q}^{1-2\varepsilon }}\right) \\
&&\times \left\Vert R_{\leqslant M_{k+q}}^{(k+q)}\left( \theta (t_{k+q+1})%
\tilde{B}_{N,many}^{(k+q+1)}\alpha _{N}^{(k+q+1)}\left( t_{k+q+1}\right)
\right) \right\Vert _{X_{-\frac{1}{2}+}^{(k+q)}}\Big]\Bigg\}.
\end{eqnarray*}%
Use Corollary \ref{Corollary:Key of loc},%
\begin{eqnarray*}
&\leqslant &(C_{\varepsilon }T^{\frac{1}{2}})\sum_{q=0}^{p}\Bigg\{%
(C_{\varepsilon }T^{\frac{1}{4}})^{q}\sum_{M_{k+q}\geqslant M_{k-1}}\Big[%
\left( \frac{M_{k-1}^{1-2\varepsilon }}{M_{k+q}^{1-2\varepsilon }}\right) \\
&&\times C^{k+q+1}N^{-\frac{1}{2}+}\min (N^{\beta },M_{k+q})\frac{1}{\sqrt{N}%
}\left\Vert \theta (t_{k+q+1})S_{1}S^{\left( k+q+1\right) }\alpha
_{N}^{(k+q+1)}\right\Vert _{L_{t_{k+q+1}}^{2}L_{\mathbf{x}}^{2}L_{\mathbf{x}%
^{\prime }}^{2}}\Big]\Bigg\},
\end{eqnarray*}%
because there are $(k+q)$ terms inside $\tilde{B}_{N,many}^{(k+q+1)}$.
Rearranging terms%
\begin{eqnarray*}
&\leqslant &C^{k}(C_{\varepsilon }T^{\frac{1}{2}})\sum_{q=0}^{p}\Bigg\{(CT^{%
\frac{1}{4}})^{q}\frac{1}{\sqrt{N}}\left\Vert \theta
(t_{k+q+1})S_{1}S^{\left( k+q+1\right) }\alpha _{N}^{(k+q+1)}\right\Vert
_{L_{t_{k+q+1}}^{2}L_{\mathbf{x}}^{2}L_{\mathbf{x}^{\prime }}^{2}} \\
&&M_{k-1}^{1-2\varepsilon }N^{-\frac{1}{2}+}\sum_{M_{k+q}\geqslant
M_{k-1}}\min (M_{k+q}^{-1+2\varepsilon }N^{\beta },M_{k+q}^{2\varepsilon })%
\Bigg\},
\end{eqnarray*}

We carry out the sum in $M_{k+q}$ by dividing into $M_{k+q}\leqslant
N^{\beta }$ (for which $\min (M_{k+q}^{-1+2\varepsilon }N^{\beta
},M_{k+q}^{2\varepsilon })=M_{j}^{2\epsilon }$) and $M_{k+q}\geqslant
N^{\beta }$ (for which $\min (M_{k+q}^{-1+2\varepsilon }N^{\beta
},M_{k+q}^{2\varepsilon })=M_{k+q}^{-1+2\varepsilon }N^{\beta }$). This
yields%
\begin{eqnarray}
\sum_{M_{k+q}\geqslant M_{k-1}}\min (M_{k+q}^{-1+2\varepsilon }N^{\beta
},M_{k+q}^{2\varepsilon }) &\lesssim &\sum_{N^{\beta }\geqslant
M_{k+q}\geqslant M_{k-1}}\left( ...\right) +\sum_{M_{k+q}\geqslant
M_{k-1},M_{k+q}\geqslant N^{\beta }}\left( ...\right)  \label{sum:key in L-P}
\\
&\lesssim &\sum_{N^{\beta }\geqslant M_{k+q}\geqslant
M_{k-1}}M_{k+q}^{2\epsilon }+\sum_{M_{k+q}\geqslant N^{\beta
}}M_{k+q}^{-1+2\varepsilon }N^{\beta }  \notag \\
&\lesssim &N^{2\epsilon }.  \notag
\end{eqnarray}

\begin{remark}
The above is exactly what we meant by writing "gains one derivative via
Littlewood-Paley" in \S \ref{Sec:org of paper}.
\end{remark}

So we have reached%
\begin{eqnarray*}
&&\left\Vert R_{\leqslant M_{k-1}}^{(k-1)}B_{N,1,k}\mathnormal{KIP}%
^{k,p}\right\Vert _{L_{T}^{1}L_{\mathbf{x,x}^{\prime }}^{2}} \\
&\leqslant &C^{k}(C_{\varepsilon }T^{\frac{1}{2}})\sum_{q=0}^{p}\Bigg\{(CT^{%
\frac{1}{4}})^{q}\frac{1}{\sqrt{N}}\left\Vert \theta
(t_{k+q+1})S_{1}S^{\left( k+q+1\right) }\alpha _{N}^{(k+q+1)}\right\Vert
_{L_{t_{k+q+1}}^{2}L_{\mathbf{x}}^{2}L_{\mathbf{x}^{\prime }}^{2}} \\
&&M_{k-1}^{1-2\varepsilon }N^{-\frac{1}{2}+2\varepsilon }\Bigg\}.
\end{eqnarray*}%
Via Condition $(\ref{bound:k-energy estimate})$ (the energy estimate), it
becomes%
\begin{eqnarray*}
&\leqslant &C^{k}(C_{\varepsilon }T^{\frac{1}{2}})\sum_{q=0}^{p}(CT^{\frac{1%
}{4}})^{q}C_{0}^{k+q+1}M_{k-1}^{1-2\varepsilon }N^{-\frac{1}{2}+2\varepsilon
} \\
&\leq &C^{k}(C_{\varepsilon }T^{\frac{1}{2}})\sum_{q=0}^{\infty }(CT^{\frac{1%
}{4}})^{q}C_{0}^{q+1}M_{k-1}^{1-2\varepsilon }N^{-\frac{1}{2}+2\varepsilon }.
\end{eqnarray*}%
We can then choose a $T$ independent of $M_{k-1}$, $k,$ $p$ and $N$ such
that the infinite series converges. We then have%
\begin{equation*}
\left\Vert R_{\leqslant M_{k-1}}^{(k-1)}B_{N,1,k}\mathnormal{KIP}%
^{k,p}\right\Vert _{L_{T}^{1}L_{\mathbf{x,x}^{\prime }}^{2}}\leqslant
C^{k-1}M_{k-1}^{1-2\varepsilon }N^{-\frac{1}{2}+2\varepsilon }
\end{equation*}%
for some $C$ larger than $C_{0}$. Therefore, on the one hand, there is a $C$
independent of $M_{k-1}$, $k,$ $p,$ and $N$ s.t. given a $M_{k-1}$, there
is\ $N_{0}(M_{k-1})$ which makes 
\begin{equation*}
\left\Vert R_{\leqslant M_{k-1}}^{(k-1)}B_{N,1,k}\mathnormal{KIP}%
^{k,p}\right\Vert _{L_{T}^{1}L_{\mathbf{x,x}^{\prime }}^{2}}\leqslant C^{k-1}%
\text{, for all }N\geqslant N_{0},
\end{equation*}%
on the other hand, 
\begin{equation*}
\left\Vert R_{\leqslant M_{k-1}}^{(k-1)}B_{N,1,k}\mathnormal{KIP}%
^{k,p}\right\Vert _{L_{T}^{1}L_{\mathbf{x,x}^{\prime }}^{2}}\rightarrow 0%
\text{ as }N\rightarrow \infty
\end{equation*}%
which matches Theorem \ref{THM:Convergence} as well. Whence we have finished
the proof of estimate (\ref{estimate:kip}).

\begin{corollary}
\label{Corollary:Key of loc} 
\begin{equation}  \label{E:Str24}
\begin{aligned} \hspace{0.3in}&\hspace{-0.3in} \left\| R_{\leq
M_{k+q}}^{(k+q)} \tilde B_{N,\mathrm{many}}^{(k+q+1)} \alpha_N^{(k+q+1)}
\right\|_{X_{-\frac12+}^{(k+q)}} \\ & \lesssim C^{k+q} \left( N^{-\frac12}
\|S_1 S^{(k+q+1)} \alpha_N^{(k+q+1)} \|_{L_t^2L_{\mathbf{x}\mathbf{x}'}^2}
\right) \left\{ \begin{aligned} & M_{k+q} N^{-\frac12+} \\ &
N^{\beta-\frac12+} \end{aligned} \right. \end{aligned}
\end{equation}
\end{corollary}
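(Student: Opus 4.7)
The plan is to upgrade the $L_T^\infty L^2_{\mathbf{x},\mathbf{x}'}$-type estimate in Lemma \ref{Lem:KeyOfLocInConvergence} to an $X_{-1/2+}^{(k+q)}$ bound with the Littlewood-Paley projector $P_{\leq M_{k+q}}^{(k+q)}$ in place, producing the min-of-two-bounds structure advertised in \eqref{E:Str24}. The extra gain is precisely the Littlewood-Paley trick previewed in Section \ref{Sec:org of paper}.

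First I would split $\tilde B_{N,\mathrm{many}}^{(k+q+1)}\alpha_N^{(k+q+1)} = \sum_{\ell=1}^{k+q} \tilde B_{N,\mathrm{many},\ell,k+q+1}\alpha_N^{(k+q+1)}$ and then further decompose each summand via Lemma \ref{lemma:term counting in L} into at most $8^{k+q}$ pieces of the form $\tilde B^+_{N,\mathrm{many},\ell,k+q+1,\sigma}\alpha_N^{(k+q+1)}$, each an integral against $\tilde V_N(x_\ell - x_{k+q+1})\, w_N(x_\sigma - x_{k+q+1})$ times a harmless bounded factor $A_\sigma$. The resulting combinatorial factor $(k+q)\cdot 8^{k+q}$ is absorbed into $C^{k+q}$. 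For each such piece, after inserting a smooth time cut-off $\theta(t)$, I would pass from the $X_{-1/2+}^{(k+q)}$ norm to a H\"older-in-$x_{k+q+1}$ estimate via the 3D endpoint Strichartz in the $X_b$ framework, as packaged by Lemma \ref{Lem:basic lem in KIP}, inequality \eqref{E:Str20}, in exactly the manner used in the proof of Lemma \ref{Lem:KeyOfLocInConvergence}. This peels off $\|\tilde V_N\|_{L^{1+}}\|N^{\beta-1}w_0(N^\beta\cdot)\|_{L^{3+}}\|A_\sigma\|_{L^\infty}\lesssim C^{k+q}N^{-1+}$, and leaves an $L_t^2 L^2_{\mathbf{x},\mathbf{x}'}$ norm of some derivative structure applied to $\alpha_N^{(k+q+1)}$. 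The $R^{(k+q)}$-derivatives not involving $x_\ell$, $x_\sigma$ or their primed counterparts commute through the $x_{k+q+1}$-integration and are absorbed directly into $S^{(k+q+1)}$ on $\alpha$.

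The core new step is the treatment of the "bad" derivatives $|\nabla_{x_\ell}|$ and $|\nabla_{x_\sigma}|$ that touch the potentials. Here I would apply the trick from Section \ref{Sec:org of paper}: transferring such a derivative to $\alpha$ and using $\|P_{\leq M_{k+q}}|\nabla|\cdot\|_{L^p}\lesssim M_{k+q}\|\cdot\|_{L^p}$ produces a factor $M_{k+q}$, while integrating by parts instead places it on $\tilde V_N$ or $w_N$, where the $N^\beta$ rescaling produces a factor $N^\beta$. Choosing the better of the two strategies yields the two branches $M_{k+q}N^{-\frac12+}$ and $N^{\beta-\frac12+}$ of the min. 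The single remaining "extra" derivative produced along the way is captured by the $\langle\nabla_{x_1}\rangle$ that distinguishes $S_1 S^{(k+q+1)}$ from $S^{(k+q+1)}$, and the overall $N^{-\frac12}$ in front arises by peeling off half of the $N^{-1+}$ gain so that the energy side assembles into the form $\tfrac{1}{\sqrt N}\|S_1 S^{(k+q+1)}\alpha_N^{(k+q+1)}\|_{L^2_tL^2_{\mathbf{x},\mathbf{x}'}}$ used downstream in Section \ref{sec:estimate for LIP}.

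The main obstacle will be the precise derivative bookkeeping: ensuring that after Leibniz-distributing and Bernstein-estimating, the Littlewood-Paley branch produces exactly one power of $M_{k+q}$ and the integration-by-parts branch exactly one power of $N^\beta$, with all other weights on $\alpha$ combining cleanly into $S_1 S^{(k+q+1)}$ with no loss. A secondary technicality is verifying that the $L^{1+}\times L^{3+}$ split is sharply compatible with the $X_{-1/2+}^{(k+q)}$-valued Strichartz packaging provided by Lemma \ref{Lem:basic lem in KIP}, since any deviation in the H\"older exponents would spoil the $N^{-1+}$ factor that the whole scheme is balanced on.
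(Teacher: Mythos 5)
Your route is the paper's route: the decomposition via \eqref{E:B-many-decomp} and Lemma \ref{lemma:term counting in L} into $\lesssim (k+q)\,8^{k+q}$ pieces, the dichotomy ``bound the offending derivative by $M_{k+q}$ using $P^{(k+q)}_{\leq M_{k+q}}\nabla \leq M_{k+q}$, or distribute it and pay $N^{\beta}$ when it lands on the potential,'' and the H\"older split $L^{1+}\times L^{3+}$ (resp.\ $L^{1+}\times L^{\frac32+}$ with the $N^\beta$) from Lemma \ref{Lem:basic lem in KIP} are exactly the content of Proposition \ref{P:Str22}, which is what the paper invokes term by term. Your exponent accounting ($\|\tilde V_N\|_{L^{1+}}\|w_N\|_{L^{3+}}\sim N^{-1+}$, split as $N^{-\frac12}\cdot N^{-\frac12+}$, with the extra derivative on the integrated variable absorbed into $S_1S^{(k+q+1)}$ by symmetry) is also correct. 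One terminological slip: the $N^\beta$ branch comes from the Leibniz rule applied to $\nabla_{x_\ell}\bigl[\tilde V_N(x_\ell-x_{k+q+1})\,\alpha\bigr]$, not from integration by parts; and the Leibniz expansion produces \emph{both} terms, so the ``min'' is taken at the level of the whole expression, not by choosing where a single derivative goes.

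The one genuine gap is your claim that the derivatives of $R^{(k+q)}$ in the coordinates other than $x_\ell$, $x_\sigma$ ``commute through the $x_{k+q+1}$-integration and are absorbed directly into $S^{(k+q+1)}$.'' They do not commute: the factor $A_\sigma=\prod_{j\neq \ell,\sigma}Z_jZ_j'$ with $Z_j=N^{\beta-1}w_0(N^\beta(x_j-x_{k+q+1}))$ depends on every one of those coordinates, and $\nabla_{x_j}Z_j$ is of size $N^{2\beta-1}$ in $L^\infty$, which is not harmless for $\beta>\frac12$. The paper Leibniz-distributes these derivatives as well (this is the source of the $2^{k+q-1}$ additional terms in its proof of the corollary), and the terms where a derivative lands on a $Z_j$ are controlled by the $\|f\|_{L^3}$ branches of \eqref{E:Str20}: one has $\|\nabla_{x_j}Z_j\|_{L^3}=N^{\beta-1}\|\nabla w_0\|_{L^3}\lesssim 1$, at the cost of one extra Sobolev derivative on $\alpha_N^{(k+q+1)}$, which still fits under $S_1S^{(k+q+1)}$. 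Without this step your argument as written fails exactly on those terms; with it, it coincides with the paper's proof.
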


\begin{proof}
Recall \eqref{E:B-many-decomp}, which gives the expansion 
\begin{equation}  \label{E:Str26}
\tilde B_{N,\mathrm{many}}^{(k+q)} = \sum_{\ell=1}^{k+q} \tilde B_{N,\mathrm{%
many}, \ell,k+q+1}
\end{equation}
where $\tilde B_{N,\mathrm{many},\ell,k+q+1}$ is defined by \eqref{def:B,loc}
and itself decomposed in Lemma \ref{lemma:term counting in L} into a sum of
at most $8^{k+q}$ terms of the form 
\begin{equation}  \label{E:Str27}
\beta_N^{(k+q)} = \begin{aligned}[t]
&\frac{N-k}{N}\int_{\mathbb{R}^{3}}\tilde{V}_{N}(x_{l}-x_{k+q+1})N^{\beta
-1} w_{0}(N^{\beta }(x_{\sigma }-x_{k+q+1})) \\ &\qquad \qquad \qquad
A_{\sigma }\alpha _{N}^{(k+q+1)}(
\mathbf{x}_{k+q},x_{k+q+1};\mathbf{x}_{k+q}^{\prime
},x_{k+q+1})\,dx_{k+q+1}. \end{aligned}
\end{equation}
Here, $x_\sigma \in \{ x_1, \ldots, x_{k+q+1}, x_1^{\prime },\ldots,
x_{k+q+1}^{\prime }\}\backslash \{x_\ell\}$ and 
\begin{equation*}
A_\sigma = \prod_{\substack{ 1\leq j \leq k+q  \\ j\neq \ell, \; j\neq
\sigma }} Z_jZ_j^{\prime }
\end{equation*}
where $Z_j$ is either $1$ or $N^{\beta-1}w_0(N^\beta(x_j-x_{k+q+1}))$, and
likewise $Z_j^{\prime }$ is either $1$ or $N^{\beta-1}w_0(N^\beta(x_j^{%
\prime }-x_{k+q+1}))$. Since there are $(k+q)$ terms in \eqref{E:Str26} and $%
\leq 8^{k+q}$ terms of the type $\beta_N^{(k+q)}$ in \eqref{E:Str27}, we
multiply by a factor $C^{k+q}$. For each individual term $\beta_N^{(k+q)}$,
the derivatives $\nabla_{x_j}$ for $1\leq j \leq k+q$, $j\neq \ell$, $j\neq
\sigma$ can either land on $Z_jZ_j^{\prime }$ or $\alpha_N^{(k+q+1)}$,
giving $2^{k+q-1}$ terms. Each possibility is accommodated by a suitable
variant of Proposition \ref{P:Str22}. Of course, we actually need to modify %
\eqref{E:Str22} so that it has a $(k+q+1)$-component density (as opposed to $%
4$) and multiple factors of the type $f_N(x_1-x_4)$ in \eqref{E:Str22}, but
these modifications are straightforward and amount to bookkeeping. The
remaining coordinates act as ``passive variables'' and are placed in $L^2$
on the inside of the estimates, and otherwise do not play any role.
\end{proof}

\subsection{Estimate for the Potential Part}

Repeating Steps I and II in the treatment of the k-body interaction part, we
have%
\begin{eqnarray*}
&&\left\Vert R_{\leqslant M_{k-1}}^{(k-1)}B_{N,1,k}\mathnormal{PP}%
^{k,p}\right\Vert _{L_{T}^{1}L_{\mathbf{x,x}^{\prime }}^{2}} \\
&\leqslant &(C_{\varepsilon }T^{\frac{1}{2}})\sum_{q=0}^{p}\Bigg\{%
(C_{\varepsilon }T^{\frac{1}{4}})^{q}\sum_{M_{k+q}\geqslant M_{k-1}}\Big[%
\left( \frac{M_{k-1}^{1-2\varepsilon }}{M_{k+q}^{1-2\varepsilon }}\right) \\
&&\times \left\Vert R_{\leqslant M_{k+q}}^{(k+q)}\left( \theta (t_{k+q+1})%
\tilde{V}_{N}^{(k+q)}\alpha _{N}^{(k+q)}\left( t_{k+q+1}\right) \right)
\right\Vert _{X_{-\frac{1}{2}+}^{(k+q)}}\Big]\Bigg\}.
\end{eqnarray*}%
Recall%
\begin{equation*}
\tilde{V}_{N}^{(k)}\alpha _{N}^{(k)}=(A_{N}^{(k)}-A_{N}^{(k)^{\prime
}})\alpha _{N}^{(k)}+(E_{N}^{(k)}-E_{N}^{(k)^{\prime }})\alpha _{N}^{(k)}.
\end{equation*}%
From here on out, we will call $(A_{N}^{(k)}-A_{N}^{(k)^{\prime }})\alpha
_{N}^{(k)}$ the three-body potential term and $(E_{N}^{(k)}-E_{N}^{(k)^{%
\prime }})\alpha _{N}^{(k)}$ the two-body error term.

By Step III in the estimate of the $k$-body interaction term, it suffices to
prove the following two corollaries.

\begin{corollary}
\label{Corollary:Key of 3-body}Recall 
\begin{equation*}
A_{N,i,j,\ell }^{(k+q)}\alpha ^{(k+q)}=\frac{\nabla _{x_{\ell }}G_{N,\ell
,i}\cdot \nabla _{x_{\ell }}G_{N,\ell ,j}}{G_{N,\ell ,i}\;G_{N,\ell ,j}}%
\alpha ^{(k+q)},
\end{equation*}%
as defined in \eqref{formula: a piece of three body}. Then 
\begin{equation*}
\Vert R_{\leq M_{k+q}}^{(k+q)}A_{N,i,j,\ell }\alpha _{N}^{(k+q)}\Vert _{X_{-%
\frac{1}{2}+}^{(k+q)}}\lesssim \Vert S^{(k+q)}\alpha _{N}^{(k+q)}\Vert
_{L_{t}^{2}L_{\mathbf{x}\mathbf{x}^{\prime }}^{2}}\left\{ \begin{aligned} &
N^{3\beta-2} \\ & M_{k+q}^3 N^{-2+} \end{aligned}\right.
\end{equation*}
\end{corollary}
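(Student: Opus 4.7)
My plan is to reduce $A_{N,i,j,\ell}$ to a product of two scalar potentials and then extract the three outer derivatives $|\nabla_{x_i}|$, $|\nabla_{x_j}|$, $|\nabla_{x_\ell}|$ in two different ways, one giving each of the claimed bounds. Following the opening of the proof of Lemma \ref{Lem:key three-body in convergence}, I would first write $v_{2,N}(x)=\nabla G_N(x)/G_N(x)\sim \tilde v_{2,N}(x):=N^{2\beta-1}\langle N^\beta x\rangle^{-2}$, so that
\[
|A_{N,i,j,\ell}\alpha_N^{(k+q)}|\lesssim \tilde v_{2,N}(x_i-x_\ell)\,\tilde v_{2,N}(x_j-x_\ell)\,|\alpha_N^{(k+q)}|,
\]
and set $V_1=\tilde v_{2,N}(x_i-x_\ell)$, $V_2=\tilde v_{2,N}(x_j-x_\ell)$. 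For any index $m\notin\{i,j,\ell\}$ and every primed index $m'$, the Fourier multipliers $P_{\leq M_{k+q}}^m|\nabla_{x_m}|$ and $P_{\leq M_{k+q}}^{m'}|\nabla_{x_{m'}}|$ commute through the multiplication by $V_1V_2$ and, using $P_{\leq M}|\nabla_{x_m}|\leq S_m$, can be absorbed into the factor $S^{(k+q)}\alpha_N^{(k+q)}$ on the right. The task then reduces to bounding $\bigl\|P_{\leq M_{k+q}}^{i,j,\ell}|\nabla_{x_i}||\nabla_{x_j}||\nabla_{x_\ell}|[V_1 V_2 \widetilde\alpha]\bigr\|_{X^{(k+q)}_{-1/2+}}$, where $\widetilde\alpha$ denotes $\alpha_N^{(k+q)}$ after its inert derivatives have been moved.

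For the bound $M_{k+q}^3 N^{-2+}$, I would extract each $|\nabla_{x_m}|$ via $P_{\leq M_{k+q}}^{m}|\nabla_{x_m}|\lesssim M_{k+q}\,P_{\leq M_{k+q}}^{m}$, producing the factor $M_{k+q}^{3}$. The remaining quantity $\|V_1 V_2\widetilde\alpha\|_{X^{(k+q)}_{-1/2+}}$ is then handled exactly as in Lemma \ref{Lem:key three-body in convergence}: the dual 3D endpoint Strichartz estimate (Corollary \ref{corollary:basic corollary in three body}) gives $\|\tilde v_{2,N}\|_{L^{3/2+}}^{2}\,\|\langle\nabla_{x_i}\rangle\langle\nabla_{x_j}\rangle\langle\nabla_{x_\ell}\rangle\widetilde\alpha\|_{L_t^2 L^2}\lesssim N^{-2+}\|S^{(k+q)}\alpha_N^{(k+q)}\|_{L_t^2 L^2}$. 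For the bound $N^{3\beta-2}$, the plan is to instead transfer all three derivatives onto the two potentials, paying a factor $N^\beta$ per derivative hit. Writing $\tilde v_{2,N}(x)=N^{2\beta-1}f(N^\beta x)$ with $f(y)=\langle y\rangle^{-2}$, a direct scaling computation yields $\|\partial^{s}\tilde v_{2,N}\|_{L^{3/2+}}=N^{s\beta-1}\|\partial^{s}f\|_{L^{3/2+}}$ (finite for every $s\geq 0$), so that any allocation $a_1+a_2=3$ of the derivatives onto $V_1$ and $V_2$ produces $\|\partial^{a_1}\tilde v_{2,N}\|_{L^{3/2+}}\|\partial^{a_2}\tilde v_{2,N}\|_{L^{3/2+}} = N^{3\beta-2}$. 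The same dual 3D endpoint Strichartz estimate then places $\widetilde\alpha$ in $L_t^2L^2$ at the cost of $\langle\nabla_{x_i}\rangle\langle\nabla_{x_j}\rangle\langle\nabla_{x_\ell}\rangle$, all absorbed into $S^{(k+q)}\alpha_N^{(k+q)}$ on the right.

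The principal obstacle will be the derivative-transfer step for the second bound: since $|\nabla_{x_m}|$ is a nonlocal Fourier multiplier rather than a genuine derivative, the Leibniz rule fails, and I would need a paraproduct decomposition in each of the three variables $x_i,x_j,x_\ell$. The splitting into low-high, high-low, and high-high pieces introduces a sum over frequency levels, but in each piece either the derivative falls comfortably on a potential factor (using the $L^{3/2+}$ scaling above) or the frequency is already forced to be $\lesssim M_{k+q}$ by the outer projection $P_{\leq M_{k+q}}^{i,j,\ell}$ --- and in the latter case the resulting estimate is precisely the companion bound $M_{k+q}^3 N^{-2+}$. Once this bookkeeping is carried out, the two claimed inequalities emerge as the two opposing Littlewood-Paley regimes of a single derivative distribution, mirroring the $M^2$ vs.\ $N^{2\beta}$ dichotomy illustrated in \S\ref{Sec:org of paper}.
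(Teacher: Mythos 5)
Your overall route is the same as the paper's: the paper writes $A_{N,i,j,\ell}=N^{-2\beta-2}U_N(x_\ell-x_i)U_N(x_\ell-x_j)$ with $U=(\nabla w_0)/(1-N^{\beta-1}w_0)$ and $U_N=N^{3\beta}U(N^\beta\cdot)$, moves the inert derivatives onto $\alpha_N^{(k+q)}$, and invokes (the obvious extension of) Proposition \ref{P:Str29}, whose two regimes are exactly your two regimes: either all six active derivatives are distributed by Leibniz into the product (giving $N^{5\beta+}$, hence $N^{3\beta-2+}$ after the prefactor), or the three unprimed ones are traded for $M_{k+q}^3$ (giving $M_{k+q}^3N^{-2+}$). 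Two of your worries, however, are misplaced, and one real step is missing. First, the paraproduct machinery is unnecessary: since $X^{(k+q)}_{-\frac12+}$ and $L^2_{\mathbf{x}\mathbf{x}'}$ are $L^2$-based in Fourier space, $|\nabla_{x_m}|$ may be replaced throughout by the true derivative $\nabla_{x_m}$ (this is the footnote in the proof of Proposition \ref{P:Str22}), after which the ordinary Leibniz rule applies. Second, you should work with the exact potential $U$ rather than a pointwise majorant $\langle N^\beta x\rangle^{-2}$: once derivatives fall on the potential, a pointwise bound on the undifferentiated function is not enough, and one must check $|\nabla^s U|\lesssim\langle x\rangle^{-2-s}$ uniformly in $N$ (which the paper does).

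The genuine gap is in your $N^{3\beta-2}$ regime. You cannot ``transfer all three derivatives onto the two potentials'': Leibniz produces all $3\times2\times2=12$ distributions, including terms where one or more of $\nabla_{x_i},\nabla_{x_j},\nabla_{x_\ell}$ land on $\alpha_N^{(k+q)}$. For those terms the estimate $\Vert V\Vert_{L^{3/2+}}\Vert W\Vert_{L^{3/2+}}\Vert\langle\nabla_{x_i}\rangle\langle\nabla_{x_j}\rangle\langle\nabla_{x_\ell}\rangle\gamma\Vert$ (the first line of Corollary \ref{corollary:basic corollary in three body}) would place \emph{two} derivatives in the already-differentiated variable on $\alpha_N^{(k+q)}$, which is not controlled by $S^{(k+q)}\alpha_N^{(k+q)}$ and hence not by the energy bound. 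The correct fix is the graded H\"older menu in Corollary \ref{corollary:basic corollary in three body}: if $d$ derivatives remain on $\alpha$, put the potentials in $L^{2+}\times L^{2+}$, $L^{2+}\times L^{6+}$, or $L^{6+}\times L^{6+}$ accordingly; the scaling $\Vert(\partial^aU)_N\Vert_{L^p}=N^{3\beta-3\beta/p+}$ then yields the same total power $N^{5\beta+}$ (hence $N^{3\beta-2+}$) for every one of the $12$ terms. Your proposed fallback --- converting the leftover derivatives into factors of $M_{k+q}$ --- produces interpolated bounds $M_{k+q}^dN^{(3-d)\beta-2+}$; these happen to be dominated by $\min(N^{3\beta-2+},M_{k+q}^3N^{-2+})$ and so would still suffice for the dyadic summation in the proof of estimate \eqref{estimate:pp}, but they do not prove the corollary as literally stated.
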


\begin{corollary}
\label{Corollary:key for 1st order term}Recall 
\begin{equation*}
E_{N,j,l}^{(k+q)}\alpha ^{(k+q)}=\frac{\nabla _{x_{\ell }}G_{N,j,\ell }}{%
G_{N,j,\ell }}\cdot \nabla _{x_{\ell }}\alpha ^{(k+q)}
\end{equation*}%
as defined in (\ref{formula: a piece of 1st order}), we have 
\begin{equation}
\begin{aligned} \hspace{0.3in}&\hspace{-0.3in} \left\Vert R^{(k+q)}_{\leq
M_{k+q}} \left( E_{N,j,l}^{(k)}\alpha ^{(k)}\left( t_{k+1}\right) \right)
\right\Vert _{X_{-\frac{1}{2}+}^{(k+q)}} \\ &\lesssim \Big( N^{-\frac12}
\|S_1S^{(k+q)} \alpha\|_{L_t^2L_{\mathbf{x}_{k+q}\mathbf{x}_{k+q}'}^2} +
\|S^{(k)} \alpha\|_{L_t^2L_{\mathbf{x}_{k+q}\mathbf{x}_{k+q}'}^2} \Big)
\left\{ \begin{aligned} &M_{k+q} N^{\frac{\beta}{2}-\frac34} \\
&N^{\frac{3\beta}{2}-\frac34} \end{aligned} \right. \end{aligned}
\end{equation}%
where, for convenience, we have assumed that $\beta >\frac{1}{2}$.
\end{corollary}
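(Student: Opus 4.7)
The plan is to follow the strategy of Lemma \ref{Lem:key 1st order in convergence}, combining the retarded 3D endpoint Strichartz of Corollary \ref{corollary:basic corollary for 3D endpoint} with Lemma \ref{Lemma:b to b-1}, but now refined with a Littlewood--Paley decomposition that trades the size of the potential against the number of derivatives placed on it. First I would absorb $\frac{\nabla_{x_\ell}G_{N,j,\ell}}{G_{N,j,\ell}}$ into the model function $\tilde v_{2,N}(x)=N^{2\beta-1}\langle N^\beta x\rangle^{-2}$, using that $G_{N,j,\ell}\to 1$ in $L^\infty$ for $\beta\leq 1$, which reduces matters to estimating $\|R^{(k+q)}_{\leq M_{k+q}}\,\tilde v_{2,N}(x_j-x_\ell)\,\nabla_\ell \alpha\|_{X^{(k+q)}_{-1/2+}}$. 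After inserting a smooth time cutoff $\theta(t)$ and pairing with a unit test function in $X^{(k+q)}_{1/2-}$ via the duality underlying Corollary \ref{corollary:basic corollary for 3D endpoint}, the problem reduces to a spatial H\"older estimate in the two active variables $x_j,x_\ell$, with the remaining variables treated as passive $L^2$ factors (as with $A_\sigma$ in the proof of Corollary \ref{Corollary:Key of loc}).

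The Littlewood--Paley projection $P^{(k+q)}_{\leq M_{k+q}}$ caps the output frequencies in $(x_j,x_\ell)$ at scale $M_{k+q}$. The Leibniz rule then lets us redistribute the two factors $|\nabla_{x_j}|,|\nabla_{x_\ell}|$ coming from $R^{(k+q)}$ between $\tilde v_{2,N}(x_j-x_\ell)$ and $\nabla_\ell\alpha$. This is precisely the ``gain of one derivative via Littlewood--Paley'' idea explained in \S\ref{Sec:org of paper}: the ``no derivatives on $\tilde v_{2,N}$'' estimate is controlled by $M_{k+q}^{2}\|\tilde v_{2,N}\|_{L^2}$, while the ``two derivatives on $\tilde v_{2,N}$'' estimate is controlled by $\|P^{j}_{M_{k+q}}P^{\ell}_{M_{k+q}}|\nabla_j||\nabla_\ell|\tilde v_{2,N}\|_{L^2}\sim N^{2\beta}\|\tilde v_{2,N}\|_{L^2}$. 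These two estimates cross at $M_{k+q}\sim N^\beta$ and produce the stated min-structure $\min(M_{k+q},N^\beta)\cdot N^{\beta/2-3/4}$.

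A direct computation gives $\|\tilde v_{2,N}\|_{L^2}\sim N^{\beta/2-1}$, so combined with the $N^{1/4}$ loss from the endpoint Strichartz H\"older exponents (exactly as in Lemma \ref{Lem:key 1st order in convergence}) one obtains the prefactors $M_{k+q}N^{\beta/2-3/4}$ and $N^{3\beta/2-3/4}$. The two summands $N^{-1/2}\|S_1 S^{(k+q)}\alpha\|$ and $\|S^{(k+q)}\alpha\|$ in the energy factor correspond to the two placements of the $\nabla_\ell$ originating in $E_{N,j,\ell}$: if this $\nabla_\ell$ combines with $|\nabla_\ell|$ from $R^{(k+q)}$ on $\alpha$, we pay two derivatives on a single variable, which by bosonic symmetry is controlled by $N^{-1/2}\|S_1 S^{(k+q)}\alpha\|_{L^2_tL^2}$ via (\ref{bound:k-energy estimate}); if instead the $|\nabla_\ell|$ from $R^{(k+q)}$ lands on $\tilde v_{2,N}$, only one derivative remains on $\alpha$ and the natural bound is $\|S^{(k+q)}\alpha\|_{L^2_tL^2}$.

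The main obstacle will be the careful Leibniz bookkeeping, ensuring that in each dyadic regime $M_1,M_2\leq M_{k+q}$ the projection $P^{(k+q)}_{\leq M_{k+q}}$ absorbs exactly the derivatives we want and that the intermediate H\"older exponents remain compatible with the 3D endpoint Strichartz (which is what forces the $N^{1/4}$ loss). The restriction $\beta>1/2$ entering the corollary is presumably precisely what is needed to ensure that, after these allocations, the resulting bounds are strong enough to close the summation argument of Step III in \S\ref{sec:estimate for LIP} when applied to the potential part.
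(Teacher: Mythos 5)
Your route coincides with the paper's: the proof there writes $\nabla_{x_{\ell }}G_{N,j,\ell }/G_{N,j,\ell }=N^{-\beta -1}U_{N}(x_{j}-x_{\ell })$ with $U=\nabla w_{0}/(1-N^{\beta -1}w_{0})$ and invokes the $(k+q)$-body version of Proposition \ref{P:2body-potential-prep}, whose proof is exactly what you describe — Leibniz distribution of the two unprimed derivatives of $R^{(k+q)}$, one of them capped by the frequency projection, combined with the transformed-coordinate 3D endpoint Strichartz estimates of Corollary \ref{corollary:basic corollary for 3D endpoint}. Two points of your bookkeeping need repair when fleshed out. First, replacing the coefficient by the pointwise majorant $\tilde{v}_{2,N}(x)=N^{2\beta -1}\langle N^{\beta }x\rangle ^{-2}$ is not legitimate once Leibniz puts derivatives on the potential: a pointwise bound on the coefficient controls neither its gradient nor its Hessian. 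The paper keeps $U$ intact and verifies $|\nabla U|\lesssim \langle x\rangle ^{-3}$ and $|\nabla ^{2}U|\lesssim \langle x\rangle ^{-4}$ uniformly in $N$, which is what makes $C_{U}$ finite. Second, your $L^{2}$-based heuristic ($M_{k+q}^{2}\Vert \tilde{v}_{2,N}\Vert _{L^{2}}$ versus $N^{2\beta }\Vert \tilde{v}_{2,N}\Vert _{L^{2}}$) overshoots the stated bound by a factor of $\min (M_{k+q},N^{\beta })$: in the actual estimate only one of the two unprimed derivatives is absorbed by the projection (a single power of $M_{k+q}$), the other is distributed, and the exponents come from the $L^{\frac{6}{5}+}$, $L^{\frac{3}{2}+}$, $L^{3+}$ norms of $U_{N}$ and its derivatives rather than from $L^{2}$ norms. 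Finally, $\beta >\frac{1}{2}$ is not what closes the Step III summation (that works for all $\beta <1$); it merely guarantees that, after multiplying Proposition \ref{P:2body-potential-prep} by $N^{-\beta -1}$, the power $N^{\frac{3\beta }{2}+\frac{1}{4}}$ dominates $N^{\beta +\frac{1}{2}}$, so the corollary can be stated with a single power of $N$.
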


Then one merely needs to estimate the following two sums:%
\begin{equation*}
N^{-2+}\sum_{M_{k+q}\geqslant M_{k-1}}\min \left( M_{k+q}^{-1+2\varepsilon
}N^{3\beta },M_{k+q}^{2+2\varepsilon }\right) ,
\end{equation*}%
and%
\begin{equation*}
N^{\frac{1}{2}\beta -\frac{3}{4}+}\sum_{M_{k+q}\geqslant M_{k-1}}\min \left(
M_{k+q}^{-1+2\varepsilon }N^{\beta },M_{k+q}^{2\varepsilon }\right) .
\end{equation*}%
In fact, separate the above sums at $M_{k+q}\geqslant N^{\beta }$ and $%
M_{k+q}\leqslant N^{\beta }$, then use the same method as in estimate (\ref%
{sum:key in L-P}), we get to%
\begin{eqnarray*}
N^{-2+}\sum_{M_{k+q}\geqslant M_{k-1}}\min \left( M_{k+q}^{-1+2\varepsilon
}N^{3\beta },M_{k+q}^{2+2\varepsilon }\right) &\lesssim &N^{-2+}N^{2\beta
+2\varepsilon } \\
N^{\frac{1}{2}\beta -\frac{3}{4}+}\sum_{M_{k+q}\geqslant M_{k-1}}\min \left(
M_{k+q}^{-1+2\varepsilon }N^{\beta },M_{k+q}^{2\varepsilon }\right)
&\lesssim &N^{\frac{1}{2}\beta -\frac{3}{4}+}N^{2\varepsilon }
\end{eqnarray*}%
which is enough to conclude the estimates of the potential part for $\beta
\in \left( 0,1\right) $.

\begin{remark}
We remark that the estimate for the three-body interaction term is the only
place in this paper which requires $\beta <1$.
\end{remark}

\begin{proof}[Proof of Corollary \protect\ref{Corollary:Key of 3-body}]
Since $\nabla _{x_{\mu }}$ and $\nabla _{x_{\mu }^{\prime }}$ move directly
onto $\alpha _{N}^{(k+q)}$, for $\mu \in \{1,\ldots ,k+q\}\backslash
\{i,j,\ell \}$, it suffices to use the obvious extension of Proposition \ref%
{P:Str29} where $\{1,2,3\}$ is replaced by $\{\ell ,i,j\}$, $\alpha ^{(3)}$
is replaced by $\alpha ^{(k+q)}$, and $X_{-\frac{1}{2}+}^{(3)}$ is replaced
by $X_{-\frac{1}{2}+}^{(k+q)}$. Note that 
\begin{equation*}
A_{Nij\ell }=N^{-2\beta -2}U_{N}(x_{\ell }-x_{i})U_{N}(x_{\ell }-x_{j})
\end{equation*}%
where 
\begin{equation*}
U(x)=\frac{(\nabla w_{0})(x)}{1-N^{\beta -1}w_{0}(x)}
\end{equation*}%
Note that 
\begin{equation*}
\nabla U(x)=\frac{\nabla ^{2}w_{0}}{(1-w_{0}(x))^{2}}+N^{\beta -1}\left( 
\frac{1}{1-w_{0}(x)}\right) ^{2}
\end{equation*}%
and 
\begin{equation*}
|U(x)|\lesssim \langle x\rangle ^{-2}\,,\qquad |\nabla U(x)|\lesssim \langle
x\rangle ^{-3}\,,\qquad |\nabla ^{2}U(x)|\lesssim \langle x\rangle ^{-4}
\end{equation*}%
uniformly in $N$. Hence $U$, $\nabla U$, and $\nabla ^{2}U$ all belong to $%
L^{p}$ for $p>\frac{3}{2}$ (uniformly in $N$).
\end{proof}

\begin{proof}[Proof of Corollary \protect\ref{Corollary:key for 1st order
term}]
Note that 
\begin{equation*}
\frac{\nabla_{x_\ell} G_{N,j,\ell}}{G_{N,j,\ell}} = N^{-\beta-1}
U_N(x_j-x_\ell)
\end{equation*}
where 
\begin{equation*}
U(x) = \frac{\nabla w_0(x)}{1-N^{\beta-1}w_0(x)}
\end{equation*}
We then appeal to the straightforward generalization of Proposition \ref%
{P:2body-potential-prep} to $(k+q)$-level density, noting that $%
|U(x)|\lesssim \langle x \rangle^{-2}$, $|\nabla U(x)| \lesssim \langle x
\rangle^{-3}$, and $|\nabla^2 U(x)|\lesssim \langle x \rangle^{-4}$,
uniformly in $N$, so $C_U<\infty$ and independent of $N$.
\end{proof}

\section{Collapsing and Strichartz Estimates}

\label{S:Strichartz-X}

Define the norm\footnote{%
To be precise, this $X_{b}$ should be written as $X_{0,b}$ in the usual
notation for the $X_{s,b}$ spaces. Since we are not using the $s$ in $%
X_{s,b} $, we write it as $X_{b}$.} 
\begin{equation*}
\Vert \alpha ^{(k)}\Vert _{X_{b}^{(k)}}=\left( \int \langle \tau +\left\vert 
\mathbf{\xi }_{k}\right\vert ^{2}-\left\vert \mathbf{\xi }_{k}^{\prime
}\right\vert ^{2}\rangle ^{2b}\left\vert \hat{\alpha}^{(k)}(\tau ,\mathbf{%
\xi }_{k},\mathbf{\xi }_{k}^{\prime })\right\vert ^{2}\,d\tau \,d\mathbf{\xi 
}_{k}\,d\mathbf{\xi }_{k}^{\prime }\right) ^{1/2}
\end{equation*}

We will use the case $b=\frac{1}{2}+$ of the following lemma.

\begin{lemma}[{\protect\cite[Lemma 4.1]{C-H2/3}}]
\label{Lemma:b to b-1}Let $\frac{1}{2}<b<1$ and $\theta (t)$ be a smooth
cutoff. Then 
\begin{equation}
\left\Vert \theta (t)\int_{0}^{t}U^{(k)}(t-s)\beta ^{(k)}(s)\,ds\right\Vert
_{X_{b}^{(k)}}\lesssim \Vert \beta ^{(k)}\Vert _{X_{b-1}^{(k)}}
\label{E:X-1}
\end{equation}
\end{lemma}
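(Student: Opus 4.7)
The plan is to reduce this bound to a standard one-dimensional Sobolev estimate by exploiting the fact that the $X_b^{(k)}$ norm is, by Plancherel, the $H^b_t L^2_{\mathbf{x,x}'}$ norm after conjugating by the free evolution. Specifically, the symbol of $U^{(k)}(t)$ in space-time Fourier variables is $e^{-it\omega}$ with $\omega=|\mathbf{\xi}_k|^2-|\mathbf{\xi}_k'|^2$, so taking Fourier transform in $t$ of $U^{(k)}(t) f(t)$ shifts the time-frequency variable by $\omega$ and one obtains the identity $\|U^{(k)}(t) f(t)\|_{X_b^{(k)}} = \|f\|_{H^b_t L^2_{\mathbf{x,x}'}}$. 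Setting $g(s)=U^{(k)}(-s)\beta^{(k)}(s)$ converts the Duhamel integral into $U^{(k)}(t)\int_0^t g(s)\,ds$, and reduces (\ref{E:X-1}) to
\begin{equation*}
\left\|\theta(t)\int_0^t g(s)\,ds\right\|_{H^b_t L^2_{\mathbf{x,x}'}} \lesssim \|g\|_{H^{b-1}_t L^2_{\mathbf{x,x}'}}.
\end{equation*}

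Since the $L^2_{\mathbf{x,x}'}$ norm sits outside the time-Sobolev norm on both sides, Plancherel in $\mathbf{x,x}'$ lets me treat the spatial variables as parameters and the problem reduces to the purely scalar inequality: for any $g\colon\mathbb{R}\to\mathbb{C}$,
\begin{equation*}
\left\|\theta(t)\int_0^t g(s)\,ds\right\|_{H^b_t} \lesssim \|g\|_{H^{b-1}_t}.
\end{equation*}
I would prove this in the usual way by splitting $g=g_1+g_2$ in time frequency, with $\hat g_1$ supported in $\{|\tau|\leq 1\}$ and $\hat g_2$ in $\{|\tau|\geq 1\}$. The low-frequency piece is harmless: $\|g_1\|_{H^{b-1}}\sim\|g_1\|_{L^2}$, its antiderivative is Lipschitz, and after multiplication by the compactly supported $\theta$ it lies in $H^b$ trivially. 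For the high-frequency piece, use $\int_0^t e^{is\tau}\,ds = (e^{it\tau}-1)/(i\tau)$ to write
\begin{equation*}
\theta(t)\int_0^t g_2(s)\,ds = \theta(t)\int \frac{e^{it\tau}}{i\tau}\hat g_2(\tau)\,d\tau - \theta(t)\int \frac{\hat g_2(\tau)}{i\tau}\,d\tau .
\end{equation*}
On $|\tau|\geq 1$ the bound $\langle\tau\rangle^b/|\tau|\lesssim\langle\tau\rangle^{b-1}$ shows $\hat g_2/(i\tau)$ has $H^b$-norm $\lesssim\|g_2\|_{H^{b-1}}$, and multiplication by $\theta$, which on the Fourier side is convolution with the Schwartz function $\hat\theta$, is bounded on $H^b$ by Peetre's inequality combined with Young's inequality. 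The constant term is controlled via Cauchy--Schwarz by $\|g_2\|_{H^{b-1}}\bigl(\int_{|\tau|\geq 1}|\tau|^{-2b}d\tau\bigr)^{1/2}$, which is finite.

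The only subtle point is the interplay $\tfrac{1}{2}<b<1$: the lower bound is forced by convergence of $\int_{|\tau|\geq 1}|\tau|^{-2b}d\tau$ in the constant-term estimate, which is precisely the absorption of the $1/\tau$ singularity produced by the antiderivative at zero frequency, while the upper bound $b<1$ keeps multiplication by $\theta$ harmless in $H^b$. I do not expect a real obstacle here---once the Fourier-space decomposition is in place, every step is routine Fourier analysis---but the cleanest presentation must carefully state the reduction to the scalar inequality and the low/high frequency split so that the range of $b$ is transparent.
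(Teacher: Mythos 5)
Your proposal is correct: the paper gives no proof of this lemma, deferring entirely to \cite[Lemma 4.1]{C-H2/3}, and your argument is the classical one for the inhomogeneous $X_{b}$ estimate (conjugate by $U^{(k)}$ and use Plancherel to reduce to the scalar bound $\|\theta\int_{0}^{t}g\|_{H^{b}}\lesssim\|g\|_{H^{b-1}}$, then split into low and high time frequencies and use $\int_{0}^{t}e^{is\tau}ds=(e^{it\tau}-1)/(i\tau)$), which is exactly the standard route such references rely on. One minor mis-attribution: the upper restriction $b<1$ is not what makes multiplication by $\theta$ bounded on $H^{b}$ (a Schwartz multiplier is bounded on $H^{b}$ for every $b$ by Peetre plus Young); it is used only in your low-frequency piece, where you place the compactly supported Lipschitz function $\theta\int_{0}^{t}g_{1}$ in $H^{1}\subset H^{b}$, which requires $b\leq 1$.
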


\begin{lemma}[{\protect\cite[Lemma 4.4]{C-H2/3}}]
\label{Lemma:LocalizedKM} For each $\varepsilon >0$, there is a $%
C_{\varepsilon }$ independent of $M_{k},j,k$, and $N$ such that 
\begin{equation*}
\Vert R_{\leqslant M_{k}}^{(k)}\tilde{B}_{N,j,k+1}U^{(k+1)}(t)f^{(k+1)}\Vert
_{L_{t}^{2}L_{\mathbf{x},\mathbf{x}^{\prime }}^{2}}\leqslant C_{\varepsilon
}\left\Vert \tilde{V}\right\Vert _{L^{1}}\sum_{M_{k+1}\geqslant M_{k}}\left( 
\frac{M_{k}}{M_{k+1}}\right) ^{1-\varepsilon }\left\Vert R_{\leqslant
M_{k+1}}^{(k+1)}f^{(k+1)}\right\Vert _{L_{\mathbf{x},\mathbf{x}^{\prime
}}^{2}}
\end{equation*}%
where the sum on the right is in $M_{k+1}$, over dyads such that $%
M_{k+1}\geqslant M_{k}$.
\end{lemma}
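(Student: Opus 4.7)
The approach is a Littlewood--Paley argument in the $(k+1)$-th pair of variables combined with the Klainerman--Machedon type collapsing estimate, extracting the dyadic gain $(M_k/M_{k+1})^{1-\varepsilon}$ from a frequency-transfer computation. Throughout I abbreviate $R_{\leqslant M}^{(\ell)} = P_{\leqslant M}^{(\ell)} R^{(\ell)}$.

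First, split $\tilde B_{N,j,k+1} = \tilde B^{+}_{N,j,k+1} - \tilde B^{-}_{N,j,k+1}$ into its unprimed and primed pieces; by symmetry only the first needs treatment. Observe that the $2(k-1)$ spectator variables $\{x_i,x_i' : i\neq j\}$ commute (up to isometric phases in $L^2$) with both the free evolution $U^{(k+1)}(t)$ and multiplication by $\tilde V_N(x_j-x_{k+1})$, and the projectors $P^i_{\leqslant M_k}P^{i'}_{\leqslant M_k}$ act only on these spectator coordinates. They can therefore be pulled through and absorbed into the $L^2_{\mathbf{x},\mathbf{x}'}$ norm, reducing the whole estimate to a four-variable bound involving only $(x_j,x_{k+1};x_j',x_{k+1}')$ with projectors $P^j_{\leqslant M_k}P^{j'}_{\leqslant M_k}$ on the output.

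Second, decompose $f^{(k+1)}$ via the telescoping sum $f^{(k+1)} = \sum_{M_{k+1} \text{ dyadic}}(R_{\leqslant 2M_{k+1}}^{(k+1)} - R_{\leqslant M_{k+1}}^{(k+1)})f^{(k+1)}$ and, for each annular piece $g_{M_{k+1}}$ with $(x_{k+1},x_{k+1}')$-frequency in the shell $[M_{k+1},2M_{k+1}]$ and $M_{k+1} \geqslant M_k$, prove the crucial single-shell bound
\begin{equation*}
\bigl\Vert P^j_{\leqslant M_k}P^{j'}_{\leqslant M_k}\,\tilde B^{+}_{N,j,k+1}U^{(k+1)}(t)g_{M_{k+1}}\bigr\Vert_{L_t^2 L_{\mathbf{x},\mathbf{x}'}^2} \lesssim_{\varepsilon} \Vert \tilde V\Vert_{L^1}\Bigl(\tfrac{M_k}{M_{k+1}}\Bigr)^{1-\varepsilon}\bigl\Vert R_{\leqslant M_{k+1}}^{(k+1)}g_{M_{k+1}}\bigr\Vert_{L^2}.
\end{equation*}
This follows by combining the $X_b$-to-$L_t^2L^2$ trick (Lemma \ref{Lemma:b to b-1}) with the retarded 3D endpoint Strichartz estimate in the spirit of Corollary \ref{corollary:basic corollary for 3D endpoint} applied to the spacetime kernel of $\tilde B^{+}_{N,j,k+1}U^{(k+1)}(t)$. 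The output constraint $|\xi_j|\leqslant M_k$ together with the trace identity $\xi_{k+1}=\xi_{k+1}'$ under the $dx_{k+1}$ integration limits the effective frequency that can reach the output to $\lesssim M_k$, whereas the input carries frequency $\sim M_{k+1}$; each derivative traded in this way yields a factor $M_k/M_{k+1}$. The small loss $\varepsilon > 0$ is the standard endpoint-Strichartz loss needed to close the Bourgain-space estimate, and $\Vert\tilde V\Vert_{L^1}$ enters because $\widehat{\tilde V}$ is a uniformly bounded Fourier multiplier (uniform in $N$, since $\tilde V_N = V_N(1-w_N)$ is an $L^1$-normalized rescaling of a fixed profile).

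The main obstacle is the frequency-transfer step: one must verify that applying $\tilde B^{+}_{N,j,k+1}$ (which convolves by $\tilde V_N$ in $x_j - x_{k+1}$ before collapsing $x_{k+1}=x_{k+1}'$) to a high-frequency input $\sim M_{k+1}$ but demanding low-frequency output $\leqslant M_k$ produces a genuine power-gain $(M_k/M_{k+1})^{1-\varepsilon}$ which is uniform in $N$, $k$, $j$, and $M_k$. This is where care is required: one cannot lose factors of $N^\beta$ in the argument, so the estimate must be done in a way that passes derivatives through $\tilde V_N$ only via the $L^1$ norm and exploits the output frequency cutoff directly. Once the single-shell bound is in hand, summing over $M_{k+1} \geqslant M_k$ and bounding each annular piece by the cumulative one $R_{\leqslant M_{k+1}}^{(k+1)}f^{(k+1)}$ yields precisely the geometric-series form of the lemma.
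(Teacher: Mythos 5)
The overall architecture of your proposal --- Littlewood--Paley decomposition of the input, a single-shell collapsing bound carrying a $(M_k/M_{k+1})^{1-\varepsilon}$ gain, and summation of the resulting geometric series --- is the right shape and matches how the cited Lemma 4.4 of \cite{C-H2/3} is organized. But there is a genuine gap at the core: the single-shell bound, which is the entire content of the lemma, is asserted rather than proved, and the machinery you invoke to prove it cannot produce the stated constant. You propose to run the estimate through Lemma \ref{Lemma:b to b-1} and the retarded endpoint Strichartz estimates in the spirit of Corollary \ref{corollary:basic corollary for 3D endpoint}. Those estimates place the potential in $L^{\frac{6}{5}+}$, $L^{\frac{3}{2}+}$ or $L^{3+}$ via H\"older--Sobolev; for $\tilde V_N$, which is an $L^1$-normalized rescaling concentrating at scale $N^{-\beta}$, one has $\|\tilde V_N\|_{L^p}\sim N^{3\beta(1-1/p)}\rightarrow\infty$ for every $p>1$, so any Strichartz/H\"older route loses positive powers of $N^{\beta}$ and cannot yield a bound by $\|\tilde V\|_{L^1}$ alone --- which is exactly what the lemma claims and what Step I of \S\ref{sec:estimate for LIP} needs, since the bound must be uniform in $N$. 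You correctly remark that $\|\tilde V\|_{L^1}$ should enter because $\widehat{\tilde V}_N$ is a uniformly bounded Fourier multiplier, but that observation is incompatible with the H\"older-based estimates you cite: $L^1_x$ is not a dual Strichartz forcing space, and H\"older reaches $\|\tilde V\|_{L^1}$ only by paying $L^\infty_x$ on the other factor.

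The actual proof is a direct Klainerman--Machedon computation rather than a Strichartz argument: take the spacetime Fourier transform of $R^{(k)}_{\leqslant M_k}\tilde B_{N,j,k+1}U^{(k+1)}(t)f^{(k+1)}$, apply Plancherel and Cauchy--Schwarz in the collapsed frequency variables against the weight produced by the delta function $\delta(\tau+|\mathbf{\xi}|^2-|\mathbf{\xi}'|^2)$, and bound the supremum of the resulting frequency integral. There $\widehat{\tilde V}_N$ enters only through $\|\widehat{\tilde V}_N\|_{L^\infty}\leqslant\|\tilde V\|_{L^1}$, and the gain $(M_k/M_{k+1})^{1-\varepsilon}$ is extracted by estimating the measure of the frequency region where the output constraint $|\xi_j|\leqslant M_k$ is compatible with input frequencies of size $\sim M_{k+1}$; the $\varepsilon$ is a logarithmic loss in that integral, not an endpoint-Strichartz loss. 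Your ``frequency transfer'' heuristic points at the correct mechanism, but without the quantitative integral estimate the key inequality remains unproved, and the route you chose to establish it would fail to be uniform in $N$.
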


\begin{lemma}[{\protect\cite[Lemma 4.5]{C-H2/3}}]
\label{Lemma:LocalizedKMWithX_b}For each $\varepsilon >0$, there is a $%
C_{\varepsilon }$ independent of $M_{k},j,k$, and $N$ such that 
\begin{equation*}
\Vert R_{\leqslant M_{k}}^{(k)}\tilde{B}_{N,j,k+1}\alpha ^{(k+1)}\Vert
_{L_{t}^{2}L_{\mathbf{x},\mathbf{x}^{\prime }}^{2}}\leqslant C_{\varepsilon
}\sum_{M_{k+1}\geqslant M_{k}}\left( \frac{M_{k}}{M_{k+1}}\right)
^{1-\varepsilon }\left\Vert R_{\leqslant M_{k+1}}^{(k+1)}\alpha
^{(k+1)}\right\Vert _{X_{\frac{1}{2}+}^{(k)}}.
\end{equation*}%
where the sum on the right is in $M_{k+1}$, over dyads such that $%
M_{k+1}\geqslant M_{k}$.
\end{lemma}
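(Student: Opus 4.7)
The plan is to derive this estimate from Lemma \ref{Lemma:LocalizedKM} (its analogue for free initial data) by the standard ``transfer principle'' for $X_{b}$ spaces. The idea is to represent $\alpha^{(k+1)}$ as a continuous superposition of free Schr\"odinger evolutions indexed by a modulation parameter $\tau$, apply the free-evolution estimate slice-by-slice, and reassemble by Minkowski and Cauchy--Schwarz in $\tau$.

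Concretely, I would first take the full space-time Fourier transform $\widehat{\alpha^{(k+1)}}(\tau,\mathbf{\xi}_{k+1},\mathbf{\xi}_{k+1}')$ and, for each $\tau\in\mathbb{R}$, define $f_\tau$ by
\[
\widehat{f_\tau}(\mathbf{\xi}_{k+1},\mathbf{\xi}_{k+1}') \defeq \widehat{\alpha^{(k+1)}}\bigl(\tau + |\mathbf{\xi}_{k+1}|^{2} - |\mathbf{\xi}_{k+1}'|^{2},\,\mathbf{\xi}_{k+1},\,\mathbf{\xi}_{k+1}'\bigr).
\]
Inverting the time Fourier transform, one obtains the representation
\[
\alpha^{(k+1)}(t) = \int_{\mathbb{R}} e^{it\tau}\, U^{(k+1)}(t)\, f_\tau \, d\tau,
\]
and Plancherel in $\tau$ gives $\|R^{(k+1)}_{\leqslant M_{k+1}}\alpha^{(k+1)}\|_{X^{(k)}_{1/2+}}^{2} = \int \langle\tau\rangle^{1+}\, \|R^{(k+1)}_{\leqslant M_{k+1}} f_\tau\|_{L^{2}_{\mathbf{x},\mathbf{x}'}}^{2}\, d\tau$, since the Littlewood--Paley projectors are purely spatial Fourier multipliers that commute with $U^{(k+1)}(t)$.

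Next, I would apply $R^{(k)}_{\leqslant M_{k}}\tilde{B}_{N,j,k+1}$ inside the $\tau$-integral, take the $L^{2}_{t}L^{2}_{\mathbf{x},\mathbf{x}'}$ norm, and pull it inside via Minkowski's integral inequality. The key point is that Lemma \ref{Lemma:LocalizedKM} applies to each slice $f_\tau$ with the same constant $C_\varepsilon$, yielding
\[
\|R^{(k)}_{\leqslant M_k}\tilde{B}_{N,j,k+1}\alpha^{(k+1)}\|_{L^{2}_{t}L^{2}} \;\leqslant\; C_\varepsilon \|\tilde{V}\|_{L^{1}} \int_{\mathbb{R}} \sum_{M_{k+1}\geqslant M_{k}} \bigl(\tfrac{M_{k}}{M_{k+1}}\bigr)^{1-\varepsilon} \|R^{(k+1)}_{\leqslant M_{k+1}} f_\tau\|_{L^{2}}\, d\tau.
\]
After swapping sum and integral (justified by the $\varepsilon$-gain, which makes everything absolutely convergent), Cauchy--Schwarz in $\tau$ against the weight $\langle\tau\rangle^{-(1/2+)}\in L^{2}(\mathbb{R})$ turns the $\tau$-integral precisely into $\|R^{(k+1)}_{\leqslant M_{k+1}} \alpha^{(k+1)}\|_{X^{(k)}_{1/2+}}$, yielding the claimed bound once $\|\tilde{V}\|_{L^{1}}\lesssim \|V\|_{L^{1}}$ (uniform in $N$, since $0\leqslant w_{N}\leqslant 1$) is absorbed into $C_\varepsilon$.

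I do not anticipate a serious obstacle: this is a textbook $X_{b}$-transfer argument and its only subtlety is the commutativity of the spatial projectors $R^{(k+1)}_{\leqslant M_{k+1}}$ with both $U^{(k+1)}(t)$ and the time Fourier transform, which allows Lemma \ref{Lemma:LocalizedKM} to be invoked uniformly inside the $\tau$-integration with a single constant independent of $\tau,\,k,\,j,\,M_{k},\,N$.
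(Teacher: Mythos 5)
Your argument is correct: the paper states this lemma by citation to \cite[Lemma 4.5]{C-H2/3} without reproducing a proof, and the proof there is precisely the $X_{b}$ transference you describe — write $\alpha^{(k+1)}(t)=\int e^{it\tau}U^{(k+1)}(t)f_{\tau}\,d\tau$ with $\widehat{f_{\tau}}$ a modulation-shifted slice of $\widehat{\alpha^{(k+1)}}$, apply Lemma \ref{Lemma:LocalizedKM} slice-by-slice (valid since each $f_{\tau}$ is time-independent and the projectors are spatial multipliers), and close with Cauchy--Schwarz in $\tau$ against $\langle\tau\rangle^{-\frac12-}$. The only nitpick is the sign in your definition of $\widehat{f_{\tau}}$, which depends on the Fourier convention and does not affect the argument.
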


The 3D endpoint Strichartz estimate directly yields the following
multiparticle estimate: 
\begin{equation*}
\| \beta^{(k)}\|_{X^{(k)}_{-\frac12+}} \lesssim \|\beta^{(k)}
\|_{L_t^2L_{x_1}^{\frac65+}L^2_c}
\end{equation*}
where $c$ stands for the remaining spatial coordinates $(x_2,\ldots,x_k,
x_1^{\prime }, \ldots, x_k^{\prime })$. However, when $\beta^{(k)} =
V(x_1-x_2) \gamma^{(k)}$, this estimate does not allow us to put $V$ in $%
L^{\frac65+}$ since the $L^2_{x_2}$ norm comes before the $%
L^{\frac65+}_{x_1} $ norm. In order to effectively put the $L^2_{x_2}$ norm 
\emph{after} the $L^{\frac65+}_{x_1}$ norm, we need to translate coordinates
before applying the Strichartz estimate. This maneuver was introduced in our
earlier paper \cite[Lemma 4.6]{C-H3Dto2D}. We restate the relevant estimate
in the following lemma.

Since we will need to deal with Fourier transforms in only selected
coordinates, we introduce the following notation: $\mathcal{F}_{0}$ denotes
the Fourier transform in $t$, $\mathcal{F}_{j}$ denotes the Fourier
transform in $x_{j}$, and $\mathcal{F}_{j^{\prime }}$ denotes Fourier
transform in $x_{j}^{\prime }$. Fourier transforms in multiple coordinates
will be denoted as combined subscripts -- for example, $\mathcal{F}%
_{01^{\prime }}=\mathcal{F}_{0}\mathcal{F}_{1^{\prime }}$ denotes the
Fourier transform in $t$ and $x_{1}^{\prime }$.

\begin{lemma}[3D endpoint Strichartz in transformed coordinates]
\label{Lemma:TheStrichartzEstimate} Let 
\begin{equation*}
T_1f(x_1,x_2) = f(x_1+x_2,x_2)
\end{equation*}
\begin{equation*}
T_2f(x_1,x_2) = f(x_1,x_2+x_1)
\end{equation*}
Then 
\begin{equation}  \label{E:Str1}
\| \beta^{(k)}\|_{X^{(k)}_{-\frac12+}} \lesssim \left\{ \begin{aligned} &\|
(\mathcal{F}_2 T_1 \beta^{(k)})(t,x_1,\xi_2) \|_{L_t^2 L_{\xi_2}^2
L_{x_1}^{\frac65+} L^2_c}\\ &\| (\mathcal{F}_1 T_2 \beta^{(k)})(t,\xi_1,x_2)
\|_{L_t^2 L_{\xi_1}^2 L_{x_2}^{\frac65+} L^2_c} \end{aligned} \right.
\end{equation}
where in each case $c$ stands for ``complementary coordinates'',
specifically coordinates $(x_3,\ldots, x_k, x_1^{\prime }, \ldots,
x_k^{\prime })$.
\end{lemma}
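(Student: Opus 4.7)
The plan is to reduce the multi-particle estimate to the single-particle 3D endpoint Strichartz inequality via duality and Plancherel, and to incorporate $T_1$ through a change of Fourier variables that exploits the identity $\mathcal F_{12}T_1\beta(\xi_1,\xi_2)=\hat\beta(\xi_1,\xi_2-\xi_1)$.

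\emph{Step 1 (single-particle).} Starting from the 3D endpoint Strichartz estimate $\|e^{it\Delta}f\|_{L^2_t L^6_x}\lesssim\|f\|_{L^2}$, the standard $X_b$-transference (valid for $b>\tfrac12$) yields $\|u\|_{L^2_t L^6_x}\lesssim\|u\|_{X^{(1)}_{1/2+}}$. Dualizing this embedding and invoking a Sobolev margin to trade the endpoint $1/2-$ regularity for a small loss in the Lebesgue exponent gives
\[
\|g\|_{X^{(1)}_{-1/2+}}\;\lesssim\;\|g\|_{L^2_t L^{6/5+}_x}.
\]

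\emph{Step 2 (change of Fourier variables).} Writing $f=\mathcal F_2 T_1\beta^{(k)}$, the identity $\mathcal F_1 f(\xi_1,\xi_2)=\hat\beta(\xi_1,\xi_2-\xi_1)$ allows us to express $\|\beta^{(k)}\|_{X^{(k)}_{-1/2+}}^{2}$ as an integral involving $|\mathcal F_1 f|^2$ weighted by the shifted symbol $\langle\tau+|\xi_1|^2+|\xi_2-\xi_1|^2+\mathrm{passive}\rangle^{-1+}$. Completing the square in $\xi_1$ via $\tilde\xi_1=\xi_1-\xi_2/2$ and absorbing the leftover $|\xi_2|^2/2$ into a shift of $\tau$ (both of which leave the underlying $L^2$-structure of $f$ invariant by Plancherel) recasts the weight as $\langle\tilde\tau+2|\tilde\xi_1|^2+\mathrm{passive}\rangle^{-1+}$. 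Combined with a partial Plancherel in the remaining ``passive'' coordinates (everything except $t,x_1,x_1'$), the original $X^{(k)}_{-1/2+}$ norm becomes an $L^2$ integral over $\xi_2$ and the passive Fourier variables of a $1$-particle $X^{(1)}_{-1/2+}$ norm in $(t,x_1,x_1')$ of the function $f$.

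\emph{Step 3 (apply single-particle dual Strichartz and reassemble).} For each fixed value of $\xi_2$ and the passive variables, apply Step~1 to bound the $X^{(1)}_{-1/2+}$ norm in $(t,x_1,x_1')$ by the mixed norm $L^2_t L^{6/5+}_{x_1} L^2_{x_1'}$. Taking $L^2$ in $\xi_2$ and in the passive Fourier variables, then Plancherel-inverting the passive coordinates back to position space, yields exactly the claimed estimate
\[
\|\beta^{(k)}\|_{X^{(k)}_{-1/2+}}\;\lesssim\;\|\mathcal F_2 T_1\beta^{(k)}\|_{L^2_t L^2_{\xi_2} L^{6/5+}_{x_1} L^2_c}.
\]
The second estimate, with the roles of $x_1$ and $x_2$ reversed and $T_2$ in place of $T_1$, follows by the same argument applied with $\xi_1\leftrightarrow\xi_2$.

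\emph{Main obstacle.} The delicate point is obtaining the precise ordering $L^2_{\xi_2}$ \emph{outside} $L^{6/5+}_{x_1}$. A direct Minkowski interchange would move $L^2_{\xi_2}$ inside $L^{6/5+}_{x_1}$ (since $6/5+<2$), producing a strictly larger norm and hence a weaker estimate that would not suffice in the applications where $\beta^{(k)}$ contains a two-body potential $V(x_1-x_2)$. The correct ordering is achieved because the single-particle Strichartz in Step~3 delivers the norm $L^2_t L^{6/5+}_{x_1}$ at each fixed $\xi_2$; the subsequent $L^2_{\xi_2}$ integration comes from the outer Plancherel over the change-of-variable transformation, and thus naturally lands \emph{outside} the inner spatial norm. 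The $T_1$ translation is the device that makes this reordering possible while preserving the Schr\"odinger symbol up to the harmless shifts executed in Step~2.
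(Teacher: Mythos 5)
Your argument is correct and follows essentially the same route as the paper's (the paper cites \cite[Lemma 4.6]{C-H3Dto2D} for this 3D statement and carries out the identical scheme in detail for the 6D analogue): the shear $T_1$ turns the two-particle symbol into $\langle \tau+2|\xi_1|^2-2\xi_1\cdot\xi_2+|\xi_2|^2+\cdots\rangle$, the $\xi_2$-dependent terms are removed by unimodular factors, and the dual endpoint Strichartz estimate is applied at fixed $\xi_2$ and fixed passive frequencies, which is exactly why the $L^2_{\xi_2}$ norm lands outside $L^{\frac65+}_{x_1}$. The only cosmetic difference is that you normalize the linear-in-$\xi_1$ term by completing the square (a spatial modulation), whereas the paper uses a Galilean boost (a time-dependent translation) plus a temporal modulation; both operations preserve all the mixed Lebesgue norms involved and lead to the same free Schr\"odinger symbol.
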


\begin{lemma}[H\"older and Sobolev]
\label{L:3DHolderSobolev} If 
\begin{equation}  \label{E:Str12}
\beta^{(k)}(t,x_1,x_2) = V(x_1-x_2) \gamma^{(k)}(t,x_1,x_2)
\end{equation}
then 
\begin{equation}  \label{E:Str3}
\| (\mathcal{F}_2 T_1 \beta^{(k)})(t,x_1,\xi_2) \|_{L_t^2L_{\xi_2}^2
L_{x_1}^{\frac65+} L^2_c} \lesssim \left\{ \begin{aligned}
&\|V\|_{L^{\frac65+}} \| \langle \nabla_{x_1}\rangle^{\frac32} \gamma^{(k)}
\|_{L_t^2L_{\mathbf{x}\mathbf{x}'}^2} \\ &\|V\|_{L^{\frac32+}} \|
\nabla_{x_1}\gamma^{(k)} \|_{L_t^2L_{\mathbf{x}\mathbf{x}'}^2} \\
&\|V\|_{L^{3+}} \| \gamma^{(k)} \|_{L_t^2L_{\mathbf{x}\mathbf{x}'}^2}
\end{aligned} \right.
\end{equation}

\begin{equation}  \label{E:Str4}
\| (\mathcal{F}_1 T_2 \beta^{(k)})(t,\xi_1,x_2) \|_{L_t^2L_{\xi_1}^2
L_{x_2}^{\frac65+} L^2_c} \lesssim \left\{ \begin{aligned}
&\|V\|_{L^{\frac65+}} \| \langle \nabla_{x_2}\rangle^{\frac32} \gamma^{(k)}
\|_{L_t^2L_{\mathbf{x}\mathbf{x}'}^2} \\ &\|V\|_{L^{\frac32+}} \|
\nabla_{x_2} \gamma^{(k)} \|_{L_t^2L_{\mathbf{x}\mathbf{x}'}^2} \\
&\|V\|_{L^{3+}} \| \gamma^{(k)} \|_{L_t^2L_{\mathbf{x}\mathbf{x}'}^2}
\end{aligned} \right.
\end{equation}
\end{lemma}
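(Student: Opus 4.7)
The idea is to use the translation $T_1$ to decouple $V$ from the cross-dependence of $\gamma^{(k)}$. A direct computation gives
\[
(T_1\beta^{(k)})(t,x_1,x_2,c) \;=\; V(x_1)\,\gamma^{(k)}(t,x_1+x_2,x_2,c),
\]
where $c=(x_3,\ldots,x_k,x_1',\ldots,x_k')$ are the complementary (passive) coordinates. The key structural gain is that $V$ now depends only on the single variable $x_1$, so it can be extracted by H\"older in $x_1$ alone.

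To carry out the estimate, I would first interchange $L^2_{\xi_2}$ and $L^{6/5+}_{x_1}$ via Minkowski's integral inequality (valid since $6/5+\le 2$) and then apply Plancherel in $\xi_2\to x_2$ on the resulting $L^2_{\xi_2,c}$ block, obtaining
\[
\|\mathcal{F}_2 T_1\beta^{(k)}\|_{L^2_tL^2_{\xi_2}L^{6/5+}_{x_1}L^2_c}
\;\le\;
\big\||V(x_1)|\,g(t,x_1)\big\|_{L^2_tL^{6/5+}_{x_1}},
\]
where $g(t,x_1):=\|\gamma^{(k)}(t,x_1+x_2,x_2,c)\|_{L^2_{x_2,c}}$. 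H\"older in $x_1$ with $\tfrac{1}{p}+\tfrac{1}{q}=\tfrac{5}{6}-$ then bounds the right-hand side by $\|V\|_{L^p}\cdot\|g\|_{L^2_tL^q_{x_1}}$. The three stated cases correspond to the Hölder pairs $(p,q)\in\{(\tfrac{6}{5}+,\infty),(\tfrac{3}{2}+,6-),(3+,2)\}$.

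For each case I control $\|g(t,\cdot)\|_{L^q_{x_1}}$ by applying the 3D Sobolev embedding $H^s(\mathbb{R}^3)\hookrightarrow L^q(\mathbb{R}^3)$ with $s=\tfrac{3}{2},\,1,\,0$ respectively (the middle case uses $\dot H^1\hookrightarrow L^6$, which explains why the stated bound features $\nabla_{x_1}$ rather than $\langle\nabla_{x_1}\rangle$) fibrewise in the first slot of $\gamma^{(k)}$, and then invoking Minkowski in the form $\|f\|_{L^q_{x_1}L^2_{x_2,c}}\le\|f\|_{L^2_{x_2,c}L^q_{x_1}}$ (valid for $q\ge 2$; the case $q=2$ is just Fubini) to move $\langle\nabla_{x_1}\rangle^s$ inside the $L^2_{x_2,c}$ norm. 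Finally, the change of variable $y_1=x_1+x_2$ (for each fixed $x_2$) undoes the translation: since $\nabla_{x_1}[\gamma^{(k)}(x_1+x_2,x_2,c)]=(\nabla_1\gamma^{(k)})(x_1+x_2,x_2,c)$, this yields the three targets $\|\langle\nabla_{x_1}\rangle^{3/2}\gamma^{(k)}\|_{L^2_tL^2_{\mathbf{x}\mathbf{x}'}}$, $\|\nabla_{x_1}\gamma^{(k)}\|_{L^2_tL^2_{\mathbf{x}\mathbf{x}'}}$, and $\|\gamma^{(k)}\|_{L^2_tL^2_{\mathbf{x}\mathbf{x}'}}$, matching the statement. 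The bound \eqref{E:Str4} is then obtained by the symmetric argument with $T_2$ in place of $T_1$, using that $(T_2\beta^{(k)})(t,x_1,x_2,c)=V(-x_2)\gamma^{(k)}(t,x_1,x_1+x_2,c)$ and $\|V(-\cdot)\|_{L^p}=\|V\|_{L^p}$.

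There is no genuine analytic obstacle here; the lemma is essentially a bookkeeping companion to the Strichartz estimate of Lemma~\ref{Lemma:TheStrichartzEstimate}. The only points requiring care are (i) that both Minkowski interchanges go in the correct direction for an upper bound (namely $L^{6/5+}_{x_1}L^2_{\xi_2}\ge L^2_{\xi_2}L^{6/5+}_{x_1}$, and similarly $L^2_{x_2,c}L^q_{x_1}\ge L^q_{x_1}L^2_{x_2,c}$ when $q\ge 2$), and (ii) that H\"older in $x_1$ genuinely admits each of the three exponent pairs at the level of the ``$+$'' epsilons (for the third case this amounts to choosing the epsilons so that $q$ can be taken equal to $2$). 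Beyond that, the 3D Sobolev embedding and a translation change of variable do all the work.
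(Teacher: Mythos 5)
Your proof is correct and follows essentially the same route as the paper's: translate by $T_1$ so that $V$ depends on $x_1$ alone, pull it out by H\"older in $x_1$, apply Sobolev in $x_1$, and use Plancherel in $\xi_2$ together with translation invariance of the $L^2_{\mathbf{x}\mathbf{x}'}$ norm; the only difference is the order of operations (you apply Minkowski/Plancherel before H\"older--Sobolev, the paper after), and both directions of the Minkowski interchanges you use are valid. Your remark on the exponent bookkeeping for the middle and third cases correctly fills in what the paper dismisses with ``the other cases are similar.''
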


\begin{proof}
Consider \eqref{E:Str3}. By \eqref{E:Str12}, 
\begin{equation*}
(T_{1}\beta ^{(k)})(t,x_{1},x_{2})=V(x_{1})(T_{1}\gamma
^{(k)})(t,x_{1},x_{2})
\end{equation*}%
and hence, applying $\mathcal{F}_{2}$, we obtain 
\begin{equation*}
(\mathcal{F}_{2}T_{1}\beta ^{(k)})(t,x_{1},\xi _{2})=V(x_{1})(\mathcal{F}%
_{2}T_{1}\gamma ^{(k)})(t,x_{1},\xi _{2})
\end{equation*}%
Applying H\"{o}lder, 
\begin{equation*}
\Vert (\mathcal{F}_{2}T_{1}\beta ^{(k)})(t,x_{1},\xi _{2})\Vert _{L_{x_{1}}^{%
\frac{6}{5}+}L_{c}^{2}}\leq \Vert V\Vert _{L^{\frac{6}{5}+}}\Vert (\mathcal{F%
}_{2}T_{1}\gamma ^{(k)})(t,x_{1},\xi _{2})\Vert _{L_{x_{1}}^{\infty
-}L_{c}^{2}}
\end{equation*}%
By Sobolev, 
\begin{align*}
\Vert (\mathcal{F}_{2}T_{1}\beta ^{(k)})(t,x_{1},\xi _{2})\Vert _{L_{x_{1}}^{%
\frac{6}{5}+}L_{c}^{2}}& \leq \Vert V\Vert _{L^{\frac{6}{5}+}}\Vert \langle
\nabla _{x_{1}}\rangle ^{\frac{3}{2}}(\mathcal{F}_{2}T_{1}\gamma
^{(k)})(t,x_{1},\xi _{2})\Vert _{L_{x_{1}}^{2}L_{c}^{2}} \\
& =\Vert V\Vert _{L^{\frac{6}{5}+}}\Vert \mathcal{F}_{2}\langle \nabla
_{x_{1}}\rangle ^{\frac{3}{2}}(T_{1}\gamma ^{(k)})(t,x_{1},\xi _{2})\Vert
_{L_{x_{1}}^{2}L_{c}^{2}}
\end{align*}%
Applying the $L_{\xi _{2}}^{2}$ norm and Plancherel in $x_{2}$, 
\begin{equation*}
\Vert (\mathcal{F}_{2}T_{1}\beta ^{(k)})(t,x_{1},\xi _{2})\Vert _{L_{\xi
_{2}}^{2}L_{x_{1}}^{\frac{6}{5}+}L_{c}^{2}}=\Vert V\Vert _{L^{\frac{6}{5}%
+}}\Vert \langle \nabla _{x_{1}}\rangle ^{\frac{3}{2}}(T_{1}\gamma
^{(k)})(t,x_{1},x_{2})\Vert _{L_{\mathbf{x}\mathbf{x}^{\prime }}^{2}}
\end{equation*}%
Reviewing the definition of $T_{1}$, we see that 
\begin{equation*}
=\Vert V\Vert _{L^{\frac{6}{5}+}}\Vert \langle \nabla _{x_{1}}\rangle ^{%
\frac{3}{2}}\gamma ^{(k)}(t,x_{1},x_{2})\Vert _{L_{\mathbf{x}\mathbf{x}%
^{\prime }}^{2}}.
\end{equation*}%
The other cases are similar.
\end{proof}

Using frequency localization, we can share derivatives between two
coordinates, as in the following corollary.

\begin{corollary}
\label{corollary:basic corollary for 3D endpoint}If $\gamma ^{(k)}$ is
symmetric and 
\begin{equation*}
\beta ^{(k)}(t,x_{1},x_{2})=V(x_{1}-x_{2})\gamma ^{(k)}(t,x_{1},x_{2})
\end{equation*}%
then 
\begin{equation}  \label{estimate:basic corollary for 3D endpoint}
\Vert \beta ^{(k)}\Vert _{X_{-\frac{1}{2}+}^{(k)}} \lesssim \left\{ %
\begin{aligned} &\Vert V\Vert _{L^{\frac{6}{5}+}}\Vert \langle \nabla
_{x_{1}}\rangle ^{\frac{3}{4}}\langle \nabla _{x_{2}}\rangle
^{\frac{3}{4}}\gamma ^{(k)}\Vert _{L_{t}^{2}L_{\mathbf{x}\mathbf{x}^{\prime
}}^{2}} \\ &\Vert V\Vert _{L^{\frac{3}{2}+}}\Vert \langle \nabla
_{x_{i}}\rangle \gamma ^{(k)}\Vert
_{L_{t}^{2}L_{\mathbf{x}\mathbf{x}^{\prime }}^{2}}\text{ with }i=1,2 \\
&\Vert V\Vert _{L^{3+}}\Vert \gamma ^{(k)}\Vert
_{L_{t}^{2}L_{\mathbf{x}\mathbf{x}^{\prime }}^{2}} \end{aligned} \right.
\end{equation}
\end{corollary}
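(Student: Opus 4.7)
The second and third inequalities in Corollary \ref{corollary:basic corollary for 3D endpoint} are direct restatements of Lemma \ref{L:3DHolderSobolev}: the choices $i=1$ and $i=2$ correspond to the two ``parallel'' bounds already proven there, and symmetry of $\gamma^{(k)}$ in $(x_1,x_2)$ is only used to confirm that both choices produce the same right-hand side. The substance of the corollary therefore lies entirely in the first inequality, where a single $3/2$-derivative on one coordinate is replaced by a shared $(3/4, 3/4)$ allocation across both coordinates.

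\textbf{Main step.} I intend to obtain the first inequality by complex interpolation of the two Lemma \ref{L:3DHolderSobolev} bounds at the $L^{6/5+}$ level, namely
\[
\|V\gamma^{(k)}\|_{X^{(k)}_{-\frac{1}{2}+}}\lesssim \|V\|_{L^{\frac{6}{5}+}}\|\langle\nabla_{x_j}\rangle^{\frac{3}{2}}\gamma^{(k)}\|_{L^2_tL^2_{\mathbf{x}\mathbf{x}'}}, \qquad j=1,2.
\]
Via Plancherel, the two right-hand sides are the norms of $\gamma^{(k)}$ in the weighted $L^2$ spaces $L^2(\langle\xi_1\rangle^{3}\,d\tau\,d\xi\,d\xi')$ and $L^2(\langle\xi_2\rangle^{3}\,d\tau\,d\xi\,d\xi')$ respectively. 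Since $V$ is fixed, the map $T_V:\gamma^{(k)}\mapsto V\gamma^{(k)}$ is a \emph{linear} operator sending each of these weighted $L^2$ spaces boundedly into the fixed codomain $X^{(k)}_{-\frac{1}{2}+}$ (itself a weighted $L^2$ in the spacetime Fourier variables), with operator norm controlled in each case by a constant multiple of $\|V\|_{L^{6/5+}}$. Applying Stein--Weiss complex interpolation of weighted $L^2$ spaces,
\[
\bigl[L^2(w_0\,d\mu),\,L^2(w_1\,d\mu)\bigr]_{\theta}=L^2\bigl(w_0^{1-\theta}w_1^{\theta}\,d\mu\bigr),
\]
with $w_0=\langle\xi_1\rangle^{3}$, $w_1=\langle\xi_2\rangle^{3}$ and $\theta=1/2$, produces the interpolated weight $\langle\xi_1\rangle^{3/2}\langle\xi_2\rangle^{3/2}$, which corresponds back in physical space to the norm $\|\langle\nabla_{x_1}\rangle^{3/4}\langle\nabla_{x_2}\rangle^{3/4}\gamma^{(k)}\|_{L^2_tL^2_{\mathbf{x}\mathbf{x}'}}$. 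The interpolated bound, with operator norm still controlled by $\|V\|_{L^{6/5+}}$, is precisely the first inequality of the corollary.

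\textbf{Expected obstacle.} The only real bookkeeping issue is to confirm that the Lemma \ref{L:3DHolderSobolev} bounds are genuinely linear in $\gamma^{(k)}$ with $V$ treated as a fixed multiplier (so that one-operator complex interpolation applies), and that the codomain $X^{(k)}_{-\frac{1}{2}+}$ is indeed a Banach space compatible with the interpolation machinery; both are standard. An alternative and more hands-on proof would Littlewood--Paley decompose $\gamma^{(k)}=\sum_{M,N}P_M^1 P_N^2\gamma^{(k)}$, apply Lemma \ref{L:3DHolderSobolev} to each block with the $3/2$ derivatives placed on the coordinate of lower frequency to gain the factor $\min(M,N)^{3/2}$, and square-sum using the pointwise bound $\min(M,N)^3\leq M^{3/2}N^{3/2}$. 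The subtle point in that route is the approximate orthogonality of the blocks in $X^{(k)}_{-\frac{1}{2}+}$, which is not automatic because $V$ does not commute with $P_M^1P_N^2$; the frequency smearing from $V$ is mild compared to dyadic separation but would require explicit quantification. The interpolation route bypasses this entirely, and is what I would write up.
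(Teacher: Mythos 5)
Your proposal is correct, and the reduction of the second and third estimates to Lemma \ref{L:3DHolderSobolev} matches the paper exactly; but for the first estimate you take a genuinely different route. The paper does not interpolate: it splits $\gamma^{(k)}=P_{|\xi_1|\leq|\xi_2|}\gamma^{(k)}+P_{|\xi_2|\leq|\xi_1|}\gamma^{(k)}$ into just \emph{two} pieces according to which frequency dominates, applies the $\|V\|_{L^{6/5+}}\|\langle\nabla_{x_1}\rangle^{3/2}\cdot\|$ bound (resp.\ its $x_2$ counterpart) to the piece where $\xi_1$ (resp.\ $\xi_2$) is the \emph{smaller} frequency, and then uses the pointwise inequality $\langle\xi_1\rangle^{3/2}\leq\langle\xi_1\rangle^{3/4}\langle\xi_2\rangle^{3/4}$ on the support of the projection to convert each one-sided $3/2$-derivative bound into the shared $(3/4,3/4)$ bound; the two pieces are then summed by the triangle inequality. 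This is in effect the "hands-on" alternative you sketch, but with only two pieces rather than a full dyadic decomposition, so the orthogonality issue you flag (non-commutation of $V$ with the Littlewood--Paley blocks, square-summation in $X^{(k)}_{-\frac12+}$) simply never arises -- no orthogonality is used, only the triangle inequality. Your Stein--Weiss interpolation argument is also valid: the domain and codomain are genuinely weighted $L^2$ spaces on the space-time Fourier side, $\gamma^{(k)}\mapsto V\gamma^{(k)}$ is linear with $V$ fixed, and $\bigl[L^2(\langle\xi_1\rangle^{3}),L^2(\langle\xi_2\rangle^{3})\bigr]_{1/2}=L^2(\langle\xi_1\rangle^{3/2}\langle\xi_2\rangle^{3/2})$ gives exactly the claimed norm. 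What interpolation buys is abstraction (it would produce the whole family $\langle\nabla_{x_1}\rangle^{\frac32(1-\theta)}\langle\nabla_{x_2}\rangle^{\frac32\theta}$ for free); what the paper's two-piece splitting buys is elementarity and the fact that the identical device extends immediately to the three-body Corollary \ref{corollary:basic corollary in three body}, where the frequencies are split three ways according to the maximum.
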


\begin{proof}
We need only to prove the first inequality of (\ref{estimate:basic corollary
for 3D endpoint}). The other two are directly from Lemma \ref%
{L:3DHolderSobolev} and the fact that $\gamma ^{(k)}$ is symmetric i.e. $%
\left\Vert \langle \nabla _{x_{1}}\rangle \gamma ^{(k)}\right\Vert
=\left\Vert \langle \nabla _{x_{2}}\rangle \gamma ^{(k)}\right\Vert $.

Split $\gamma ^{(k)}$ according to the relative magnitude of the $\xi _{1}$
and $\xi _{2}$ frequencies: 
\begin{equation*}
\gamma ^{(k)}=P_{|\xi _{1}|\leq |\xi _{2}|}\gamma ^{(k)}+P_{|\xi _{2}|\leq
|\xi _{1}|}\gamma ^{(k)}
\end{equation*}%
and define 
\begin{equation*}
\beta _{1\leq 2}^{(k)}\overset{\mathrm{def}}{=}V(x_{1}-x_{2})P_{|\xi
_{1}|\leq |\xi _{2}|}\gamma ^{(k)}
\end{equation*}%
\begin{equation*}
\beta _{2\leq 1}^{(k)}\overset{\mathrm{def}}{=}V(x_{1}-x_{2})P_{|\xi
_{2}|\leq |\xi _{1}|}\gamma ^{(k)}
\end{equation*}%
so that 
\begin{equation*}
\beta ^{(k)}=\beta _{1\leq 2}^{(k)}+\beta _{2\leq 1}^{(k)}
\end{equation*}%
For the $\beta _{1\leq 2}^{(k)}$ piece, use the first estimate in %
\eqref{E:Str1} together with the first estimate of \eqref{E:Str3} to obtain 
\begin{align*}
\Vert \beta _{1\leq 2}^{(k)}\Vert _{X_{-\frac{1}{2}+}^{(k)}}& \lesssim \Vert
V\Vert _{L^{\frac{6}{5}+}}\Vert \langle \nabla _{x_{1}}\rangle ^{\frac{3}{2}%
}P_{|\xi _{1}|\leq |\xi _{2}|}\gamma ^{(k)}\Vert _{L_{t}^{2}L_{\mathbf{x}%
\mathbf{x}^{\prime }}^{2}} \\
& \lesssim \Vert V\Vert _{L^{\frac{6}{5}+}}\Vert \langle \nabla
_{x_{1}}\rangle ^{\frac{3}{4}}\langle \nabla _{x_{2}}\rangle ^{\frac{3}{4}%
}P_{|\xi _{1}|\leq |\xi _{2}|}\gamma ^{(k)}\Vert _{L_{t}^{2}L_{\mathbf{x}%
\mathbf{x}^{\prime }}^{2}} \\
& \lesssim \Vert V\Vert _{L^{\frac{6}{5}+}}\Vert \langle \nabla
_{x_{1}}\rangle ^{\frac{3}{4}}\langle \nabla _{x_{2}}\rangle ^{\frac{3}{4}%
}\gamma ^{(k)}\Vert _{L_{t}^{2}L_{\mathbf{x}\mathbf{x}^{\prime }}^{2}}
\end{align*}%
where, in the middle line, we used the frequency restriction to $|\xi
_{1}|\leq |\xi _{2}|$.

For the $\beta _{2\leq 1}^{(k)}$ piece, use the second estimate in %
\eqref{E:Str1} together with the first estimate of \eqref{E:Str4}, and
proceed in an analogous fashion to obtain 
\begin{equation*}
\Vert \beta _{2\leq 1}^{(k)}\Vert _{X_{-\frac{1}{2}+}^{(k)}}\lesssim \Vert
V\Vert _{L^{\frac{6}{5}+}}\Vert \langle \nabla _{x_{1}}\rangle ^{\frac{3}{4}%
}\langle \nabla _{x_{2}}\rangle ^{\frac{3}{4}}\gamma ^{(k)}\Vert
_{L_{t}^{2}L_{\mathbf{x}\mathbf{x}^{\prime }}^{2}}
\end{equation*}
\end{proof}

Using Corollary \ref{corollary:basic corollary for 3D endpoint}, we can
prove the following proposition which will be for the first order term in
the $PP$ estimate.

\begin{proposition}
\label{P:2body-potential-prep} For any $U(x)$, let $U_N(x)=N^{3\beta}
U(N^\beta x)$. Then 
\begin{equation}  \label{E:Str32}
\begin{aligned} \hspace{0.3in}&\hspace{-0.3in} \| R_{\leq M_2}^{(2)} \Big(
U_N(x_1-x_2) \nabla_{x_2} \alpha^{(2)}\Big) \|_{X_{-\frac12+}^{(2)}} \\
&\lesssim C_U \Big( N^{-\frac12} \|S_1S^{(2)}
\alpha\|_{L_t^2L_{\mathbf{x}_2\mathbf{x}_2'}^2} + \|S^{(2)}
\alpha\|_{L_t^2L_{\mathbf{x}_2\mathbf{x}_2'}^2} \Big) \left\{
\begin{aligned} &M_2 (N^{\frac{3\beta}{2}+\frac14+}+N^{\beta+\frac12+}) \\
&(N^{\frac{5\beta}{2}+\frac14+}+N^{2\beta+\frac12+}) \end{aligned} \right.
\end{aligned}
\end{equation}
where 
\begin{equation*}
C_U = \| \nabla^2 U\|_{L^{\frac65+}}+ \| \nabla U\|_{L^{\frac65+}\cap
L^{\frac32+}} + \|U\|_{L^{\frac32+}\cap L^{3+}}
\end{equation*}
\end{proposition}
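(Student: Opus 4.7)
The plan is to reduce to applying Corollary \ref{corollary:basic corollary for 3D endpoint} after a Littlewood--Paley paraproduct decomposition that distributes the outer derivatives in $R^{(2)}_{\leq M_2}$ between the potential $U_N$ and the density. First, observe that the primed-variable derivatives $|\nabla_{x_1^{\prime}}|$ and $|\nabla_{x_2^{\prime}}|$ in $R^{(2)}$ commute with multiplication by $U_N(x_1-x_2)$, so they pass directly onto $\alpha^{(2)}$ and are accounted for by the energy norms $\|S^{(2)}\alpha\|$ and $\|S_1S^{(2)}\alpha\|$ on the right-hand side of \eqref{E:Str32}. Setting $\hat\alpha:=|\nabla_{x_1^{\prime}}||\nabla_{x_2^{\prime}}|\alpha^{(2)}$, it remains to bound
\[
I := \bigl\|P^{(2)}_{\leq M_2}|\nabla_{x_1}||\nabla_{x_2}|\bigl(U_N(x_1-x_2)\,\nabla_{x_2}\hat\alpha\bigr)\bigr\|_{X^{(2)}_{-\frac{1}{2}+}}.
\]

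Next, I would distribute the two outer derivatives $|\nabla_{x_1}|$ and $|\nabla_{x_2}|$ between the factors of the product $U_N\cdot\nabla_{x_2}\hat\alpha$ by a paraproduct expansion, producing pieces of the form $V(x_1-x_2)\,g(x_1,x_2)$ with $V\in\{U_N,\,(\partial U)_N,\,(\partial^2U)_N\}$ and $g$ a suitable derivative of $\hat\alpha$, frequency-localized to scale $\lesssim M_2$. The relevant Lebesgue-norm scalings of the potentials, uniform in $N$, are
\begin{align*}
\|U_N\|_{L^{6/5+}} &\lesssim N^{\beta/2+}, \\
\|U_N\|_{L^{3/2+}} &\lesssim N^{\beta+}, \\
\|U_N\|_{L^{3+}} &\lesssim N^{2\beta+}, \\
\|(\partial U)_N\|_{L^{6/5+}} &\lesssim N^{3\beta/2+}, \\
\|(\partial U)_N\|_{L^{3/2+}} &\lesssim N^{2\beta+}, \\
\|(\partial^2 U)_N\|_{L^{6/5+}} &\lesssim N^{5\beta/2+}.
\end{align*}
Each paraproduct piece is then estimated by Corollary \ref{corollary:basic corollary for 3D endpoint}, which offers three alternatives trading an $L^p$-norm of $V$ against Sobolev regularity on $g$. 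The projection $P^{(2)}_{\leq M_2}$ lets us convert any surplus $\langle\nabla_{x_i}\rangle^\theta$-smoothness on $g$ into a factor $M_2^\theta$, or conversely absorb powers of $M_2$ as extra smoothness on $g$; this is the Littlewood--Paley trick described in \S\ref{Sec:org of paper}.

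The two alternatives in \eqref{E:Str32} then correspond to the two extreme assignments in the paraproduct. In the branch yielding the $M_2$-factor, both outer derivatives are absorbed onto $\hat\alpha$ by $P^{(2)}_{\leq M_2}$ and only the bare $U_N$ appears: the $L^{6/5+}$ alternative of Corollary \ref{corollary:basic corollary for 3D endpoint}, combined with the inner $\nabla_{x_2}$ already present in $F$, yields the $M_2\,N^{3\beta/2+1/4+}$ term, while the $L^{3/2+}$ alternative yields the $M_2\,N^{\beta+1/2+}$ term; the factor $N^{-1/2}\|S_1S^{(2)}\alpha\|$ enters exactly in the subcases where the two outer derivatives both land on the $x_1$-variable of $\alpha^{(2)}$, requiring the stronger $S_1^2$-energy at the cost of $1/\sqrt{N}$ via \eqref{bound:k-energy estimate}. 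In the branch without an $M_2$-factor, both outer derivatives are integrated by parts onto the potential to produce $V=(\partial^2U)_N$ (or $(\partial U)_N$), estimated in $L^{6/5+}$ (resp.~$L^{3/2+}$), yielding the $N^{5\beta/2+1/4+}$ and $N^{2\beta+1/2+}$ terms with only $\|S^{(2)}\alpha\|$ needed on the right. Mixed paraproduct pieces are subordinate to one of the two extremes. The hard part will be the bookkeeping that reconciles the fractional sharing $\langle\nabla_{x_1}\rangle^{3/4}\langle\nabla_{x_2}\rangle^{3/4}$ of Corollary \ref{corollary:basic corollary for 3D endpoint} with the dyadic cutoff $M_2$ and the inherited $\nabla_{x_2}$ in $F$; this interpolation is precisely what produces the fractional $N^{1/4+}$ and $N^{1/2+}$ factors in \eqref{E:Str32}, and it must be arranged so that the accumulating $x_2$-derivatives are not forced into an $S_2^2$-requirement that the right-hand side cannot supply.
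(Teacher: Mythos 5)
Your overall strategy is the paper's: pass the primed derivatives onto $\alpha^{(2)}$, distribute $\nabla_{x_1}\nabla_{x_2}$ over the product by the Leibniz rule (the paper works with true gradients rather than $|\nabla|$, so no paraproduct machinery is needed), convert an outer derivative into a factor $M_2$ via $P^{(2)}_{\leq M_2}\nabla_{x_2}\leq M_2$ in the first branch, and close with the translated endpoint Strichartz plus H\"older/Sobolev alternatives. However, your term-by-term accounting does not reproduce the right-hand side of \eqref{E:Str32}. In the $M_2$ branch you claim that \emph{both} outer derivatives are absorbed by $P^{(2)}_{\leq M_2}$ and that only the bare $U_N$ appears: absorbing both would cost $M_2^2$, not $M_2$, and bare $U_N$ in $L^{6/5+}$ scales like $N^{\beta/2+}$, which cannot produce $M_2N^{3\beta/2+\frac14+}$. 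In the paper only the outer $\nabla_{x_2}$ becomes $M_2$, while $\nabla_{x_1}$ is distributed: when it hits the potential one gets $N^{\beta}(\nabla U)_N$ with $\|(\nabla U)_N\|_{L^{6/5+}}\lesssim N^{\beta/2+}$, i.e.\ $N^{3\beta/2+}$, and the accompanying $\langle\nabla_{x_1}\rangle^{3/2}$ Sobolev cost interpolates between $\|S^{(2)}\alpha\|$ and $N^{-\frac12}\|S_1S^{(2)}\alpha\|$ to yield the $N^{\frac14}$; when it hits $\alpha$ one uses the $L^{3/2+}$ alternative and gets $M_2N^{\beta+\frac12+}$.

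Two further corrections. First, the $N^{-\frac12}\|S_1S^{(2)}\alpha\|$ term is not triggered by ``both outer derivatives landing on the $x_1$-variable'' (one of them is a $\nabla_{x_2}$); it appears because the $L^{3/2+}$ H\"older alternative in \eqref{E:Str3} costs one additional $x_1$-derivative through Sobolev embedding, so the single outer $\nabla_{x_1}$ on $\alpha$ plus that loss totals $\langle\nabla_{x_1}\rangle^2$. Second, the symmetric sharing $\langle\nabla_{x_1}\rangle^{3/4}\langle\nabla_{x_2}\rangle^{3/4}$ of Corollary \ref{corollary:basic corollary for 3D endpoint} is exactly what you must \emph{not} use here: the $x_2$-slot already carries the inherited $\nabla_{x_2}$, and an extra $3/4$ there would demand an $S_2^2$-type energy that \eqref{bound:k-energy estimate} does not supply. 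The resolution of the tension you correctly flag at the end is to use the asymmetric estimates of Lemma \ref{L:3DHolderSobolev} (first estimate of \eqref{E:Str1} with the first estimate of \eqref{E:Str3}), loading the full $3/2$ surplus onto $x_1$, where the $N^{-1}\tr S_1S_{1'}S^{(k)}\alpha_N^{(k)}$ part of the energy is available. With these repairs your plan coincides with the paper's proof.
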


\begin{proof}
We begin by proving the first estimate of \eqref{E:Str32}. Let 
\begin{equation*}
\beta^{(2)} = R^{(2)}_{\leq M_2} \Big( U_N(x_1-x_2) \nabla_{x_2}
\alpha^{(2)} \Big) = P^{(2)}_{\leq M_2} \nabla_{x_1}\nabla_{x_2} \Big( %
U_N(x_1-x_2) \nabla_{x_2} \alpha^{(2)} \Big) = \mathrm{A}+\mathrm{B}
\end{equation*}
where $\mathrm{A}$ and $\mathrm{B}$ are produced by distributing $%
\nabla_{x_1}$ into the product: 
\begin{equation*}
\mathrm{A} = N^\beta P^{(2)}_{\leq M_2} \nabla_{x_2} \Big( (\nabla
U)_N(x_1-x_2) \nabla_{x_2} \alpha^{(2)} \Big)
\end{equation*}
\begin{equation*}
\mathrm{B} = P^{(2)}_{\leq M_2} \nabla_{x_2} \Big( U_N(x_1-x_2) \nabla_{x_1}
\nabla_{x_2} \alpha^{(2)} \Big)
\end{equation*}
Using $P^{(2)}_{\leq M_2} \nabla_{x_2} \leq M_2$, we have 
\begin{equation*}
\| \mathrm{A} \|_{X_{-\frac12+}} \lesssim M_2 N^\beta \| (\nabla
U)_N(x_1-x_2) \nabla_{x_2} \alpha^{(2)} \|_{X_{-\frac12+}}
\end{equation*}
By the first estimate of \eqref{E:Str1} combined with the first estimate of %
\eqref{E:Str3}, we obtain 
\begin{align*}
&\lesssim M_2 N^\beta \| (\nabla U)_N \|_{L^{\frac65+}} \| \langle
\nabla_{x_1} \rangle^{\frac32} \langle \nabla_{x_2} \rangle \alpha^{(2)}
\|_{L_t^2L_{\mathbf{x}_2\mathbf{x}_2^{\prime }}^2} \\
&= M_2 N^{\frac{3\beta}{2}+\frac14+} \|\nabla U \|_{L^{\frac65+}} \Big( %
N^{-\frac14} \|\langle \nabla_{x_1}\rangle^{\frac32} \langle
\nabla_{x_2}\rangle \alpha^{(2)} \|_{L_t^2L_{\mathbf{x}_2\mathbf{x}%
_2^{\prime }}^2} \Big)
\end{align*}
Using $P^{(2)}_{\leq M_2} \nabla_{x_2} \leq M_2$, we have 
\begin{equation*}
\| \mathrm{B} \|_{X_{-\frac12+}} \lesssim M_2 \| U_N(x_1-x_2)
\nabla_{x_1}\nabla_{x_2} \alpha^{(2)} \|_{X_{-\frac12+}}
\end{equation*}
By the second estimate of \eqref{estimate:basic corollary for 3D endpoint},
we obtain 
\begin{align*}
&\lesssim M_2 \|U_N\|_{L^{\frac32+}} \| \langle \nabla_{x_1}\rangle^2\langle
\nabla_{x_2}\rangle \alpha^{(2)} \|_{L_t^2L_{\mathbf{x}_2\mathbf{x}%
_2^{\prime }}^2} \\
& = M_2 N^{\beta+\frac12+} \|U\|_{L^{\frac32+}}\Big( N^{-\frac12} \|
|\nabla_{x_1}|^{\frac32} |\nabla_{x_2}| \alpha^{(2)} \|_{L_t^2L_{\mathbf{x}_2%
\mathbf{x}_2^{\prime }}^2} \Big)
\end{align*}
Now we turn to the second estimate of \eqref{E:Str32}. In this case, we
distribute both $\nabla_{x_1}$ and $\nabla_{x_2}$ into the product to obtain 
$4$ terms 
\begin{align*}
\beta^{(2)} &= R^{(2)}_{\leq M_2} \Big( U_N(x_1-x_2) \nabla_{x_2}
\alpha^{(2)} \Big) = P^{(2)}_{\leq M_2} \nabla_{x_1}\nabla_{x_2} \Big( %
U_N(x_1-x_2) \nabla_{x_2} \alpha^{(2)} \Big) \\
&= \mathrm{C}+\mathrm{D}+\mathrm{E}+\mathrm{F}
\end{align*}
where 
\begin{equation*}
\mathrm{C} = -N^{2\beta} P^{(2)}_{\leq M_2} (\nabla^2 U)_N(x_1-x_2)
\nabla_{x_2}\alpha^{(2)}
\end{equation*}
\begin{equation*}
\mathrm{D} = N^{\beta} P^{(2)}_{\leq M_2} (\nabla U)_N(x_1-x_2) \nabla_{x_1}
\nabla_{x_2}\alpha^{(2)}
\end{equation*}
\begin{equation*}
\mathrm{E} = -N^{\beta} P^{(2)}_{\leq M_2} (\nabla U)_N(x_1-x_2)
\nabla_{x_2}^2\alpha^{(2)}
\end{equation*}
\begin{equation*}
\mathrm{F} = P^{(2)}_{\leq M_2} U_N(x_1-x_2) \nabla_{x_1} \nabla_{x_2}^2
\alpha^{(2)}
\end{equation*}
By the first estimate in \eqref{E:Str1} followed by the first estimate in %
\eqref{E:Str3}, 
\begin{align*}
\| \mathrm{C} \|_{X_{-\frac12+}} &\lesssim N^{2\beta} \|(\nabla^2
U)_N(x_1-x_2) \nabla_{x_2} \alpha^{(2)} \|_{X_{-\frac12+}} \\
&\lesssim N^{2\beta} \| (\nabla^2 U)_N \|_{L^{\frac65+}} \| \langle
\nabla_{x_1} \rangle^{\frac32} \nabla_{x_2} \alpha^{(2)} \|_{L_t^2L_{\mathbf{%
x}_2\mathbf{x}_2^{\prime }}^2} \\
&= N^{\frac{5\beta}{2}+\frac14+} \|\nabla^2 U\|_{L^{\frac65+}} \Big( %
N^{-\frac14} \| \langle \nabla_{x_1} \rangle^{\frac32} \nabla_{x_2}
\alpha^{(2)} \|_{L_t^2L_{\mathbf{x}_2\mathbf{x}_2^{\prime }}^2} \Big)
\end{align*}
By the second estimate of \eqref{estimate:basic corollary for 3D endpoint} 
\begin{align*}
\| \mathrm{D} \|_{X_{-\frac12+}} &\lesssim N^\beta \|(\nabla U)_N
\|_{L^{\frac32+}} \| \langle \nabla_{x_1} \rangle^2 \nabla_{x_2}
\alpha^{(2)} \|_{L_t^2L_{\mathbf{x}_2\mathbf{x}_2^{\prime }}^2} \\
&\lesssim N^{2\beta+\frac12+} \| \nabla U\|_{L^{\frac32+}} \Big( %
N^{-\frac12} \| \langle \nabla_{x_1} \rangle^2 \nabla_{x_2} \alpha^{(2)}
\|_{L_t^2L_{\mathbf{x}_2\mathbf{x}_2^{\prime }}^2} \Big)
\end{align*}
By the second estimate of \eqref{estimate:basic corollary for 3D endpoint}.
The treatment of $\mathrm{E}$ is nearly identical. By the third estimate of %
\eqref{estimate:basic corollary for 3D endpoint} 
\begin{align*}
\| \mathrm{F} \|_{X_{-\frac12+}} &\lesssim \|U_N \|_{L^{3+}} \| \langle
\nabla_{x_1} \rangle^2 \nabla_{x_2} \alpha^{(2)} \|_{L_t^2L_{\mathbf{x}_2%
\mathbf{x}_2^{\prime }}^2} \\
&\lesssim N^{2\beta+\frac12+} \| U\|_{L^{3+}} \Big( N^{-\frac12} \| \langle
\nabla_{x_1} \rangle^2 \nabla_{x_2} \alpha^{(2)} \|_{L_t^2L_{\mathbf{x}_2%
\mathbf{x}_2^{\prime }}^2} \Big)
\end{align*}
\end{proof}

We now provide 6D analogues to the above coordinate translated 3D Strichartz
estimate in Lemma \ref{Lemma:TheStrichartzEstimate} and the associated
H\"older and Sobolev estimates in Lemma \ref{L:3DHolderSobolev}. These 6D
estimates are essential to optimally distribute the derivatives in
three-body estimates.

\begin{lemma}[6D endpoint Strichartz in transformed coordinates]
\footnote{%
It was first observed by X.C. \cite{Chen2ndOrder} in the Hartree setting
that the 6D retarded endpoint Strichartz estimate helps to deal with
three-body interactions and shows that three-body interactions are "better"
than two-body interactions. However, the problem we are discussing here
provides a much deeper and much more substantial explanation to this
phenomenon.} Let 
\begin{equation*}
T_{12}f(x_1,x_2,x_3) = f(x_1+x_3,x_2+x_3,x_3)
\end{equation*}
\begin{equation*}
T_{23}f(x_1,x_2,x_3) = f(x_1,x_2+x_1,x_3+x_1)
\end{equation*}
\begin{equation*}
T_{13}f(x_1,x_2,x_3) = f(x_1+x_2,x_2,x_3+x_2)
\end{equation*}
Then 
\begin{equation}  \label{E:Str5}
\| \beta^{(k)}\|_{X^{(k)}_{-\frac12+}} \lesssim \left\{ \begin{aligned} &\|
(\mathcal{F}_3 T_{12} \beta^{(k)})(t,x_1,x_2,\xi_3) \|_{L_t^2 L_{\xi_3}^2
L_{x_1x_2}^{\frac32+} L^2_c} \\ &\| (\mathcal{F}_2 T_{13}
\beta^{(k)})(t,x_1,\xi_2,x_3) \|_{L_t^2 L_{\xi_2}^2 L_{x_1x_3}^{\frac32+}
L^2_c}\\ &\| (\mathcal{F}_1 T_{23} \beta^{(k)})(t,\xi_1,x_2,x_3) \|_{L_t^2
L_{\xi_1}^2 L_{x_2x_3}^{\frac32+} L^2_c} \end{aligned} \right.
\end{equation}
where in each case $c$ stands for ``complementary coordinates'',
specifically coordinates $(x_4,\ldots, x_k, x_1^{\prime }, \ldots,
x_k^{\prime })$.
\end{lemma}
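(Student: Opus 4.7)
The plan is to follow the argument for the 3D version (Lemma \ref{Lemma:TheStrichartzEstimate}, itself from \cite[Lemma 4.6]{C-H3Dto2D}), replacing the 3D endpoint Strichartz pair $(q,r)=(2,6)$ by the 6D Keel--Tao endpoint pair $(2,3)$ applied to the Schr\"odinger evolution on $\mathbb{R}^6$ in the coordinates $(x_1,x_2)$. By symmetry, it suffices to establish the first estimate in (\ref{E:Str5}); the other two follow identically after relabeling, using $T_{13}$ or $T_{23}$ in place of $T_{12}$.

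First, by duality of $X^{(k)}_b$ spaces,
$$\|\beta^{(k)}\|_{X^{(k)}_{-1/2+}} = \sup_{\|\phi^{(k)}\|_{X^{(k)}_{1/2-}} \leq 1} \Bigl| \int \beta^{(k)}\, \overline{\phi^{(k)}}\, dt\, d\mathbf{x}_k\, d\mathbf{x}_k' \Bigr|.$$
I would then perform the unimodular change of variables $(x_1, x_2, x_3) \mapsto (x_1 - x_3, x_2 - x_3, x_3)$ in the integrand and apply Plancherel in $x_3$, rewriting the pairing as an integral of $(\mathcal{F}_3 T_{12}\beta^{(k)}) \overline{(\mathcal{F}_3 T_{12}\phi^{(k)})}$ against $dt\, dy_1\, dy_2\, d\xi_3$ and the other passive variables. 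H\"older in $(y_1, y_2) \in \mathbb{R}^6$ with exponents $(3/2+,\; 3-)$ and Cauchy--Schwarz in the remaining variables then bound the pairing by the product of the desired right-hand side of (\ref{E:Str5}) (for $\beta^{(k)}$) and the Strichartz-type quantity $\|\mathcal{F}_3 T_{12}\phi^{(k)}\|_{L_t^2 L^2_{\xi_3} L^{3-}_{y_1 y_2} L^2_c}$.

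The remaining step is to bound this last quantity by $\|\phi^{(k)}\|_{X^{(k)}_{1/2-}}$. Under $T_{12}$, the operator $-\Delta_{x_1} - \Delta_{x_2} - \Delta_{x_3}$ becomes a non-degenerate second-order elliptic operator in $(y_1, y_2, y_3)$; its Fourier symbol, after the partial transform $\mathcal{F}_3$ at each fixed $\xi_3$, is a positive-definite quadratic form in $(\eta_1, \eta_2) \in \mathbb{R}^6$ plus a linear term in $\xi_3$. Completing the square (a $\xi_3$-dependent Galilean shift, which is an $L^p$-isometry on $(y_1, y_2)$ for every $p$) removes the linear term, reducing the flow to that of a fixed positive-definite quadratic Schr\"odinger operator on $\mathbb{R}^6$. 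The Keel--Tao endpoint Strichartz estimate then applies fiberwise in $\xi_3$, yielding the desired bound after integrating over $\xi_3$ and the other passive frequencies.

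The main obstacle is this final conjugation step: one must verify that the $X^{(k)}_b$ phase, diagonal in the original frequencies, transforms under $T_{12}$ and $\mathcal{F}_3$ into a form compatible with a single, uniform 6D Strichartz estimate. Because the Hessian of the new quadratic symbol in $(\eta_1, \eta_2)$ (with diagonal blocks $4 I_3$ and off-diagonal block $2 I_3$) is independent of $\xi_3$ and of the complementary frequencies, the ellipticity and the Strichartz constants remain uniform, while the Galilean shift costs nothing in the $L^{3-}_{y_1 y_2}$ norm. Combined with Lemma \ref{Lemma:b to b-1} to perform the standard $X_b$--Strichartz reduction, this yields the claimed inequality.
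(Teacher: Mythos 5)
Your proposal is correct and follows essentially the same route as the paper's proof: the coordinate change $T_{12}$, the partial Fourier transform $\mathcal{F}_3$, the $\xi_3$-dependent Galilean shift removing the linear (cross) term, and the fiberwise 6D Keel--Tao endpoint estimate for the resulting $\xi_3$-independent positive-definite quadratic symbol $2|\eta_1|^2+2|\eta_2|^2+2\eta_1\cdot\eta_2$ are precisely the ingredients of the paper's computation with $\sigma_{\xi_3}$; you merely package the final step by duality (testing against $\phi\in X^{(k)}_{\frac12-}$ and using the embedding $X_{\frac12-}\hookrightarrow L_t^2L^{3-}_{y_1y_2}L^2_c$, obtained by interpolating the endpoint embedding with the trivial $L^2$ one) where the paper applies the equivalent dual Strichartz estimate directly to $\beta^{(k)}$. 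The only small blemish is the closing appeal to Lemma \ref{Lemma:b to b-1}, which plays no role in this particular lemma.
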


\begin{proof}
We will only prove the first estimate in \eqref{E:Str5}. The other two
estimates follow in analogy or can be deduced from the first estimate by
permuting coordinates (this does not require symmetry of $\beta^{(k)}$).

\begin{equation}  \label{E:Str14}
(\mathcal{F}_{123}T_{12}\beta^{(k)})(t,\xi_1,\xi_2,\xi_3) = (\mathcal{F}%
_{123}\beta^{(k)})(t,\xi_1,\xi_2,\xi_3-\xi_1-\xi_2)
\end{equation}
Also 
\begin{equation}  \label{E:Str15}
\begin{aligned} \hspace{0.3in}&\hspace{-0.3in}
e^{-2it\xi_1\cdot\xi_3}e^{-2it\xi_2\cdot\xi_3} (\mathcal{F}_{123} T_{12}
\beta^{(k)})(t,\xi_1,\xi_2,\xi_3) \\ &= \mathcal{F}_{12} [ (\mathcal{F}_3
T_{12} \beta^{(k)})(t,x_1-2t\xi_3,x_2-2t\xi_3,\xi_3)](\xi_1,\xi_2)
\end{aligned}
\end{equation}
Now 
\begin{align*}
\hspace{0.3in}&\hspace{-0.3in} (\mathcal{F}_{0123}\beta^{(k)})(\tau-|%
\xi_3|^2+2\xi_1\cdot\xi_3+2\xi_2\cdot\xi_3, \xi_1, \xi_2, \xi_3-\xi_1-\xi_2)
&  \\
&= (\mathcal{F}_{0123}T_{12}\beta^{(k)})(\tau-|\xi_3|^2+2\xi_1\cdot\xi_3+2%
\xi_2\cdot\xi_3,\xi_1,\xi_2,\xi_3) & & \text{by \eqref{E:Str14}} \\
&= \mathcal{F}_0[ e^{it|\xi_3|^2} e^{-2it\xi_1\cdot\xi_3} e^{-2it\xi_2\cdot
\xi_3}(\mathcal{F}_{123}T_{12}\beta^{(k)})(t,\xi_1,\xi_2,\xi_3)](\tau) &  \\
&= \mathcal{F}_0 [ e^{it|\xi_3|^2} \mathcal{F}_{12}[ (\mathcal{F}%
_3T_{12}\beta^{(k)})(t,x_1-2t\xi_3,x_2-2t\xi_3,\xi_3)](\xi_1,\xi_2)](\tau) & 
& \text{by \eqref{E:Str15}} \\
&= \mathcal{F}_{012} [ e^{it|\xi_3|^2} (\mathcal{F}_3T_{12}%
\beta^{(k)})(t,x_1-2t\xi_3,x_2-2t\xi_3,\xi_3)](\tau,\xi_1,\xi_2) & 
\end{align*}
Let 
\begin{equation}  \label{E:Str17}
\sigma_{\xi_3}(t,x_1,x_2) \overset{\mathrm{def}}{=} e^{it|\xi_3|^2} (%
\mathcal{F}_3T_{12}\beta^{(k)})(t,x_1-2t\xi_3,x_2-2t\xi_3,\xi_3)
\end{equation}
where $\xi_3$ is regarded as a fixed parameter. Then we have shown that 
\begin{equation}  \label{E:Str16}
(\mathcal{F}_{012}\sigma_{\xi_3})(\tau,\xi_1,\xi_2) = (\mathcal{F}%
_{0123}\beta^{(k)})(\tau-|\xi_3|^2+2\xi_1\cdot\xi_3+2\xi_2\cdot\xi_3, \xi_1,
\xi_2, \xi_3-\xi_1-\xi_2)
\end{equation}

Now consider 
\begin{equation*}
\| \beta^{(k)} \|_{X_{-\frac12+}} = \| \langle \tau + |\xi_1|^2+|\xi_2|^2
+|\xi_3|^2\rangle^{-\frac12+} (\mathcal{F}_{0123}\beta)(\tau,\xi_1,\xi_2,%
\xi_3) \|_{L^2_{\tau\xi_1\xi_2\xi_3}}
\end{equation*}

Change variable $\xi_3\mapsto \xi_3-\xi_1-\xi_2$ and then $\tau\mapsto \tau
- |\xi_3|^2 + 2\xi_1\cdot\xi_3 + 2\xi_1\cdot \xi_3$ and substitute %
\eqref{E:Str16} to obtain 
\begin{equation*}
\| \beta^{(k)} \|_{X_{-\frac12+}} = \| \langle \tau + 2|\xi_1|^2+2|\xi_2|^2
+2\xi_1\cdot\xi_2 \rangle^{-\frac12+} (\mathcal{F}_{012}\sigma_{\xi_3})(%
\tau,\xi_1,\xi_2) \|_{L^2_{\xi_3} L^2_{\tau\xi_1\xi_2}}
\end{equation*}
By the dual 6D endpoint Strichartz estimate \cite{Keel-Tao} 
\begin{equation*}
\| \beta^{(k)} \|_{X_{-\frac12+}} \lesssim \| \sigma_{\xi_3} \|_{L_{\xi_3}^2
L_t^2L_{x_1x_2}^{\frac32+}}
\end{equation*}
Returning to the definition of $\sigma_{\xi_3}$ given above in %
\eqref{E:Str17}, and changing variable $x_1 \mapsto x_1+2t\xi_3$, $x_2
\mapsto x_2+2t\xi_3$, we obtain 
\begin{equation*}
\| \beta^{(k)} \|_{X_{-\frac12+}} \lesssim \| \mathcal{F}_3
T_{12}\beta^{(k)} \|_{L_{\xi_3}^2 L_t^2L_{x_1x_2}^{\frac32+}}
\end{equation*}
\end{proof}

\begin{lemma}[H\"{o}lder and Sobolev]
\label{lem:three-body Holder soblev}If $\beta ^{(k)}$ has any one of the
following three forms 
\begin{equation}
\beta ^{(k)}(t,x_{1},x_{2},x_{3})=\gamma ^{(k)}(t,x_{1},x_{2},x_{3})\times
\left\{ \begin{aligned} &V(x_1-x_2)W(x_1-x_3)\\ &V(x_1-x_2)W(x_2-x_3)\\
&V(x_1-x_3)W(x_2-x_3) \end{aligned}\right.  \label{E:Str11}
\end{equation}%
then all three of the following estimates hold 
\begin{equation}
\Vert (\mathcal{F}_{3}T_{12}\beta ^{(k)})(t,x_{1},x_{2},\xi _{3})\Vert
_{L_{t}^{2}L_{\xi _{3}}^{2}L_{x_{1}x_{2}}^{\frac{3}{2}+}L_{c}^{2}}\lesssim
\left\{ \begin{aligned} &\|V\|_{L^{\frac32+}}\|W\|_{L^{\frac32+}} \| \langle
\nabla_{x_1}\rangle^{\frac32} \langle \nabla_{x_2}\rangle^{\frac32}
\gamma^{(k)} \|_{L_t^2L_{\mathbf{x}\mathbf{x}'}^2} \\ &\|V\|_{L^{2+}}
\|W\|_{L^{2+}} \| \nabla_{x_1}\nabla_{x_2} \gamma^{(k)}
\|_{L_t^2L_{\mathbf{x}\mathbf{x}'}^2} \\ &\|V\|_{L^{2+}} \|W\|_{L^{6+}} \|
\nabla_{x_1} \gamma^{(k)} \|_{L_t^2L_{\mathbf{x}\mathbf{x}'}^2} \\
&\|V\|_{L^{6+}} \|W\|_{L^{2+}} \| \nabla_{x_2} \gamma^{(k)}
\|_{L_t^2L_{\mathbf{x}\mathbf{x}'}^2} \\ &\|V\|_{L^{6+}}\|W\|_{L^{6+}} \|
\gamma^{(k)} \|_{L_t^2L_{\mathbf{x}\mathbf{x}'}^2} \end{aligned}\right.
\label{E:Str8}
\end{equation}%
\begin{equation}
\Vert (\mathcal{F}_{2}T_{13}\beta ^{(k)})(t,x_{1},\xi _{2},x_{3})\Vert
_{L_{t}^{2}L_{\xi _{2}}^{2}L_{x_{1}x_{3}}^{\frac{3}{2}+}L_{c}^{2}}\lesssim
\left\{ \begin{aligned} &\|V\|_{L^{\frac32+}}\|W\|_{L^{\frac32+}} \| \langle
\nabla_{x_1}\rangle^{\frac32} \langle \nabla_{x_3}\rangle^{\frac32}
\gamma^{(k)} \|_{L_t^2L_{\mathbf{x}\mathbf{x}'}^2} \\ &\|V\|_{L^{2+}}
\|W\|_{L^{2+}} \| \nabla_{x_1}\nabla_{x_3} \gamma^{(k)}
\|_{L_t^2L_{\mathbf{x}\mathbf{x}'}^2} \\ &\|V\|_{L^{2+}} \|W\|_{L^{6+}} \|
\nabla_{x_1} \gamma^{(k)} \|_{L_t^2L_{\mathbf{x}\mathbf{x}'}^2} \\
&\|V\|_{L^{6+}} \|W\|_{L^{2+}} \| \nabla_{x_3} \gamma^{(k)}
\|_{L_t^2L_{\mathbf{x}\mathbf{x}'}^2} \\ &\|V\|_{L^{6+}}\|W\|_{L^{6+}} \|
\gamma^{(k)} \|_{L_t^2L_{\mathbf{x}\mathbf{x}'}^2} \end{aligned}\right.
\label{E:Str9}
\end{equation}%
\begin{equation}
\Vert (\mathcal{F}_{1}T_{23}\beta ^{(k)})(t,\xi _{1},x_{2},x_{3})\Vert
_{L_{t}^{2}L_{\xi _{1}}^{2}L_{x_{2}x_{3}}^{\frac{3}{2}+}L_{c}^{2}}\lesssim
\left\{ \begin{aligned} &\|V\|_{L^{\frac32+}}\|W\|_{L^{\frac32+}} \| \langle
\nabla_{x_2}\rangle^{\frac32} \langle \nabla_{x_3}\rangle^{\frac32}
\gamma^{(k)} \|_{L_t^2L_{\mathbf{x}\mathbf{x}'}^2} \\ &\|V\|_{L^{2+}}
\|W\|_{L^{2+}} \| \nabla_{x_2}\nabla_{x_3} \gamma^{(k)}
\|_{L_t^2L_{\mathbf{x}\mathbf{x}'}^2} \\ &\|V\|_{L^{6+}} \|W\|_{L^{2+}} \|
\nabla_{x_2} \gamma^{(k)} \|_{L_t^2L_{\mathbf{x}\mathbf{x}'}^2} \\
&\|V\|_{L^{2+}} \|W\|_{L^{6+}} \| \nabla_{x_3} \gamma^{(k)}
\|_{L_t^2L_{\mathbf{x}\mathbf{x}'}^2} \\ &\|V\|_{L^{6+}}\|W\|_{L^{6+}} \|
\gamma^{(k)} \|_{L_t^2L_{\mathbf{x}\mathbf{x}'}^2} \end{aligned}\right.
\label{E:Str10}
\end{equation}
\end{lemma}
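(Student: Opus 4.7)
The three estimates \eqref{E:Str8}, \eqref{E:Str9}, \eqref{E:Str10} are
structurally parallel; the plan is to carry out \eqref{E:Str8} in detail and
note that \eqref{E:Str9} and \eqref{E:Str10} follow by the same template after
replacing $T_{12}$ with $T_{13}$ or $T_{23}$ and permuting the remaining
indices. The starting point is the observation that each of the three forms of
$\beta^{(k)}$ in \eqref{E:Str11}, after the translation
$T_{12}:(x_1,x_2,x_3)\mapsto(x_1+x_3,x_2+x_3,x_3)$, becomes a product of two
potential factors that are \emph{independent of} $x_3$: form 1 becomes
$V(x_1-x_2)W(x_1)$, form 2 becomes $V(x_1-x_2)W(x_2)$, and form 3 becomes
$V(x_1)W(x_2)$. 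Thus in every case
\begin{equation*}
(T_{12}\beta^{(k)})(t,x_1,x_2,x_3)=P(x_1,x_2)\,(T_{12}\gamma^{(k)})(t,x_1,x_2,x_3),
\end{equation*}
with a potential product $P$ that does not depend on $x_3$. Analogous verifications show that for each form in \eqref{E:Str11} the translations $T_{13}$ and $T_{23}$ likewise produce a factorization in which the potentials do not depend on $x_2$ or $x_1$, respectively.

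Because $P$ is independent of $x_3$, it commutes with the partial Fourier
transform $\mathcal{F}_3$, so
$(\mathcal{F}_3T_{12}\beta^{(k)})(t,x_1,x_2,\xi_3)=P(x_1,x_2)\,f(t,x_1,x_2,\xi_3)$
with $f=\mathcal{F}_3T_{12}\gamma^{(k)}$. Taking the
$L^{\frac{3}{2}+}_{x_1x_2}$ norm and applying iterated H\"older in
$(x_1,x_2)$ (using that $\|V(\cdot-x_2)\|_{L^p_{x_1}}=\|V\|_{L^p}$),
combined with the 3D Sobolev embeddings
$\dot H^{3/2-}\hookrightarrow L^{\infty-}$ and
$\dot H^1\hookrightarrow L^6$, produces the five displayed bounds. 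These
correspond to the five ways of splitting $\tfrac{1}{3/2+}=\tfrac{2}{3}$
among $V$, $W$, and $f$, with Sobolev promoting each $L^p$ norm of $f$ in
$x_j$ back to $L^2$ at the cost of the corresponding number of
$\nabla_{x_j}$ derivatives. This is the analogue, in the 6D transformed
setting, of Lemma \ref{L:3DHolderSobolev}.

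To finish, apply $L^2_{\xi_3}$ and Plancherel in $x_3$ to convert $f$ back
into $T_{12}\gamma^{(k)}$ in $L^2_{x_1x_2x_3}$. Since $\nabla_{x_1}$ and
$\nabla_{x_2}$ commute with $T_{12}$ (only the $x_3$ coordinate enters the
translation) and $T_{12}$ is a volume-preserving change of variables on
$\mathbb{R}^9$, the resulting norms equal those stated on $\gamma^{(k)}$;
restoring $L^2_t$ and the passive $L^2_c$ on the outside completes the
argument. The one delicate point is the bookkeeping for the asymmetric
lines (lines 3 and 4 of \eqref{E:Str8}--\eqref{E:Str10}): which variable
carries the Sobolev derivative is dictated by which variable is integrated
first in the iterated H\"older step, and this choice must be coordinated
with the particular form of $\beta^{(k)}$ so that, e.g., integrating $x_1$
first lands a $\nabla_{x_1}$ on $\gamma^{(k)}$ while integrating $x_2$
first lands a $\nabla_{x_2}$. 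Once these cases are matched, everything
else reduces to a routine H\"older--Sobolev computation on
$\mathbb{R}^3\times\mathbb{R}^3$.
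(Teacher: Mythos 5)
Your proposal is correct and follows essentially the same route as the paper's proof: undo the potentials' $x_3$-dependence via $T_{12}$ (resp. $T_{13}$, $T_{23}$), commute them past $\mathcal{F}_3$, apply H\"older and Sobolev in $(x_1,x_2)$, and return to $\gamma^{(k)}$ by Plancherel together with the fact that the shear is measure-preserving and commutes with $\nabla_{x_1},\nabla_{x_2}$. The paper likewise writes out only the first line of \eqref{E:Str8} and dispatches the remaining cases with ``use H\"older differently,'' so your remarks on the exponent bookkeeping for the asymmetric lines are, if anything, more explicit than the original.
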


\begin{proof}
All of the estimates have a similar proof. As an illustrative example,
consider the first estimate of \eqref{E:Str8}. By \eqref{E:Str11}, 
\begin{equation*}
(T_{12}\beta^{(k)})(t,x_1,x_2,x_3) = (T_{12}\gamma^{(k)})(t,x_1,x_2,x_3)
\times \left\{ \begin{aligned} &V(x_1-x_2)W(x_1) \\ &V(x_1-x_2)W(x_2) \\
&V(x_1)W(x_2) \end{aligned} \right.
\end{equation*}
Hence 
\begin{equation*}
(\mathcal{F}_3 T_{12}\beta^{(k)})(t,x_1,x_2,\xi_3) = (\mathcal{F}%
_3T_{12}\gamma^{(k)})(t,x_1,x_2,\xi_3) \times \left\{ \begin{aligned}
&V(x_1-x_2)W(x_1) \\ &V(x_1-x_2)W(x_2) \\ &V(x_1)W(x_2) \end{aligned} \right.
\end{equation*}
By H\"older, 
\begin{equation*}
\| (\mathcal{F}_3
T_{12}\beta^{(k)})(t,x_1,x_2,\xi_3)\|_{L_{x_1x_2}^{\frac32+}L_c^2} \leq
\|V\|_{L^{\frac32+}} \|W\|_{L^{\frac32+}} \| (\mathcal{F}_3T_{12}%
\gamma^{(k)})(t,x_1,x_2,\xi_3)\|_{L_{x_1x_2}^{\infty-}L_c^2}
\end{equation*}
By Sobolev, 
\begin{equation*}
\begin{aligned} \hspace{0.3in}&\hspace{-0.3in} \| (\mathcal{F}_3
T_{12}\beta^{(k)})(t,x_1,x_2,\xi_3)\|_{L_{x_1x_2}^{\frac32+}L_c^2} \\ &\leq
\|V\|_{L^{\frac32+}} \|W\|_{L^{\frac32+}} \| \langle \nabla_{x_1}
\rangle^{\frac32} \langle \nabla_{x_2} \rangle^{\frac32}
(\mathcal{F}_3T_{12}\gamma^{(k)})(t,x_1,x_2,\xi_3)\|_{L_{x_1x_2}^2L_c^2}\\
&= \|V\|_{L^{\frac32+}} \|W\|_{L^{\frac32+}} \| \mathcal{F}_3 \langle
\nabla_{x_1} \rangle^{\frac32} \langle \nabla_{x_2} \rangle^{\frac32}
(T_{12}\gamma^{(k)})(t,x_1,x_2,\xi_3)\|_{L_{x_1x_2}^2L_c^2} \end{aligned}
\end{equation*}
Apply $L^2_{\xi_3}$ and use Plancherel, 
\begin{equation*}
\begin{aligned} \hspace{0.3in}&\hspace{-0.3in} \| (\mathcal{F}_3
T_{12}\beta^{(k)})(t,x_1,x_2,\xi_3)\|_{L_{\xi_3}^2
L_{x_1x_2}^{\frac32+}L_c^2} \\ &= \|V\|_{L^{\frac32+}} \|W\|_{L^{\frac32+}}
\| \langle \nabla_{x_1} \rangle^{\frac32} \langle \nabla_{x_2}
\rangle^{\frac32}
(T_{12}\gamma^{(k)})(t,x_1,x_2,x_3)\|_{L_{\mathbf{x}\mathbf{x}'}^2} \\ &=
\|V\|_{L^{\frac32+}} \|W\|_{L^{\frac32+}} \| \langle \nabla_{x_1}
\rangle^{\frac32} \langle \nabla_{x_2} \rangle^{\frac32}
\gamma^{(k)}(t,x_1,x_2,x_3)\|_{L_{\mathbf{x}\mathbf{x}'}^2} \end{aligned}
\end{equation*}
The other estimates in \eqref{E:Str8} follow by using H\"older differently.
\end{proof}

By splitting up $\gamma^{(k)}$ according to the relative magnitude of
frequencies, we can share derivatives among three coordinates, as in the
following corollary.

\begin{corollary}
\label{corollary:basic corollary in three body} If $\gamma ^{(k)}$ is
symmetric and $\beta ^{(k)}$ has any one of the following three forms 
\begin{equation}
\beta ^{(k)}(t,x_{1},x_{2},x_{3})=\gamma ^{(k)}(t,x_{1},x_{2},x_{3})\times
\left\{ \begin{aligned} &V(x_1-x_2)W(x_1-x_3)\\ &V(x_1-x_2)W(x_2-x_3)\\
&V(x_1-x_3)W(x_2-x_3) \end{aligned}\right.  \label{E:Str13}
\end{equation}%
then 
\begin{equation}  \label{estimate:basic corollary in three body}
\Vert \beta ^{(k)}\Vert _{X_{-\frac{1}{2}+}^{(k)}} \lesssim \left\{ %
\begin{aligned} &\Vert V\Vert _{L^{\frac{3}{2}+}}\Vert W\Vert
_{L^{\frac{3}{2}+}}\Vert \langle \nabla _{x_{1}}\rangle \langle \nabla
_{x_{2}}\rangle \langle \nabla _{x_{3}}\rangle \gamma ^{(k)}\Vert
_{L_{t}^{2}L_{\mathbf{x}\mathbf{x}^{\prime }}^{2}} \\ &\Vert V\Vert
_{L^{2+}}\Vert W\Vert _{L^{2+}}\Vert \langle \nabla _{x_{i}}\rangle \langle
\nabla _{x_{j}}\rangle \gamma ^{(k)}\Vert
_{L_{t}^{2}L_{\mathbf{x}\mathbf{x}^{\prime }}^{2}}\text{ with
}i,j=1,2,3\text{ but }i\neq j \\ &\Vert V\Vert _{L^{2+}}\Vert W\Vert
_{L^{6+}}\Vert \langle \nabla _{x_{i}}\rangle \gamma ^{(k)}\Vert
_{L_{t}^{2}L_{\mathbf{x}\mathbf{x}^{\prime }}^{2}}\text{ with }i=1,2,3 \\
&\Vert V\Vert _{L^{6+}}\Vert W\Vert _{L^{6+}}\Vert \gamma ^{(k)}\Vert
_{L_{t}^{2}L_{\mathbf{x}\mathbf{x}^{\prime }}^{2}} \end{aligned} \right.
\end{equation}
\end{corollary}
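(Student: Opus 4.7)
The plan is to imitate the proof of Corollary \ref{corollary:basic corollary for 3D endpoint}, replacing its two-region frequency split with a three-region split dictated by which of the frequencies $|\xi_1|, |\xi_2|, |\xi_3|$ is largest. I would first observe that the last three inequalities of \eqref{estimate:basic corollary in three body} follow almost immediately from the 2nd through 5th lines of any of \eqref{E:Str8}--\eqref{E:Str10}, together with the symmetry of $\gamma^{(k)}$: this symmetry makes $\|\nabla_{x_i}\nabla_{x_j}\gamma^{(k)}\|$ independent of the (distinct) indices $i,j$, and similarly for $\|\nabla_{x_i}\gamma^{(k)}\|$, so I may just apply whichever of \eqref{E:Str8}--\eqref{E:Str10} is compatible with the particular form of $\beta^{(k)}$ from \eqref{E:Str13}. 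The only interesting case is the first inequality, which requires a genuine redistribution of the derivatives.

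For that inequality, I would decompose
\[
\gamma^{(k)} = \gamma_{(1)}^{(k)} + \gamma_{(2)}^{(k)} + \gamma_{(3)}^{(k)},
\]
where $\gamma_{(i)}^{(k)}$ is the Fourier restriction of $\gamma^{(k)}$ to the region $\{|\xi_i| \geq |\xi_j|\text{ for all } j\neq i\}$ (ties broken arbitrarily). Multiplying by the fixed product $V\cdot W$ in \eqref{E:Str13} induces a corresponding split $\beta^{(k)} = \beta_{(1)}^{(k)} + \beta_{(2)}^{(k)} + \beta_{(3)}^{(k)}$, and I estimate each piece separately via an appropriate choice of coordinate transformation. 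For $\beta_{(3)}^{(k)}$, the first estimate of \eqref{E:Str8} gives
\[
\bigl\|\beta_{(3)}^{(k)}\bigr\|_{X_{-\frac{1}{2}+}^{(k)}} \lesssim \|V\|_{L^{\frac{3}{2}+}}\|W\|_{L^{\frac{3}{2}+}} \bigl\|\langle\nabla_{x_1}\rangle^{\frac{3}{2}}\langle\nabla_{x_2}\rangle^{\frac{3}{2}}\gamma_{(3)}^{(k)}\bigr\|_{L_t^2 L^2_{\mathbf{x}\mathbf{x}'}},
\]
and on the support of $\gamma_{(3)}^{(k)}$ one has $\langle\xi_1\rangle,\langle\xi_2\rangle\leq\langle\xi_3\rangle$, so the Fourier multiplier $\langle\nabla_{x_1}\rangle^{1/2}\langle\nabla_{x_2}\rangle^{1/2}$ is dominated by $\langle\nabla_{x_3}\rangle$ on this set. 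Hence
\[
\bigl\|\langle\nabla_{x_1}\rangle^{\frac{3}{2}}\langle\nabla_{x_2}\rangle^{\frac{3}{2}}\gamma_{(3)}^{(k)}\bigr\| \lesssim \bigl\|\langle\nabla_{x_1}\rangle\langle\nabla_{x_2}\rangle\langle\nabla_{x_3}\rangle\gamma^{(k)}\bigr\|.
\]
The pieces $\beta_{(1)}^{(k)}$ and $\beta_{(2)}^{(k)}$ are treated analogously using the first lines of \eqref{E:Str10} and \eqref{E:Str9}, respectively; in each case the frequency restriction on the piece allows the replacement $\langle\nabla_{x_i}\rangle^{3/2}\langle\nabla_{x_j}\rangle^{3/2} \to \langle\nabla_{x_1}\rangle\langle\nabla_{x_2}\rangle\langle\nabla_{x_3}\rangle$. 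Summing the three contributions completes the first inequality.

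The main conceptual step is identifying the correct three-way split, and I do not expect a significant obstacle: Lemma \ref{lem:three-body Holder soblev} already supplies the full $3\times 3$ matrix of Strichartz-H\"older-Sobolev estimates needed to pair each form of $\beta^{(k)}$ in \eqref{E:Str13} with each of the three coordinate transformations $T_{12}, T_{13}, T_{23}$. The role of symmetry of $\gamma^{(k)}$ is only to tidy up the three remaining inequalities, not to drive the proof of the first one.
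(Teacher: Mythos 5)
Your proposal is correct and follows essentially the same route as the paper: the last three bounds come directly from Lemma \ref{lem:three-body Holder soblev} plus symmetry, and the first bound comes from splitting $\gamma^{(k)}$ according to which of $|\xi_1|,|\xi_2|,|\xi_3|$ is largest and, on each piece, pairing the appropriate transformed-coordinate Strichartz estimate of \eqref{E:Str5} with the first line of the matching estimate among \eqref{E:Str8}--\eqref{E:Str10}, then using the frequency restriction to trade half a derivative from each of the two smaller-frequency variables onto the largest one. The only cosmetic point is that quoting \eqref{E:Str8} alone is shorthand for the combination of \eqref{E:Str5} with \eqref{E:Str8}, which is clearly what you intend.
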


\begin{proof}
As in the proof of Corollary \ref{corollary:basic corollary for 3D endpoint}%
, it suffices to prove the first inequality of (\ref{estimate:basic
corollary in three body}) since the other two follows directly from Lemma %
\ref{lem:three-body Holder soblev} and the fact that $\gamma ^{(k)}$ is
symmetric.

Split $\gamma ^{(k)}$ according to whether $\max (|\xi _{1}|,|\xi _{2}|,|\xi
_{3}|)=|\xi _{3}|$, $\max (|\xi _{1}|,|\xi _{2}|,|\xi _{3}|)=|\xi _{2}|$, or 
$\max (|\xi _{1}|,|\xi _{2}|,|\xi _{3}|)=|\xi _{1}|$ 
\begin{equation*}
\gamma ^{(k)}=P_{\substack{ |\xi _{1}|\leq |\xi _{3}|  \\ |\xi _{2}|\leq
|\xi _{3}|}}\gamma ^{(k)}+P_{\substack{ |\xi _{1}|\leq |\xi _{2}|  \\ |\xi
_{3}|\leq |\xi _{2}|}}\gamma ^{(k)}+P_{\substack{ |\xi _{2}|\leq |\xi _{1}| 
\\ |\xi _{3}|\leq |\xi _{1}|}}\gamma ^{(k)}
\end{equation*}%
and define 
\begin{equation*}
\beta _{1,2\leq 3}^{(k)}\overset{\mathrm{def}}{=}P_{\substack{ |\xi
_{1}|\leq |\xi _{3}|  \\ |\xi _{2}|\leq |\xi _{3}|}}\gamma ^{(k)}\times
\left\{ \begin{aligned} &V(x_1-x_2)W(x_1-x_3)\\ &V(x_1-x_2)W(x_2-x_3)\\
&V(x_1-x_3)W(x_2-x_3) \end{aligned}\right.
\end{equation*}%
\begin{equation*}
\beta _{1,3\leq 2}^{(k)}\overset{\mathrm{def}}{=}P_{\substack{ |\xi
_{1}|\leq |\xi _{2}|  \\ |\xi _{3}|\leq |\xi _{2}|}}\gamma ^{(k)}\times
\left\{ \begin{aligned} &V(x_1-x_2)W(x_1-x_3)\\ &V(x_1-x_2)W(x_2-x_3)\\
&V(x_1-x_3)W(x_2-x_3) \end{aligned}\right.
\end{equation*}%
\begin{equation*}
\beta _{2,3\leq 1}^{(k)}\overset{\mathrm{def}}{=}P_{\substack{ |\xi
_{2}|\leq |\xi _{1}|  \\ |\xi _{3}|\leq |\xi _{1}|}}\gamma ^{(k)}\times
\left\{ \begin{aligned} &V(x_1-x_2)W(x_1-x_3)\\ &V(x_1-x_2)W(x_2-x_3)\\
&V(x_1-x_3)W(x_2-x_3) \end{aligned}\right.
\end{equation*}%
so that 
\begin{equation*}
\beta ^{(k)}=\beta _{1,2\leq 3}^{(k)}+\beta _{1,3\leq 2}^{(k)}+\beta
_{2,3\leq 1}^{(k)}
\end{equation*}%
For the $\beta _{1,2\leq 3}^{(k)}$ piece, we use the first estimate of %
\eqref{E:Str5} combined with the first estimate of \eqref{E:Str8} 
\begin{equation*}
\Vert \beta _{1,2\leq 3}^{(k)}\Vert _{X_{-\frac{1}{2}+}^{(k)}}\lesssim \Vert
V\Vert _{L^{\frac{3}{2}+}}\Vert W\Vert _{L^{\frac{3}{2}+}}\Vert \langle
\nabla _{x_{1}}\rangle ^{\frac{3}{2}}\langle \nabla _{x_{2}}\rangle ^{\frac{3%
}{2}}P_{\substack{ |\xi _{1}|\leq |\xi _{3}|  \\ |\xi _{2}|\leq |\xi _{3}|}}%
\gamma ^{(k)}\Vert _{L_{t}^{2}L_{\mathbf{x}\mathbf{x}^{\prime }}^{2}}
\end{equation*}%
By the frequency restriction, we can move $\frac{1}{2}$ derivative in $x_{1}$
to $x_{3}$ and $\frac{1}{2}$ derivative in $x_{2}$ to $x_{3}$ to obtain: 
\begin{equation*}
\Vert \beta _{1,2\leq 3}^{(k)}\Vert _{X_{-\frac{1}{2}+}^{(k)}}\lesssim \Vert
V\Vert _{L^{\frac{3}{2}+}}\Vert W\Vert _{L^{\frac{3}{2}+}}\Vert \langle
\nabla _{x_{1}}\rangle \langle \nabla _{x_{2}}\rangle \langle \nabla
_{x_{3}}\rangle \gamma ^{(k)}\Vert _{L_{t}^{2}L_{\mathbf{x}\mathbf{x}%
^{\prime }}^{2}}
\end{equation*}%
The term $\beta _{1,3\leq 2}^{(k)}$ is handled analogously, using the second
estimate of \eqref{E:Str5} together with the first estimate of \eqref{E:Str9}%
. The term $\beta _{2,3\leq 1}^{(k)}$ is handled using the third estimate of %
\eqref{E:Str5} together with the first estimate of \eqref{E:Str10}.
\end{proof}

\begin{proposition}
\label{P:Str29} For any $U(x)$, let $U_N(x) = N^{3\beta}U(N^\beta x)$. Then 
\begin{equation}  \label{E:Str29}
\|R^{(3)}_{\leq M_3} ( U_N(x_1-x_2)U_N(x_1-x_3) \,
\alpha^{(3)})\|_{X_{-\frac12+}^{(3)}} \lesssim C_U \|\alpha^{(3)}\|_{L_t^2L_{%
\mathbf{x}_3\mathbf{x}_3}^2} \left\{ \begin{aligned} & N^{5\beta+} \\ &
M_3^3 N^{2\beta+} \end{aligned} \right.
\end{equation}
where 
\begin{equation*}
C_U = \|\nabla U\|_{L^{\frac32+}}\|\nabla^2 U \|_{L^{\frac32+}}+\| \nabla
U\|_{L^{\frac32+}}^2 + \|U\|_{L^{\frac32+}}^2
\end{equation*}
\end{proposition}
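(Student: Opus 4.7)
I would follow the template of Proposition \ref{P:2body-potential-prep}, adapted to the three-body setting, replacing Corollary \ref{corollary:basic corollary for 3D endpoint} throughout by its three-body analogue, Corollary \ref{corollary:basic corollary in three body}. The first key observation is that the three primed derivatives $|\nabla_{x_j'}|$ in $R^{(3)}$ commute past the product $U_N(x_1-x_2)U_N(x_1-x_3)$, since the latter depends only on the unprimed coordinates, and therefore act directly on $\alpha^{(3)}$. Combined with the primed projections $P_{\leq M_3}^{j'}$ inside $R^{(3)}_{\leq M_3}$, Bernstein then contributes a factor of $M_3^3$ and reduces the problem to controlling
\[
\Bigl\|P^{(3)}_{\leq M_3,\mathrm{unpr}}\,|\nabla_{x_1}||\nabla_{x_2}||\nabla_{x_3}|\bigl(U_N(x_1-x_2)U_N(x_1-x_3)\,\tilde\alpha^{(3)}\bigr)\Bigr\|_{X_{-\frac12+}^{(3)}},
\]
where $\tilde\alpha^{(3)}$ carries the primed Bernstein gain.

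The three remaining unprimed derivatives are then distributed by a Leibniz/paraproduct decomposition (treating $|\nabla|$ analogously to $\nabla$ as in Proposition \ref{P:2body-potential-prep}) onto the three factors, producing finitely many terms of the form $V(x_1-x_2)\,W(x_1-x_3)\,\gamma$ with $V,W\in\{U_N,(\nabla U)_N,(\nabla^2 U)_N\}$ and $\gamma$ an appropriate variant of $\tilde\alpha^{(3)}$ carrying any residual unprimed derivatives. The scaling
\[
\|\nabla^k U_N\|_{L^{3/2+}}\;=\;N^{(k+1)\beta+}\,\|\nabla^k U\|_{L^{3/2+}}
\]
quantifies each distribution. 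For every individual term I would invoke the first line of Corollary \ref{corollary:basic corollary in three body}, since this is the only line that uses exactly the $L^{3/2+}$ norms appearing in $C_U$; any residual unprimed derivatives on $\gamma$ from this application are then absorbed by Bernstein via the outer unprimed projections $P_{\leq M_3}^{j}$.

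The two advertised bounds then correspond to the two extremal choices of distribution. The bound $M_3^3 N^{2\beta+}$ arises by leaving all three unprimed derivatives on $\alpha^{(3)}$: one takes $V=W=U_N$, so the potential contribution is $\|U_N\|_{L^{3/2+}}^2=N^{2\beta+}$, while the three residual unprimed derivatives on $\gamma$ contribute $M_3^3$ through $P_{\leq M_3}^{j}$. The bound $N^{5\beta+}$ arises from the opposite extreme, in which all three unprimed derivatives are placed on the potentials by a $(2,1)$ or $(1,2)$ split; this yields $V=(\nabla^2 U)_N$, $W=(\nabla U)_N$ (or the symmetric assignment), with
\[
\|(\nabla^2 U)_N\|_{L^{3/2+}}\,\|(\nabla U)_N\|_{L^{3/2+}}\;\sim\;N^{3\beta}\cdot N^{2\beta}\;=\;N^{5\beta+},
\]
and no unprimed Bernstein factors are needed. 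The mixed distributions $(a,b)$ with $a+b\in\{0,1,2,3\}$ all interpolate between these extremes and are subsumed by taking the minimum; the constant $C_U$ is precisely the combination of $L^{3/2+}$ products that emerges.

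\textbf{Main obstacle.} The technical difficulty is the careful bookkeeping of Bernstein factors from the projections and their matching with the powers of $M_3$ advertised in the two estimates; in particular, one must treat $|\nabla|$ in the distribution as $\nabla$ (which is legitimate via a standard Littlewood--Paley decomposition but requires care at the level of the fractional Leibniz rule) and arrange the application of Corollary \ref{corollary:basic corollary in three body} so that no derivative on $\alpha^{(3)}$ escapes the frequency cutoff supplied by $P_{\leq M_3}^{(3)}$.
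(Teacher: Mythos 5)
Your overall strategy is the paper's: commute the primed derivatives past the potentials onto $\alpha^{(3)}$, distribute the unprimed derivatives by Leibniz over the two factors $U_N$ and $\alpha^{(3)}$, and estimate each resulting term with Corollary \ref{corollary:basic corollary in three body}. However, two bookkeeping steps in your plan are genuinely wrong. First, you extract $M_3^3$ from the \emph{primed} derivatives by Bernstein at the outset. The paper does not: the primed derivatives simply land on $\alpha^{(3)}$ and are absorbed into $\|S^{(3)}\alpha^{(3)}\|_{L_t^2L^2_{\mathbf{x}\mathbf{x}'}}$ on the right-hand side (this is what the proof produces and what Corollary \ref{Corollary:Key of 3-body} consumes; the bare $\|\alpha^{(3)}\|_{L^2}$ in the statement is shorthand for that). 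If you genuinely pay $M_3^3$ for the primed side, your first bound becomes $M_3^3N^{5\beta+}$, and then in the application the sum $\sum_{M_{k+q}}M_{k+q}^{-1+2\varepsilon}\min(\cdot,\cdot)$ no longer converges; if you do not pay it, your reduction to an unprimed-only problem is unjustified.

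Second, you propose to use \emph{only} the first line of \eqref{estimate:basic corollary in three body} for every term and to Bernstein away whatever unprimed derivatives remain on $\gamma$. The mixed terms of the Leibniz expansion then come out as $M_3N^{4\beta+}$ and $M_3^2N^{3\beta+}$. These lie strictly \emph{between} $\min(N^{5\beta},M_3^3N^{2\beta})$ and $\max(N^{5\beta},M_3^3N^{2\beta})$ (indeed $M_3N^{4\beta}=(N^{5\beta})^{2/3}(M_3^3N^{2\beta})^{1/3}$), so they are not "subsumed by taking the minimum"; they violate whichever of the two advertised bounds is the smaller one, and in the application it is precisely the minimum that must be summed. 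The paper avoids this entirely: for the $N^{5\beta+}$ bound it distributes all six derivatives and matches \emph{each} of the twelve terms to the line of Corollary \ref{corollary:basic corollary in three body} whose derivative placement on $\gamma$ is exactly what that term can afford (the $L^{2+}$ and $L^{6+}$ lines for the mixed and all-on-$\alpha$ terms, not just the $L^{\frac32+}$ line), so every term is $\lesssim N^{5\beta+}\|S^{(3)}\alpha^{(3)}\|$ with no factor of $M_3$; for the $M_3^3N^{2\beta+}$ bound it does not distribute at all, but uses $R^{(3)}_{\leq M_3}\leq M_3^3\nabla_{x_1'}\nabla_{x_2'}\nabla_{x_3'}$ in one shot and applies the first line to the underived product $U_NU_N$. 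Your reading of $C_U$ as forcing the $L^{\frac32+}$ line throughout is therefore too rigid; the proof needs the full menu of H\"older exponents in the corollary.
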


\begin{proof}
To prove the top estimate of \eqref{E:Str29}, we use do not use the
frequency restriction and distribute all derivatives $\nabla_{x_1}%
\nabla_{x_2}\nabla_{x_3} \nabla_{x_1^{\prime }}\nabla_{x_2^{\prime
}}\nabla_{x_3^{\prime }}$ into the expression. The $\nabla_{x_1}^{\prime
}\nabla_{x_2^{\prime }}\nabla_{x_3^{\prime }}$ move directly onto $%
\alpha^{(3)}$. The expansion of 
\begin{equation}  \label{E:Str28}
\nabla_{x_1}\nabla_{x_2}\nabla_{x_3} \Big( U_N(x_1-x_2)U_N(x_1-x_3)
(\nabla_{x_1^{\prime }}\nabla_{x_2^{\prime }}\nabla_{x_3^{\prime }}
\alpha_N^{(3)}) \Big)
\end{equation}
has $3\times 2 \times 2=12 $ terms total. Each is estimated using different
estimates in \eqref{estimate:basic corollary in three body}. We will not
write out each term, but take some representative examples. Let us consider
the case 
\begin{equation*}
\begin{aligned} \hspace{0.3in}&\hspace{-0.3in} [\nabla_{x_1}\nabla_{x_2} \;
U_N(x_1-x_2)] [\nabla_{x_3} \; U_N(x_1-x_3)]
(\nabla_{x_1'}\nabla_{x_2'}\nabla_{x_3'} \alpha_N^{(3)})\\ & = -N^{3\beta}
(\nabla^2 U)_N(x_1-x_2) (\nabla U)_N(x_1-x_3) \alpha_N^{(3)} \end{aligned}
\end{equation*}
Apply the first estimate of \eqref{estimate:basic corollary in three body}
to obtain 
\begin{equation*}
\begin{aligned} \hspace{0.3in}&\hspace{-0.3in} \| [\nabla_{x_1}\nabla_{x_2}
\; U_N(x_1-x_2)] [\nabla_{x_3} \; U_N(x_1-x_3)]
(\nabla_{x_1'}\nabla_{x_2'}\nabla_{x_3'} \alpha_N^{(3)}) \|_{X_{-\frac12+}}
\\ &\lesssim N^{3\beta} \|(\nabla^2 U)_N \|_{L^{\frac32+}} \| (\nabla U)_N
\|_{L^{\frac32+}}
\|S^{(3)}\alpha^{(3)}\|_{L_t^2L_{\mathbf{x}_3\mathbf{x}_3'}^2} \\ &\lesssim
N^{5\beta+} \| \nabla^2 U \|_{L^{\frac32+}} \| \nabla U\|_{L^{\frac32+}}
\|S^{(3)}\alpha^{(3)}\|_{L_t^2L_{\mathbf{x}_3\mathbf{x}_3'}^2} \end{aligned}
\end{equation*}
Another term resulting from the expansion of \eqref{E:Str28} is 
\begin{equation*}
U_N(x_1-x_2) \, U_N(x_1-x_3) (\nabla_{x_1}\nabla_{x_2}\nabla_{x_3}
\nabla_{x_1^{\prime }}\nabla_{x_2^{\prime }}\nabla_{x_3^{\prime }}
\alpha_N^{(3)}))
\end{equation*}
In this case, we apply the fourth estimate of 
\eqref{estimate:basic corollary in
three body} to obtain 
\begin{equation*}
\begin{aligned} \hspace{0.3in}&\hspace{-0.3in} \| U_N(x_1-x_2) \,
U_N(x_1-x_3) (\nabla_{x_1}\nabla_{x_2}\nabla_{x_3}
\nabla_{x_1'}\nabla_{x_2'}\nabla_{x_3'} \alpha_N^{(3)}))\|_{X_{-\frac12+}}
\\ &\lesssim \| U_N\|_{L^{6+}}^2\|S^{(3)} \alpha^{(3)}
\|_{L_t^2L_{\mathbf{x}_3\mathbf{x}_3'}^2} \\ &\lesssim N^{5\beta+} \|
U\|_{L^{6+}}^2 \|S^{(3)} \alpha^{(3)}
\|_{L_t^2L_{\mathbf{x}_3\mathbf{x}_3'}^2} \end{aligned}
\end{equation*}
To prove the bottom estimate of \eqref{E:Str29}, we use the frequency
restriction $R^{(3)}_{\leq M_3} \leq M_3^3 \nabla_{x_1^{\prime
}}\nabla_{x_2^{\prime }}\nabla_{x_3^{\prime }}$, and all of the $%
\nabla_{x_1^{\prime }}\nabla_{x_2^{\prime }}\nabla_{x_3^{\prime }}$
derivatives move directly onto $\alpha^{(3)}$. One then estimates using the
first estimate of \eqref{estimate:basic corollary in three body} to obtain 
\begin{equation*}
\begin{aligned} \|R^{(3)}_{\leq M_3} ( U_N(x_1-x_2)U_N(x_1-x_3) \,
\alpha^{(3)})\|_{X_{-\frac12+}^{(3)}} &\lesssim M_3^3
\|U_N\|_{L_{\frac32+}}^2 \|S^{(3)} \alpha_N^{(3)}
\|_{L_t^2L_{\mathbf{x}_3\mathbf{x}_3'}^2} \\ &\lesssim
M_3^3N^{2\beta}\|U\|_{L_{\frac32+}}^2 \|S^{(3)} \alpha_N^{(3)}
\|_{L_t^2L_{\mathbf{x}_3\mathbf{x}_3'}^2} \end{aligned}
\end{equation*}
\end{proof}

For the KIP estimates, we provide the following lemma and Proposition \ref%
{P:Str22}.

\begin{lemma}
\label{Lem:basic lem in KIP}%
\begin{equation}
\begin{aligned} \hspace{0.3in}&\hspace{-0.3in} \left\| \int_{x_4}
V(x_2-x_4)W(x_3-x_4)f(x_1-x_4)\alpha^{(4)}(x_1,x_2,x_3,x_4;
x_1',x_2',x_3',x_4) \, dx_4 \right\|_{X^{(3)}_{-\frac12+}} \\ &\lesssim
\left\{ \begin{aligned} &\|V\|_{L^{1+}} \|W\|_{L^{\frac32+}}
\|f\|_{L^{\infty}} \| \langle \nabla_{x_3} \rangle \langle
\nabla_{x_4}\rangle \langle \nabla_{x_4'}) (\langle \nabla_{x_4} \rangle +
\langle \nabla_{x_4'} \rangle )\alpha^{(4)}
\|_{L_t^2L_{\mathbf{x}\mathbf{x}'}^2} \\ &\|V\|_{L^{1+}}
\|W\|_{L^{\frac32+}} \|f\|_{L^3} \| \langle \nabla_{x_1} \rangle \langle
\nabla_{x_3} \rangle \langle \nabla_{x_4}\rangle \langle \nabla_{x_4'})
(\langle \nabla_{x_4} \rangle + \langle \nabla_{x_4'} \rangle
)\alpha^{(4)}\|_{L_t^2L_{\mathbf{x}\mathbf{x}'}^2} \\ &\|V\|_{L^{1+}}
\|W\|_{L^{3+}} \|f\|_{L^{\infty}} \|\langle \nabla_{x_4}\rangle \langle
\nabla_{x_4'}) (\langle \nabla_{x_4}\rangle + \langle \nabla_{x_4'} \rangle
)\alpha^{(4)} \|_{L_t^2L_{\mathbf{x}\mathbf{x}'}^2} \\ &\|V\|_{L^{1+}}
\|W\|_{L^{3+}} \|f\|_{L^3} \| \langle \nabla_{x_1}\rangle \langle
\nabla_{x_4}\rangle \langle \nabla_{x_4'}) (\langle \nabla_{x_4}\rangle +
\langle \nabla_{x_4'} \rangle )\alpha^{(4)}
\|_{L_t^2L_{\mathbf{x}\mathbf{x}'}^2} \end{aligned} \right. \end{aligned}
\label{E:Str20}
\end{equation}
\end{lemma}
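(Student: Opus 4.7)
The plan is to proceed by duality, writing
\[
\|\beta^{(3)}\|_{X^{(3)}_{-1/2+}} = \sup_{\|g\|_{X^{(3)}_{1/2-}} \leq 1} \Bigl| \int g(\mathbf{x}_3,\mathbf{x}_3') \, V(x_2-x_4)W(x_3-x_4)f(x_1-x_4) \alpha^{(4)}(\ldots) \, dx_4 d\mathbf{x}_3 d\mathbf{x}_3' \, dt \Bigr|,
\]
and then applying H\"older's inequality on the real-space pairing. The four cases in \eqref{E:Str20} correspond to four different orderings of H\"older: placing $V$ in $L^{1+}$ (dualizing against $L^{\infty-}$ in $x_2$), and then $W$ in either $L^{3/2+}$ or $L^{3+}$, and $f$ in either $L^\infty$ or $L^3$, with corresponding Sobolev embeddings absorbing the loss on $\alpha^{(4)}$ via derivatives in $x_1, x_3$.

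First I would move the integration variable $x_4$ via a coordinate translation (analogous to $T_1, T_2$ in Lemma \ref{Lemma:TheStrichartzEstimate}) so that the potentials $V(x_2-x_4), W(x_3-x_4), f(x_1-x_4)$ become pure functions of translated coordinates centered at $x_4$. The integration over $x_4$ effectively produces a trace $\mathrm{Tr}_4 \alpha^{(4)}$, but the presence of $V, W, f$ breaks the clean trace structure; however, after the change of variables this is only a rearrangement. To handle the $X^{(3)}_{1/2-}$ norm of $g$, apply the dual 3D endpoint Strichartz in translated coordinates from Lemma \ref{Lemma:TheStrichartzEstimate}, which lets one freely place $g$ in $L^2_t L^2_{\xi_j} L^{6/5+}_{x} L^2_c$ (for appropriate coordinates $j$).

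Next I would apply H\"older's inequality to separate the three potentials. In the representative case (first line of \eqref{E:Str20}), place $V$ in $L^{1+}_{x_2}$ (paired against $g$ in $L^{\infty-}_{x_2}$ via a trivial $L^{\infty-} \hookleftarrow$ Sobolev step in $x_2$), place $W$ in $L^{3/2+}_{x_3}$ (paired via the 3D endpoint in the $x_3$-translated coordinate, costing $\langle \nabla_{x_3}\rangle$ on $\alpha^{(4)}$), and place $f$ in $L^\infty_{x_1}$ (cost-free). The remaining factor of $\alpha^{(4)}$ is in $L^2_{t,x_1,x_2,x_3,\mathbf{x}'}$ but with $x_4$ appearing in both bra and ket; to pass from the diagonal trace to an $L^2$ norm of $\alpha^{(4)}$ itself, use the standard trace-class bound
\[
\|\mathrm{Tr}_4 \alpha^{(4)}\|_{L^2} \lesssim \|\langle \nabla_{x_4}\rangle \langle \nabla_{x_4'}\rangle (\langle \nabla_{x_4}\rangle + \langle \nabla_{x_4'}\rangle) \alpha^{(4)}\|_{L^2},
\]
which accounts for the derivative factor in $x_4,x_4'$ that appears uniformly in all four cases. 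The four cases in \eqref{E:Str20} are produced by independently choosing the Sobolev/endpoint trade-off in the $x_1$ and $x_3$ coordinates (each of which can be either $\|W\|_{L^{3/2+}}$ with $\langle\nabla_{x_3}\rangle$ or $\|W\|_{L^{3+}}$ with no derivative, and similarly for $f$ versus $\langle\nabla_{x_1}\rangle$), yielding the $2\times 2$ table of estimates.

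The main technical obstacle is the trace reduction in $x_4$: unlike the standard Lemma \ref{L:3DHolderSobolev} setting, the potentials here all involve the integrated variable $x_4$, so one cannot immediately separate out the trace before applying H\"older. The remedy is to introduce $x_4$ as a ``passive'' coordinate kept in $L^2$ at the innermost step of the nested norm (placing it inside $L^2_c$ in Lemma \ref{Lemma:TheStrichartzEstimate}), and only at the final stage convert back to a norm on $\alpha^{(4)}$ itself using the $\langle \nabla_{x_4}\rangle \langle \nabla_{x_4'}\rangle (\langle \nabla_{x_4}\rangle + \langle \nabla_{x_4'}\rangle)$ factor to compensate for the on-diagonal restriction $x_4=x_4'$. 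Once this is correctly set up, the remaining steps reduce to bookkeeping with H\"older and Sobolev exactly as in the proofs of Lemmas \ref{L:3DHolderSobolev} and \ref{lem:three-body Holder soblev}.
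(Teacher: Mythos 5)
Your overall architecture -- translate coordinates so the potentials become functions of the new variables, apply the transformed-coordinate 3D endpoint Strichartz estimate, split the potentials by H\"older with a $2\times 2$ choice of exponents for $W$ and $f$, and handle the diagonal restriction $x_4'=x_4$ at the end -- is exactly the paper's, and your exponent table for $W$ and $f$ is right. But there is one concrete step that fails as written: you propose to extract $\|V\|_{L^{1+}}$ by pairing $V(x_2-x_4)$ against the dual test function $g$ placed in $L^{\infty-}_{x_2}$ ``via a trivial Sobolev step in $x_2$.'' That Sobolev step is not available: $g$ is only controlled in $X^{(3)}_{\frac12-}$, an $L^2$-based space carrying no spatial derivatives, and the transference of the endpoint Strichartz estimate only puts $g$ in $L^{6-}$ (dual to $L^{\frac65+}$) of a \emph{single} chosen translated coordinate -- here that slot is already spent on $x_3$ to absorb $W$. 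Nor can the $L^{\infty-}$ cost be shifted onto $\alpha^{(4)}$ in the $x_2$ variable, since no $\langle\nabla_{x_2}\rangle$ factor appears on the right side of \eqref{E:Str20}.

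The repair is the paper's change of variables $x_4\mapsto x_4+x_2$, after which $V(x_2-x_4)=V(-x_4)$ depends only on the integration variable. The $L^{1+}_{x_4}$--$L^{\infty-}_{x_4}$ H\"older is then performed in $x_4$, with the Sobolev cost $\langle\nabla_{x_4}\rangle^{\frac32-}$ landing on $\alpha^{(4)}$; combined with the $\frac32+$ derivatives needed for the trace onto the diagonal $x_4'=x_4$, this is precisely what the three-derivative factor $\langle\nabla_{x_4}\rangle\langle\nabla_{x_4'}\rangle(\langle\nabla_{x_4}\rangle+\langle\nabla_{x_4'}\rangle)$ pays for. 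Your accounting instead asks the trace bound alone to absorb all three $x_4$-derivatives and pays for $V$ elsewhere, so the derivative budget does not close; once the $V$-H\"older is moved to the $x_4$ variable as above, the rest of your argument goes through and coincides with the paper's proof.
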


\begin{proof}
Change variables $x_4\mapsto x_4+x_2$ to get 
\begin{equation*}
\begin{aligned} \hspace{0.3in}&\hspace{-0.3in}
\beta^{(3)}(t,x_1,x_2,x_3;x_1',x_2',x_3') \\ &\overset{\mathrm{def}}{=}
\int_{x_4} V(x_2-x_4)W(x_3 - x_4)f(x_4-x_1)\alpha^{(4)}(x_1,x_2,x_3,x_4;
x_1', x_2^{\prime },x_3^{\prime},x_4) \, dx_4 \\ &= \int_{x_4}
V(-x_4)W(x_3-x_2-x_4)f(x_4+x_2-x_1)\alpha^{(4)}(x_1,x_2,x_3,x_4+x_2; x_1',
x_2^{\prime },x_3^{\prime },x_4+x_2) \, dx_4 \end{aligned}
\end{equation*}
Let us, for notational convenience, write 
\begin{equation}  \label{E:Str18a}
\tilde \sigma \overset{\mathrm{def}}{=}
\alpha^{(4)}(x_1,x_2,x_3+x_2,x_4+x_2;x_1^{\prime }, x_2^{\prime
},x_3^{\prime },x_4+x_2)
\end{equation}
\begin{equation}  \label{E:Str18}
\begin{aligned} \sigma &\stackrel{\rm{def}}{=} f(x_4+x_2-x_1) \tilde \sigma
\\ &= f(x_4+x_2-x_1)\alpha^{(4)}(x_1,x_2,x_3+x_2,x_4+x_2;x_1', x_2^{\prime
},x_3^{\prime},x_4+x_2) \end{aligned}
\end{equation}
Recalling that $T_3g(x_2,x_3) = g(x_2,x_3+x_2)$ (as in the proof of Lemma %
\ref{Lemma:TheStrichartzEstimate}), we have 
\begin{equation*}
(T_3\beta^{(3)})(t,x_1,x_2,x_3;x_1^{\prime },x_2^{\prime },x_3^{\prime }) =
\int_{x_4}V(-x_4)W(x_3-x_4) \sigma \, dx_4
\end{equation*}
By \eqref{E:Str1}, 
\begin{equation*}
\| \beta^{(3)} \|_{X_{-\frac12+}^{(3)}} \lesssim \|\mathcal{F}_2 T_3
\beta^{(2)}\|_{L^2_{\xi_2} L_t^2 L_{x_3}^{\frac65+} L_{x_1x_1^{\prime
}x_2^{\prime }x_3^{\prime }}^2}
\end{equation*}
Moving the $L^2_{\xi_2}$ norm to the inside (by Minkowski's integral
inequality) and applying Plancherel to convert $L^2_{\xi_2}$ to $L^2_{x_2}$ 
\begin{equation*}
\| \beta^{(2)} \|_{X_{-\frac12+}^{(3)}} \lesssim \| T_3 \beta^{(2)}
\|_{L_t^2 L_{x_3}^{\frac65+} L_{x_1x_2x_1^{\prime }x_2^{\prime }x_3^{\prime
}}^2}
\end{equation*}
By Minkowski's integral inequality, 
\begin{equation*}
\lesssim \int_{x_4} |V(-x_4)| \| W(x_3-x_4) \, \sigma \|_{L_t^2
L_{x_3}^{\frac65+}L_{x_1x_2x_1^{\prime }x_2^{\prime }x_3^{\prime }}^2} \,dx_4
\end{equation*}
\begin{equation*}
=\int_{x_4} |V(-x_4)| \| W(x_3-x_4) \, \| \sigma \|_{L^2_{x_1x_2x_1^{\prime
}x_2^{\prime }x_3^{\prime }}} \, \|_{L_t^2 L_{x_3}^{\frac65+}} \, dx_4
\end{equation*}
At this point, recalling \eqref{E:Str18a}, \eqref{E:Str18}, we either
estimate the inside term as 
\begin{equation}  \label{E:Str31}
\|\sigma\|_{L_{x_1}^2} \leq \|f\|_{L^\infty} \|\tilde \sigma\|_{L_{x_1}^2}
\end{equation}
which leads to the first and third estimates of \eqref{E:Str20} or we
estimate using H\"older and Sobolev 
\begin{equation*}
\| \sigma \|_{L_{x_1}^2} \leq \|f\|_{L^3} \| \tilde \sigma \|_{L_{x_1}^6}
\lesssim \|f\|_{L^3} \|\nabla_{x_1} \tilde \sigma \|_{L_{x_1}^2}
\end{equation*}
which leads to the second and fourth estimates of \eqref{E:Str20}. Since the
remaining steps are similar in either case, we will content ourselves to use %
\eqref{E:Str31} and prove the first and third estimates of \eqref{E:Str20}
below.

We next apply H\"older in $x_3$. For the first estimate of \eqref{E:Str20},
we use $\frac56=\frac23+\frac16$, and for the third estimate of %
\eqref{E:Str20} we use $\frac56=\frac13+\frac12$. Let us proceed with the
proof of the first estimate in \eqref{E:Str20} 
\begin{equation*}
\lesssim \| W\|_{L^{\frac32+}} \|f\|_{L^\infty} \int_{x_4} |V(-x_4)| \|
\tilde \sigma \|_{L_t^2L_{x_3}^6 L^2_{x_1x_2x_1^{\prime }x_2^{\prime
}x_3^{\prime }}} \,\, dx_4
\end{equation*}
\begin{equation*}
\lesssim \| W\|_{L^{\frac32+}} \|f\|_{L^\infty} \int_{x_4} |V(-x_4)| \, \|
\tilde \sigma \|_{L_t^2L^2_{x_1x_2 x_1^{\prime }x_2^{\prime }x_3^{\prime
}}L_{x_3}^6} \, \, dx_4
\end{equation*}
By Sobolev in $x_3$, 
\begin{equation*}
\lesssim \| W\|_{L^{\frac32+}}\|f\|_{L^\infty} \int_{x_4} |W(-x_4)| \, \|
\langle \nabla_{x_3}\rangle \, \tilde \sigma \|_{L_t^2
L^2_{x_1x_2x_1^{\prime }x_2^{\prime }x_3^{\prime }}L_{x_3}^2} \, \, dx_4
\end{equation*}
Now apply H\"older in $x_4$ to obtain 
\begin{equation*}
\lesssim \|W\|_{L^{\frac32+}} \|V\|_{L^{1+}} \|f\|_{L^\infty}\| \langle
\nabla_{x_3}\rangle \tilde \sigma \|_{L_{x_4}^{\infty-}L_{t
x_2x_3x_2^{\prime }x_3^{\prime }}^2}
\end{equation*}
\begin{equation*}
\lesssim \|W\|_{L^{\frac32+}} \|V\|_{L^{1+}} \|f\|_{L^\infty} \| \langle
\nabla_{x_3}\rangle \tilde \sigma \|_{L_{t x_2x_3x_2^{\prime }x_3^{\prime
}}^2L_{x_4}^{\infty-}}
\end{equation*}
Apply Sobolev in $x_4$ to obtain 
\begin{equation*}
\lesssim \|W\|_{L^{\frac32+}} \|V\|_{L^{1+}} \|f\|_{L^{\infty}} \|\langle
\nabla_{x_4}\rangle^{\frac32-} \langle \nabla_{x_3}\rangle \, \tilde \sigma
\|_{L_{tx_2x_3x_2^{\prime }x_3^{\prime }}^2L_{x_4}^2}
\end{equation*}
Changing variable $x_3\mapsto x_3-x_2$ and $x_4\mapsto x_4-x_2$, 
\begin{equation*}
= \|W\|_{L^{\frac32+}} \|V\|_{L^{1+}} \|f\|_{L^{\infty}}\|\langle
\nabla_{x_4}\rangle^{\frac32-} \langle
\nabla_{x_3}\rangle\alpha^{(4)}(x_1,x_2,x_3,x_4;x_1^{\prime }, x_2^{\prime
},x_3^{\prime },x_4) \|_{L_{t x_2x_3x_2^{\prime }x_3^{\prime }}^2L_{x_4}^2}
\end{equation*}
By standard trace estimates, we complete the proof of the first estimate in %
\eqref{E:Str20}.
\end{proof}

\begin{proposition}
\label{P:Str22} 
\begin{equation}  \label{E:Str22}
\begin{aligned} \hspace{0.3in}&\hspace{-0.3in} \left\| R_{\leq M_3}^{(3)}
\int_{x_4} V_N(x_2-x_4) w_N(x_3-x_4) f_N(x_1-x_4) \alpha^{(4)} \, dx_4
\right\|_{X_{-\frac12+}^{(3)}} \\ & \lesssim C_{V,w_0,f} \left( N^{-\frac12}
\|S_4 S^{(4)} \alpha^{(4)} \|_{L_t^2L_{\mathbf{x}_4\mathbf{x_4}'}^2} \right)
\left\{ \begin{aligned} & M_3 N^{-\frac12+} \\ & N^{\beta-\frac12+}
\end{aligned} \right. \end{aligned}
\end{equation}
where $V_N(x) = N^{3\beta}V(N^\beta x)$, $w_N(x) = N^{\beta-1} w_0(N^\beta
x) $, $f_N(x_1-x_4) = w_0(N^\beta(x_1-x_4))$ or $f_N(x_1-x_4)=1$, and 
\begin{equation*}
C_{V,w_0} = (\|V\|_{L^{1+}}+\|\nabla V\|_{L^{1+}} +
\|w_0\|_{L^{3+}}+\|\nabla w_0\|_{L^{\frac32+}})(1+ \|w_0\|_{L^{\infty}} +
\|\nabla w_0\|_{L^3})
\end{equation*}
(which is finite and independent of $N$).
\end{proposition}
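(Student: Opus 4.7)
The plan is to distribute the six derivatives in $R^{(3)}_{\leq M_3} = P^{(3)}_{\leq M_3}\prod_{j=1}^{3}|\nabla_{x_j}||\nabla_{x_j'}|$ and then reduce to Lemma \ref{Lem:basic lem in KIP}. First, the three primed derivatives commute through the $x_4$-integral and land directly on $\alpha^{(4)}$. For each unprimed $|\nabla_{x_j}|$, I have two options. Option (a): distribute via Leibniz, so that $|\nabla_{x_j}|$ either lands on the companion potential ($f_N, V_N, w_N$ for $j=1,2,3$, respectively) and produces an $N^{\beta}$ factor, or passes through to $\alpha^{(4)}$. Option (b): absorb it into the Littlewood-Paley projection via the operator bound $\||\nabla_{x_j}|P^{j}_{\leq M_3}\|_{\mathrm{op}}\leq M_3$, trading the derivative for $M_3$ without producing any $N^{\beta}$. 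Either choice reduces the problem to an integral of the form $\int \tilde V(x_2-x_4)\tilde W(x_3-x_4)\tilde f(x_1-x_4)\tilde\alpha^{(4)}\,dx_4$, to which Lemma \ref{Lem:basic lem in KIP} applies with suitably modified $\tilde V,\tilde W,\tilde f,\tilde\alpha^{(4)}$.

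For the $N^{\beta-\frac12+}$ bound, I would take option (a) for every unprimed derivative. The relevant norm estimates are $\|V_N\|_{L^{1+}}\sim 1$, $\|\nabla V_N\|_{L^{1+}}\sim N^{\beta+}$, $\|w_N\|_{L^{3+}},\|\nabla w_N\|_{L^{3/2+}}\sim N^{-1-}$, and $\|f_N\|_{L^\infty},\|\nabla f_N\|_{L^{3}}\sim 1$. For each Leibniz subterm I choose the estimate of Lemma \ref{Lem:basic lem in KIP} that actually makes sense, noting in particular that since $w_0\notin L^{3/2}(\mathbb R^3)$, one must use the third/fourth estimates whenever $\tilde W = w_N$, and use the second/fourth estimates (with $\|\tilde f\|_{L^3}$ rather than $\|\tilde f\|_{L^\infty}$) whenever $\tilde f=\nabla f_N$, so as to avoid an unwanted factor of $N^{\beta}$ arising from $\|\nabla f_N\|_{L^\infty}$. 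A bookkeeping verification then shows that every Leibniz subterm is controlled by $N^{\beta-1+}\|S_4 S^{(4)}\alpha^{(4)}\|_{L_t^2 L^2}$, which is exactly the target $N^{\beta-\frac12+}\cdot N^{-1/2}\|S_4 S^{(4)}\alpha^{(4)}\|$.

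For the $M_3 N^{-\frac12+}$ bound, I would instead take option (b) for any unprimed derivative that would otherwise land on a potential under option (a), replacing each such $N^{\beta}$ by $M_3$. The resulting integral has all derivatives sitting on $\alpha^{(4)}$ (together with the primed ones), so the third estimate of Lemma \ref{Lem:basic lem in KIP} delivers $\|V_N\|_{L^{1+}}\|w_N\|_{L^{3+}}\|f_N\|_{L^\infty}\|S_4S^{(4)}\alpha^{(4)}\|\sim N^{-1-}\|S_4S^{(4)}\alpha^{(4)}\|$, giving the prefactor $M_3 N^{-1+}\|S_4 S^{(4)}\alpha^{(4)}\|$ as required. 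Taking the two bounds together yields the $\min$-type estimate in the statement; one can simply pick whichever of (a) or (b) is cheaper, since $M_3 N^{-1/2+}\leq N^{\beta-1/2+}$ precisely when $M_3\leq N^{\beta}$.

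The main obstacle will be the Leibniz subterms in which two or three unprimed derivatives would simultaneously land on potentials, because a naive pairing with the first or third estimate of Lemma \ref{Lem:basic lem in KIP} yields $N^{2\beta-1+}$ (bad for $\beta>\tfrac12$) or $M_3^{2}N^{-1-}$ (bad for $M_3>N^{0+}$). The resolution, for the $N^{\beta-\frac12+}$ bound, is to exploit the second estimate of Lemma \ref{Lem:basic lem in KIP}: the norm $\|\nabla f_N\|_{L^{3}}\sim 1$ (as opposed to $\|\nabla f_N\|_{L^\infty}\sim N^{\beta}$) strips off one potential-derivative $N^{\beta}$ at the cost of an extra $\langle\nabla_{x_1}\rangle$ on $\alpha^{(4)}$, which is absorbed into $S^{(4)}$. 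For the $M_3 N^{-\frac12+}$ bound the resolution is analogous: option (b) applied to these derivatives, combined with the third estimate of Lemma \ref{Lem:basic lem in KIP}, never forces more than one factor of $M_3$ to be exposed once one tracks which configurations are compatible with the given $(M_3,N)$ regime $M_3\leq N^{\beta}$.
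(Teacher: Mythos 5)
Your overall architecture is the paper's: land the primed derivatives on $\alpha^{(4)}$, handle the unprimed ones either by Leibniz or by the operator bound $\|P^{j}_{\leq M_3}\nabla_{x_j}\|_{\op}\leq M_3$, and reduce every resulting subterm to Lemma \ref{Lem:basic lem in KIP}. Your $N^{\beta-\frac12+}$ branch is essentially correct and coincides with the paper's (the four terms $\mathrm{C},\mathrm{D},\mathrm{E},\mathrm{F}$): you correctly identify that the only uncompensated $N^{\beta}$ comes from $\nabla_{x_2}$ hitting $V_N$, because $\|\nabla_{x_3}w_N\|_{L^{\frac32+}}\sim N^{-1-}\sim\|w_N\|_{L^{3+}}$ and $\|\nabla_{x_1}f_N\|_{L^{3}}\sim 1\sim\|f_N\|_{L^{\infty}}$, provided one selects the estimate of Lemma \ref{Lem:basic lem in KIP} with the matching integrability exponents.

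The $M_3 N^{-\frac12+}$ branch, however, has a genuine gap. Options (a) and (b) are mutually exclusive choices made per derivative \emph{before} expanding, so "option (b) for any derivative that would otherwise land on a potential" means absorbing all three unprimed derivatives (each has a Leibniz subterm landing on its companion potential), which yields $M_3^{3}$, not $M_3$; your own description ("all derivatives sitting on $\alpha^{(4)}$\dots the third estimate") is only consistent with that reading. You then acknowledge the $M_3^{2}$ problem but your proposed resolution --- that option (b) "never forces more than one factor of $M_3$\dots once one tracks which configurations are compatible with the regime $M_3\leq N^{\beta}$" --- is not an argument: the operator-norm bound produces one factor of $M_3$ per absorbed derivative regardless of the relative size of $M_3$ and $N^{\beta}$, and each of the two bounds in \eqref{E:Str21} must be produced by a fixed decomposition. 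The correct fix, which is what the paper does, is to apply option (b) \emph{only} to $\nabla_{x_2}$ (the derivative paired with $V_N$, the one potential for which differentiation genuinely costs $N^{\beta}$ since $\|\nabla V_N\|_{L^{1+}}\sim N^{\beta}\|V_N\|_{L^{1+}}$), and to Leibniz-distribute $\nabla_{x_3}$ and $\nabla_{x_1}$ exactly as in your other branch: the subterms where they hit $w_N$ or $f_N$ cost nothing extra by the same norm computations $\|\nabla_{x_3}w_N\|_{L^{\frac32+}}\sim N^{-1-}$ and $\|\nabla_{x_1}f_N\|_{L^{3}}\sim 1$. This produces exactly one factor of $M_3$ and the prefactor $M_3N^{-1+}$, i.e.\ the first line of \eqref{E:Str22}.
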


\begin{proof}
Let\footnote{%
We write the operator $R^{(k)}$ using true derivatives $\nabla$ rather than $%
|\nabla|$. Once the $X_{-\frac12+}$ norm is applied, one can be converted to
the other.} 
\begin{equation}  \label{E:Str23}
\begin{aligned} \hspace{0.3in}&\hspace{-0.3in}
\beta^{(3)}(x_1,x_2,x_3;x_1',x_2',x_3') \stackrel{\rm{def}}{=} R_{\leq
M_3}^{(3)} \int_{x_4} V_N(x_2-x_4) w_N(x_3-x_4) f_N(x_1-x_4)\alpha^{(4)} \,
dx_4 \\ &= P_{\leq M_3}^{(3)} \nabla_{x_2}\nabla_{x_3}\int_{x_4}
V_N(x_2-x_4) w_N(x_3-x_4) \nabla_{x_1} [f_N(x_1-x_4) \nabla_{x_1'}
\nabla_{x_2'}\nabla_{x_3'} \alpha^{(4)}] \, dx_4 \end{aligned}
\end{equation}
If $\nabla_{x_1}$ lands on $f_N(x_1-x_4)$, then we ultimately use the second
or fourth estimate in \eqref{E:Str20}. If, on the other hand, $\nabla_{x_1}$
lands on $\alpha^{(4)}$, then we ultimately use either the first or third
estimate in \eqref{E:Str20}. Since the two cases are similar, we will just
proceed assuming that $\nabla_{x_1}$ lands on $\alpha^{(4)}$. Then %
\eqref{E:Str22} is the two estimates: 
\begin{equation}  \label{E:Str21}
\| \beta^{(3)} \|_{X_{-\frac12+}} \lesssim C_{V,w_0} \left( N^{-\frac12}
\|S_4 S^{(4)} \alpha^{(4)} \|_{L_t^2L_{\mathbf{x}_4\mathbf{x_4}^{\prime
}}^2} \right) \left\{ \begin{aligned} & M_2 N^{-\frac12+} \\ &
N^{\beta-\frac12+} \end{aligned} \right.
\end{equation}

We begin by proving the first estimate in \eqref{E:Str21}. Distributing the $%
\nabla_{x_3}$ derivative into the integral, we obtain two terms: 
\begin{equation*}
\beta^{(3)} = \mathrm{A}+\mathrm{B}
\end{equation*}
where 
\begin{equation*}
\mathrm{A} = N^\beta P_{\leq M_3}^{(3)} \nabla_{x_2}\int_{x_4} V_N(x_2-x_4)
(\nabla_{x_3} w)_N(x_3-x_4) f_N(x_1-x_4) \nabla_{x_1} \nabla_{x_1^{\prime }}
\nabla_{x_2^{\prime }}\nabla_{x_3^{\prime }} \alpha^{(4)} \, dx_4
\end{equation*}
\begin{equation*}
\mathrm{B} = P_{\leq M_3}^{(3)} \nabla_{x_2} \int_{x_4} V_N(x_2-x_4)
w_N(x_3-x_4) f_N(x_1-x_4) \nabla_{x_1}\nabla_{x_3}\nabla_{x_1^{\prime }}
\nabla_{x_2^{\prime }}\nabla_{x_3^{\prime }}\alpha^{(4)} \, dx_4
\end{equation*}
Now use that $P_{\leq M_3}^{(3)} \nabla_{x_2} \leq M_3$ to obtain 
\begin{equation*}
\| \mathrm{A}\|_{X_{-\frac12+}} \lesssim M_3 N^\beta \left\| \int_{x_4}
V_N(x_2-x_4) (\nabla_{x_3} w)_N(x_3-x_4) f_N(x_1-x_4)
\nabla_{x_1}\nabla_{x_1^{\prime }}\nabla_{x_2^{\prime }}\nabla_{x_3^{\prime
}} \alpha^{(4)} \, dx_4 \right\|_{X_{-\frac12+}^{(2)}}
\end{equation*}
By the first estimate in \eqref{E:Str20}, 
\begin{equation*}
\|\mathrm{A} \|_{X_{-\frac12+}} \lesssim M_3 N^{-1+} \|V\|_{L^{1+}}\|\nabla
w_0\|_{L^{\frac32+}}\|w_0\|_{L^\infty} \left\| (S^4+S^{4^{\prime }}) \frac{%
S^{(4)}}{\langle \nabla_{x_2}\rangle} \alpha^{(4)} \right\|_{L_t^2L_{\mathbf{%
x}_3\mathbf{x}_3^{\prime }}^2}
\end{equation*}
Again, using that $P_{\leq M_3}^{(2)} \nabla_{x_2} \leq M_3$, we obtain 
\begin{equation*}
\| \mathrm{B} \|_{X_{-\frac12+}} \lesssim M_3 \left\| \int_{x_4}
V_N(x_2-x_4) w_N(x_3-x_4) f_N(x_1-x_4) \nabla_{x_1}\nabla_{x_3}
\nabla_{x_1^{\prime }}\nabla_{x_2^{\prime }} \nabla_{x_3^{\prime }}
\alpha^{(4)} \, dx_4 \right\|_{X_{-\frac12+}^{(2)}}
\end{equation*}
By the third estimate in \eqref{E:Str20}, 
\begin{equation*}
\| \mathrm{B} \|_{X_{-\frac12+}} \lesssim M_3 N^{-1+} \|V\|_{L^{1+}}
\|w_0\|_{L^{3+}}\|w_0\|_{L^\infty} \left\| (S^4+S^{4^{\prime }}) \frac{%
S^{(4)}}{\langle \nabla_{x_2}\rangle} \alpha^{(4)} \right\|_{L_t^2L_{\mathbf{%
x}_3\mathbf{x}_3^{\prime }}^2}
\end{equation*}
Combining the above estimates for terms A and B, we obtain the first
estimate of \eqref{E:Str21}.

For the second estimate in \eqref{E:Str21}, starting from \eqref{E:Str23},
we distribute both $\nabla _{x_{2}}$ and $\nabla _{x_{3}}$ into the
integral. The result is four terms 
\begin{equation*}
\beta ^{(2)}=\mathrm{C}+\mathrm{D}+\mathrm{E}+\mathrm{F}
\end{equation*}%
where 
\begin{equation*}
\mathrm{C}=N^{2\beta }P_{\leq M_{3}}^{(2)}\int_{x_{4}}(\nabla
V)_{N}(x_{2}-x_{4})(\nabla w)_{N}(x_{3}-x_{4})f_{N}(x_{1}-x_{4})\nabla
_{x_{1}} \nabla_{x_1^{\prime }} \nabla _{x_{2}^{\prime }}\nabla
_{x_{3}^{\prime }}\alpha ^{(4)}\,dx_{4}
\end{equation*}%
\begin{equation*}
\mathrm{D}=N^{\beta }P_{\leq
M_{3}}^{(2)}\int_{x_{4}}V_{N}(x_{2}-x_{4})(\nabla
w)_{N}(x_{3}-x_{4})f_{N}(x_{1}-x_{4})\nabla _{x_{1}}\nabla
_{x_{2}}\nabla_{x_1^{\prime }} \nabla_{x_{2}^{\prime }}\nabla
_{x_{3}^{\prime }}\alpha ^{(4)}\,dx_{4}
\end{equation*}%
\begin{equation*}
\mathrm{E}=N^{\beta }P_{\leq M_{3}}^{(2)}\int_{x_{4}}(\nabla
V)_{N}(x_{2}-x_{4})w_{N}(x_{3}-x_{4})f_{N}(x_{1}-x_{4})\nabla _{x_{1}}\nabla
_{x_{3}} \nabla_{x_1^{\prime }} \nabla _{x_{2}^{\prime }}\nabla
_{x_{3}^{\prime }}\alpha ^{(4)}\,dx_{4}
\end{equation*}%
\begin{equation*}
\mathrm{F}=P_{\leq
M_{3}}^{(2)}%
\int_{x_{4}}V_{N}(x_{2}-x_{4})w_{N}(x_{3}-x_{4})f_{N}(x_{1}-x_{4})\nabla
_{x_{1}}\nabla _{x_{2}}\nabla _{x_{3}} \nabla_{x_1^{\prime }} \nabla
_{x_{2}^{\prime }}\nabla _{x_{3}^{\prime }}\alpha ^{(4)}\,dx_{4}
\end{equation*}%
For C and D, we use the first estimate of \eqref{E:Str20}, and for E and F,
we use the second estimate of \eqref{E:Str20}. This gives 
\begin{equation*}
\Vert \mathrm{C}\Vert _{X_{-\frac{1}{2}+}^{(k)}}\lesssim N^{\beta -\frac{1}{2%
}+}\Vert w_{0}\Vert _{L^{\infty }}\Vert \nabla V\Vert _{L^{1+}}\Vert \nabla
w_{0}\Vert _{L^{\frac{3}{2}+}}\Vert w_{0}\Vert _{L^{\infty }}\left( N^{-%
\frac{1}{2}}\Vert (S_{4}+S_{4^{\prime }})\frac{S^{(4)}}{\langle \nabla
_{x_{2}}\rangle }\alpha ^{(4)}\Vert _{L_{t}^{2}L_{\mathbf{x}_{4}\mathbf{x}%
_{4}^{\prime }}^{2}}\right)
\end{equation*}%
\begin{equation*}
\Vert \mathrm{D}\Vert _{X_{-\frac{1}{2}+}^{(k)}}\lesssim N^{-\frac{1}{2}%
+}\Vert V\Vert _{L^{1+}}\Vert \nabla w_{0}\Vert _{L^{\frac{3}{2}+}}\Vert
w_{0}\Vert _{L^{\infty }}\left( N^{-\frac{1}{2}}\Vert (S_{4}+S_{4^{\prime
}})S^{(4)}\alpha ^{(4)}\Vert _{L_{t}^{2}L_{\mathbf{x}_{4}\mathbf{x}%
_{4}^{\prime }}^{2}}\right)
\end{equation*}%
\begin{equation*}
\Vert \mathrm{E}\Vert _{X_{-\frac{1}{2}+}^{(k)}}\lesssim N^{\beta -\frac{1}{2%
}+}\Vert \nabla V\Vert _{L^{1+}}\Vert w_{0}\Vert _{L^{3+}}\Vert w_{0}\Vert
_{L^{\infty }}\left( N^{-\frac{1}{2}}\Vert (S_{4}+S_{4^{\prime }})\frac{%
S^{(4)}}{\langle \nabla _{x_{2}}\rangle }\alpha ^{(4)}\Vert _{L_{t}^{2}L_{%
\mathbf{x}_{4}\mathbf{x}_{4}^{\prime }}^{2}}\right)
\end{equation*}%
\begin{equation*}
\Vert \mathrm{F}\Vert _{X_{-\frac{1}{2}+}^{(k)}}\lesssim N^{-\frac{1}{2}%
+}\Vert V\Vert _{L^{1+}}\Vert w_{0}\Vert _{L^{3+}}\Vert w_{0}\Vert
_{L^{\infty }}\left( N^{-\frac{1}{2}}\Vert (S_{4}+S_{4^{\prime
}})S^{(4)}\alpha ^{(4)}\Vert _{L_{t}^{2}L_{\mathbf{x}_{4}\mathbf{x}%
_{4}^{\prime }}^{2}}\right)
\end{equation*}%
Pulling these together gives the second estimate in \eqref{E:Str21}.
\end{proof}

\appendix

\section{The Topology on the Density Matrices\label{appendix:ESYTopology}}

In this appendix, we define a topology $\tau _{prod}$ on the density
matrices as was previously done in \cite{E-E-S-Y1, E-Y1,
E-S-Y1,E-S-Y2,E-S-Y5,
E-S-Y3,Kirpatrick,TChenAndNP,ChenAnisotropic,Chen3DDerivation,C-H3Dto2D,C-H2/3,C-HFocusing,C-HFocusingII,C-PUniqueness}%
.

Denote the space of Hilbert-Schmidt operators on $L^{2}\left( \mathbb{R}%
^{3k}\right) $ as $\mathcal{L}_{k}^{2}$. Then $\left( \mathcal{L}%
_{k}^{2}\right) ^{\prime }=\mathcal{L}_{k}^{2}$. By the fact that $\mathcal{L%
}_{k}^{2}$ is separable, we select a dense countable subset $%
\{J_{i}^{(k)}\}_{i\geqslant 1}\subset \mathcal{L}_{k}^{2}$ in the unit ball
of $\mathcal{L}_{k}^{2}$ (so $\Vert J_{i}^{(k)}\Vert _{\func{op}}\leqslant 1$
where $\left\Vert \cdot \right\Vert _{\func{op}}$ is the operator norm). For 
$\gamma ^{(k)},\tilde{\gamma}^{(k)}\in \mathcal{L}_{k}^{2}$, we then define
a metric $d_{k}$ on $\mathcal{L}_{k}^{2}$ by 
\begin{equation*}
d_{k}(\gamma ^{(k)},\tilde{\gamma}^{(k)})=\sum_{i=1}^{\infty
}2^{-i}\left\vert \left\langle J_{i}^{(k)},\left( \gamma ^{(k)}-\tilde{\gamma%
}^{(k)}\right) \right\rangle \right\vert .
\end{equation*}%
A uniformly bounded sequence $\gamma _{N}^{(k)}\in \mathcal{L}_{k}^{2}$
converges to $\gamma ^{(k)}\in \mathcal{L}_{k}^{2}$ with respect to the weak
topology if and only if 
\begin{equation*}
\lim_{N}d_{k}(\gamma _{N}^{(k)},\gamma ^{(k)})=0.
\end{equation*}%
For fixed $T>0$, let $C\left( \left[ 0,T\right] ,\mathcal{L}_{k}^{2}\right) $
be the space of functions of $t\in \left[ 0,T\right] $ with values in $%
\mathcal{L}_{k}^{2}$ which are continuous with respect to the metric $d_{k}.$
On $C\left( \left[ 0,T\right] ,\mathcal{L}_{k}^{2}\right) ,$ we define the
metric 
\begin{equation*}
\hat{d}_{k}(\gamma ^{(k)}\left( \cdot \right) ,\tilde{\gamma}^{(k)}\left(
\cdot \right) )=\sup_{t\in \left[ 0,T\right] }d_{k}(\gamma ^{(k)}\left(
t\right) ,\tilde{\gamma}^{(k)}\left( t\right) ).
\end{equation*}%
We can then define a topology $\tau _{prod}$ on the space $\oplus
_{k\geqslant 1}C\left( \left[ 0,T\right] ,\mathcal{L}_{k}^{2}\right) $ by
the product of topologies generated by the metrics $\hat{d}_{k}$ on $C\left( %
\left[ 0,T\right] ,\mathcal{L}_{k}^{2}\right) .$

\end{document}